\newcommand{\red}{}
\newcommand{\green}[1]{#1}
\numberwithin{equation}{section}
\newtheorem{teorema}{Theorem}[section]
\newtheorem{propo}[teorema]{Proposition}
\newtheorem{lema}[teorema]{Lemma}
\newtheorem{coro}[teorema]{Corollary}
\newtheorem{defprop}[teorema]{Definition/Proposition}
\theoremstyle{definition}
\newtheorem{defin}[teorema]{Definition}
\newtheorem{nota}[teorema]{Remark}
\theoremstyle{remark}
\newtheorem{ejemplo}[teorema]{Example}
\newcommand{\be}{\begin{enumerate}}
\newcommand{\benot}{\begin{enumerate}[leftmargin=0em,itemindent=2em,itemsep=0em]}
\newcommand{\ee}{\end{enumerate}}
\newcommand{\bi}{\begin{itemize}}
\newcommand{\ei}{\end{itemize}}
\newcommand{\dis}{\displaystyle}
\newcommand{\RR}{{\mathbb R}}
\newcommand{\ZZ}{{\mathbb Z}}
\newcommand{\NN}{{\mathbb N}}
\newcommand{\CC}{{\mathbb C}}
\newcommand{\PC}{{\mathbb P}^1_{\CC}} 
\newcommand{\OO}{{\mathcal O}}
\newcommand{\BB}{{\mathcal B}}
\renewcommand{\AA}{{\mathcal A}}
\newcommand{\MM}{{\mathcal M}}
\newcommand{\mcN}{{\mathcal N}}
\newcommand{\LL}{{\mathcal L}}
\newcommand{\EE}{{\mathcal E}}
\newcommand{\mcR}{{\mathcal R}}
\newcommand{\eps}{\varepsilon}
\renewcommand{\a}{\alpha}
\newcommand{\bal}{{\boldsymbol\alpha}}
\newcommand{\bbeta}{{\boldsymbol\beta}}
\newcommand{\bv}{\mathbf{v}}
\newcommand{\bx}{{\boldsymbol x}}
\newcommand{\bof}{{\boldsymbol f}}
\newcommand{\ba}{{\boldsymbol a}}
\newcommand{\bt}{{\boldsymbol t}}
\newcommand{\by}{\mathbf{y}}
\newcommand{\bz}{{\boldsymbol z}}
\newcommand{\bo}{{\boldsymbol 0}}
\newcommand{\bm}{{\boldsymbol m}}
\newcommand{\bn}{{\boldsymbol n}}
\newcommand{\br}{{\boldsymbol r}}
\newcommand{\bR}{{\boldsymbol R}}
\newcommand{\bH}{{\boldsymbol H}}
\newcommand{\norm}[1]{\left | #1 \right |}
\newcommand{\norma}[1]{\left |\left| #1 \right|\right |}
\newcommand{\con}{\CC \{\bx\}}
\newcommand{\formal}{\CC [\!\!\:[\bx]\!\!\:]}
\newcommand{\lf}{[\hspace*{-1pt}[}
\newcommand{\rf}{]\hspace*{-1pt}]}
\newcommand{\lista}[2]{#1_1,#1_2,\ldots ,#1_{#2}}
\DeclareMathOperator{\dist}{dist}
\DeclareMathOperator{\cl}{cl}
\newcommand{\bibart}[5]{\textsc{#1}. \textit{#2}. #3 (#4), #5.} 
\newcommand{\biblibro}[4]{\textsc{#1}. \textrm{#2}. #3, #4.} 
\newcommand{\labl}[1]{\label{#1}}
\newcommand{\reff}[1]{\ref{#1}}
\begin{document}

\title[Asymptotic expansions with respect to an analytic germ]{Asymptotic expansions and summability\\with respect to an analytic germ}

\author{Jorge Mozo Fern\'andez}
\address[Jorge Mozo Fern\'andez]{Dpto. \'{A}lgebra, An\'alisis Matem\'atico, Geometr\'\i{}a y Topolog\'\i{}a \\
Facultad de Ciencias \\ Campus Miguel Delibes \\
Universidad de Valladolid\\
Paseo de Bel\'en, 7 \\
47011 Valladolid\\
Spain}
\email{jmozo@maf.uva.es}
\thanks{The first author partially supported by the Spanish national project MTM2010-15471}

\author{Reinhard Sch\"afke}
\address[Reinhard Sch\"afke]{
Institut de Recherche Math\'ematique Avanc\'ee\\
U.F.R.\ de Ma\-th\'e\-ma\-ti\-ques et Informatique\\
Universit\'e de  Strasbourg et C.N.R.S.\\
7, rue Ren\'e Descartes\\
67084 Strasbourg cedex\\France
}
\email{schaefke@unistra.fr}
\thanks{The second author was supported in part by grants of the French National
Research Agency (ref.\ ANR-10-BLAN 0102 and ANR-11-BS01-0009)}

\subjclass[2000]{Primary 41A60}

\keywords{Asymptotic expansions, summability}

\date{\today}



\begin{abstract}
In a previous article \cite{CDMS}, monomial asymptotic expansions, Gevrey asymptotic expansions and monomial summability were introduced and applied to  certain systems of singularly perturbed differential equations.
In the present work, we extend this concept, introducing (Gevrey) asymptotic expansions and summability with respect to a germ of an analytic function in several variables - this includes polynomials.
The reduction theory of sin\-gu\-la\-rities of curves and monomialization of germs of analytic functions are crucial to establish properties of the new notions, for example a generalization of the Ramis-Sibuya theorem for the existence of Gevrey asymptotic expansions.
{Two examples of singular differential equations are presented for which the formal solutions 
are shown to be summable with respect to a polynomial: one ordinary and one partial differential equation.
}
\end{abstract}

\maketitle

\section{Introduction}

The concept of asymptotic expansion for complex functions in one variable is well established and widely used since Poincar\'e, in order to give a meaning to { divergent} formal power series that appear as solutions of different functional equations, and to understand the behavior near singular points of { analytic} solutions and other special functions. { We mention only the books of W.\ Wasow \cite{Wasow65} and F.W.J.\ Olver \cite{O}.}

{ The closely related notion of summability was introduced to provide unique analytic functions
having certain asymptotic expansions. In one complex variable, it has been extensively used
in such different fields as ordinary differential equations, the analytic classification of formal objects and some classes of singularly perturbed differential equations and partial differential equations.}

For {asymptotic expansions in} several variables, different approaches exist in the literature. Let us mention the approaches of R.~G\'erard and Y.~Sibuya \cite{GS}, who treated some class of Pfaffian systems, and a more powerful one by H.~Majima \cite{Majima84}. This last author introduced the concept of strong asymptotic expansion in polysectors in order to study several classes of singularly perturbed differential equations and integrable connections.

{Several problems suggested asymptotic expansions in several variables in which monomials
$x^py^q$ are crucial: W.~Wasow \cite{Wasow79} studied equations of the form
$$
\eps^h x^{-k} \frac{d\by}{dx} =A(x,\eps )\by,
$$
where $h$, $k\in \NN$ and $A(x,\eps )$ is a matrix holomorphic near $\eps=0$ and  $x=\infty$. They are singular both in the variable $x$ and in the parameter $\eps$.
J.~Martinet and J.-P.~Ramis \cite{MR2} studied the analytic classification of resonant singularities
of holomorphic foliations in two variables. The formal normal form
involved the monomial $u=x^py^q$. The normalizing transformations are $(k,p,q)$-summable
in the following sense: they are locally defined as
$$(x,y)\mapsto (x\exp\{q\,h(x,y)\}, y\exp\{-p\,h(x,y)\}),\ \mbox{where}\ h(x,y)=f(x^py^q)(x,y)$$
and $f$ is obtained as the $k$-sum of an
element  $\tilde{f}\in \CC \{ x,y\} \lf u\rf$, $k$-summable in the variable $u$ with coefficients
in $\CC\{x,y\}$. L.\ Stolovitch \cite{Stolo} used a similar construction in $n\geq2$ variables.

These examples lead M.~Canalis-Durand and the authors \cite{CDMS} to a detailed investigation of
the concept of \emph{monomial asymptotic expansion} in two variables.
One possible definition is the above one given by \cite{MR2}; \cite{CDMS} gives a more algorithmical
definition. In the case of $p=q=1$, a power series $f(x,y)$ is   $(k,1,1)$-summable if rewritten
$f(x,y)=\sum_{n=0}^\infty (a_n(x)+b_n(y))(xy)^n$ all series $a_n(x)$, $b_n(y)$ have a common radius of convergence
$R>0$ and for sufficienly small $r>0$ the series $Tf(u)=\sum_n c_nu^n\in \OO_b(D(0,r)^2)\lf u\rf $
is  $k$-summable where $c_n(x,y)=a_n(x)+b_n(y), |x|<r,|y|<r$ are elements of the Banach space $\OO_b(D(0,r)^2)$
of bounded holomorphic functions on $D(0,r)^2$. \cite{CDMS} applied this definition to doubly singular
ordinary differential equations of the form
$$
\eps^\sigma x^{r+1} \frac{d\by}{dx}= f(x,\eps, \by ),
$$
where $f(0,0,\bo) =\bo$ and $\frac{\partial f}{\partial \by}(0,0,\bo)$ is invertible.

{\red Returning to the problem of classification of holomorphic foliations,
observe that  resonant ones are some of the models that appear as a final step in the reduction
of singularities of holomorphic foliations in dimension two, in a situation in which the set of
separatrices and the divisor have normal crossings.
Some examples of vector fields with nilpotent linear part leading to foliations without normal crossings
have been previously studied. For instance, F.~Loray \cite{Loray} 
considered 
generic perturbations of
the system $\dot x  =2y,\ \dot y  =3x^2$ with hamiltonian $h=y^2-x^3$.
He obtained a formal normal form 
involving formal power series in $h$.
In \cite{CDS}, it was shown that the normal form always contains summable series in $h$. 
In this context an extension of monomial summability to summability with respect to a polynomial
will be useful. It is conjectured that there is a normalizing transformation that is
summable with respect to $y^2-x^3$.}

We choose to 
study asymptotic expansions with respect to a germ of an analytic function as it is no more
difficult than asymptotic expansions with respect to a polynomial. 
Such a concept
needs to behave properly with respect to blow-ups in both
directions: properties of the asymptotic expansions must be preserved when you blow-up, and properties
of the blow-up must give properties of the original asymptotic expansion.%
\footnote{Let us remark here that in \cite{tesisSergio,CarrilloMozo}, the authors
study the behaviour of monomial asymptotics under blow-ups in
order to establish certain Tauberian theorems useful to study properties of Pfaffian systems.}

The purpose of the present work is to introduce such concepts of asymptotic expansions and
summability with respect to a germ of  an analytic function in an arbitrary number of variables.
We use blow-ups of centers of codimension two and ramifications, in the style of
Rolin, Speissegger and Wilkie \cite{RSW}, who, at their turn, follow the ideas of Bierstone and Milman \cite{BM}. The reader should note that we haven't used the full power of desingularization techniques -- this
undoubtedly deserves a further study. Throughout this work, different techniques are used, among them,
induction on the number of steps needed to monomialize the analytic germ, a
Generalized Weierstrass Division Theorem, and Ramis-Sibuya Theorems.}

The structure of this work is as follows: In Section 2, we present some tools needed in the present work.
Among them are a normalization result adapted from \cite{RSW} and a Generalized Weierstrass Division
Theorem from \cite{AHV}. Both are proved here in a simplified version adapted to our needs. They are
used to deal with bounded quotients of germs of analytic functions (i.e., for the elimination of indeterminacies,
Lemma \reff{lemadivision}), and to rewrite a formal power series in terms of powers of 
a germ of an analytic function 
(Corollary \reff{coro-P-germ}). In Section 3,  we recall some properties of classical  and monomial
asymptotic expansions and present the latter in a more general setting than in \cite{CDMS}.
In particular, monomial asymptotic expansions are given for an arbitrary number of variables. 
Certain operators are defined, that transform, both in the formal
and in the analytic setting, monomial asymptotic expansions into asymptotic 
expansion in one variable with coefficients in a Banach space -- the inverse transformations are
given by simple substitution operators. 

Asymptotic expansions with respect to an
arbitrary germ of an analytic function, which are the main object of this work, are defined in
Section 4, and their main properties are established. In order to study them, we construct new operators
analogous to the previous section
that
transform asymptotic expansions with respect to a germ into asymptotic expansions
with respect to one variable 
with coefficients in some Banach space (see Theorem \reff{st-form}).
In the analytic setting, these operators are constructed in Theorem 
\reff{T-sector-germ}; this is one of the main results of this section. Its rather technical 
proof is given in Section 5. It uses induction with respect to the number of steps needed to monomialize the
analytic germ.

In Section 6, the behaviour with respect to blow-ups with 
centers of codimension two is established for the new concept of
asymptotic expansion with respect to a germ of an analytic function. 
While a function $f$ having an 
asymptotic expansion with respect to a germ $P$ clearly also has a corresponding asymptotic expansion 
after blow-up, the converse is more interesting and it is proved in  Theorem \reff{asymp-blu+}. 
For $P$-asymptotic series, i.e.\ series appearing as asymptotic expansions with respect to a germ $P$
for certain functions, we prove an analogous result (Theorem \reff{aserie-blu+}).

In Section 7, 
Gevrey asymptotic expansions
with respect to an analytic germ are defined  and investigated, and subsequently, summability with respect 
to such a germ. 
 We also study the behavior of these concepts with respect to blow-ups.

{Finally, in Section 8, we present two examples of singular differential equations for which the formal solutions 
are summable with respect to a polynomial: one ordinary and one partial differential equation.
The examples suggest that asymptotic expansions with respect to an analytic germ
will play an important in these theories and in the theory of foliations. They also illustrate 
the application of our results, in particular concerning blow-ups, in  proving summability.}

%

\subsection*{Acknowledgments}
The first author wishes to thank the University of Strasbourg and the second author wishes to thank the University of Valladolid  
for the hospitality during their visits while preparing this article.

\section{Preliminaries}
\subsection{Notation} \labl{notation}
The following notation will be used throughout this work.
We fix an integer $d\geq2$.
$$
D(\bo; (\lista rd)):=\{ \bx=(\lista xd)\in \CC^d;\ \norm{x_j}<r_j,\
\mbox{ for }j=1,\ldots,d\}
$$
is a polydisk around the origin. As an abbreviation, $D(\bo; \rho):=
D(\bo; (\rho,\ldots, \rho))$. If $U$ is an open set, in $\CC$ or in
$\CC^d$, $\OO (U)=\OO(U;\CC)$ is the set of complex valued
functions holomorphic on $U$,
and $\OO_b (U)$ the subset of bounded holomorphic functions.  Analogously, if $E$ is a Banach space, $\OO (U;E)$ will denote the set of $E$-valued holomorphic functions on $U$, and $\OO_b (U;E)$ the set of bounded $E$-valued holomorphic functions.
$\OO=\con=\CC\{\lista xd\}$ is the ring of germs of analytic functions at the
origin (convergent power series), and $\hat{\OO}=\formal$ the ring
of formal power series. We denote by $\mathfrak{m}$ the maximal
ideal of both (local) rings. There are natural inclusions
$$
\OO_b (D(\bo; \rho))\subseteq \OO( D(\bo; \rho)) \subseteq  \con
\subseteq  \formal,
$$
as well as the relation $\OO=\bigcup_{\rho>0}\OO_b(D(\bo;\rho))$
that we will not detail. For an element $f$ of one of those
rings, $J(f)$ will be its power series at the origin, and
$J_m(f)$ its $m$-jet, i.e., the
polynomial of degree at most $m$
obtained from $J(f)$ deleting the terms of degree greater than $m$.

We use $\PC=\CC\cup\{\infty\}$ with the usual topology.
For $\bx=(\lista xd)\in\CC^d$ put $\bx'=(x_2,\ldots ,x_d)$ and $\bx''=(x_3,\ldots ,x_d)$.

\newcommand{\blu}{blow-up}
\newcommand{\blus}{blow-ups}
\subsection{Normalisation}\labl{normal}
Our approach uses \blus{} at several essential points and we would like to recall
the well known result we use. Our presentation follows that of \cite{RSW} (who base their work
on \cite{BM}); the results
themselves are classical.

Following \cite{RSW}, we will only use \blus{} of codimension two smooth varieties
and so we only recall this case. Assume that the center of the \blu{}
is $x_1=x_2=0$ and define
$$M=\{([u_1,u_2],\bt)\in\PC\times\CC^d;u_1t_2=u_2t_1\}$$
the \blu{} variety and
$$\begin{array}{rcl}b:M&\to&\CC^d\\
    ([u_1,u_2],\bt)&\mapsto&\bt
\end{array}  $$
the \blu{} map (shortly \blu).
$M$ is covered by affine charts, each
one analytically equivalent to $\CC^d$. In fact, identifying
$\PC\cong\bar\CC=\CC\cup\{\infty\}$ as $[1,\xi]\equiv\xi\in\CC$,
$[0,1]\equiv\infty$, we use the charts centered in $\xi\in\CC$,
$$\phi_\xi:\left\{\begin{array}{rcl}M_\xi&\to&\CC^d\\
  ([u_1,u_2],\bt)&\mapsto&(\frac{u_2}{u_1}-\xi,t_1,\bt''),\end{array}\right.$$
where $M_\xi=M_0=\{([u_1,u_2],\bt)\in M;u_1\neq0\}$ and the chart at $\infty$
$$\phi_\infty:\left\{\begin{array}{rcl}M_\infty&\to&\CC^d\\
  ([u_1,u_2],\bt)&\mapsto&(\frac{u_1}{u_2},t_2,\bt''),\end{array}\right.$$
where $M_\infty=\{([u_1,u_2],\bt)\in M;u_2\neq0\}$.
Then the map $b$ is described by
$$b_\xi=b\circ\phi_\xi^{-1}:\left\{\begin{array}{rcl}\CC^d&\rightarrow&\CC^d\\
             \bv&\mapsto&(v_2,(\xi+v_1)v_2,\bv'')\end{array}\right.$$
in the chart at $\xi$ and by
$$b_\infty=b\circ\phi_\infty^{-1}:\left\{\begin{array}{rcl}\CC^d&\rightarrow&\CC^d\\
             \bv&\mapsto&(v_1v_2,v_2,\bv'').\end{array}\right.$$
in the chart at $\infty$. The reason for our somewhat unusual choice of
$\phi_\xi$ and hence $b_\xi$ will become clear in section 
\reff{blu}; otherwise the choice of charts is not important.
For $k\in\NN$, $k\geq 2$, we introduce the ramification
$$r_k:\left\{\begin{array}{rcl}\CC^d&\rightarrow&\CC^d\\
             \bt&\mapsto&(t_1^k,\bt').\end{array}\right.$$
We say that $f\in\OO$ has {\em normal crossings} (at the origin)
if there is a diffeomorphism\footnote{
i.e.\ its Jacobian at the origin is an invertible matrix.}
$D\in\mbox{Diff}(\CC^d,\bo)$ such that
$$(f\circ D)(\bx)=x_1^{\ell_1}\cdot \ldots\cdot x_d^{\ell_d}\cdot U(\bx)$$
with non-negative integers $\ell_j$ and a unit $U\in\OO$,
i.e.\ a germ satisfying $U(\bo)\neq0$.

It has been shown in \cite{RSW}
\begin{lema}\labl{reduction}
There exists a function $h:\OO\setminus\{0\}\to\NN$ with the following properties:
\be\item If $h(f)=0$ then $f$ has normal crossings.
\item If $h(f)>0$ then there exists a diffeomorphism  $D\in\mbox{Diff}(\CC^d,\bo)$
such that either for all $\xi\in\PC$
$$h(f\circ D\circ b_\xi)<h(f)$$
or there exists $k\in\NN$, $k\geq2$ such that $h(f\circ D\circ r_k)<h(f)$.
\ee\end{lema}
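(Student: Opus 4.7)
The plan is to follow the approach of \cite{RSW}, constructing $h$ as a complexity invariant by induction on the dimension $d$ and on the order $\nu(f):=\text{ord}_\bo f$. For $d=1$, every nonzero germ factors as $x^n U(x)$ with $U$ a unit, so $f$ already has normal crossings; set $h\equiv 0$. For $d\geq 2$, I would define $h(f)$ as a lexicographic pair $(\nu(f),\tilde h(f))$ embedded in $\NN$ via any order-preserving injection $\NN^2\hookrightarrow\NN$, where $\tilde h(f)$ is an auxiliary invariant measuring the failure of the Weierstrass coefficients of $f$ to themselves be normal-crossings germs in $d-1$ variables.

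Concretely, after a generic linear change of coordinates we may assume $f$ is $x_1$-regular of order $n=\nu(f)$, and Weierstrass preparation yields $f=U\cdot P$ with
$$P(x_1,\bx')=x_1^n+c_{n-1}(\bx')x_1^{n-1}+\cdots+c_0(\bx'),\qquad c_i\in\mathfrak{m}^{n-i}.$$
The natural choice is $\tilde h(f)=\max_i h(c_i)$, well defined by the inductive hypothesis. One then verifies: if $\nu(f)=0$ then $f$ is a unit, hence trivially normal crossings; if $\nu(f)\geq 1$ and $\tilde h(f)=0$, then each $c_i$ has normal crossings in $d-1$ variables, and a short argument using the factorization of $P$ over the ring where the $c_i$ are monomialized shows that $f$ itself has normal crossings. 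This establishes statement (1).

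For statement (2), I would take $D$ to be the linear change of coordinates placing $f$ in Weierstrass position. The chart formulas give
$$(f\circ D\circ b_\xi)(\bv)=(f\circ D)(v_2,(\xi+v_1)v_2,\bv'')=v_2^{\,n}\cdot Q_\xi(\bv),$$
with an analogous expression for $b_\infty$. The factor $v_2^{\,n}$ peels off a monomial contribution, and one must analyze $Q_\xi$ chart by chart. For generic $\xi$ the order at $\bo$ drops, reducing $\nu$; for the exceptional values of $\xi$ corresponding to the roots of the initial form of $f\circ D$ in the $(x_1,x_2)$-plane, the order may be preserved but the inductive invariant $\tilde h$ of the new Weierstrass coefficients strictly decreases. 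In the fully degenerate situation where this initial form is a perfect power, no blow-up can separate the branches; instead one resorts to the ramification $r_k$ with $k$ equal to the multiplicity of that power, after which the new initial form splits into distinct linear factors and $\tilde h$ drops.

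The main obstacle is verifying that $h$ strictly decreases in \emph{every} chart $\xi\in\PC$ (including $\xi=\infty$) simultaneously, since in the worst cases only $\tilde h$, not $\nu$, decreases. This requires a careful analysis of how the prepared coefficients transform under $b_\xi$ and $r_k$, and the choice of $D$ must be sufficiently generic to control the initial form of $f\circ D$ uniformly in $\xi$. Since this is precisely the content of the desingularization algorithm detailed in \cite{RSW} (following \cite{BM}), the plan is to invoke their construction rather than to reprove this technical heart of the argument.
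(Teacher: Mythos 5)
There is a genuine gap, and it sits exactly where you wave your hands. Your verification of statement (1) rests on the claim that if $\nu(f)\geq 1$ and every Weierstrass coefficient $c_i$ has normal crossings in $d-1$ variables, then ``a short argument'' shows $f$ itself has normal crossings. This is false: for $f=x_d^2-x_1x_2$ (with $d\geq 3$) the coefficients are $c_1=0$ and $c_0=-x_1x_2$, both already monomials, yet $f$ is an irreducible cone singularity and is not of the form $\bx^{\boldsymbol\ell}U(\bx)$ after any local diffeomorphism. Monomializing the coefficients \emph{individually} is simply not enough, and consequently your auxiliary invariant $\tilde h(f)=\max_i h(c_i)$ cannot serve as the second entry of a lexicographic complexity: in the problematic charts it need not decrease, and when it reaches $0$ you are not done. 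The device the paper uses to repair exactly this point is to apply the inductive hypothesis in $d-1$ variables not to the $c_i$ separately but to the single product
$$\prod_{k\in\MM}c_k(\bx')\ \prod_{i<j,\ i,j\in\MM}\bigl(c_i(\bx')^{n!/i}-c_j(\bx')^{n!/j}\bigr),$$
so that after monomialization the weighted vanishing orders $\frac{n!}{j}\gamma_j$ form a totally ordered set; this is what allows one to pick a dominant coefficient, choose the codimension-two center $x_k=x_d=0$, divide the transform by $x_k^n$ in every chart, and (after the Tschirnhaus substitution killing the $x_d^{n-1}$ coefficient, which you also omit and which is needed to rule out the transform being $x_d^n$ up to a unit) conclude that the order strictly drops after finitely many blow-ups, simultaneously for all $\xi\in\PC$. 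Your treatment of ramification is likewise off: in the paper it enters through the $(d-1)$-variable induction and to make each $\gamma_j$ divisible by $j$, not as a substitute for blow-up when the initial form is a perfect power (a single ramification does not resolve, e.g., $x_2^2-x_1^3$).

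Finally, deferring ``the technical heart'' to \cite{RSW} is not compatible with the way you set the proof up: once you have committed to a specific invariant $h(f)=(\nu(f),\tilde h(f))$, the burden is to prove that \emph{this} invariant decreases, and the construction in \cite{RSW} (or the paper's sketch) uses a different bookkeeping, so the citation does not close your argument. Either reproduce the product-of-differences argument above, or define $h$ abstractly (e.g.\ as the minimal number of blow-up/ramification steps needed to reach normal crossings, which the sketched algorithm shows is finite) and prove properties (1) and (2) directly for that definition.
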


Observe that this Lemma can be applied simultaneously to a finite number of germs
$f_1,f_2,\ldots, f_r$. It suffices to consider their product $f=f_1\cdot f_2\cdot \ldots \cdot f_r$.
Moreover, it is elementary (see \cite{RSW}) that $f_1\cdot f_2\cdot \ldots \cdot  f_r$ has normal crossings
if and only if every $f_i$ has normal crossings.

In a more general setting (quasi-analytic classes), this result is proved in \cite{RSW}, following some general ideas adapted from \cite{BM}. In the analytic situation, the result is much easier. We sketch a proof for completeness, omitting most technical details.

\begin{proof}

Use induction on $d$. Every $f\in \CC \{ x\}$ has normal crossings, so  $h(f)=0$ and the result is trivial if $d=1$. Assume now that $d>1$, and let $n$ denote the order of $f$.
By a linear change of variables,
$f$ can be made $x_d$-regular of order $n$, that is $f(0,\ldots,0,x_d)$ has valuation $n$.
Indeed, as $f_n(\a_1,\ldots,\a_d)\neq0$ (where $f_n$ is the homogeneous component of degree $n$ of $f$) for some sufficiently small vector $\bal$ with $\a_d\neq0$, the function
$\tilde f(t_1,\ldots,t_d)=f(t_1+\a_1 t_d,\ldots ,t_{d-1}+\a_{d-1} t_d, \a_dt_d)$ is regular.

If $f$ is $x_d$-regular, the Weierstrass Preparation Theorem  allows us to
write $f (\bx )=(x_d^n+c_1(\bx') x_d^{n-1}+\cdots + c_n (\bx '))U(\bx)$ where $U\in\CC\{\bx\}$,
$U(\bo)\neq0$ and $c_1,\ldots,c_n$ vanish at the origin. Hence it is sufficient to continue with the
polynomial factor. Another change of variable ($t_d=x_d+c_1(\bx')/n$, $\bt'=\bx'$)
eliminates $c_1 (\bx ')$, so we consider only the case $c_1 (\bx ')\equiv 0$.
Let $\MM$ denote the set of $j\in\{2,\ldots,n\}$ such that $c_j\neq 0$ which we assume to be non-empty.

Now apply  the induction hypothesis to the product
$$\dis\prod_{k\in\MM} c_k (\bx ') 
\prod_{i,j\in\MM,i<j} (c_i (\bx')^{n!/i}-c_j (\bx ')^{n!/j}).$$

By a sequence of right compositions with diffeomorphisms,
blow-ups and ramifications, we arrive at a situation where the product has normal crossings.
Thus every $c_j (\bx ')$ and every $c_k (\bx')^{d!/i}-c_j (\bx ')^{d!/j}$ in the product
has normal crossings.  It is easy to deduce
(see Lemma 4.7 in \cite{BM}) that
if $c_j (\bx ')=\bx'^{\gamma_j} \cdot U_j (\bx ')$ with $U_j (\bo )\neq 0$,
then the set $\{\frac{n!}j\gamma_j\mid j\in\MM\}$ is totally ordered with
respect to the relation $\preceq$
defined by $\mathbf{\alpha}\preceq \mathbf{\beta}$ if $\alpha_i\leq \beta_i$ for all $i$.
A ramification in the first $d-1$ variables allows us to suppose that, moreover,
$\gamma_j$ is divisible by $j$ for all $j\in\MM$ and thus the subset
$\{\frac{1}j\gamma_j\mid j\in\MM\}$ of $\NN^{d-1}$  is totally ordered.
Let $l\in\MM$ be such $\frac1l\gamma_l \leq \frac1j\gamma_j$ for every $j\in\MM$.


Denote $\gamma_j= (\gamma_{j1},\ldots , \gamma_{j,d-1} )$, $j\in\MM$.
For the above $l$, let $k$ be the largest index such that $\gamma_{lk}\neq 0$
(and consequently,  $\gamma_{jk}/j\geq \gamma_{lk}/l\geq 1$ for every $j\in\MM$ ).

Now, blow-up with center $x_k=x_d=0$.  For $\xi=\infty$, this means replacing
$x_k$ by $x_kx_d$. Then for $j\in\MM$,
the term $\bx'^{\gamma_{j}} \cdot x_d^{n-j}$ is transformed into $\bx'^{\gamma_j}\cdot
  x_d^{\gamma_{jk}+n-j}$
which can be divided by $x_d^n$, because $\gamma_{jk}\geq j$. Hence $f$ is transformed
into $\tilde f(\bx)=x_d^{n}\left(1+\sum_{j\in\MM} \bx'^{\gamma_j}q_j(\bx)x_d^{\gamma_{jk}-j}\right)$
with some $q_j$ analytic at the origin. Clearly, $\tilde f$ has normal crossings.

For $\xi\in\CC$, this blow-up means to
replace $x_d$ by $x_k (x_d+\xi )$. Then for $j\in\MM$,
the term $\bx'^{\gamma_{j}} \cdot x_d^{n-j}$ is transformed into
$$
\left( \prod_{i\neq k,d} x_i^{\gamma_{ji}} \right) \cdot x_k^{\gamma_{jk}+n-j} (x_d+\xi )^{n-j},
$$
which can be divided by $x_k^n$, because $\gamma_{jk}\geq j$.
Thus $f(\bx)$ is transformed into
$$\tilde f(\bx)=x_k^n\left((\xi+x_d)^n+\tilde c_2(\bx')(\xi+x_d)^{n-2}+\ldots +\tilde c_n(\bx')\right),$$
where
$\tilde c_j(\bx')=x_k^{-j}c_j(\bx')$ is analytic at the origin. We continue then with the second
factor $\bar f(\bx)=x_k^{-n}\tilde f(\bx)$.

If all $\tilde c_j(\bo)$ vanish, we have normal crossings for $\xi\neq0$, but have to
blow-up another time if $\xi=0$ (we do not use ramifications as $\gamma_j$
is already divisible by $j$ for $j\in\MM$). After a finite number of such blow-ups, at least one of the
$\tilde c_j(\bo)\neq 0$. The second
factor $\bar f(\bx)=x_k^{-n}\tilde f(\bx)$
is then of lower order than $f$ for all $\xi$ and also $x_d$-regular. Indeed,
$\bar f(0,\ldots ,0,x_d)=(\xi+x_d)^n+\tilde c_2(\bo)(\xi+x_d)^{n-2}+\ldots+\tilde c_n(\bo)$
might vanish at $x_d=0$, but because it has no term with $(\xi+x_d)^{n-1}$,
it cannot be equal to $x_d^n$ and hence
cannot vanish of order $n$. As $\bar f(0,\ldots ,0,x_d)$ contains some term $x_d^m$, $m<n$,
this term is also in $\bar f$ and hence it is at most of order $m$.
\end{proof}

A first application of this monomialisation Lemma is the following statement.
It may be well-known to specialists, but lacking precise references we include a proof
for the sake of completeness.
\begin{lema} \labl{lemadivision}
Let $f,g\in\OO$, $f(\bo)=g(\bo)=0$, $g\neq0$ be germs of holomorphic functions.
Assume that both are defined on $D(\bo;r)$ and that
$\norm{\frac{f(\bx)}{g(\bx)}}$ is bounded on the set $A_{\theta,r}$
of all $\bx\in D(\bo;r)$ such that $g(\bx)\neq 0$ and $\arg g(\bx)=\theta$.
Then $g$ divides $f$, i.e.\ there exists $q\in\OO$ such that $f=q\cdot g$.
\end{lema}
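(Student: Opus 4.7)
The plan is to induct on $h(g)$, where $h$ is the invariant from Lemma \reff{reduction}. For the base case $h(g)=0$, a diffeomorphism puts $g$ in the form $g(\bx)=\bx^{\bm}U(\bx)$ with $U(\bo)\neq 0$; since $U$ is a unit near $\bo$, divisibility by $g$ is the same as divisibility by $\bx^{\bm}$, and the hypothesis transfers to: $|f(\bx)/\bx^{\bm}|$ is bounded on $A_{\theta,r}$. I would then show $x_i^{m_i}\mid f$ for each $i$ separately, and conclude $\bx^{\bm}\mid f$ by coprimality. For $x_1^{m_1}\mid f$ (assuming $m_1>0$), fix $\bx'=(x_2,\ldots,x_d)$ small with each $x_j\neq 0$: the equation $\arg g(x_1,\bx')=\theta$ cuts out a real-analytic curve $\gamma_{\bx'}$ in the punctured $x_1$-disk that accumulates at $0$, and on it $|f(x_1,\bx')/x_1^{m_1}|$ remains bounded by a constant depending only on $\bx'$. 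A short one-variable argument (if $f(\cdot,\bx')$ has order $k<m_1$, then along any curve approaching $0$ the quotient grows like $|x_1|^{k-m_1}$) forces $f(\cdot,\bx')$ to vanish to order $\geq m_1$ for each such $\bx'$. Writing $f(\bx)=\sum_j f_j(\bx')x_1^j$, the coefficients $f_j$ with $j<m_1$ then vanish on a dense subset of a polydisk, hence identically by analyticity; so $x_1^{m_1}\mid f$.

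For the inductive step, Lemma \reff{reduction} produces a diffeomorphism $D$ such that either (a) $h(g\circ D\circ b_\xi)<h(g)$ for every $\xi\in\PC$, or (b) $h(g\circ D\circ r_k)<h(g)$ for some $k\geq 2$. Composition with $D$ is harmless: the boundedness hypothesis transfers after shrinking, and $g\mid f$ iff $g\circ D\mid f\circ D$. In case (b), the induction yields $f\circ D\circ r_k=Q\cdot (g\circ D\circ r_k)$ with $Q\in\OO$; since both sides depend only on $(x_1^k,\bx')$, so does $Q$, i.e.\ $Q(\bx)=\tilde Q(x_1^k,\bx')$ with $\tilde Q\in\OO$, and therefore $f\circ D=\tilde Q\cdot(g\circ D)$. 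In case (a), one checks that the hypothesis transfers to each chart $b_\xi$ on a sufficiently small polydisk (using continuity of $b_\xi$ at $\bo$ and $b_\xi(\bo)=\bo$), and the induction supplies a holomorphic germ $q_\xi=(f\circ D\circ b_\xi)/(g\circ D\circ b_\xi)$ at the origin of each chart.

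To finish case (a), the various $q_\xi$ must be glued: on overlaps they agree with the meromorphic pullback $((f\circ D)/(g\circ D))\circ b$, so together they define a holomorphic function on a neighborhood $U$ of the exceptional fiber $b^{-1}(\bo)\cong\PC$ in $M$. By properness of $b$, there is a neighborhood $W$ of $\bo$ in $\CC^d$ with $b^{-1}(\overline W)\subset U$ compact, so $((f\circ D)/(g\circ D))\circ b$ is bounded on $b^{-1}(W)$. Since $b$ is a biholomorphism off the codimension-two center $\{x_1=x_2=0\}$, the meromorphic germ $(f\circ D)/(g\circ D)$ is bounded on $W\setminus\{x_1=x_2=0\}$. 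Its polar locus is a codimension-one analytic set that cannot be contained in a codimension-two set, hence must be empty; therefore $(f\circ D)/(g\circ D)$ extends to a holomorphic germ at $\bo$, and composing with $D^{-1}$ gives $g\mid f$ in $\OO$.

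The main obstacle I anticipate is case (a): one has to glue the holomorphic quotients on the various blow-up charts into a single holomorphic function near the whole compact exceptional fiber, transport the resulting boundedness back to $\CC^d$ through the proper map $b$, and then exploit the codimension mismatch between the potential polar locus (codimension one) and the center of the blow-up (codimension two) to rule out any poles. The remaining verifications (transfer of the boundedness hypothesis under $D$, $b_\xi$, $r_k$) are routine bookkeeping after shrinking domains.
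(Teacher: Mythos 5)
Your proof is correct, and its skeleton is the same as the paper's: reduce via the monomialization Lemma \ref{reduction}, settle a normal-crossings base case by a one-variable argument, handle ramification by rotation-invariance of the quotient, and handle blow-ups by gluing the chart-wise quotients along the exceptional fiber and descending. There are two genuine (if modest) deviations worth noting. First, you induct on $h(g)$ rather than on $h(f\cdot g)$ as the paper does; this forces your base case to treat an arbitrary $f$ against a normal-crossings $g$, which you do correctly by producing points with $\arg g(x_1,\bx')=\theta$ on every small circle (the winding number of $x_1^{m_1}\cdot(\mbox{unit})$ is $m_1\geq1$), deducing order $\geq m_1$ of $f(\cdot,\bx')$ for each fixed $\bx'$ with nonzero entries, and killing the low coefficients $f_j(\bx')$ by analyticity and density, then invoking pairwise coprimality of the $x_i^{m_i}$ in the UFD $\OO$. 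The paper instead monomializes the product, so in its base case both $f$ and $g$ are monomials times units and the one-variable step is immediate; your version trades that simplicity for not having to carry $f$ through the desingularization. Second, your descent from the blow-up differs from the paper's: you use properness of $b$ to get boundedness of $(f\circ D)/(g\circ D)$ on a punctured neighborhood of the center, then Riemann extension across $\{g\circ D=0\}$ and extension across the codimension-two center (equivalently, a polar set cannot have codimension two), whereas the paper applies Liouville's theorem on the compact fibers $\PC$ and then Hartogs to the descended function; these rest on the same standard extension theorems and are interchangeable here. The transfer-of-hypothesis checks under $D$, $b_\xi$ and $r_k$ that you defer are indeed routine and are handled the same way in the paper.
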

\begin{proof}
Note first that this result is much easier in dimension one.
Indeed, consider  functions $f,g$ holomorphic in some
neighbourhood of $0\in \CC$ such that $f(0)=g(0)=0$, $g\neq0$. Then
the quotient $f/g$ has at most a pole at 0, if we restrict
ourselves to a small enough neighbourhood of the origin. If there exist a
sequence $\{ t_n\}_{n=1}^\infty $ such that $\lim_{n\rightarrow
\infty }t_n=0$ for which $\norm{f(t_n)/g(t_n)}$ is bounded,
the origin cannot be a pole, so it is a removable
singularity: there exists a germ $q\in\OO$ such
that $f=g\cdot q$.

In arbitrary dimension, we proceed by induction on $h(f\cdot g)$.
At some points it might be necessary to reduce $r$ but we do not
always mention this.
If $h(f\cdot g)=0$, $f$ and $g$ have normal crossings and we can assume that
$$f(\bx)=x_1^{m_1}\cdot\ldots\cdot x_d^{m_d}U_1(\bx);\
    g(\bx)=x_1^{\ell_1}\cdot\ldots\cdot x_d^{\ell_d}U_2(\bx)$$
with  germs $U_1,U_2$, $U_1(\bo)\cdot U_2(\bo)\neq0$. If all $\ell_j$ vanish, there is
nothing to show. Otherwise, we can assume that $\ell_1>0$.

For fixed $x_2,\ldots,x_d\neq0$ and if $r$ is sufficiently small,
we can apply the one-dimensional result with respect to the variable
$x_1$ and appropriate $\theta_1$.
This implies that $m_1\geq \ell_1$. Similarly, we obtain that
$m_j\geq\ell_j$ for all $j$ and the statement follows.

Assume now that the statement is true for all couples $(f,g)$ with
$h(f\cdot g)\leq m$ for some $m\in\NN$.
Consider some couple $(f,g)$ satisfying the assumptions of the lemma
and $h(f\cdot g)=m+1$. As the statement is stable with respect to right composition by
diffeomorphisms, we can assume that $h((f\circ b_\xi)\cdot(g\circ b_\xi))\leq m$
for all $\xi\in\PC$ or that there is a $k\in\NN$ with $h((f\circ r_k)\cdot(g\circ r_k))\leq m$.
In the latter case, the assumption of the lemma implies that $f(r_k(\bt))/g(r_k(\bt))$
is bounded on the set of all sufficiently small $\bt\in\CC^d$
such that $g(r_k(\bt))\neq0$ and $\arg(g(r_k(\bt)))=\theta$.
Hence there exists $q\in\OO$ such that
\begin{equation}\labl{qrk} f\circ r_k = (g \circ r_k)\cdot q.
\end{equation}
As $f\circ r_k$ and $g\circ r_k$ are invariant under the rotation
$\bt\mapsto (e^{2\pi i/k}t_1,\bt')$, so is $q$ and hence there is a germ
$Q\in\OO$ such that $q=Q\circ r_k$. We obtain that $f=g\cdot Q$ and the statement
follows.

Consider now the first case that $h((f\circ b_\xi)\cdot(g\circ b_\xi))\leq m$ for all $\xi\in\PC$.
The assumption of the lemma implies that $f(b_\xi(\bt))/g(b_\xi(\bt))$
is bounded on the set of all sufficiently small $\bt\in\CC^d$
such that $g(b_\xi(\bt))\neq0$ and $\arg(g(b_\xi(\bt)))=\theta$.
Hence for every $\xi\in\PC$ there exists $q_\xi\in\OO$ such that
\begin{equation}\labl{qxi} f\circ b_\xi = (g \circ b_\xi)\cdot q_\xi.
\end{equation}

Thus for every $\xi\in\PC$, there is an open neighborhood $W_\xi$ of $\bo$,
where $f\circ b_\xi$, $g\circ b_\xi$ and $q_\xi$ are defined.
With these we consider the open neighborhoods $U_\xi=\phi_\xi^{-1}(W_\xi)$ of
$(\xi,\bo)\in M$ (see Subsection \reff{normal} for notation) and
the holomorphic $Q_\xi:U_\xi\to\CC$ defined by $Q_\xi=q_\xi\circ \phi_\xi$.
By definition, for $\xi,\zeta\in\PC$, we have
$Q_\xi (p)= Q_\zeta (p)$ for $p\in U_\xi\cap U_\zeta$ with $g(b(p))\neq 0$.
As $g$ is not identically zero, this means that $Q_\xi$ and $Q_\zeta$ coincide on an open and dense subset
of $U_\xi\cap U_\zeta$. Therefore $Q_\xi=Q_\zeta$ on this intersection and thus all
$Q_\xi$, $\xi\in\PC$ define a holomorphic function $Q:U\rightarrow \CC$,
where $U$ is some neighbourhood of $\PC\times \{ \bo \}\subseteq M$.
$\PC$ being compact, $Q$ is constant over it, so there exists a holomorphic $q:V\rightarrow \CC$,
$V$ a neighbourhood of $\bo\in \CC^d$, such that $q\circ b= Q$ (apply Hartogs' Theorem).
By construction, we have $f\circ b_\xi = (g\circ b_\xi )\cdot (q\circ b_\xi)$ for all $\xi\in\PC$
and thus we obtain that  $f= g\cdot q$, as desired.
\end{proof}

Lemma \reff{lemadivision} is basic for our article, but especially for the study of
Gevrey asymptotics with respect to an analytic function, we need more quantitative information
about division.
\begin{lema} \labl{lemadivision2}
Let $D',D$ denote two open connected subsets of $\CC^d$ such that the closure
of $D'$ is contained in $D$ and compact. Suppose that $P$ is holomorphic in
$D$ and not identically vanishing. Let $\theta\in\RR$ and
$A_{\theta,D}=\{\bx\in D\mid P(\bx)\neq0,\,\arg(P(\bx))=\theta\}$. For $n\in\NN$ let
$M_n$ denote the set of all functions $f\in\OO_b(D)$ such that
$f/P^n$ is bounded on $A_{\theta,D}.$

Then $f/P^n$ can be analytically continued to $D'$ if $f\in M_n$. Furthermore,
there is a constant $K>0$ such that the mappings $q_n:M_n\to\OO_b(D')$
associating to $f\in M_n$ the analytic continuation of $f/P^n$ to $D'$
have norms $\leq K^n$ (provided $M_n$ and $\OO_b(D')$ are equipped
with the maximum norm).\end{lema}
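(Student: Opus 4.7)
My plan is to prove the two assertions --- analytic continuation and the norm bound --- separately. The first will follow from a local application of Lemma \reff{lemadivision}. For the quantitative bound I will cover $\overline{D'}$ by finitely many polydisks contained in $D$ whose distinguished boundaries avoid $\{P=0\}$, and apply the Cauchy integral formula on each; the key point is that a single cover works for all $n$, yielding precisely a $K^n$ bound (whereas naive iteration of the $n=1$ case on nested domains would deliver something like $n^n$).

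For the extension, I would argue pointwise at each $\bx_0 \in D$. If $P(\bx_0) \neq 0$ the quotient $f/P^n$ is trivially analytic near $\bx_0$. If $P(\bx_0) = 0$, openness of the non-constant holomorphic map $P$ makes $A_{\theta,D}$ accumulate at $\bx_0$, so boundedness of $f/P^n$ on $A_{\theta,D}$ forces $f(\bx_0) = 0$ by continuity. I then apply Lemma \reff{lemadivision} iteratively to the pair $(f,P)$: its hypotheses are satisfied because $|f/P| = |P|^{n-1}\,|f/P^n|$ is bounded on $A_{\theta,D}$ near $\bx_0$, yielding $P\mid f$ as germs; the same argument applied to $(f/P,P)$ gives $P\mid f/P$; and so on. After $n$ iterations, $P^n\mid f$ as germs at $\bx_0$, so $q_n(f) = f/P^n$ is analytic near $\bx_0$. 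Gluing yields $q_n(f) \in \OO(D)$, hence in particular on $D'$.

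For the quantitative bound, I propose to cover $\overline{D'}$ by finitely many polydisks $\Delta_i = \Delta(\by_i,\rho_i)$, $i=1,\dots,N$, with $\overline{\Delta_i} \subset D$, each such that $|P| \geq \delta_i > 0$ on the distinguished boundary $T_i$. Such a cover exists: for $\bx \in \overline{D'}$ with $P(\bx) \neq 0$ a small polydisk centered at $\bx$ works (take $\rho$ small enough that $|P|\geq |P(\bx)|/2$ on the closure); for $\bx$ with $P(\bx) = 0$, the identity principle (on the connected $D$, using $P \not\equiv 0$) furnishes $\by$ arbitrarily close to $\bx$ with $P(\by) \neq 0$, and a small polydisk centered at $\by$ still containing $\bx$ works. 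Compactness of $\overline{D'}$ extracts a finite subcover. Setting $\delta = \min_i \delta_i > 0$, for any $\bx \in \overline{D'}$ I pick $i$ with $\bx \in \Delta_i$ and apply the Cauchy integral formula (equivalently, the maximum modulus principle on polydisks) on $\Delta_i$ to obtain
\[
|q_n(f)(\bx)| \;\leq\; \sup_{T_i} |q_n(f)| \;=\; \sup_{T_i} |f|/|P|^n \;\leq\; \|f\|_D\,\delta^{-n},
\]
establishing the bound with $K = 1/\delta$. The main delicacy --- and the principal obstacle --- is the uniform existence of $\delta > 0$; the perturbation trick $\bx \rightsquigarrow \by$ combined with compactness is what supplies it.
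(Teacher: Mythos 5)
Your first assertion (the analytic continuation) is handled correctly and in essentially the same way as the paper: apply Lemma \reff{lemadivision} near each point, iterating to get $P^n\mid f$ as germs, with the vanishing of the successive quotients at the point and the accumulation of $A_{\theta,D}$ correctly used.

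The quantitative part, however, has a genuine gap at exactly the point you flag as the ``main delicacy'': the existence of the polydisk cover with $|P|\geq\delta$ on the distinguished boundaries. The recipe ``pick $\by$ near $\bx$ with $P(\by)\neq0$ and take a small polydisk centered at $\by$ still containing $\bx$'' does not supply it: $P(\by)\neq0$ controls $P$ at the center only, while the distinguished boundary is a $d$-dimensional torus which the hypersurface $\{P=0\}$ (real codimension two, and passing through $\bx$ inside the polydisk) can perfectly well meet. Concretely, take $d=2$, $P(z_1,z_2)=z_1-z_2$, $\bx=\bo$: for any center $\by=(a,b)$ and equal radii $\rho$ with $|a|,|b|<\rho$ (needed so the polydisk contains $\bx$), the circles $|w-a|=\rho$ and $|w-b|=\rho$ intersect because $|a-b|<2\rho$, so the diagonal $\{z_1=z_2\}$ meets the distinguished boundary no matter how $\by$ is chosen with $P(\by)\neq0$. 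Producing an admissible polydisk requires adapting the radii, or making a linear change of coordinates, to the local geometry of $\{P=0\}$ at $\bx$ --- e.g.\ via Weierstrass preparation in a direction in which $P$ is regular, so that $|P|\geq\delta$ on the part of the boundary where the distinguished variable has maximal modulus --- and nothing in ``perturbation of the center plus compactness'' achieves this; for $d>2$ even the existence of an axis-parallel such polydisk is not obvious without that preparation. Your maximum-principle step itself is fine (and coordinate-free), so once such a cover is actually constructed your argument would give the $K^n$ bound by a route different from the paper's: the paper instead proves the estimate by induction on the monomialization height $h(P\circ T_{\bx_0})$ from Lemma \reff{reduction}, settling the normal-crossings case with Schwarz's lemma and reducing the general case through blow-ups and ramifications together with the compactness of $\PC$. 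As it stands, the missing construction is the heart of the matter, so the proposal is incomplete.
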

\begin{proof} The first statement is proved by applying Lemma
\reff{lemadivision} in the
neighborhood of each point of $D'$.

For the second statement, it is sufficient to prove the existence of such a
constant in some neighborhood of every point of the closure of $D'$ and then use
the compactness of the latter.

In order to show it in some neighborhood of some point $\bx_0$ of the closure of $D'$,
one uses induction on $h(P\circ T_{x_0})$, $T_{x_0}$ the translation $\bx\mapsto\bx+\bx_0$,
similarly to the proof of Lemma \reff{lemadivision}.
If $P(\bx_0)\neq0$, the statement is trivial, if $P\circ T_{x_0}$ has normal crossings,
the existence of such a constant follows by Schwarz's Lemma.
Indeed, assume that $D(\bx_0,\br)\subset D'$ and that $(P\circ T_{x_0})(\bx)=\bx^\bal U(\bx)$,
$U$ a unit, i.e.\ $U(\bo)\neq0$, and without loss of generality (reduce $\br$ otherwise)
there is a constant $\mu>0$ such that $\norm{U(\bx)}\geq\mu$ for $\bx\in D(\bx_0,\br)$.
Given $f\in M_n$, we can apply Schwarz's Lemma repeatedly to $f/U^n$ and obtain that
$$\sup_{\bx\in D(\bx_0,\br)}\norm{q_n(f)(\bx)}\leq \sup_{\bx\in D(\bx_0,\br)}\norm{f(\bx)}
  \br^{-n\bal}\mu^{-n}.$$
Hence $K=\br^{-\bal}\mu^{-1}$ satisfies the wanted estimates in $D(\bx_0,\br)$.

If $h(P\circ T_{x_0})=m>0$, we can assume that either $h(P\circ T_{x_0}\circ b_\xi)<m$
for all $\xi\in\PC$  or there is a $k\in\NN$ such that $h(P\circ T_{x_0}\circ r_k)<m$.
In the former case there exists such a constant $K_\xi$ and a
neighborhood $V_\xi$ of the origin for $P\circ T_{x_0}\circ b_\xi$ ; $\xi\in\PC$
arbitrary. Using the compactness of $\PC$ as in the previous proof, the existence
of such a constant $K$ for $P$ and some neighborhood of $\bo$ follows.

In the latter case ($h(P\circ T_{x_0}\circ r_k)<m$),
we can use the same constant before and after ramification and
only adjust the neighborhood.
\end{proof}

\subsection{Generalised Weierstrass Division}\labl{GWD}
We will present here a version adapted and simplified by Stevens \cite{Stevens} of a generalised Weierstrass Division Theorem, who attributes it to Galligo \cite{Galligo}, but whose original version is due to Aroca, Hironaka and Vicente  \cite{AHV}.
The version (see Lemma \reff{WDT-Ob}) for functions bounded and holomorphic on
certain special neighborhoods of the origin is particularly useful in the sequel.


Let $S$ be either the ring $\OO=\con=\CC\{\lista xd\}$ or $\hat\OO=\mcR\lf \bx\rf $ where $\mcR$ is an integral domain; in most cases we will use $\mcR=\CC$. Each $f\in S$
can be written as a (formal) power series
\begin{equation}\labl{serie}f=\sum_{\bal\in\NN^d}f_\bal\,\bx^\bal\end{equation}
where the monomials $\bx^\bal=x_1^{\a_1}\cdot\ldots\cdot x_d^{\a_d}$ are defined as usual.
Let $\ell:\NN^d\to\RR_+$ be an injective linear form, $\ell(\bal)=\ell_1\a_1+\ldots +\ell_d\a_d$.
As in \cite{Stevens}, we define a total ordering on the monomials by
$$\bx^\bal<_\ell \bx^\bbeta\mbox{ if and only if }\ell(\bal)<\ell(\bbeta).$$
For $f\in S\setminus\{0\}$  written according to (\reff{serie}), we will say that $\bal$ is the
minimal exponent of $f$, and we will denote $ v_{\ell} (f)=\bal$ if
$$\bx^\bal=\min\{\bx^\bbeta\mid \bbeta\in\NN^d\ ;\ f_\bbeta\neq0\},$$
where the minimum is taken according to the ordering $<_\ell$.
Observe that $v_\ell$ is compatible with the multiplication: 
$v_{\ell}(fg)=v_{\ell}(f)+v_{\ell}(g)$ if $f,g\neq0$.
Therefore the multiples of some nonzero $f\in S$ (i.e.\ the elements of $f\cdot S$)
have minimal exponents in $v_{\ell}(f)+\NN^d$. The converse is false in general, of course.

Given a nonzero $P\in S$ with $v_\ell(P)>_\ell\bo$, we introduce the set
\begin{equation}\labl{delta}
\Delta_{\ell} (P)=\{g=\sum g_\bal\,x^\bal\ ;\ g_\bal=0\mbox{ if }\bal\in v_\ell(P)+\NN^d\}.
\end{equation}
In the case of two variables and $v_{\ell}(P)=(a_1,a_2)$, $a_1,a_2>0$, this set can be written as
$$\bigoplus_{j=0}^{a_1-1}x_1^j\CC\lf x_2\rf \oplus x_1^{a_1}
  \bigoplus_{k=0}^{a_2-1}x_2^k\CC\lf x_1\rf .$$
In the general case it is possible to express $\Delta_\ell(P)$ in a similar way, but
we do not write down this cumbersome formula.
In the rest of subsection \reff{GWD} we omit the index $\ell$ for the sake of simplicity.

\begin{lema}\labl{WDT}Let $P\in S$, $P\neq0$ with $\ell(v(P))>0$ and let $\Delta(P)$
be defined by (\reff{delta}). In the case $S=\mcR\lf \bx\rf $, we assume that the coefficient $P_{v(P)}$ of $\bx^{v(P)}$ in $P$
is a unit in $\mcR$. Then for every $g\in S$, there exist unique $q\in S$
and $r\in\Delta(P)$ such that
\begin{equation}\labl{qr}g=q\cdot P + r.\end{equation}
\end{lema}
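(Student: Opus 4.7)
My plan is to dispose of uniqueness via multiplicativity of $v_\ell$, build the formal solution as the fixed point of an operator equation solvable by well-ordered recursion, and verify convergence in the analytic case using a Banach algebra with an $\ell$-adapted weight. For uniqueness, set $\beta = v_\ell(P)$ and suppose $qP+r = q'P+r'$ with $r,r' \in \Delta_\ell(P)$. Then $(q-q')P = r'-r$; since $P_\beta$ is a unit in $\mcR$ (automatic when $\mcR = \CC$), the valuation $v_\ell$ is multiplicative, so if $q \neq q'$ the minimal exponent of $r'-r$ equals $v_\ell(q-q') + \beta \in \beta + \NN^d$, contradicting $r'-r \in \Delta_\ell(P)$. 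Thus $q = q'$ and $r = r'$.

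For existence, split $P = P_\beta \bx^\beta + P'$ with $v_\ell(P') >_\ell \beta$ and introduce the shift-and-divide operator $T \colon S \to S$ defined by $T\bigl(\sum h_\alpha \bx^\alpha\bigr) := P_\beta^{-1}\sum_{\alpha \in \beta + \NN^d} h_\alpha \bx^{\alpha - \beta}$, together with the complementary projection $R \colon S \to \Delta_\ell(P)$. Separating $qP = qP_\beta\bx^\beta + qP'$ according to whether the support lies in $\beta + \NN^d$ or not, a decomposition $g = qP + r$ with $r \in \Delta_\ell(P)$ is equivalent to the fixed-point system
\begin{equation*}
(I + \Phi)\,q = T(g), \qquad r = g - qP, \qquad \Phi := T \circ (P'\,\cdot\,).
\end{equation*}
The essential observation is that $\Phi$ strictly raises $v_\ell$: any monomial $\bx^\alpha$ of $qP'$ with $\alpha \in \beta + \NN^d$ comes from $q_\delta\bx^\delta \cdot P'_\eta\bx^\eta$ with $\eta >_\ell \beta$, forcing $\delta <_\ell \alpha - \beta$, so $v_\ell(\Phi(q)) >_\ell v_\ell(q)$. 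Because $\ell$ is injective, each $\ell_j$ is positive and $\{\alpha \in \NN^d : \ell(\alpha) < M\}$ is finite for every $M$; hence $<_\ell$ is a well-ordering of $\NN^d$, and the coefficients $q_\gamma$ are determined uniquely by transfinite induction on $\gamma$. This settles the case $S = \mcR\lf\bx\rf$.

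For the convergent case $S = \OO$, the main obstacle is showing that the formal $q$ converges. Normalize $|P_\beta| = 1$ and, for small $t > 0$, work in the Banach algebra $\BB_t$ of power series with finite norm $\|h\|_t := \sum_\alpha |h_\alpha|\sigma(t)^\alpha$ at the $\ell$-adapted polyradius $\sigma(t) := (t^{\ell_1}, \dots, t^{\ell_d})$. Direct estimation gives $\|T\|_{\BB_t} \leq t^{-\ell(\beta)}$ and $\|P'\,\cdot\,\|_{\BB_t} \leq \|P'\|_t$, so
\begin{equation*}
\|\Phi\|_{\BB_t} \leq \sum_{\eta >_\ell \beta} |P_\eta|\,t^{\ell(\eta) - \ell(\beta)}.
\end{equation*}
Each summand has strictly positive exponent by injectivity of $\ell$, and for $t \leq t_0$ it is majorized by the summable $|P_\eta|\,t_0^{\ell(\eta)-\ell(\beta)}$, so dominated convergence yields $\|\Phi\|_{\BB_t} \to 0$ as $t \to 0^+$. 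Choosing $t$ small enough that $\|\Phi\|_{\BB_t} < 1$, the operator $I + \Phi$ is invertible on $\BB_t$ by Neumann series, giving $q = \sum_{k \geq 0} (-\Phi)^k T(g) \in \BB_t \subset \OO$, and then $r = g - qP \in \OO \cap \Delta_\ell(P)$. The delicate point is the choice of the $\ell$-adapted polyradius: an equal-component polyradius does not suffice because $v_\ell(P') - \beta$ need not lie in $\NN^d$, so one cannot simply factor $\bx^\beta$ out of $P'$; only the directional scaling $\sigma(t) = t^\ell$ converts the inequality $v_\ell(P') >_\ell \beta$ into the quantitative contractivity needed.
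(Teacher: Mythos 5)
Your proof is correct and follows essentially the same route as the paper: the paper likewise rewrites the division as the fixed-point equation $q=Q_0(g-\tilde P q)$ for the ``divide by the leading monomial'' operator and gets smallness of the tail perturbation from the factor $\mu^{\ell(v(\tilde P))-\ell(v(P))}$ on the same $\ell$-adapted polydisk $D(\bo;(\mu^{\ell_1},\ldots,\mu^{\ell_d}))$. The only (cosmetic) differences are that the paper works with the sup norm on $\OO_b(D_\mu)$ and invokes Banach's fixed point theorem (which also yields the bounded operators of Lemma \ref{WDT-Ob} used later), and treats the formal case by the same contraction in the complete metric $2^{-\ell(v(\cdot))}$, whereas you use an $\ell^1$-coefficient norm with a Neumann series, a well-ordered coefficient recursion for the formal case, and a separate valuation argument for uniqueness.
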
\newcommand{\Sc}{{\mathcal S}}
\begin{proof}[Proof in the case $S=\CC\{\bx\}$] We even prove it in the case of the Banach space
$\Sc_\mu=\OO_b(D_\mu)$, with norm $\norma{\cdot}_{\infty}$, where $D_\mu=D(\bo;(\mu^{\ell_1},\ldots ,\mu^{\ell_d}))$, if $\mu>0$ is sufficiently small.
The set $D_\mu$ has been chosen so that $\norm{x^\bal}\leq \mu^{\ell(\bal)}$ for $x\in D_\mu.$

If we define
$$T_jg(\bx)=\left\{\begin{array}{cl}
  (g(\bx)-g(x_1,\ldots ,x_{j-1},0,x_{j+1},\ldots ,x_d))/x_j&\mbox{if }x_j\neq0\\
  \frac{\partial g}{\partial x_j}(\bx)&\mbox{if }x_j=0
  \end{array}\right.,$$
then with $g\in\Sc_\mu$, also $T_jg\in\Sc_\mu$ and $\norma{T_jg}_\infty\leq
2\mu^{-\ell_j}\norma{g}_\infty$.

We put $\ba=v(P)$.
Clearly, every $g\in\Sc_\mu$ can be written uniquely
\begin{equation}\labl{QR} g=Q_0(g)\bx^\ba +R_0(g)\mbox{ where }Q_0(g)\in\OO,\,R_0(g)\in
\Delta(P)\cap\OO.
\end{equation}
Rewriting $Q_0(g)=T_1^{a_1}\ldots T_d^{a_d}g$ we find that $Q_0$ is a linear operator from $\Sc_\mu$
to itself and satisfies $\norma{Q_0(g)}_\infty\leq2^{\norm\ba}\mu^{-\ell(\ba)}\norma g_\infty$
for all $g\in\Sc_\mu$.

We can suppose $P_\ba=1$ without loss of generality. Then $P=\bx^\ba+\tilde P$
with some $\tilde P\in\Sc_\mu$, $v(\tilde P)>_\ell \ba$. We can rewrite the equation (\reff{qr})
as $q \bx^\ba+r=g-\tilde P q$ which is equivalent to the fixed point equation
\begin{equation}\labl{FP}
q=Q_0(g-\tilde P q)\end{equation}
together with $r=R_0(g-\tilde P q)$.
We can find a constant $K>0$ such that $\norma{\tilde P}_\infty\leq
K \mu^{\ell(v(\tilde P))}$ if $\mu$ is sufficiently small.
This finally yields
$$\norma{Q_0(\tilde P\cdot h)}_\infty\leq K 2^{\norm\ba} \mu^{\ell(v(\tilde P))-\ell(\ba)}\norma h_\infty$$
for all $h\in\Sc_\mu$, $\mu$ sufficiently small. Therefore the right hand side of (\reff{FP})
defines a contraction on $\Sc_\mu$ and hence it has a unique solution if $\mu>0$
is sufficiently small.
As (\reff{FP}) together with $r=R_0(g-\tilde P q)$ is equivalent to (\reff{qr}),
this implies the statement of the lemma in the case of $S=\Sc_\mu$ and also in the case
of $S=\OO=\CC\{\bx\}$.

Concerning the proof in the case of $S=\hat\OO=\CC\lf \bx\rf $,
we define $w(f)=\ell(v(f))$ for $f\in S$. Then $w$ is a discrete valuation on $S$:
$w(f+g)\geq\min(w(f),w(g))$ and $w(fg)=w(f)+w(g)$ and $\delta(g,h)=2^{-w(g-h)}$
makes $S$ into a complete metric space. In the same way as above, the equation (\reff{qr})
of the lemma is equivalent to the fixed point equation (\reff{FP}) and it can be shown that it
has exactly one solution.
\end{proof}
An immediate consequence of the above lemma is
\begin{coro}\labl{coro-P-serie}Under the assumptions of Lemma \reff{WDT},
every $f\in \mcR\lf \bx\rf $ can be written uniquely in the form
\begin{equation}\labl{P-serie}
\hat f =\sum_{n=0}^\infty g_n\cdot P^n
\end{equation}
where $g_n\in\Delta(P)$ for all $n\in\NN$.
\end{coro}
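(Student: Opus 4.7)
The plan is to iterate Lemma \ref{WDT} and check convergence in the $\ell$-adic topology. First, apply the lemma to $f \in \mathcal{R}\lf \bx\rf$ to obtain unique $q_0 \in S$ and $g_0 \in \Delta(P)$ with
$$f = q_0 \cdot P + g_0.$$
Applying the lemma to $q_0$ gives $q_0 = q_1 \cdot P + g_1$ with $g_1 \in \Delta(P)$, and inductively we construct sequences $(q_n)_{n \geq 0} \subset S$ and $(g_n)_{n\geq 0} \subset \Delta(P)$ satisfying $q_{n-1} = q_n \cdot P + g_n$ for all $n \geq 1$. Substituting these relations telescopically yields the partial sum identity
$$f = \sum_{n=0}^{N-1} g_n \cdot P^n + q_{N-1} \cdot P^N, \qquad N \geq 1.$$

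Next I would verify that the series $\sum_{n\geq 0} g_n P^n$ converges to $f$ in the valuation topology on $\mcR\lf\bx\rf$. As in the last part of the proof of Lemma \ref{WDT}, set $w(h) = \ell(v(h))$ for $h \neq 0$; this is a discrete valuation, and the distance $\delta(g,h) = 2^{-w(g-h)}$ makes $S$ a complete metric space. Since $v(P) >_\ell \bo$ by assumption, we have $w(P) > 0$, and compatibility with multiplication gives
$$w(q_{N-1} \cdot P^N) \geq N \cdot w(P) \xrightarrow[N\to\infty]{} \infty,$$
so the remainder tends to $0$ in this metric. Hence $\sum_{n=0}^\infty g_n P^n$ converges to $f$, proving existence of the expansion \reff{P-serie}.

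For uniqueness, suppose $f = \sum_{n \geq 0} g_n P^n = \sum_{n \geq 0} g_n' P^n$ with all $g_n, g_n' \in \Delta(P)$. Rewriting each side as
$$f = g_0 + P \cdot \Bigl( \sum_{n \geq 0} g_{n+1} P^n \Bigr) = g_0' + P \cdot \Bigl( \sum_{n \geq 0} g_{n+1}' P^n \Bigr),$$
we obtain two decompositions of $f$ as in Lemma \ref{WDT}. The uniqueness part of that lemma forces $g_0 = g_0'$ and $\sum_{n\geq 0} g_{n+1} P^n = \sum_{n\geq 0} g_{n+1}' P^n$, and iterating gives $g_n = g_n'$ for every $n$.

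The main obstacle is essentially notational bookkeeping; the only substantive point is to invoke completeness of $\mcR\lf\bx\rf$ with respect to the $\ell$-valuation (the metric $\delta$ already introduced in the proof of Lemma \ref{WDT}) in order to make sense of the infinite sum $\sum g_n P^n$ and identify it with $f$. Once this is in place, both existence and uniqueness are immediate consequences of iterating Lemma \ref{WDT}.
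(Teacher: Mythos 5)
Your proposal is correct and follows essentially the same route as the paper: iterate Lemma \ref{WDT} to obtain the partial-sum identity with remainder $q_{N-1}P^N$ and pass to the limit in the adic topology, uniqueness coming from the uniqueness clause of the lemma. The only cosmetic difference is that you phrase convergence via the valuation $w=\ell\circ v$ while the paper invokes $\mathfrak m$-adic convergence; since $\ell$ has positive coefficients these induce the same topology, so the arguments coincide.
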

\begin{proof} Using Lemma \reff{WDT} repeatedly, we can write (uniquely)
$$\hat f=g_0+g_1P+g_2P^2+\cdots+g_{N-1}P^{N-1}+q_NP^N$$
where $N\in\NN$, all $g_n\in\Delta(P)$ and $q_N\in \mcR\lf \bx\rf $.
As $N\to\infty$, the statement follows by $\mathfrak m$-adic convergence.
\end{proof}

For later use, we note the statement proved in the first part of the proof of
Lemma \reff{WDT} and prove the analogue of Corollary \reff{coro-P-serie} for $\CC\{\bx\}$.
\begin{lema}\labl{WDT-Ob} Let $\ell$, $P$ as above Lemma \reff{WDT} and let $\Delta(P)$
be defined by (\reff{delta}). For $s>0$ let
$D_s=D(\bo;(s^{\ell_1},\ldots,s^{\ell_d}))$. If $s$ is sufficiently small,
then for every $g\in\OO_b(D_s)$
there exist unique $r\in\OO_b(D_s)$ with $J(r)\in\Delta(P)$ and $q\in\OO_b(D_s)$
such that $g=q\cdot P + r.$

The corresponding operators $Q,R:\OO_b(D_s)\to\OO_b(D_s)$ defined by
$g\mapsto q$ (respectively $g\mapsto r$) are linear and continuous.
\end{lema}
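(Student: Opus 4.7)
The plan is to reduce the statement essentially to the first part of the proof of Lemma \reff{WDT}, and then to extract the linearity and continuity of the operators $Q$ and $R$ from the Banach fixed point construction used there.

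First, I would observe that the existence and uniqueness assertion is already contained in the first part of the proof of Lemma \reff{WDT}, carried out precisely in the Banach space $\Sc_s=\OO_b(D_s)$. Normalizing $P_{v(P)}=1$ (allowed because $v(P)$ is the minimal exponent, so $P_{v(P)}\neq 0$) and writing $P=\bx^{\ba}+\tilde P$ with $\ba=v(P)$ and $v(\tilde P)>_\ell\ba$, the decomposition $g=qP+r$ with $J(r)\in\Delta(P)$ is equivalent to the system
\begin{equation*}
q=Q_0(g-\tilde P\,q),\qquad r=R_0(g-\tilde P\,q),
\end{equation*}
where $Q_0,R_0$ are the linear projectors on $\Sc_s$ associated to the direct sum decomposition \reff{QR}. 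The key estimate established there,
\begin{equation*}
\|Q_0(\tilde P\cdot h)\|_\infty\leq K\,2^{|\ba|}\,s^{\ell(v(\tilde P))-\ell(\ba)}\,\|h\|_\infty,
\end{equation*}
shows that for $s$ sufficiently small the self-map $q\mapsto Q_0(g-\tilde P\,q)$ of $\Sc_s$ is a contraction. The Banach fixed point theorem then produces a unique $q\in\Sc_s$, hence a unique $r\in R_0(\Sc_s)\subset\OO_b(D_s)$, and by construction $J(r)\in\Delta(P)$.

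Second, for the linearity and continuity of $Q:g\mapsto q$ and $R:g\mapsto r$, I would rewrite the fixed point equation in operator form. Let $M_{\tilde P}$ denote multiplication by $\tilde P$ on $\Sc_s$. The equation becomes $(I+Q_0 M_{\tilde P})\,q = Q_0\,g$. By the above estimate, $\|Q_0 M_{\tilde P}\|_{\Sc_s\to\Sc_s}<1$ for $s$ small enough, so $I+Q_0 M_{\tilde P}$ is invertible on $\Sc_s$ by a Neumann series, with bounded inverse. Consequently
\begin{equation*}
Q=(I+Q_0 M_{\tilde P})^{-1}\circ Q_0,\qquad R=R_0\circ(I-M_{\tilde P}\circ Q)
\end{equation*}
are both bounded linear operators on $\Sc_s=\OO_b(D_s)$, as required.

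I do not expect any serious obstacle: the technical work (the contraction estimate controlling the factor $s^{\ell(v(\tilde P))-\ell(\ba)}$, which is positive since $v(\tilde P)>_\ell\ba$) has already been done in Lemma \reff{WDT}, and the only new ingredient is the routine observation that a Banach fixed point depending linearly on a parameter yields a bounded linear solution operator.
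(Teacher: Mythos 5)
Your proposal is correct and is essentially the paper's own argument: the paper proves Lemma \ref{WDT-Ob} precisely by noting that the first part of the proof of Lemma \ref{WDT} was carried out in the Banach space $\OO_b(D_\mu)$ via the contraction estimate on $Q_0(\tilde P\,\cdot)$, which gives existence and uniqueness there. Your explicit Neumann-series formulas $Q=(I+Q_0M_{\tilde P})^{-1}\circ Q_0$ and $R=R_0\circ(I-M_{\tilde P}\circ Q)$ simply make precise the linearity and continuity that the paper leaves implicit in that fixed-point construction.
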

\begin{coro}\labl{coro-P-germ} Under the assumptions of Lemma \reff{WDT}, for every
$f\in\CC\{\bx\}$ there exist $\rho>0$ and a sequence $\{g_n\}_{n\in\NN}$ in $\OO_b(D(\bo;\rho))$
with $J(g_n)\in\Delta(P)$ for all $n$ such that $f$ can be written in the form
\begin{equation}\labl{P-germ}
f(\bx) =\sum_{n=0}^\infty g_n(\bx)\cdot P(\bx)^n\mbox{ for }\norm{\bx}\leq\rho.
\end{equation}
\end{coro}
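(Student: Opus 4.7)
The plan is to mimic the formal argument of Corollary \reff{coro-P-serie} but at the analytic level, by iterating Lemma \reff{WDT-Ob} and controlling the remainder using two nested polydisks. The key issue is that the division operators $Q,R$ of Lemma \reff{WDT-Ob} live on a fixed domain $D_s$ with a finite, but not necessarily small, operator norm; to obtain analytic convergence I will exploit that $P(\bo)=0$ and therefore $|P|$ is arbitrarily small on a sufficiently small subpolydisk.

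First I would choose $s>0$ small enough that $f\in\OO_b(D_s)$ and Lemma \reff{WDT-Ob} applies, and let $Q,R:\OO_b(D_s)\to\OO_b(D_s)$ be the associated continuous operators, with common bound $M$ on their norms. Set $q_0:=f$ and define recursively $g_n:=R(q_n)$ and $q_{n+1}:=Q(q_n)$, so that $q_n=g_n+P\cdot q_{n+1}$ in $\OO_b(D_s)$ and $J(g_n)\in\Delta(P)$ for every $n$. Unrolling this recursion yields, for every $N\geq 1$,
\[
f \;=\; \sum_{n=0}^{N-1} g_n\,P^n \;+\; q_N\,P^N \qquad \text{in }\OO_b(D_s),
\]
together with the bounds $\|q_n\|_{D_s}\leq M^{n}\|f\|_{D_s}$ and $\|g_n\|_{D_s}\leq M^{n+1}\|f\|_{D_s}$.

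Next, since $P(\bo)=0$, I would choose $s'\in(0,s)$ so small that $|P(\bx)|\leq 1/(2M)$ on $D_{s'}$. On the nested polydisk $D_{s'}\subseteq D_s$ the estimates above give
\[
\|g_n\,P^n\|_{D_{s'}}\;\leq\; M^{n+1}\|f\|_{D_s}\,(2M)^{-n}\;=\;M\|f\|_{D_s}\,2^{-n},
\]
so $\sum_{n=0}^{\infty} g_n P^n$ converges normally on $D_{s'}$, while $\|q_N\,P^N\|_{D_{s'}}\leq \|f\|_{D_s}\,2^{-N}\to 0$. Hence the partial sums converge uniformly to $f$ on $D_{s'}$. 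Finally, picking any $\rho>0$ with $D(\bo;\rho)\subseteq D_{s'}$ and restricting the $g_n$ to this polydisk yields the desired representation. The only real obstacle is matching the norm of $Q$ against the smallness of $P$, and the two-polydisk device resolves this cleanly; everything else is a routine iteration of Lemma \reff{WDT-Ob} in the Banach space $\OO_b(D_s)$.
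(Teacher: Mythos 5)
Your proposal is correct and follows essentially the same route as the paper: iterate Lemma \ref{WDT-Ob} to get $f=\sum_{n=0}^{N-1}(RQ^n f)P^n+(Q^N f)P^N$ on $D_s$, then shrink to a smaller polydisk where $\sup\norm{P}$ is small compared with the operator norm, so the remainder tends to zero geometrically. The only (harmless) difference is that you additionally verify normal convergence of the series $\sum g_nP^n$, whereas the paper only estimates the remainder term, which already suffices.
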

The functions $g_n$ are uniquely determined by Corollary
\reff{coro-P-serie}.
\begin{proof} For $s>0$ sufficiently small, $f\in\OO_b(D_s)$ and the operators $Q,R$
of the preceding lemma are defined on $\OO(D_s)$. For $N\in\NN$, we obtain
$$f(\bx)=\sum_{n=0}^{N-1}((RQ^n)f)(\bx)P(\bx)^n+(Q^Nf)(\bx)P(\bx)^N$$
by repeated application of Lemma
\reff{WDT-Ob}.

If $\rho\in]0,s]$ is so small that $M:=\sup\{\norm{P(\bx)}\mid \bx\in D(\bo,\rho)\}
  <\frac1{\norma{Q}}$
then we can estimate
$$\sup_{\norm\bx<\rho}\norm{f(\bx)-\sum_{n=0}^{N-1}[((RQ^n)f)\cdot P^n](\bx)}\leq
         (M\norma{Q})^N\sup_{\by\in D_s}\norm{f(\by)}.$$
This proves the statement.\end{proof}

\subsection{A Cousin Problem}\newcommand{\U}{{\mathcal U}}
In the sequel we use the following lemma solving a certain Cousin problem for $\PC$.
Let $I$ denote some finite set, $\infty\in I$, $\U=(U_i)_{i\in I}$ a finite cover
of $\PC$ by open sets. Assume $\infty\in U_\infty$ for simplicity of notation.
Let $(U_{ij})_{i,j\in I}$ denote the collection of intersections
$U_{ij}=U_i\cap U_j$.
\begin{lema} \labl{cousin}
Let ${\mathcal C}^0(\U)$ denote the Banach space of collections $f=(f_i)_{i\in I}$ of
bounded holomorphic $f_i:U_i\to\CC$ such that $f_\infty(\infty)=0$, equipped
with the maximum norm. Let ${\mathcal Z}^1(\U)$ denote the Banach space of collections
$d=(d_{ij})_{i,j\in I}$ of bounded holomorphic $d_{ij}:U_{ij}\to\CC$  satisfying
the cocycle condition
$$d_{ij}(z)+d_{jk}(z)=d_{ik}(z),\mbox{ if }z\in U_i\cap U_j\cap U_k,$$
equipped with the maximum norm.

Then the boundary mapping $\delta:{\mathcal C}^0(\U)\to{\mathcal Z}^1(\U)$ defined by
$$\delta\left((f_i)_{i\in I}\right)=(f_i-f_j)_{i,j\in I}$$
is bijective, linear and continuous and its inverse,
denoted by $\Sigma$, is also continuous.
\end{lema}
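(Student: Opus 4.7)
The linearity of $\delta$ is immediate, and continuity is the obvious estimate $\|\delta f\|_\infty=\sup_{i,j}\sup_{U_{ij}}|f_i-f_j|\leq 2\|f\|_\infty$. Once we establish that $\delta$ is a bijection, the continuity of $\Sigma=\delta^{-1}$ is free from Banach's open mapping theorem; hence the core of the proof is to verify injectivity and surjectivity.

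For injectivity, suppose $\delta f=0$. Then $f_i=f_j$ on every $U_{ij}$, so the $f_i$ glue to a single bounded holomorphic function $F$ on the compact Riemann surface $\PC$. By the maximum modulus principle $F$ is constant, and the normalization $f_\infty(\infty)=0$ forces this constant to be $0$.

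For surjectivity I would use the standard partition-of-unity / $\bar\partial$ argument on $\PC$. Let $(\chi_k)_{k\in I}$ be a smooth partition of unity subordinate to $\mathcal{U}$. Given a cocycle $d=(d_{ij})\in\mathcal{Z}^1(\mathcal{U})$, set
$$
\tilde f_i(z)=\sum_{k\in I}\chi_k(z)\,d_{ik}(z),\qquad z\in U_i,
$$
where $\chi_k d_{ik}$ is extended by zero outside $\mathrm{supp}(\chi_k)\cap U_{ik}$, which is legitimate since $\chi_k$ vanishes near $\partial U_k$. A short computation using the cocycle relation $d_{ik}=d_{ij}+d_{jk}$ gives
$$
\tilde f_i-\tilde f_j=\sum_{k\in I}\chi_k(d_{ik}-d_{jk})=\Bigl(\sum_k\chi_k\Bigr)d_{ij}=d_{ij}
$$
on $U_{ij}$. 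The $\tilde f_i$ are smooth but not holomorphic; however, since $\bar\partial\tilde f_i-\bar\partial\tilde f_j=\bar\partial d_{ij}=0$, the forms $\bar\partial\tilde f_i$ patch into a global smooth $(0,1)$-form $\omega$ on $\PC$. Because $H^{0,1}(\PC,\OO)=0$, we may solve $\bar\partial u=\omega$ with $u\in C^\infty(\PC)$. Then $f_i:=\tilde f_i-u$ is holomorphic on $U_i$, bounded (the $\tilde f_i$ are bounded by construction from bounded $d_{ij}$, and $u$ is continuous on the compact $\PC$), and satisfies $f_i-f_j=d_{ij}$. Finally subtract the constant $f_\infty(\infty)$ from every $f_i$ to enforce the normalization; this does not affect the differences.

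The only genuine difficulty is the solvability of $\bar\partial u=\omega$ on $\PC$, which is the one non-formal ingredient; everything else is bookkeeping. With bijectivity established, the open mapping theorem delivers the continuity of $\Sigma$, completing the proof. (If one preferred to avoid an abstract appeal, one could track the norms of the partition-of-unity and of the Cauchy--Pompeiu-type solution of $\bar\partial u=\omega$, but the open mapping theorem makes this unnecessary.)
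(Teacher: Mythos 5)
Your proof is correct, but it takes a genuinely different route from the paper on the surjectivity step, which is where the real content lies. The paper simply invokes the known vanishing $H^1(\U,\OO)=0$ to obtain \emph{some} holomorphic family $(f_i)$ with $f_i-f_j=d_{ij}$, and then must work to show these $f_i$ are bounded: for each $c\in\PC$ one picks a neighborhood $V_c$ compactly contained in some $U_{i(c)}$, notes $f_{i(c)}$ is bounded there, propagates boundedness to the other $f_j$ on $V_c\cap U_j$ via $f_j=f_{i(c)}+d_{j i(c)}$, and finishes by compactness of $\PC$. You instead build the solution explicitly: the partition-of-unity functions $\tilde f_i=\sum_k\chi_k d_{ik}$ split the cocycle smoothly, and correcting by a global solution of $\bar\partial u=\omega$ (using $H^{0,1}(\PC,\OO)=0$) makes them holomorphic. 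A pleasant by-product of your construction is that boundedness comes for free — $\lvert\tilde f_i\rvert\le\max_{j,k}\norma{d_{jk}}_\infty$ since $\sum_k\chi_k=1$, and $u$ is continuous on the compact $\PC$ — so you avoid the paper's separate compactness argument entirely; the price is that your non-formal ingredient is the solvability of $\bar\partial$ on $\PC$ rather than the (equally standard, and in fact usually proved by the same $\bar\partial$ method) Cousin-type vanishing the paper cites. Injectivity (gluing to a global holomorphic, hence constant, function on $\PC$, killed by the normalization at $\infty$) and the continuity of $\Sigma$ via the bounded inverse theorem are the same in both arguments; your remark that one could instead extract an explicit norm bound from the partition of unity and the Cauchy--Pompeiu solution is accurate but, as you say, unnecessary.
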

\begin{proof} $\delta$ is surjective. Since $H^1(\U,\OO)=0$ as is well
known, there exist, for every $(d_{ij})_{i,j\in I}\in{\mathcal Z}^1(\U)$, a family
$(f_i)_{i\in I}$, $f_i:U_i\to\CC$ holomorphic, such that $d_{ij}=f_i-f_j$ for $i,j\in I$.
The additional condition $f_\infty(\infty)=0$ is achieved by adding the
same suitable constant to each $f_i$. It remains to show that each $f_i$ is bounded.

As $\U$ is a cover of $\PC$, each point $c\in\PC$ is contained in some $U_{i(c)}$; hence
there exists a neighborhood $V_c$ of $c$ the closure of which is contained in $U_{i(c)}$.
Therefore $f_{i(c)}$ is bounded on $V_c$. For $j\in I$, $j\neq {i(c)}$, the function $f_j$ can be written
$f_j=f_{i(c)}+d_{j{i(c)}}$ on $V_c\cap U_j\subset U_{{i(c)}j}$, provided that $U_{i(c)j}\neq  \emptyset$, and therefore $f_j$ is also bounded on
$V_c\cap U_j$ as $d_{{i(c)}j}$
and $f_{i(c)}$ are. We have shown that every $c\in\PC$ has a neighborhood $V_c$ such that
for all $j\in I$, the function $f_j$ is bounded on $V_c\cap U_j$.
By the compactness of $\PC$, a finite number of such neighborhoods $V_c$ covers $\PC$ and the boundedness
of all $f_j$, $j\in I$, follows. This completes the proof that $\delta$ is surjective.

$\delta$ is injective,
because its kernel is $\{0\}$. Indeed, if $\delta((f_i)_{i\in I})=(0)_{i,j\in I},$
then $f_i(z)=f_j(z)$ whenever $z\in U_i\cap U_j$.
Hence $(f_i)_{i\in I}$ is actually the collection of restrictions of some analytic
function $f:\PC\to \CC$ to the $(U_i)_{i\in I}$.
By Liouville's theorem, $f$ is then a constant. The condition
$f_\infty(\infty)=0$ now implies that $f=0$; hence $(f_i)_{i\in I}=0$.

Obviously $\delta$ is linear and continuous. Therefore, by the theorem of the bounded inverse,
its inverse is also continuous.
\end{proof}
In the sequel, we need an extension of the above Lemma to functions depending holomorphically upon parameters.
\begin{lema}\labl{rem-cousin} Consider a collection $(d_{ij})_{i,j\in I}$ of functions holomorphic on 
$B\times U_{ij}$, where $B$ is some open subset of $\CC^m$, $m\geq1$,
satisfying the cocycle condition with respect to the second variable. There exists
a collection $(f_i)_{i\in I}$ of holomorphic functions  on
$B\times U_i$, $i\in I$, such that $f_i(b,t)-f_j(b,t)=d_{ij}(b,t)$ for all $b\in B$
and $t\in U_{ij}$. 

If there exists a function $K:B\to\RR_+$ such that
for every $b\in B$, the collection of functions $(z\mapsto d_{ij}(b,z))_{i,j\in I}$
is bounded by $K(b)$, then the collection of functions $(z\mapsto f_i(b,z))_{i\in I}$ is bounded by $\norma\Sigma\,K(b)$ for
every $b\in B$. 
\end{lema}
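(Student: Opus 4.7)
My plan is to reduce this to Lemma \reff{cousin} pointwise in $b$, and then upgrade pointwise construction to joint holomorphy by a Banach-space-valued holomorphy argument.

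Fix $b_0\in B$. Since each $d_{ij}$ is continuous on $B\times U_{ij}$ and every $\overline{U_{ij}}$ is relatively compact in some slightly larger open set of $\PC$ (after shrinking $U_{ij}$ if needed, which is harmless for the statement), we can choose an open neighborhood $V\subset B$ of $b_0$ on which each $d_{ij}(b,\cdot)$ is uniformly bounded in $b\in V$. Thus the map $\phi:V\to\mathcal{Z}^1(\U)$ defined by $\phi(b)=(d_{ij}(b,\cdot))_{i,j\in I}$ takes values in the Banach space $\mathcal{Z}^1(\U)$. This map is holomorphic: it is locally bounded and, for every continuous linear functional of the form ``evaluate at a fixed point'' it is holomorphic in $b$, and such functionals separate points; by standard vector-valued holomorphy criteria (weak holomorphy plus local boundedness implies holomorphy), $\phi$ is holomorphic.

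Now apply the continuous linear operator $\Sigma$ from Lemma \reff{cousin}: the composition $\psi=\Sigma\circ\phi:V\to\mathcal{C}^0(\U)$ is again holomorphic. Writing $\psi(b)=(f_i^{(V)}(b,\cdot))_{i\in I}$, each component $f_i^{(V)}$ is bounded holomorphic in $b\in V$ with values in $\OO_b(U_i)$, hence defines a function holomorphic in $(b,t)\in V\times U_i$ (holomorphic separately in each variable and locally bounded, so jointly holomorphic by Hartogs). By construction $f_i^{(V)}-f_j^{(V)}=d_{ij}$ on $V\times U_{ij}$ and $f_\infty^{(V)}(b,\infty)=0$ for $b\in V$.

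To globalize, observe that if $V,V'$ are two such neighborhoods with $V\cap V'\neq\emptyset$, then for each $b\in V\cap V'$ the two collections $(f_i^{(V)}(b,\cdot))_{i}$ and $(f_i^{(V')}(b,\cdot))_{i}$ both lie in $\mathcal{C}^0(\U)$ and both are sent to $\phi(b)$ by $\delta$. The injectivity of $\delta$ (proved in Lemma \reff{cousin}) forces them to coincide. Therefore the local solutions patch together to give a single collection $(f_i)_{i\in I}$ holomorphic on $B\times U_i$ satisfying $f_i-f_j=d_{ij}$. Finally, the boundedness statement is immediate: for fixed $b\in B$, applying the continuous inverse $\Sigma$ to the element $(d_{ij}(b,\cdot))_{i,j}\in\mathcal{Z}^1(\U)$, which has norm $\leq K(b)$, yields $(f_i(b,\cdot))_{i}\in\mathcal{C}^0(\U)$ of norm $\leq\norma{\Sigma}\,K(b)$.

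The only delicate step is verifying that $\phi$ is holomorphic as a Banach-space-valued map, i.e.\ converting joint holomorphy of the $d_{ij}(b,t)$ plus local boundedness in $b$ into holomorphy of $\phi$ into the sup-norm space; this is where Hartogs-type arguments and the characterization of Banach-valued holomorphy are invoked, but it is routine once local boundedness is in place.
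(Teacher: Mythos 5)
Your overall strategy is the same as the paper's: apply the Banach-space inverse $\Sigma$ from Lemma \ref{cousin} pointwise in $b$, then upgrade to joint holomorphy via Banach-valued holomorphy (the paper uses the identification $\OO_b(D\times{\mathcal V})\cong\OO_b(D,\OO_b({\mathcal V}))$ and Cauchy's formula; you use the weak-holomorphy-plus-local-boundedness criterion, which is equivalent in spirit). However, there is a genuine gap.

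The problem is the parenthetical ``after shrinking $U_{ij}$ if needed, which is harmless for the statement.'' It is not harmless. To obtain uniform boundedness of $b\mapsto d_{ij}(b,\cdot)$ near $b_0$ you genuinely must pass to a finer cover ${\mathcal V}=(V_i)$ with $\cl(V_i)\subset U_i$, because the given $d_{ij}$ are only holomorphic on $B\times U_{ij}$ and may blow up near $\partial U_{ij}$. Once you shrink, the map $\phi$ lands in ${\mathcal Z}^1({\mathcal V})$ and $\Sigma_{\mathcal V}$ produces bounded holomorphic functions on $V_i$, \emph{not} on $U_i$. Your $\psi(b)$ therefore takes values in ${\mathcal C}^0({\mathcal V})$, and you obtain joint holomorphy of $f_i(b,z)$ only for $z\in V_i$. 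The lemma claims holomorphy on all of $B\times U_i$. The missing step is the extension from $V_i$ to $U_i$: for $z\in U_i$ choose $k$ with $z\in V_k$ and set $F_i(b,z)=\tilde f_k(b,z)+d_{ik}(b,z)$ (well defined by the cocycle condition, holomorphic because both summands are), and then verify $F_i=f_i$ by the injectivity of $\delta$ together with the normalization $f_\infty(\cdot,\infty)=0$. Without that extension-and-identification step, the proof does not reach the stated conclusion; this is precisely the remainder of the argument in the paper.
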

\begin{proof} We define the collection $(f_i)_{i\in I}$ by $f_i(b,z)=\phi_i^b(z)$, where
$$(\phi_i^b)_{i\in I}=\Sigma((z\mapsto d_{ij}(b,z))_{i,j\in I})\mbox{ for }b\in B$$
where $\Sigma$ is the operator of Lemma \reff{cousin}. Then the second statement of our Lemma
follows from \reff{cousin}. It is not clear, however, that the functions $f_i$ are holomorphic with repect to
both variables.

In order to prove this, we first choose an  open cover
$V_i$, $i\in I$, of $\PC$ such that for every $i$, the closure $\cl(V_i)$ is a subset of $U_i$.
The set $V_\infty$ can be chosen such that, additionnally, it contains
$\infty$.
Let $\tilde \Sigma$ denote the operator of Lemma \reff{cousin} applied to the cover ${\mathcal V}=(V_i)_{i\in I}$.

Now fix any $b_0\in B$ and choose $\rho>0$ such that $\cl(D(b_0,\rho))\subset B$. Then the collection
$(\tilde d_{ij})_{i,j\in I}$, $\tilde d_{ij}=d_{ij}\mid_{D(b_0,\rho)\times(V_i\cap V_j)}$, consists
of bounded holomorphic functions.
Then we use that for all open subsets $D\subset \CC^m$  the Banach space $\OO_b(D\times {\mathcal V},\CC)$ is canonically
isometrically isomorphic to
$\OO_b(D,\OO_b({\mathcal V},\CC))$ and that Cauchy's formula with respect to the
first variable commutes with the continuous linear operator $\tilde\Sigma$, applied with respect to the second variable.
We obtain that the collection $(\tilde f_i)_{i\in I}$ defined by $\tilde f_i(b,z)=\psi^b_i(z)$, 
$$(\psi_i^b)_{i\in I}=\tilde \Sigma((z\mapsto \tilde d_{ij}(b,z))_{i,j\in I})\mbox{ for }b\in D(b_0,\rho),$$
consists of bounded holomorphic functions on $D(b_0,\rho)\times V_i$, $i\in I$.

We define now a collection of holomorphic functions $(F_i)_{i\in I}$ on $D(b_0,\rho)\times U_i$ by
$F_i(b,z)=\tilde f_k(b,z)+d_{ik}(b,z)$ for $b\in D(b_0,\rho)$, $z\in U_i$ if $z\in V_k$ for some $k\in I$. 
Observe that here $k=i$ is allowed in which case $d_{ii}(b,z)=0$ and hence $F_i(b,z)=\tilde f_i(b,z)$.
Since $V_k$, $k\in I$, form a cover of $\PC$, we can always find some $k$ such that $z\in V_k$.
In that case $z\in U_i\cap V_k\subset U_{ik}$ and $F_i(b,z)$ is defined.
The cocycle condition and the definition of $\tilde f_k$, $k\in I$, imply that the definition
is independent of the choice of $k$ with $z\in V_k$.
In a similar way, we obtain that $F_i(b,z)-F_j(b,z)=d_{ij}(b,z)$ for all $b\in D(b_0,\rho)$ and $z\in U_{ij}$.

As $F_\infty(b,\infty)=\tilde f_\infty(b,\infty)=f_\infty(b,\infty)=0$ for all $b\in D(b_0,\infty)$, we obtain
that $(z\mapsto F_i(b,z))_{i\in I}=\Sigma((z\mapsto d_{ij}(b,z))_{i,j\in I})=(z\mapsto f_i(b,z))_{i\in I}$
for all $b\in D(b_0,\rho)$. Therefore all functions $f_i$ are holomorphic with respect to $(b,z)$.
%
%
%
%
\end{proof}

We will use a consequence of this Lemma for covers of the exceptional divisor in subsequent sections.
See Subsection \reff{normal} for notation.
\begin{lema}\labl{cover-E}Consider an open cover $U_j$, $j=0,\ldots ,N$ of the exceptional divisor
$E=\PC\times \{\bo\}$ in the blow-up variety $M$
. Then there exist a positive constant $C$ and an open cover
$\tilde U$,  $j=0,\ldots ,N$, of E with $\tilde U_j\subset U_j$, $j=0,\ldots ,N$, 
and the following property.

Given a collection of holomorphic functions $D_{i,j}:U_i\cap U_j\to\CC$, $i,j=0,\ldots ,N$
satisfying the cocycle condition $D_{i,j}(p)+ D_{j,k}(p)= D_{i,k}(p)$ for all $i,j,k$ and
$p\in U_i\cap U_j\cap U_k$ there exists a collection
of holomorphic functions $F_i:\tilde U_i\to\CC$ such that ${D_{ij}}(p)=F_i(p)-F_j(p)$
for all $i,j\in\{0,\ldots ,N\}$, $p\in \tilde U_i\cap \tilde U_j$, and
$$\max_{j\in\{0,\ldots ,N\}}\,\sup_{p\in \tilde U_j}\norm{F_j(p)}\, \leq\, C\,
\max_{i,j\in\{0,\ldots ,N\}}\,\sup_{p\in U_i\cap U_j}\norm{D_{i,j}(p)}\leq\infty.$$
\end{lema}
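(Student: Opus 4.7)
The strategy is to emulate the Banach-space argument of Lemma \reff{cousin}, with $\PC$ replaced by the exceptional divisor $E \subset M$, and reducing to Lemma \reff{rem-cousin} for the construction of solutions.

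First, I would identify $E$ with $\PC$ via the first projection $\sigma : M \to \PC$, $([u_1,u_2],\bt)\mapsto[u_1,u_2]$, which restricts to a bijection on $E$. Setting $V_j := \sigma(U_j \cap E)$ yields an open cover of $\PC$. By normality of $\PC$, find an open cover $\{\tilde V_j\}_{j=0}^N$ with $\overline{\tilde V_j} \subset V_j$. In each chart $\phi_\xi$ (or $\phi_\infty$) of Subsection \reff{normal}, the divisor $E$ appears as the $v_1$-axis with transverse coordinates $(v_2, \bv'')$. Using compactness of each $\overline{\tilde V_j}$ and openness of $U_j$, choose a uniform transverse polydisk radius $\rho > 0$ such that the tubular set
$$\tilde U_j := \{p \in M \mid \sigma(p) \in \tilde V_j,\ \text{transverse coordinate of }p\text{ has norm }<\rho \text{ in any chart containing }\sigma(p)\}$$
lies in $U_j$. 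These $\tilde U_j$ are open in $M$, cover $E$, and satisfy $\tilde U_j \subset U_j$.

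Next, introduce the Banach space $\mathcal{C}^0$ of collections $(F_i)$ of bounded holomorphic $F_i : \tilde U_i \to \CC$ (with a normalization such as $F_0(p_0)=0$ for a fixed $p_0 \in \tilde U_0 \cap E$), and the Banach space $\mathcal{Z}^1$ of bounded cocycles $(D_{i,j})$ on $U_i \cap U_j$, both equipped with the maximum norm. The boundary map $\delta: \mathcal{C}^0 \to \mathcal{Z}^1$, $(F_i)\mapsto(F_i-F_j)$, is linear and continuous. Injectivity follows as in Lemma \reff{cousin}: if $\delta F=0$, the $F_i$ glue to a bounded holomorphic function on $\bigcup\tilde U_i$ whose restriction to $E\cong\PC$ is constant (Liouville), hence zero (normalization). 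Surjectivity is the solvability of the Cousin problem, which I would obtain by applying Lemma \reff{rem-cousin} in the two standard affine charts $W_0$, $W_\infty$ of $\PC$ covering $\tilde U := \bigcup \tilde U_j$ (where $\pi : M \to \PC$ trivializes as a product with a transverse polydisk, which plays the role of the parameter space $B$), and then patching the two local splittings via an auxiliary Cousin construction on the overlap $W_0 \cap W_\infty$. The open mapping theorem then gives continuity of $\delta^{-1}$, so one may take $C := \|\delta^{-1}\|$.

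The main obstacle is the non-triviality of $\pi : M \to \PC$ as a bundle: near $E$ it is the total space of $\OO(-1) \oplus \CC^{d-2}$, so $\tilde U$ is \emph{not} globally a product $\PC \times B$, and Lemma \reff{rem-cousin} cannot be invoked in a single step. The patching argument in the surjectivity step is what handles this, but the care needed is in the bookkeeping of norms: each local application of Lemma \reff{rem-cousin} produces a bound involving $\|\Sigma\|$, and the auxiliary patching introduces a further constant; their product yields the statement's $C$.
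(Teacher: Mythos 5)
There is a genuine gap, and it sits at the heart of your Banach-space scheme. Your injectivity argument for $\delta$ fails: if $\delta F=0$, the $F_i$ glue to a bounded holomorphic function on the tube $\tilde U=\bigcup\tilde U_i$, and Liouville only forces its \emph{restriction to} $E\cong\PC$ to be constant — not the function itself. The kernel of $\delta$ is huge (for instance $p\mapsto t_1(p)$, or any bounded $g\circ b$ vanishing at your base point $p_0\in E$), so the normalization $F_0(p_0)=0$ does not make $\delta$ injective and the bounded-inverse theorem cannot be invoked. (One could try to retreat to the open mapping theorem for a mere surjection, but your proposal also has a domain mismatch: as defined, $(F_i)\mapsto(F_i-F_j)$ lands in cocycles on the $\tilde U_i\cap\tilde U_j$, not on the $U_i\cap U_j$; and if you put \emph{both} spaces on the shrunk cover, surjectivity with sup-norm control is precisely the hard statement — the whole point of the lemma is that the data live on the large intersections and the bounded solution only on the shrunk sets.)

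Moreover, the step you delegate to "an auxiliary Cousin construction on the overlap $W_0\cap W_\infty$" is exactly where the real work lies and is not a consequence of Lemma \reff{rem-cousin}: after solving in the two chart tubes, the discrepancy is a single holomorphic function on a tube over $\CC^*$ that must be split, \emph{with uniform bounds}, into pieces holomorphic on the two chart tubes — a two-set instance of the very problem you are proving, on a neighborhood of $\PC$ whose normal direction contains $\OO(-1)$, so it is not a product over a parameter space and Lemma \reff{cousin} does not apply off the shelf. The paper avoids both difficulties by a different and more elementary route: after arranging that only $U_0$ contains $(\infty,\bo)$, it uses the projection $R(\xi,\bt)=(\xi,(t_1,\bt''))$, which is biholomorphic on the chart $M_0$ (since $t_2=\xi t_1$ there), to transport the whole cocycle to a genuine product $\PC\times D'(\bo,\rho)$; a \emph{single} application of the parametric Lemma \reff{rem-cousin} then yields the $\tilde F_j$ with the bound $C$ from Lemma \reff{cousin}, and the solution is pulled back via $R$ (intersecting with $U_0$ for the chart at infinity). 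If you want to keep your approach, you must either carry out the overlap splitting with explicit bounds (e.g.\ a Laurent-series argument in the $\PC$-variable, tracking the $\OO(-1)$ twist) or adopt the paper's projection trick; as written, the argument is circular at its key step and the Banach-space shortcut does not close it.
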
\begin{proof}
Without loss in generality, we can assume that $(\infty,\bo)\in U_0$. Then it is sufficient to
prove the lemma under the additional assumption that $(\infty,\bo)$ is not an element of
the other $U_k$.

Consider the projection $R:M\to\PC\times \CC^{d-1}$ defined by
$R((\xi,\bt))=(\xi,(t_1,\bt''))$.
Its restriction to the chart $M_0$ is an analytic diffeomorphism onto its image. 
We will use the geodesic distance $d$ on $\PC\simeq S^2$ and denote for $\xi\in\PC$, $\mu>0$
by $B(\xi,\mu)$ the set of all $\zeta\in\PC$ with $d(\xi,\zeta)<\mu$.
Also let $V_i\subset\PC$ denote the open (in $\PC$) set such that $V_i\times\{\bo\}=U_i\cap E$.
As in the proof of Lemma \reff{rem-cousin} we choose an open cover
$\tilde V_k$, $k=0,\ldots,N$, of $\PC$ such that for every $k$, the closure $\cl(\tilde V_k)$ is a subset of $V_k$.

Fix now some $k\in\{1,\ldots,N\}$. By assumption, for all $\xi\in V_k$, there exists $\rho_\xi>0$
such that $B(\xi,\rho_\xi)\times D'(\bo,\rho_\xi)\subset R(U_k)$ \footnote{Here it is used that $(\infty,\bo)\not\in U_k$
and hence $\infty\not\in V_k$.}. By the compactness of
$\cl(\tilde V_k)$, a finite number of $B(\xi,\rho_\xi)$, $\xi\in\cl(\tilde V_k)$, is sufficient
to cover  $\cl(\tilde V_k)$. Taking the minimum of these $\rho_\xi$ implies that
there exists  $\rho^{(k)}>0$ such that $\tilde V_k\times D'(\bo,\rho^{(k)})\subset R(U_k)$.

In a similar manner, we obtain $\rho_0^{(k)}$, $k=1,\ldots,N$ such that 
$(\tilde V_k\cap\tilde V_0)\times D'(\bo,\rho_0^{(k)})\subset R(U_k\cap U_0)$.
Then let $\rho>0$ denote the minimum of the $2N$ numbers  $\rho^{(k)}$, $\rho_0^{(k)}$, $k=1,\ldots,N$.
It has the property that for $k=1,\ldots,N$ we have
$\tilde V_{k}\times D'(\bo,\rho)\subset R(U_{k})$
and $(\tilde V_0\cap\tilde V_{k})\times D'(\bo,\rho)\subset R(U_{0}\cap U_k)$.

Therefore we can define bounded holomorphic
functions $\tilde D_{j,k}:= D_{j,k} \circ R^{-1}$ on
$(\tilde V_{j}\cap\tilde V_{k})\times D'(\bo,\rho)$ for
$j,k\in\{0,\ldots ,N\}$ not both equal to $0$. For completeness put
$\tilde D_{0,0}=0$ on $\tilde V_0\times D'(\bo,\rho)$.
Now we can apply Lemma  \reff{rem-cousin} and obtain a family of bounded holomorphic
functions $\tilde F_{j}:\tilde V_{j}\times D'(\bo,\rho)\to\CC$, $j=0,\ldots ,N$, such that
$\tilde D_{j,k}=\tilde F_{j}-\tilde F_{k}$ for $j,k=0,\ldots ,N$.
We have, moreover, for all $j\in\{0,\ldots ,N\}$
$$\norma{\tilde F_{j}}_\infty\leq C
\max\{\norma{ \tilde D_{\ell,k}}_\infty\mid \ell,k\in\{0,\ldots ,N\}\}$$
where $C$ denotes the constant associated to the cover $\{\tilde V_{j}\}_{j=0,\ldots ,N}$
in Lemma \reff{cousin}.

Now put $\tilde U_j=R^{-1}(\tilde V_{j}\times D'(\bo,\rho))$ for $j\in\{1,\ldots,N\}$,
$\tilde U_0=R^{-1}(\tilde V_{0}\times D'(\bo,\rho))\cap U_0$ 
and $F_j(p)=\tilde F_j(R(p))$ for $j\in\{0,\ldots ,N\}$, $p\in\tilde U_j$.
Then $\tilde U_j\cap E=\tilde V_j\times \{\bo\}$ for $j=0,\ldots,N$ and hence $\tilde U_j$,
$j=0,\ldots,N$, form
an open cover of $E$. By construction, we have $\tilde U_j\subset U_j$ for $j=0,\ldots,N$
and 
$F_j(p)-F_k(p)=D_{jk}(p)$ for $p\in\tilde U_j\cap\tilde U_j$.

\end{proof}\begin{nota}\labl{rem-cover}
The above Lemma can be extended to collections $(D_{ij})_{i,j=0,\ldots,N}$ depending holomorphically
upon parameters, that is $D_{ij}$ are holomorphic on $T\times U_{ij}$, $T$ an open subset of
$\CC^k$. We obtain collections $(F_i)_{i}$ of holomorphic functions on $T\times \tilde U_i$
and for $t\in T$ the estimates
$$\max_{j\in\{0,\ldots ,N\}}\,\sup_{p\in \tilde U_j}\norm{F_j(t,p)}\, \leq\, C\,
\max_{i,j\in\{0,\ldots ,N\}}\,\sup_{p\in U_i\cap U_j}\norm{D_{i,j}(t,p)}\leq\infty.$$
The proof remains essentially the same, one just has to apply Lemma \reff{rem-cousin}
to $\tilde V_j\times D'(\bo,\rho)\times T$ instead of $\tilde V_j\times D'(\bo,\rho)$, $j=0,\ldots,N.$
Details are left to the reader.
\end{nota}

\section{Classical and Monomial asymptotics and summability}\labl{monomial}
Here we recall the notions and main properties of classical Poincar\'e- and Gevrey asymptotics and
summability in one variable (see, for instance, \cite{R2,Sibuyalibro,Balserlibro,Mireillenotas}) and then the corresponding
theory for monomial asymptotics of \cite{CDMS}. Our presentation follows essentially \cite{CDMS};
the theory of monomial asymptotic expansion is presented for $d\geq2$ variables instead of
2 and is rearranged and abbreviated.

\subsection{Asymptotics in one Variable}
Let $E$ be a complex
Banach space, with norm $\| \cdot \|_E$ and ${\hat f}(x)=\sum a_nx^n \in E\lf x\rf $. A (open)
sector in $\CC$ is a set $V(a,b;r)= \{ x\in \CC \mid a<\arg
x<b,\ 0<|x|<r \}$. We will omit frequently $a$, $b$, $r$, and
speak of a sector $V$. If $f:V\rightarrow E$ is holomorphic, $f$
is said to have $\hat{f}$ as an asymptotic expansion at the
origin if for each $N\in \NN$, there exists
$C(N)>0$ such that
\begin{equation}\labl{onevar}
\norma{f(x)-\sum_{n=0}^{N-1} a_nx^n}_E \leq C(N)\cdot |x|^N
\text{ in }V.
\end{equation}
The asymptotic expansion is $s$-Gevrey if, moreover, $C(N)$
can be chosen as \linebreak
$C(N)=K\cdot A^N \cdot N!^s$, with
constants $K$, $A$. We will write
$f\sim {\hat f}$ and $f\sim_s \hat{f}$ in the $s$-Gevrey case,
respectively.
Observe that $f\sim_s\hat f$ implies that the formal series $\hat f$
is $s$-Gevrey, i.e.\ there exist $C,A>0$ such that
$|a_n|\leq C A^n n!^s$ for all $n\in\NN$. The set of all
such formal series will be denoted by $E\lf x\rf _s$.

Asymptotic expansions are unique, and respect algebraic
operations and differentiation. The so called Borel-Ritt-Gevrey theorem and
Watson's lemma are of great importance. The following result
collects them.

\begin{teorema}\labl{borel-ritt-watson}
Let $V=V(a,b;r)$, ${{\hat f}}\in E\lf x\rf _s$, and $s>0$. Then:
\be
\item If $b-a\leq s\pi$, there exists $f\in {\mathcal O}(V,E)$
such that $f\sim_s {\hat f}$.
\item If $f\in {\mathcal O}(V,E)$ is such that $f\sim_s 0$, then
there are positive constants such that
$$
\norma{f(x)}_E\leq C\cdot \exp
\left({-{A}/{|x|^{1/s}}}\right).
$$
\item If $b-a>s\pi$ and $f_1,f_2\in {\mathcal O} (V,E)$
have $\hat f$ as their $s$-Gevrey asymptotic expansion,
then $f_1=f_2$. \ee
\end{teorema}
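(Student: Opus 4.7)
The plan is to treat the three parts separately by classical techniques, all of which pass through the Banach-space setting without change (the norm $\norma{\cdot}_E$ replaces the modulus; all arguments are based on norm estimates, contour integrals, and scalarisation via the dual $E^*$).

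For part (1) (Borel-Ritt-Gevrey), I would produce $f$ on $V=V(a,b;r)$ as a series with cut-off factors
$$f(x)=\sum_{n=0}^\infty a_n\, x^n\, \gamma_n(x),$$
where each $\gamma_n$ is holomorphic on a sector containing $V$, tends to $1$ as $x\to 0$ along $V$, and suppresses the divergence when $|x|$ is not small. A standard choice is $\gamma_n(x)=1-\exp\bigl(-(c_n/x)^{1/s}\bigr)$ for a suitable determination of the fractional power; since $b-a\le s\pi$ one can rotate $V$ so that $\re(c_n/x)^{1/s}\ge \kappa|c_n/x|^{1/s}$ on $V$ for some $\kappa>0$. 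Choosing $c_n$ of size proportional to $\bigl(n!^s/\|a_n\|_E\bigr)^{\!1}$ ensures term-wise convergence. To verify $f\sim_s \hat f$, I split
$$f(x)-\sum_{n=0}^{N-1}a_n\,x^n=\sum_{n=0}^{N-1}a_n x^n(\gamma_n(x)-1)+\sum_{n\ge N}a_n x^n \gamma_n(x);$$
the first sum is exponentially small because $\gamma_n-1=-\exp(-(c_n/x)^{1/s})$, while the tail, bounded using $\norma{a_n}_E\le CA^n n!^s$, yields an estimate of the form $K A^N N!^s |x|^N$. An alternative is to construct $f$ by a truncated Laplace transform of the $s$-Borel series $\sum \frac{a_n}{\Gamma(1+ns)}\xi^n$, which converges near $\xi=0$ by the Gevrey assumption.

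For part (2), assume $f\sim_s 0$. Then, for every $N\in\NN$, $\norma{f(x)}_E\le K A^N N!^s|x|^N$ on $V$. For fixed $x\in V$, minimising $A^N N!^s |x|^N$ over $N$ using Stirling's formula gives an optimal choice $N\sim (eA|x|)^{-1/s}$; substituting this into the bound produces $\norma{f(x)}_E \le C\exp\bigl(-A'/|x|^{1/s}\bigr)$ with suitable constants $C,A'>0$, as required.

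For part (3) (Watson), set $g=f_1-f_2$, which satisfies $g\sim_s 0$ on $V$. By part (2), $g$ has Gevrey exponential decay of order $1/s$ on $V$. Since $b-a>s\pi$, one can apply a Phragm\'en--Lindel\"of argument on a closed subsector of opening still strictly greater than $s\pi$: for each $\varphi\in E^*$, the scalar function $\varphi\circ g$ is holomorphic, bounded, and exponentially small of order $1/s$ on a sector of opening $>s\pi$, hence identically zero; as this holds for every $\varphi$, $g\equiv 0$. The main technical point is the uniformity in $N$ of the Gevrey-rate constant in part (1), which forces a careful calibration of the parameters $c_n$; the other two parts are essentially a Stirling-type optimisation and a routine Phragm\'en--Lindel\"of argument, both standard in the one-variable theory.
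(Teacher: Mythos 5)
You should know that the paper does not prove Theorem \ref{borel-ritt-watson}: it states it as a recollection of classical facts and refers to the standard references (\cite{R2}, \cite{Sibuyalibro}, \cite{Balserlibro}, \cite{Mireillenotas}), so there is no paper proof to compare against and your proposal must stand on its own. Parts (2) and (3) are essentially correct as written: the Stirling optimisation giving $\norma{f(x)}_E\le C\exp(-A'/|x|^{1/s})$ is exactly the standard argument, and scalarising $g=f_1-f_2$ through functionals $\varphi\in E^*$ and then applying Phragm\'en--Lindel\"of on a sector at the origin of opening $>s\pi$ is precisely how Watson's lemma is carried over to the Banach-valued setting.

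For part (1) the strategy with cut-off factors $\gamma_n(x)=1-\exp\bigl(-(c_n/x)^{1/s}\bigr)$ is the right one, but your calibration $c_n\asymp n!^s/\norma{a_n}_E$ does not make the series converge. From $|1-e^{-z}|\le|z|$ when $\re z\ge 0$ one gets $|\gamma_n(x)|\le c_n^{1/s}|x|^{-1/s}$, so the $n$-th term is bounded by $\norma{a_n}_E\,c_n^{1/s}\,|x|^{n-1/s}$; with your choice the prefactor is $\norma{a_n}_E\,c_n^{1/s}=\norma{a_n}_E^{1-1/s}\,n!$, which grows factorially (for $s=1$ it is exactly $n!$), so the series diverges at every $x\neq 0$. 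The calibration that works is $c_n\asymp\norma{a_n}_E^{-s}$, possibly damped by an extra geometric factor $q^{ns}$, which gives $\norma{a_n}_E\,c_n^{1/s}\le q^n$. Even after fixing this, verifying the Gevrey remainder bound $KA^NN!^s|x|^N$ for both the finite sum $\sum_{n<N}a_nx^n(\gamma_n(x)-1)$ and the tail is not immediate; one typically needs the elementary inequality $e^{-t}\le C_m\,t^{-m}$ to convert the exponentials into powers of $|x|$, and the choice of $c_n$ must be compatible with that step. Given these technicalities, the alternative you sketch at the end — truncated Laplace transform of the formal Borel series $\sum a_n\xi^n/\Gamma(1+ns)$, which converges near $\xi=0$ by the Gevrey hypothesis — is the cleaner route (it is the one taken in \cite{Balserlibro}), and I would recommend developing that one instead.
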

Because of the above theorem, a function   $f\in {\mathcal O} (V;E)$
is uniquely determined by its $s$-Gevrey asymptotic expansion $\hat f$,
provided that the opening of $V$ is larger than
$s\pi$. If such a function exists for a formal series $\hat{f}$,
then it is said to be $k$-summable in $V$ with $k=1/s$
and $f$ is called the $k$-sum of $\hat f$ on $V$.
More precisely
\begin{defin}\labl{summone}Let $s>0$, $k=1/s$ and
${{\hat f}}\in E\lf x\rf _s$.
\be
\item The formal series $\hat f$ is
called $k$-summable on $V=V(a,b;r)$, if $b-a>s\pi$ and there
exists a function $f\in \OO(V;E)$ such that $f\sim_s\hat f$.
The uniquely determined function $f$ is called the $k$-sum of
$\hat f$ in the direction $\theta$.
\item
The formal series $\hat f$ is called $k$-summable in the direction
$\theta\in\RR$, if there exist $\delta,r>0$ such that $\hat{f}$
is
$k$-summable on the sector
$V(\theta-s\frac\pi2-\delta,\theta+s\frac\pi2+\delta;r)$. \item
The formal series $\hat f$ is simply called $k$-summable, if it is
$k$-summable in every direction $\theta\in\RR$ with finitely many
exceptions mod $2\pi$.\ee
\end{defin}

The above notion of $k$-summability in a direction $\theta$
does not indicate how to obtain a sum from a given series;
here the following characterization of $k$-summability helps.
\begin{propo}\labl{laplace}
Given $\hat f(x)=\sum a_n x^n\in E\lf x\rf _s$, it is $k$-summability
in a direction $\theta$ if and only if the following statements hold.
\benot\item Its formal Borel transform $g(t)=\sum {a_n}t^n/{\Gamma(1+n/k)}$ is 
analytic in a neighborhood of the origin,
\item the function $g$ can be continued analytically in some infinite sector 
$S=V(\theta-\delta,\theta+\delta;\infty)$ containing the ray $\arg t=\theta$, 
\item it has exponential growth there, i.e.\ there are
there are positive constants such that
$$
\norma{g(t)}_E\leq C\cdot \exp\left({A}/{|t|^{k}}\right)
$$
and hence the Laplace integral
$f (x) =
k\,x^{-k} \int_{\arg t=\tilde \theta} e^{-{t^k}/{x ^k}}
\ {g} (t)\,{t^{k-1}} dt$ defining the
sum of $\hat f$ converges for $x$ in a certain sector
$V=V(\theta-\frac\pi{2k}-\frac{\tilde\delta} k,\theta+\frac\pi{2k}+\frac{\tilde\delta} k;r),$
$0<\tilde\delta<\delta$, and suitably chosen $\tilde\theta$ close to $\theta$.
It satisfies $f\sim_s\hat f$ on $V$, $s=1/k$.
\ee\end{propo}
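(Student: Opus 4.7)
I would prove both implications through the Borel--Laplace duality, using that the formal Borel transform $\hat B_k:\sum a_n x^n\mapsto\sum a_n t^n/\Gamma(1+n/k)$ sends $s$-Gevrey series (with $s=1/k$) bijectively onto convergent series: by Stirling, $\norma{a_n}_E\leq CA^nn!^s$ is equivalent to $\norma{a_n}_E/\Gamma(1+n/k)\leq C'(A')^n$. In particular, condition (i) is automatic in both directions given the standing hypothesis $\hat f\in E\lf x\rf_s$.

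For the sufficiency direction, I assume (i)--(iii), fix an auxiliary direction $\tilde\theta$ close to $\theta$ inside $S$, and parametrise $t=se^{i\tilde\theta}$ in
\[ f(x)=k\,x^{-k}\int_{\arg t=\tilde\theta}e^{-t^k/x^k}\,g(t)\,t^{k-1}\,dt. \]
The modulus of the integrand is bounded by a constant times $s^{k-1}\exp\bigl(-s^k[\cos(k(\tilde\theta-\arg x))/|x|^k-A]\bigr)$, so for $|x|$ small the integral converges absolutely on a sector in $x$ of opening slightly below $\pi/k=s\pi$ bisected by $\tilde\theta$. Cauchy's theorem within $S$ makes $f$ independent of the auxiliary $\tilde\theta$, so letting $\tilde\theta$ vary across its admissible range enlarges the domain to the sector $V$ of the statement. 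To establish $f\sim_s\hat f$, I split $g(t)=\sum_{n<N}a_n t^n/\Gamma(1+n/k)+t^N R_N(t)$: the polynomial part integrates exactly to $\sum_{n<N}a_n x^n$ via the identity $k\int_0^\infty e^{-u^k}u^{n+k-1}du=\Gamma(1+n/k)$ combined with contour rotation, and Cauchy's inequality applied to $R_N$ on a circle of radius optimised in $N$ produces the $s$-Gevrey remainder bound of the form $KA^NN!^s|x|^N$.

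For the necessity direction, I assume $f\in\OO(V;E)$ with $V$ of opening exceeding $s\pi$ and $f\sim_s\hat f$, and define a candidate Borel transform by the inverse Laplace-type integral
\[ g(t)=\frac{1}{2\pi i}\int_\gamma e^{t^k/x^k}f(x)\,\frac{k\,dx}{x^{k+1}}, \]
where $\gamma$ is a Hankel-type contour in $V$ encircling the origin with infinite arms along rays strictly inside $V$. For $|t|$ small and $\arg t$ close to the bisector of $V$, termwise application of this integral to the partial sum $\sum_{n<N}a_n x^n$ reduces, after the change of variable $w=t^k/x^k$, to Hankel's integral for $1/\Gamma$, while the Gevrey bounds on $f-\sum_{n<N}a_n x^n$ control the remainder; this recovers $g(t)=\sum a_n t^n/\Gamma(1+n/k)$ and identifies $g$ with $\hat B_k\hat f$. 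Deforming the two arms of $\gamma$ toward the edges of $V$ and applying Theorem \reff{borel-ritt-watson}(2) to replace $f$ on each arm by a function of size $\exp(-A'/|x|^k)$ prolongs $g$ analytically to a sector $S$ around $\theta$ (giving (ii)), and optimising the remaining contribution in $|x|$ yields $\norma{g(t)}_E\leq C\exp(A''|t|^k)$ (giving (iii)). The main obstacle is coordinating the contour deformation, the truncation level $N$, and the slack parameters so that the excess opening of $V$ beyond $s\pi$ translates precisely into the opening $2\delta$ of $S$; once $g$ is in hand, the stated Laplace formula reproduces $f$ by the uniqueness assertion in Theorem \reff{borel-ritt-watson}(3).
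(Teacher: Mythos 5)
The paper states Proposition \reff{laplace} without proof, recalling it as the classical Borel--Laplace characterization of $k$-summability and citing \cite{R2,Sibuyalibro,Balserlibro,Mireillenotas}. Your sketch reconstructs the standard proof from those sources, and its overall structure is sound: formal Borel transform and Stirling for condition (i), Laplace integration for sufficiency, and a $k$-Borel contour integral together with Watson's lemma for necessity.

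Two imprecisions should be repaired. First, in the sufficiency direction the factor $N!^s\approx\Gamma(1+N/k)$ in the remainder estimate is not produced by ``Cauchy's inequality on a circle of radius optimised in $N$''. The remainder $R_N(t)$ is bounded by $KA^N$ near $0$ (geometric tail estimate from the Gevrey coefficient bounds) and by $KA^N\exp(B|t|^k)$ on the arms of $S$ (from the exponential growth of $g$); the factor $\Gamma(1+N/k)$ only appears afterwards, when $t^NR_N(t)$ is inserted into the Laplace kernel via $\int_0^\infty e^{-c\,s^k}s^{N+k-1}\,ds=\Gamma(1+N/k)/(k\,c^{1+N/k})$. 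Second, in the necessity direction $f$ is given only on the \emph{finite} sector $V=V(a,b;r)$, so the $k$-Borel contour $\gamma$ cannot ``encircle the origin with infinite arms''; it is the boundary of a closed subsector $V(a',b';r')\subset V$ with $b'-a'>\pi/k$, traversed from $0$ outward along $\arg x=b'$, around the arc $|x|=r'$, and back into $0$ along $\arg x=a'$. The arms become infinite only after the change of variable $w=x^{-k}$. With that correction the deformation toward the edges of $V$ and the appeal to Theorem \reff{borel-ritt-watson}(2) work as you describe, and the excess opening of $V$ over $s\pi$ becomes the aperture $2\delta$ of $S$. Finally, note the paper's displayed growth bound $\exp(A/|t|^k)$ is a typo for the intended $\exp(A|t|^k)$, which you have tacitly corrected.
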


We recall also the very useful characterization
of functions having an $s$-Gevrey asymptotic expansion
due to J.P.\ Ramis and Y.\ Sibuya
(\cite{R2}, \cite{Si1}, \cite{R1}).

\begin{teorema}\labl{ramissibuya-classic}
Suppose that the sectors $V_j= V(a_j,b_j;r), 1\leq j\leq m$, form
a cover of the punctured disk $D(0;r)$.
Given $f_j: V_j \rightarrow E$ bounded and analytic, assume that
there is a constant
$\gamma>0$ such that
\begin{equation}\labl{26a}
\norma{f_{j_1}(x)-f_{j_2}(x)}_E =
O(\exp(-\gamma/|x|^{1/s}))
\end{equation}
for $x\in V_{j_1}\cap V_{j_2}$, whenever this intersection is non-empty.

Then the functions $f_j$ have common $s$-Gevrey asymptotic
expansions.

Conversely, if a function $f:V\rightarrow E$ having an $s$-Gevrey asymptotic
expansion is given, then a cover $V_j, 1\leq j\leq m$ and functions
$f_j: V_j \rightarrow E$ can be found that satisfy estimates
like (\reff{26a})  and $f=f_1$.
\end{teorema}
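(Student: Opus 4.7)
\textbf{Plan for Theorem \reff{ramissibuya-classic}.}
I would treat the two directions separately, the first being the substantive one.

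For the direct implication, the natural strategy is a Cousin-type decomposition with Gevrey control, followed by a gluing argument. First I would check that the differences $d_{j_1 j_2}(x):=f_{j_1}(x)-f_{j_2}(x)$ form a holomorphic $1$-cocycle on the cover $\{V_j\}$ of the punctured disk $D(0,r)\setminus\{0\}$, each satisfying $\|d_{j_1 j_2}(x)\|_E\le C\exp(-\gamma/|x|^{1/s})$ on its domain. The key step would then be to solve the associated Cousin problem on the punctured disk by producing bounded holomorphic $g_j:V_j\to E$ such that $d_{j_1 j_2}=g_{j_1}-g_{j_2}$ on each nonempty $V_{j_1}\cap V_{j_2}$ and, crucially, each $g_j$ still satisfies an exponential estimate $\|g_j(x)\|_E\le C'\exp(-\gamma'/|x|^{1/s})$ for some $\gamma'>0$ (possibly smaller than $\gamma$, and possibly on slightly narrower subsectors $V_j'\subset V_j$ still covering the punctured disk). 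This can be done explicitly by Cauchy-type integrals along well-chosen rays issuing from the origin that separate the overlaps; the exponential smallness of the integrand in a narrow neighbourhood of those rays yields the required Gevrey decay of $g_j$ after an elementary estimate of the kernel $1/(\zeta-x)$.

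Once the $g_j$ are constructed, I would set $F_j:=f_j-g_j$ on $V_j$. By construction $F_{j_1}=F_{j_2}$ on $V_{j_1}\cap V_{j_2}$, so the $F_j$ glue to a single bounded holomorphic function $F$ on $D(0,r)\setminus\{0\}$; Riemann's removable singularity theorem extends $F$ to a holomorphic $F:D(0,r')\to E$ for any $r'<r$. Its Taylor series at the origin, $\hat f(x)=\sum_{n\ge 0}a_nx^n$, is a candidate for the common asymptotic expansion. Cauchy's estimates on circles $|x|=\rho$ combined with the $1/s$-exponential decay of each $g_j$ (applied to a judicious choice of $\rho$ depending on $n$, typically $\rho\sim n^{-s}$) will give $\|a_n\|_E\le K A^n n!^s$, proving $\hat f\in E\lf x\rf _s$. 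To obtain $f_j\sim_s\hat f$ on (a subsector of) $V_j$, I would write $f_j-\sum_{n<N}a_nx^n=(F-\sum_{n<N}a_nx^n)+g_j$, bound the first summand by $N$-th order Taylor remainder applied to $F$ and the second by its exponential estimate; optimizing the bound in $N$ gives Gevrey-$s$ estimates uniform in $N$.

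For the converse, let $f:V\to E$ have $f\sim_s\hat f$ on $V=V(a,b;r)$. I would choose a finite family of sectors $V_2,\ldots,V_m$ of openings at most $s\pi$ such that $V_1:=V$ together with $V_2,\ldots,V_m$ covers $D(0,r)\setminus\{0\}$, apply the Borel-Ritt-Gevrey part of Theorem \reff{borel-ritt-watson} on each $V_j$, $j\ge 2$, to obtain $f_j\in\OO(V_j,E)$ with $f_j\sim_s\hat f$, and set $f_1:=f$. On each nonempty overlap $V_{j_1}\cap V_{j_2}$ the difference $f_{j_1}-f_{j_2}$ has zero $s$-Gevrey asymptotic expansion, so part~(2) of Theorem \reff{borel-ritt-watson} yields exactly the estimate~(\reff{26a}) required.

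The hard part will be the Cousin decomposition with preserved exponential decay: one must ensure that the contour integrals defining $g_j$ are convergent, that the Gevrey rate $\gamma'$ comes out positive, and that the resulting $g_j$ inherit the exponential smallness. This is the only nontrivial analytic ingredient, and it forces the (standard) reduction to slightly narrower subsectors $V_j'\subset V_j$, which is harmless since these still cover a punctured disk.
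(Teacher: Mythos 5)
The paper does not actually prove Theorem \reff{ramissibuya-classic}; it is quoted as a classical result of Ramis and Sibuya, so your attempt has to be judged on its own merits rather than against an in-paper argument.

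There is a genuine gap, and it sits exactly at the step you yourself flag as the crucial one. You claim that the exponentially small cocycle $d_{j_1j_2}=f_{j_1}-f_{j_2}$ can be split as $d_{j_1j_2}=g_{j_1}-g_{j_2}$ with each $g_j$ \emph{still} satisfying $\norma{g_j(x)}_E\leq C'\exp(-\gamma'/\norm{x}^{1/s})$. This is false in general, and your own gluing argument shows why it cannot be true: if such flat $g_j$ existed, then $F_j=f_j-g_j$ would glue to a bounded holomorphic $F$ on the punctured disk, hence holomorphic at $0$, and each $f_j=F+g_j$ would be exponentially close to the \emph{convergent} Taylor series of $F$, so the common $s$-Gevrey expansion would always be convergent. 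But the theorem is precisely designed for divergent expansions: for the Euler series $\hat f(x)=\sum_{n\geq0}n!\,(-x)^{n+1}$, the two sectorial Borel sums cover the punctured disk, differ by a multiple of $e^{-1/x}$ on one overlap, and have the divergent $\hat f$ as common Gevrey-$1$ expansion. What the Cauchy--Heine integrals along rays in the overlaps actually give is weaker but sufficient: each $g_j$ is bounded, the differences $g_{j_1}-g_{j_2}$ reproduce the cocycle, and each $g_j$ admits an $s$-Gevrey asymptotic expansion whose coefficients are the moments $\frac{1}{2\pi i}\int\zeta^{-n-1}d_{\cdot}(\zeta)\,d\zeta$; these satisfy $n!^s$-type bounds \emph{because} the cocycle is exponentially small of order $1/s$, and all the $g_j$ share the same expansion $\hat g$ since they differ by flat functions. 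With that correction your scheme works: $F_j=f_j-g_j$ glue to $F$ holomorphic at $0$ and $f_j\sim_s J(F)+\hat g$ on $V_j$; in particular your idea of extracting Gevrey bounds on the coefficients by Cauchy estimates at radius $\rho\sim n^{-s}$ is not needed for $F$ (whose coefficients are geometric) but is essentially how one bounds the moments of the cocycle. Your converse direction, via Borel--Ritt--Gevrey on subsectors of opening at most $s\pi$ together with the flatness estimate of Theorem \reff{borel-ritt-watson}\,(2), is correct and is the standard argument.
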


Such a family $f_1,\ldots ,f_m$ is sometimes called a {\em
$k$-precise quasi-function}.

In \cite{Si1}, the following complement of the above theorem can be found
\begin{teorema}\labl{compl-classic}
Suppose that the sectors $V_j= V(a_j,b_j;r), 1\leq j\leq m$, form
a cover of the punctured disk $D(0;r)$.
For couples $(j_1,j_2)$ with $V_{j_1}\cap V_{j_2}\neq\emptyset$, let
holomorphic $d_{j_1,j_2}:V_{j_1}\cap V_{j_2} \rightarrow E$ be given
that satisfy the cocycle condition
$d_{j_1,j_2}+d_{j_2,j_3}=d_{j_1,j_3}$ whenever $V_{j_1}\cap V_{j_2}\cap V_{j_3}\neq\emptyset$
and estimates
\begin{equation}
\norma{d_{j_1,j_2}(x)}_E = O(\exp(-\gamma/|x|^{1/s}))
\end{equation}
for $j_1,j_2\in\{1,\ldots ,m\}$ and $x\in V_{j_1}\cap V_{j_2}$ with some  constants
$s,\gamma>0$.

Then there exist bounded holomorphic functions $f_j:V_j\to E$ such that
$d_{j_1,j_2}=f_{j_1}-f_{j_2}$ whenever $V_{j_1}\cap V_{j_2}\neq\emptyset$; moreover
the functions $f_j$ have common $s$-Gevrey asymptotic expansions.
\end{teorema}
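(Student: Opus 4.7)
The plan separates the argument into a Cousin-type construction followed by an application of Theorem \reff{ramissibuya-classic}. More precisely, I will first produce bounded holomorphic $f_j:V_j\to E$ whose pairwise differences realize the prescribed cocycle, and then deduce the common $s$-Gevrey asymptotic expansion from the already proven Ramis-Sibuya theorem, since the differences $f_{j_1}-f_{j_2}=d_{j_1,j_2}$ are then immediately exponentially small by hypothesis.

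The construction of the $f_j$ will use the Cauchy-Heine transform. I would first refine the cover: by slightly shrinking each $V_j$ in angular width and cyclically relabeling, one may assume that each $V_j$ meets only its two neighbors $V_{j\pm 1}$ and that no triple intersection is non-empty; the cocycle condition then determines every $d_{j_1,j_2}$ from the consecutive ones $d_j:=d_{j,j+1}$ defined on $V_j\cap V_{j+1}$. Inside each such overlap I pick a ray $\gamma_j$ emanating from the origin along a direction $\theta_j$ strictly interior to the overlap, and define
$$h_j(x)=\frac{1}{2\pi i}\int_{\gamma_j}\frac{d_j(t)}{t-x}\,dt,$$
which is holomorphic on $\CC\setminus\gamma_j$; the exponential decay $\norma{d_j(t)}_E=O(\exp(-\gamma/|t|^{1/s}))$ at $t=0$ makes the integrand locally integrable there, and a mild shrinking of $r$ handles the outer endpoint. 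The Sokhotski-Plemelj jump relation across $\gamma_j$ reads $h_j^+-h_j^-=d_j$. Setting $f_j:=\sum_k\epsilon_{j,k}\,h_k$ with signs $\epsilon_{j,k}\in\{0,\pm1\}$ encoding which side of each $\gamma_k$ the sector $V_j$ lies on, the jump formulas combined with the cocycle identity give $f_{j_1}-f_{j_2}=d_{j_1,j_2}$ on every non-empty intersection.

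Once the $f_j$ are available, the hypothesis yields $\norma{f_{j_1}-f_{j_2}}_E=\norma{d_{j_1,j_2}}_E=O(\exp(-\gamma/|x|^{1/s}))$, and Theorem \reff{ramissibuya-classic} immediately produces the common $s$-Gevrey asymptotic expansion. The main obstacle is to verify that each $f_j$ is genuinely bounded on the entire sector $V_j$: near the rays $\gamma_{j-1}$ and $\gamma_j$ that border $V_j$, the Cauchy kernel $1/(t-x)$ degenerates, so one must use the jump formulas to re-express $h_{j-1}$ and $h_j$ near their cuts as integrals along rotated rays plus boundary contributions involving $d_{j-1}$ and $d_j$ themselves; the exponential smallness of those functions is exactly what converts these boundary terms into a uniform bound, after possibly shrinking $r$ cosmetically. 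This is the delicate step of the proof and is the reason for making the good-cover reduction at the beginning.
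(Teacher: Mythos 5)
The paper does not actually prove Theorem \reff{compl-classic}: it is quoted from Sibuya \cite{Si1}, so there is no internal proof to compare with. Your argument is the standard Cauchy--Heine construction, which is indeed how this kind of statement is proved in the literature, and the overall structure (split the exponentially flat cocycle, then invoke Theorem \reff{ramissibuya-classic} to get the common Gevrey expansions) is sound. Two steps are glossed over and should be made explicit. First, after passing to a refined cover in which only consecutive sectors overlap, your construction produces functions on the \emph{smaller} sectors, while the theorem asks for $f_j$ on the original $V_j$ with $f_{j_1}-f_{j_2}=d_{j_1,j_2}$ on all original intersections, which need not be consecutive; this is recovered by setting $f_j(x)=g_k(x)+d_{j,k}(x)$ for $x\in V_j$ lying in the $k$-th refined sector, the cocycle condition making the definition independent of $k$ and giving the correct differences — exactly the gluing device the paper uses in Lemma \reff{rem-cousin}. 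Second, the formula $f_j=\sum_k\epsilon_{j,k}\,h_k$ with signs $\epsilon_{j,k}\in\{0,\pm1\}$ is not the right bookkeeping: away from its cut each $h_k$ enters with coefficient $+1$, and what is needed for $k=j-1,j$ is the \emph{analytic continuation} of $h_k$ across $\gamma_k$, which by the jump relation equals the integral over a rotated ray plus $d_k$; your final paragraph describes precisely this, so it should replace the sign formulation. With these repairs (and the harmless shrinking of $r$, consistent with how the germ analogue Theorem \reff{compl-germ} is stated), the proof closes: boundedness of the $h_k$ away from the cuts holds because $\norma{d_k(t)}_E/|t|$ is integrable at the origin thanks to the exponential decay, the boundary contributions across the cuts are the bounded functions $d_{j-1},d_j$ themselves, and since $f_{j_1}-f_{j_2}=d_{j_1,j_2}$ is exponentially small, Theorem \reff{ramissibuya-classic} yields the common $s$-Gevrey asymptotic expansions.
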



\subsection{Monomial Asymptotics}
In \cite{CDMS} the notion of monomial asymptotics in two variables was introduced in order to study doubly singular differential equations. We want to extend this notion to an arbitrary number of variables.

In the sequel, let $\bx^\bal =x_1^{\alpha_1}\cdots x_d^{\alpha_d}$ denote a monomial in the $d$
variables $x_1,\ldots ,x_d$.
 We begin by restating Corollary \reff{coro-P-serie} in a slightly different way :
$\CC$ is replaced
by an arbitrary $\CC$-vector space $E$ and $P=\bx^\bal$.

\begin{lema} \labl{defT-one}
For any vector space $E$, there exists a canonical isomorphism
$$T:E\lf \bx\rf \to\Delta(\bx^\bal,E)\lf t\rf $$
with the property $(Tf)(\bx^\bal)=f$ for all series $f\in E\lf \bx\rf $. Here the symbol
$(Tf)(\bx^\bal)$ means that $t$ is replaced by $\bx^\bal$ in the series $Tf$ and
$\Delta(\bx^\bal,E)$ is defined as in (\reff{delta}), $\CC$ replaced by $E$.
\end{lema}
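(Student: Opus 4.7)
The plan is to apply Corollary \reff{coro-P-serie} to the particularly simple monomial $P = \bx^\bal$. Although that corollary is stated for coefficients in an integral domain $\mcR$, the only Weierstrass division invoked in its proof is division by $P$, and for $P = \bx^\bal$ this division is completely elementary and $\CC$-linear in the coefficients: any $g = \sum_\bbeta g_\bbeta \bx^\bbeta \in E\lf\bx\rf$ decomposes uniquely as
$$g \;=\; \sum_{\bbeta \notin \bal + \NN^d} g_\bbeta\, \bx^\bbeta \;+\; \bx^\bal \cdot \sum_{\bgamma \in \NN^d} g_{\bal + \bgamma}\, \bx^\bgamma,$$
which gives $g = r + q \cdot \bx^\bal$ with $r \in \Delta(\bx^\bal,E)$ uniquely. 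Since the construction is $\CC$-linear and the coefficients of $P$ lie in $\CC$, the entire proof goes through with $\mcR$ replaced by the vector space $E$. Iterating the division and noting that $(\bx^\bal)^n$ has order $n|\bal| \to \infty$, so the remainder vanishes in the $\mathfrak{m}$-adic topology, I obtain the unique expansion $f = \sum_{n=0}^\infty g_n \cdot (\bx^\bal)^n$ with $g_n \in \Delta(\bx^\bal, E)$.

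I would then \emph{define} $T: E\lf\bx\rf \to \Delta(\bx^\bal,E)\lf t\rf$ by $T(f) = \sum_{n=0}^\infty g_n \, t^n$. Linearity and injectivity both follow immediately from uniqueness of the decomposition. For surjectivity and for the identity $(Tf)(\bx^\bal) = f$, I would introduce the inverse substitution operator: given $\phi(t) = \sum_n h_n t^n \in \Delta(\bx^\bal,E)\lf t\rf$, set $\phi(\bx^\bal) := \sum_n h_n \cdot (\bx^\bal)^n$. The key point is that this is a well-defined element of $E\lf\bx\rf$: for each $\bbeta \in \NN^d$ the coefficient of $\bx^\bbeta$ receives a contribution from the term indexed by $n$ only if $\bbeta - n\bal \in \NN^d$ and this remainder is in $\NN^d \setminus (\bal + \NN^d)$, and there is exactly one such $n$, namely the largest integer with $n\bal \leq \bbeta$ componentwise. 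Thus substitution produces a finite sum in every monomial and defines a linear map $\Delta(\bx^\bal,E)\lf t\rf \to E\lf\bx\rf$.

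Finally, the two maps are mutually inverse: substitution applied to $T(f) = \sum_n g_n t^n$ returns $\sum_n g_n \cdot (\bx^\bal)^n = f$, which is both the asserted identity $(Tf)(\bx^\bal) = f$ and the relation showing that substitution is a right inverse of $T$; conversely, starting from $\phi = \sum_n h_n t^n$, the series $\phi(\bx^\bal) = \sum_n h_n (\bx^\bal)^n$ has, by the uniqueness argument just given, precisely the $P$-adic expansion with coefficients $h_n$, so $T(\phi(\bx^\bal)) = \phi$. No real obstacle arises; the statement is just a repackaging of Corollary \reff{coro-P-serie} in the case $P = \bx^\bal$, extended from scalar coefficients to an arbitrary vector space $E$, which is legitimate because division by a monomial is $\CC$-linear in the coefficients.
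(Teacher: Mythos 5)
Your proof is correct and follows essentially the route the paper intends: the paper gives no separate argument for this lemma, presenting it as a restatement of Corollary \ref{coro-P-serie} with $P=\bx^\bal$ and $\CC$ replaced by $E$, and your elementary iterated monomial division together with the substitution inverse is exactly the verification left implicit. Your remark that division by a monomial is coefficient-wise $\CC$-linear (so the integral-domain hypothesis of the corollary is not needed for vector-space coefficients) is precisely the point that makes the restatement legitimate.
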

By abuse of notation, we use the same symbol $T$ for the analogous isomorphism
$T:E\{\bx\}\to\Delta(\bx^\bal,E)\{t\}$ if $E$ is a normed vector space (and consequently, there is a notion of convergence).

For $r>0$ let $\EE_r$ denote the Banach space of all functions $f\in\OO_b(D(\bo;r))$
the series expansion of which $J(f)\in\Delta(\bx^\bal)$. { If $r'<r$ there is a natural restriction map $\EE_r\rightarrow \EE_{r'}$, linear and continuous. The image of $f\in\EE_r$ will be denoted $f|_{\EE_{r'}}$. Similarly, if $f(t)=\sum_{n=0}^\infty f_nt^n\in\EE_r \lf t\rf $ is a formal series, $f(t)|_{\EE_{r'}}$ will represent $\sum_{n=0}^\infty f_n|_{\EE_{r'}} t^n$.
}


In the subsequent lemma, we establish an analogue of the operator $T$ for functions
defined on sectors in a monomial. This lemma generalizes the construction below Lemma 3.5 of
\cite{CDMS} to an arbitrary number of variables.

We call ``sector in $\bx^\bal$", or $\bx^\bal$-sector,  a set\newcommand{\Cstar}{\CC\setminus\{0\}}
\newcommand{\pp}{\Pi}
$\pp = \pp  (a,b; \bR)\subseteq (\Cstar)^d$, $\bR=(R_1,\ldots ,R_d)\in]0,\infty]^d$,
$$
\pp = \{ \bx \in { \CC}^d \mid a<\arg(\bx^\bal)<b,\ 0<|x_j|<R_j, j=1,\ldots ,d\}\ \  .
$$
\begin{nota}
Here and throughout this work, we will only consider sectors in $\CC$, i.e. of opening not greater than $2\pi$. So, in the definition of a $\bx^\bal$-sector, and in subsequent definitions, we will assume implicitely that $b-a\leq 2\pi$.
\end{nota}

\begin{lema}\labl{T-sector}
Let $\pp=\pp(a,b;\bR)$ a sector in $\bx^\bal$ and $f:\pp\to\CC$ a holomorphic function.
Then there is a uniquely determined holomorphic function
$Tf:V(a,b;\bR^{\bal})\times D(\bo;\bR)\to\CC$ such that $J((Tf)(t,.))\in\Delta(\bx^\bal)$
for any $t$ and
$(Tf)(\bx^\bal, \bx)=f(\bx)$. Moreover,
if there is a function $K:]0,\bR^{\bal}]\to\RR_+$
such that $\norm{f(\bx)}\leq K(\norm{\bx^\bal})$ for $\bx\in\pp$, then
$$\norm{(Tf)(t,\bx)}\leq \frac{\bR^{\a}}{\norm t} K(\norm t)
\prod_{j=1}^d\left(1-\frac{\norm{x_j}}{R_j}\right)^{-1}\mbox{ for }
t\in V(a,b;\bR^{\bal}),\  \bx\in D(\bo;\bR).$$
\end{lema}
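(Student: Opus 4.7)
Suppose $F_1,F_2$ both satisfy the conclusion and set $H=F_1-F_2\in\OO(V(a,b;\bR^\bal)\times D(\bo;\bR))$. Then $H(\bx^\bal,\bx)\equiv 0$ on $\pp$, and its Taylor expansion $H(t,\bx)=\sum_\bbeta h_\bbeta(t)\bx^\bbeta$ at $\bx=\bo$ has $h_\bbeta\equiv 0$ for every $\bbeta\in\bal+\NN^d$. For each fixed $t_0\in V(a,b;\bR^\bal)$, $H(t_0,\cdot)$ vanishes on the nonempty open piece of $\{\bx\in D(\bo;\bR):\bx^\bal=t_0\}$ coming from $\pp$; since $d\geq 2$, analytic continuation (applied on each irreducible component in the case $\gcd(\bal)>1$) propagates this vanishing to the whole hypersurface, so $H(t_0,\bx)$ is divisible by $\bx^\bal-t_0$ in $\OO(D(\bo;\bR))$ and the quotient defines a holomorphic $\psi:V\times D(\bo;\bR)\to\CC$ with $H=(\bx^\bal-t)\psi$. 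Writing $\psi(t,\bx)=\sum_\bbeta\psi_\bbeta(t)\bx^\bbeta$ and matching coefficients in $H=(\bx^\bal-t)\psi$ using $h_\bbeta\equiv 0$ for $\bbeta\in\bal+\NN^d$ yields the recursion $\psi_\bbeta=\psi_{\bbeta-\bal}/t$ on such $\bbeta$; iterating gives
$$\psi_{\bbeta_0+n\bal}(t)=-h_{\bbeta_0}(t)/t^{n+1}\qquad\text{for every }\bbeta_0\in\Delta(\bx^\bal),\ n\geq 0.$$
Cauchy estimates for $\psi(t,\cdot)$ on a sub-polydisk $D(\bo;\rho)$ with $\rho<\bR$ and $\rho^\bal>|t|$ (possible since $|t|<\bR^\bal$) give $|\psi_{\bbeta_0+n\bal}(t)|\,\rho^{\bbeta_0+n\bal}\leq M(t)$, whence
$$|h_{\bbeta_0}(t)|\,\rho^{\bbeta_0}\,(\rho^\bal/|t|)^n\leq |t|\,M(t)\qquad\text{for all }n\geq 0.$$
Letting $n\to\infty$ forces $h_{\bbeta_0}\equiv 0$ on $V$ for every $\bbeta_0\in\Delta(\bx^\bal)$, so $H\equiv 0$.

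\textbf{Existence.} I construct $Tf$ as a Cauchy-type integral whose kernel is suggested by the telescoping identity
$$
\bu^\bal-\bx^\bal=\sum_{j=1}^d\Bigl(\textstyle\prod_{i<j}u_i^{\alpha_i}\Bigr)(u_j^{\alpha_j}-x_j^{\alpha_j})\Bigl(\textstyle\prod_{i>j}x_i^{\alpha_i}\Bigr).
$$
Namely, put
$$
(Tf)(t,\bx)=\frac{1}{(2\pi i)^d}\int_C\frac{f(\bu)\,(\bu^\bal-\bx^\bal)}{(\bu^\bal-t)\prod_{j=1}^d(u_j-x_j)}\,du_1\wedge\cdots\wedge du_d,
$$
where $C\subset\pp$ is a $d$-real-dimensional cycle that, in the Cauchy sense, winds once around each hyperplane $u_j=x_j$ while avoiding $\bu^\bal=t$. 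Granted such a $C$, the three required properties follow quickly: (i) $Tf$ is holomorphic on $V\times D(\bo;\bR)$ by differentiation under the integral; (ii) at $t=\bx^\bal$ the quotient $(\bu^\bal-\bx^\bal)/(\bu^\bal-t)$ equals $1$, so the iterated Cauchy formula gives $(Tf)(\bx^\bal,\bx)=f(\bx)$; (iii) inserting the telescoping identity and dividing each factor $u_j^{\alpha_j}-x_j^{\alpha_j}$ by $u_j-x_j$ (yielding $\sum_{k=0}^{\alpha_j-1}u_j^{\alpha_j-1-k}x_j^k$) shows that every monomial in $\bx$ appearing in the Taylor expansion of the integrand at $\bx=\bo$ has at least one $x_j$-exponent strictly less than $\alpha_j$ and hence lies in $\Delta(\bx^\bal)$, giving $J((Tf)(t,\cdot))\in\Delta(\bx^\bal)\lf t\rf$.

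\textbf{Main obstacle and bound.} The technical heart of the existence part is the construction of the cycle $C$. A plain product of circles $|u_j|=r_j$ is not available, because then $\arg(\bu^\bal)=\sum_j\alpha_j\arg u_j$ would sweep an interval of length $2\pi\sum_j\alpha_j$, generically much larger than $b-a$, so the product torus leaves $\pp$. The cure I would use is to choose radii $r_j<R_j$ with $\prod_jr_j^{\alpha_j}$ comparable to $|t|$ and assemble a $d$-chain whose image under $\bu\mapsto\bu^\bal$ is a small circle around $t$ staying inside $V(a,b;\bR^\bal)$, gluing local pieces by a Cousin argument of the type of Lemma \reff{cover-E}. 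For such a $C$ one has $|f(\bu)|\leq K(|t|)$, $|\bu^\bal-t|\gtrsim|t|$, $|u_j-x_j|\geq R_j-|x_j|$, and the push-forward of the volume form to $\bu^\bal$ contributes a factor $\bR^\bal$; feeding these into the integral yields the claimed estimate $|(Tf)(t,\bx)|\leq (\bR^\bal/|t|)\,K(|t|)\prod_j(1-|x_j|/R_j)^{-1}$.
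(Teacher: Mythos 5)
Your uniqueness argument is sound, and it is in fact close in spirit to the paper's own (alternative) uniqueness proof for the germ case: there one divides by $t-P(\bx)$ (Lemma \reff{division}) and concludes with the Weierstrass-type division operator, while your coefficient recursion $\psi_{\bbeta_0+n\bal}=-h_{\bbeta_0}t^{-n-1}$ together with Cauchy estimates on a sub-polydisk with $\rho^\bal>\norm t$ is a perfectly acceptable elementary substitute, granted the standard divisibility and joint-holomorphy facts you invoke. Likewise, your telescoping computation showing that each term of the kernel only produces monomials in $\Delta(\bx^\bal)$ is formally correct.

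The existence part, however, is where the content of the lemma lies, and your sketch does not deliver it; the properties you require of the cycle $C$ are mutually incompatible. Since $K$ is an arbitrary function (no monotonicity is assumed), the inequality $\norm{f(u)}\leq K(\norm t)$ on $C$ can only follow from the hypothesis if $\norm{u_1^{\a_1}\cdots u_d^{\a_d}}=\norm t$ exactly on $C$ --- this is precisely why the paper restricts $f$ to the fibre $\{u^{\bal}=t\}$ by the substitution $u_1=t/(z_2\cdots z_d)$ and expands in Laurent series there. But on that level set at least one $\norm{u_j}$ is necessarily small (a fractional power of $\norm t$), so the lower bound $\norm{u_j-x_j}\geq R_j-\norm{x_j}$ cannot hold uniformly in $\bx\in D(\bo;\bR)$, and a fixed compact cycle cannot even avoid the poles $u_j=x_j$ for every such $\bx$. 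Worse, the proposed remedy --- a chain whose image under $u\mapsto u^{\bal}$ is a small circle around $t$ --- fails the reproducing property: take $d=2$, $\bal=(1,1)$, $f\equiv1$, $t=x_1x_2$, and the natural such cycle $C=\{(u_1,s/u_1):|u_1|=r_1,\ |s-t|=\rho\}$ with $r_1-|x_1|>\rho/|x_2|$; pulling back by $(u_1,s)\mapsto(u_1,s/u_1)$ gives
$$\frac1{(2\pi i)^2}\int_C\frac{du_1\wedge du_2}{(u_1-x_1)(u_2-x_2)}
=\frac1{(2\pi i)^2}\oint_{|u_1|=r_1}\Bigl(\oint_{|s-t|=\rho}\frac{ds}{s-x_2u_1}\Bigr)\frac{du_1}{u_1-x_1}=0,$$
because the inner integral vanishes unless $|u_1-x_1|<\rho/|x_2|$, which never happens on $|u_1|=r_1$. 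Since at $t=\bx^\bal$ your kernel reduces to the plain Cauchy kernel, your formula would return $0$ instead of $f(\bx)=1$: a chain fibred over a small circle around $t$ has the wrong class, and no argument is given for the existence of a chain inside $\Pi$ with the correct class and the stated quantitative bounds (the appeal to ``a Cousin argument of the type of Lemma \reff{cover-E}'' does not help here: that lemma solves an additive cocycle problem for holomorphic functions, it does not produce integration cycles). The paper avoids contours in $u$-space altogether: for $\bal=(1,\dots,1)$ it expands $f(t/(z_2\cdots z_d),z_2,\dots,z_d)$ in a Laurent series in $z_2,\dots,z_d$, estimates each coefficient by an optimal choice of radii on the fibre, and reassembles the coefficients; the general monomial is then reduced to this case by splitting $f$ with roots of unity via a Vandermonde system.
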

\begin{nota} \benot \item Thus for $t\in V(a,b;\bR^{\bal})$, the mapping
$\bx\mapsto(Tf)(t,\bx)$ defines an element of any $\EE_r$, $0<r<\min_jR_j$.
This element will be denoted by $Tf(t)\mid_{\EE_r}$.
Clearly $T f\mid_{\EE_r}: V(a,b;\bR^{\bal})\rightarrow \EE_r$ is
holomorphic.
\item The estimate could be improved by multiplying with
$\left(1-\frac{\norm{\bx^\bal}}{\bR^{\bal}}\right)$ on the right. We omit this factor, as it has no
advantages in applications of the lemma.\ee
\end{nota}

\begin{proof}[Proof of Lemma \ref{T-sector}.]
If $f:\pp(a,b;\bR)\to\CC$ is a holomorphic function and $\pp(a',b';\bR')$ is some proper $\bx^\bal$-subsector of $\pp(a,b;\bR)$,
that is $a<a'<b'<b$ and $0<R_j'<R_j$ for $j=1,...,d$, then we can establish a function $K:]0,\bR'^\bal]\to\RR_+$
such that $\norm{f(\bx)}\leq K(\norm{\bx^\bal})$ for $\bx\in\pp(a',b';\bR')$. We can simply put
$$K(s)=\max\{\norm{f(\bx)}\mid\bx\in{\mathcal M}(s)\}\mbox{, where }{\mathcal M}(s)=\{\bx\in\cl(\pp(a',b';\bR'))\mid\norm{\bx^\bal}=s\},$$ 
because ${\mathcal M}(s)$ is a compact subset of $\pp(a,b;\bR)$. To see this observe that for $j=1,\ldots,d$ and
$\bx\in{\mathcal M}(s)$, $|x_j|^{\alpha_j}\geq s\prod_{k\neq j}{R'}_k^{-\alpha_k}$ and hence none of the $x_j$ can be too close to $0$. 

It is therefore sufficient to prove the lemma under the additional assumption that there exists a function $K:]0,\bR^{\bal}]\to\RR_+$
such that $\norm{f(\bx)}\leq K(\norm{\bx^\bal})$ for $\bx\in\pp$.
The uniqueness implies that we can define $Tf$ for a holomorphic function on $\pp(a,b;\bR)$ by combining
all the functions $T \tilde f$ obtained for the restrictions of $f$ to proper subsectors of $\pp(a,b;\bR)$. 

The Lemma had been proved in the case of a product of two variables in \cite{CDMS}.
We give a proof for the general statement.
Suppose first that $\bal=(1,1,\ldots ,1)$, i.e.\ the monomial is the product
$\bx^\bal=x_1\cdot\ldots \cdot x_d$.
Then we show the statement with the improved estimate
\begin{equation}\labl{ineq-prod}\norm{(Tf)(t,\bx)}\leq  K(\norm t)
\prod_{j=1}^d\left(1-\frac{\norm{x_j}}{R_j}\right)^{-1}\mbox{ for }
t\in V(a,b;\bR^{\bal}),\  \bx\in D(\bo;\bR).
\end{equation}

Observe that we can assume without loss of generality that the radii coincide: $R_j=R$
for $j=1,\ldots ,d$. Otherwise put $R=R_1$ and consider the function
$\tilde f(x_1,\ldots,x_d)=f(x_1,x_2\frac{R_2}{R},\ldots ,x_d\frac{R_d}{R})$ and
$\tilde K(t)=K(t\cdot R_2\cdot\ldots\cdot R_d/R^{d-1})$; the radii are all reduced to $R$ now.

We now proceed similarly to \cite{CDMS}, but have to treat Laurent series in several variables.
Put $g(t,z_2,\ldots,z_d):=f(\frac t{z_2\cdot \ldots \cdot z_d},z_2,\ldots,z_d)$. Then for fixed
$t\in V:=V(a,b;R^d)$, $g(\bz';t)$ (with notation as in \reff{notation})
is defined on the set of all $\bz'=(z_2,\ldots,z_d)\in\CC^{d-1}$
such that $\norm{\bz'}=\max(\norm{z_2},\ldots,\norm{z_d})<R$
and $\norm{z_2\cdot\ldots\cdot z_d}>\frac{\norm t}R$. Applying several times the
theorem on Laurent series expansions, we obtain that
$\dis g(\bz';t)=\sum_{\bm\in\ZZ^{d-1}}g_\bm(t){\bz'}^\bm$
with coefficients $g_\bm(t)$ holomorphic on $V$ and that
\begin{equation}\labl{majgm}\norm{g_\bm(t)}\leq K(\norm t)r_2^{-m_2}\cdot \ldots \cdot r_d^{-m_d}
\end{equation}
whenever $0<r_2,\ldots ,r_{d}<R$ are such that their product $r_2\cdots r_{d}>\frac{\norm t}R$.
In order to get good estimates for these coefficients, we have to choose the $r_j$ in an optimal
way.

In the case that one of the $m_j$ is negative, we choose $\ell$ such that the
minimum of  ${m_2,\ldots,m_d}$ is $m_\ell<0$ and rewrite (\reff{majgm}) as
$$\norm{g_\bm(t)}\leq K(\norm t)(r_2\cdot \ldots \cdot r_d)^{-m_\ell}r_2^{m_\ell-m_2}
\cdot \ldots \cdot r_d^{m_\ell-m_d}.$$
As the differences $m_\ell-m_j\leq0$ and one of them equals 0 in case $j=\ell$, we can choose $r_j=R$
if $j\neq\ell$ and $r_\ell$ arbitrary such that $r_2\cdot \ldots \cdot r_d>\frac{\norm t}{R}$.
Going over to the limit, we can as well assume that $r_\ell$ is chosen such that
$r_2\cdot \ldots \cdot r_d=\frac{\norm t}{R}$.
Introducing the notation $\mu(\bm)=m_\ell=\min(m_2,\ldots,m_d,0)$ and
$\norm\bm_1=m_2+\cdots +m_d$, we thus obtain in this case
\begin{equation}\labl{majgm-2}
\norm{g_\bm(t)}\leq K(\norm t)\norm t^{-\mu(\bm)}R^{d\mu(\bm)-\norm\bm_1}.
\end{equation}

In the case where all $m_j$ are nonnegative, we choose $r_j=R$ for all $j$ and obtain
$\norm{g_\bm(t)}\leq K(\norm t)R^{-\norm\bm_1}$. So, (\reff{majgm-2}) is valid for all $\bm\in\ZZ^{d-1}$.

Now we put $h_\bm(t):=t^{\mu(\bm)}g_\bm(t)$ and obtain
that $h_\bm$ are holomorphic on $V$ and
$\norm{h_\bm(t)}\leq K(\norm t)R^{d\mu(\bm)-\norm\bm}$ for $t\in V$ and $\bm\in\ZZ^{d-1}$.
It is convenient to introduce $\phi:\ZZ^{d-1}\to\NN^d$ by
$\phi(\bm)=(-\mu(\bm), m_2-\mu(\bm),\ldots,m_{d}-\mu(\bm))$. Observe that
$\phi$ is a bijection between $\ZZ^{d-1}$ and the set $\MM_d$ of all
$\bn=(n_1,\ldots ,n_d)\in\NN^d$ such that at least one
of the $n_j$ vanishes; moreover $\norm{\phi(\bm)}_1=\norm\bm_1-d\mu(\bm)$.
Now we define for $t\in V$
\begin{equation}\labl{defT-many}
(Tf)(t,\bx)=\sum_{\bm\in\ZZ^{d-1}}h_\bm(t)\bx^{\phi(\bm)}.
\end{equation}
As all $\phi(\bm)$ are in $\MM_d,$ we obtain $J((Tf)(t,.))\in\Delta(\bx^\bal)$.
Next, we have to show the convergence of the series if $\norm{x_j}<R$ for all
$j$.
Using $\phi(\ZZ^{d-1})=\MM_d$, we  estimate
$$\sum_{\bm\in\ZZ^{d-1}}\norm{h_\bm(t)}\norm{\bx^{\phi(\bm)}}\leq
K(\norm t)\sum_{\bn\in\MM_d}\left(\frac{\norm\bx}R\right)^{\norm\bn_1}\leq
K(\norm t)\prod_{j=1}^d\left(1-\frac{\norm{x_j}}{R}\right)^{-1}
$$
and thus the convergence of the series and the estimate of the Theorem. This also implies
that $Tf$ is analytic for $t\in V$, $\bx\in D(\bo;R)$.
The fact that $(Tf)(\bx^\bal,\bx)=f(\bx)$ follows easily from the construction%
$$\sum_{\bm\in\ZZ^{d-1}}h_\bm(\bx^\bal)\bx^{\phi(\bm)}=
\sum_{\bm\in\ZZ^{d-1}}g_\bm(\bx^\bal)\bx^{\mu(\bm)\bal+\phi(\bm)}=
\sum_{\bm\in\ZZ^{d-1}}g_\bm(\bx^\bal)\bx'^{\bm}=f(\bx).$$

We now reduce the general case to the one treated above.
Suppose that $\a_1>1$ and let $\xi=e^{2\pi i/\a_1}$.
Observe that $\bx\in\pp$ implies $\bx^{(k)}:=(\xi^k x_1,\bx')\in\pp$ for $k=0,\ldots,\a_1-1$
and therefore
there are uniquely determined functions $F_0,\ldots,F_{\a_1-1}$ defined on the sector
$a<\arg(zx_2^{\a_2}\cdot \ldots \cdot x_d^{\a_d})<b$, $0<\norm z <R_1^{\a_1},\ 0<\norm{ x_j} <R_j,$
for $j=2,\ldots,d$  in the monomial $zx_2^{\a_2}\cdot \ldots \cdot x_d^{\a_d}$
such that
$$f(\bx)=\sum_{j=0}^{\a_1-1}x_1^jF_j(x_1^{\a_1},\bx').$$
The functions $F_j$ can be determined by the Vandermonde system
$$f(\bx^{(k)})=\sum_{j=0}^{\a_1-1}\xi^{jk}x_1^jF_j(x_1^{\a_1},\bx'),\ k=0,\ldots,\a_1-1.$$
Hence,
$$x_1^jF_j(x_1^{\a_1},\bx')=\frac1{\a_1}\sum_{k=0}^{\a_1-1}\xi^{-jk}f(\bx^{(k)}),\
  j=0,\ldots,\a_1-1$$
and therefore $\norm{x_1^{\a_1}F_j(x_1^{\a_1},x_2,\ldots,x_d)}\leq
  R_1^{\a_1-j} K(\norm{\bx^\bal})$ for $j=0,\ldots,\a_1-1$, $\bx\in\pp$.
Continuing in this way we prove that
\begin{equation}\labl{Fbeta}
f(x)=\sum_{\bo\leq\bbeta<\bal}\bx^\bbeta F_\bbeta(x_1^{\a_1},\ldots,x_d^{\a_d}),
\end{equation}
where summation is over all integer vectors $\bbeta\in\ZZ^d$, $0\leq\beta_j<\a_j$ for
all $j$, and where the functions $F_\bbeta$ satisfy
$$\norm{\bx^{\bal}F_\bbeta(x_1^{\a_1},\ldots,x_d^{\a_d})}\leq
  \bR^{\bal-\bbeta}K(\norm{\bx^\bal}).$$
Now the situation is reduced to functions $F_\bbeta$, satisfying
$$\norm{F_\bbeta(u_1,\ldots,u_d)}\leq
\bR^{\bal-\bbeta}\frac{K(\norm{u_1\cdot\ldots\cdot u_d})}
  {\norm{u_1\cdot\ldots \cdot u_d}}$$
on a $(u_1\cdot\ldots \cdot u_d)$-sector $\tilde\pp$.
Using the first part of the proof, especially (\reff{ineq-prod}) for each $F_\bbeta$, and then combining
them using (\reff{Fbeta}) implies the statement. We just have to use the formula
$$\sum_{\bo\leq\bbeta<\bal}\bx^\bbeta \bR^{-\bbeta}
\prod_{j=1}^d\left(1-\frac{\norm{x_j^{\a_j}}}{R_j^{\a_j}}\right)^{-1}=
\prod_{j=1}^d\left(1-\frac{\norm{x_j}}{R_j}\right)^{-1}.
$$

The proof of the uniqueness can be given following the same steps as in the construction
of $Tf$. Details are left to the reader.
An alternative proof is given, in the context of asymptotic expansions with respect to a germ,
at the end of the proof of Theorem \reff{T-sector-germ}, at the end of Section \reff{proof33}.
\end{proof}

\begin{ejemplo}
The following example due to S.\ Kamimoto
shows that if one of the $\alpha_i>1$,
the estimate for $Tf$ cannot be as good as in (\reff{ineq-prod}) in the case of a ``simple'' product
$x_1\cdot\ldots\cdot x_d$.

Consider the monomial $\bx^\bal=x_1^2x_2$ and a small
$\bx^\bal$-sector $\Pi=\Pi(-\delta,\delta;R)$, $\delta,R>0$. Define
$f:\Pi\to\CC$ by the principal value $x_2^{1/2}$ if $\arg x_1$ and
$\arg x_2$ are both small and extend this function to all of $\Pi$
by analytical continuation. This is possible as for any path
$\gamma:[0,1]\to\Pi$, $\gamma(s)=(\gamma_1(s),\gamma_2(s))$, we must have
$\norm{2\arg \gamma_1(s)+\arg \gamma_2(s)} < \delta$. Hence if
we start with $\arg\gamma_j(s)\approx0$ and
$\gamma_2$ has made one tour of $x_2=0$ and thus reached
$\arg\gamma_2(s)\approx2\pi$, then we have $\arg\gamma_1(s)\approx-\pi$
and are far away from the starting point of the path.
After two tours of $\gamma_2$ around $x_2=0$, we have
$\arg\gamma_1(s)\approx2\pi$ and $\arg\gamma_2(s)\approx4\pi$ and are again
(with respect to the arguments) close to the starting points of the path.
The values of $f$ obtained by analytic continuation of $x_2^{1/2}$ are also
close to the original ones as $\arg x_2$ has been changed by about $4\pi$.

Thus we have an analytic function $f:\Pi\to\CC$ that
is bounded and satisfies $f(-x_1,x_2)=-f(x_1,x_2)$ for $(x_1,x_2)\in\Pi$.
The unique function $Tf$ of Lemma \ref{T-sector} is apparently
$Tf(t,(x_1,x_2))=t^{-1/2}x_1x_2$ (with the principal value of $t^{-1/2}$)
and this function is not bounded as $t\to0$.

In the above example, the $P$-sector is connected. A simpler example
where $\Pi$ has several connected components is given in the one variable
case by the monomial $x^2$. Consider the $x^2$-sector
$\Pi(-\delta,\delta,R)$ which has the two components
$\norm{\arg x}<\delta/2$ respectively $\norm{\arg x-\pi}<\delta/2$.
A bounded holomorphic function can be defined by having the value $1$
on one component and the value $-1$ on the other. The corresponding
function $Tf$ is apparently $Tf(t,x)=t^{-1/2}x$ and also unbounded
as $t\to0$.

\end{ejemplo}

Now we are in a position to define monomial asymptotics.
\begin{defprop}  \labl{defprop36}
Let $f$ be a  bounded holomorphic function on $\pp=\pp(a,b;R)$
and  $\hat f\in \hat\OO$. We
will say
that $f$ has $\hat f$ as asymptotic expansion at the origin in
$\bx^\bal$ if there exists $0<\tilde R\leq R$ such that $T\hat f(t)=
\sum_{n=0}^\infty g_nt^n\in \EE_{\tilde R}\lf t\rf $
and one of the following equivalent conditions is
satisfied:
\begin{enumerate}
\item For every $r\in]0,\tilde R[ $ one has
        $Tf(t)\mid_{\EE_r}\sim T\hat f(t)\mid_{\EE_r}$
as $V(a,b;r^n)\ni t\rightarrow 0$ in the sense of  (\reff{onevar}).
\item 
For  every 
$0< r<\tilde R$ and every
$N$, there exists $C(N,r)$ such that for all
$\bx\in\pp(a,b;r)$
$$
\norm{{{f}}(\bx )-\sum_{n=0}^{N-1}
g_n(\bx)\bx^{n\bal}}  \leq C(N,r)\cdot
\norm{\bx^{N\bal}} .
$$
\end{enumerate}
Analogously, we define the notion of
$s$-Gevrey asymptotic expansion if
$T\hat f$ is an $s$-Gevrey formal series (with coefficients in $\EE_{\tilde R}$) and
$Tf\sim_sT\hat f$ or,
equivalently, $C(N,r)$ can be chosen as $L(r)A(r)^{N}N!^s$.
\end{defprop}
\begin{proof} It suffices to prove that the second condition implies the first, the converse
is trivial.
For that purpose, consider the function $\delta(\bx)={{f}}(\bx )-\sum_{n=0}^{N}
g_n(\bx)\bx^{n\bal}$. We can apply Lemma \reff{T-sector}
with $K(u)=u^{N+1}$ and obtain for $0<r'<r<\tilde R$
$$\norm{(Tf)(t,\bx)-\sum_{n=0}^{N}g_n(\bx)t^n}\leq C(N+1,r) \norm{t}^N
\left(1-\frac {r'}r\right)^{-d}\ .$$

So, we have
\begin{equation*}
\begin{split}
\norm{(Tf)(t,\bx)-\sum_{n=0}^{N-1}g_n(\bx)t^n} & \leq \norm{(Tf)(t,\bx)-\sum_{n=0}^{N}g_n(\bx)t^n} + \norm{g_N(\bx) t^N}  \\ & \leq
C(N+1,r) \norm{t}^N
\left(1-\frac {r'}r\right)^{-d}+ \norma{g_N(\bx)}\norm{t}^N.
\end{split}
\end{equation*}

\end{proof}

\begin{nota} \labl{caracterGevreyseries}
Let us note that, in the Gevrey case, the series $T\hat{f}(t)$ automatically turns out to be $s$-Gevrey. In fact, from the inequalities
$$\norm{(Tf)(t,\bx)-\sum_{n=0}^{N}g_n(\bx)t^n}\leq L(r) A(r)^{N+1} (N+1)!^{s} \norm{t}^N
\left(1-\frac {r'}r\right)^{-d}$$
we obtain that
\begin{align*}
\norm{g_N (\bx )} & \leq L(r) A(r)^{N+1} (N+1)!^{s} \left(1-\frac {r'}r\right)^{-d} +L(r) A(r)^{N} N!^{s} \left(1-\frac {r'}r\right)^{-d} \frac{1}{\norm{t}} \\
 & =L(r) \left(1-\frac {r'}r\right)^{-d} A(r)^{N}N!^{s} \left[ A(r) (N+1)^{s} +\frac{1}{\norm{t}} \right] .
\end{align*}
Fixing $t$ with $\norm{t} = \frac{r}{2}$ yields Gevrey bounds for $g_N (\bx)$.
\end{nota}

In the rest of this Subsection, we recall the properties of Gevrey asymptotic expansions
in a monomial  from \cite{CDMS}, but state and prove them in the general setting -- whereas \cite{CDMS}
only consider the monomial $x_1x_2$. Since we have the main Lemma \reff{T-sector}
in the general setting, the generalisation is straightforward.

As in the single variable case, functions Gevrey asymptotic to 0 in a monomial
are exponentially small.
\begin{lema} \labl{expsmall}
If $f\in{\mathcal O}(\pp;E)$ has an $s$-Gevrey asymptotic
expansion in $\bx^\bal$
where $\hat f=0$, then, for all
sufficiently small $R'>0 $ there exist $C,\ B>0$ such that
on
${\tilde \pp}$
$$
\norm{f(\bx )}\leq C\cdot \exp\left( {-\frac{B}{\norm{\bx^\bal }^{1/s}}}\right)\
\mbox{ for }\bx\in\pp,\ \norm\bx<R'.
$$
\end{lema}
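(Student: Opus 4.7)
The plan is to reduce the statement to classical Watson's Lemma (Part 2 of Theorem~\ref{borel-ritt-watson}) applied to a one--variable, Banach space valued function, by means of the operator $T$ constructed in Lemma~\ref{T-sector}.

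First, fix any $r\in ]0,\tilde R[$. By Lemma~\ref{T-sector}, the restriction $Tf(t)\mid_{\EE_r}$ is a holomorphic function on $V(a,b;\rho)$ with values in the Banach space $\EE_r$, where $\rho=r^{\alpha_1+\cdots+\alpha_d}$. Since $\hat f=0$ we have $T\hat f=0$, and Definition/Proposition~\ref{defprop36} therefore asserts that $Tf(t)\mid_{\EE_r}\sim_s 0$ in the usual one--variable $s$-Gevrey sense as $t\to 0$ in $V(a,b;\rho)$.

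Next, apply Part 2 of Theorem~\ref{borel-ritt-watson} with the Banach space $E=\EE_r$. This yields constants $C,B>0$ (depending on $r$) such that
$$
\norma{Tf(t)\mid_{\EE_r}}_{\EE_r}\leq C\exp\!\left(-B/|t|^{1/s}\right)
$$
for every $t\in V(a,b;\rho)$.

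Finally, for any $\bx\in D(\bo;r)$ the evaluation functional $g\mapsto g(\bx)$ is a continuous linear map from $\EE_r$ (equipped with the sup norm on $D(\bo;r)$) into $\CC$ of norm $\leq 1$. Hence, for $\bx\in\Pi$ with $|x_j|<r$ for every $j$, we have $\bx^\bal\in V(a,b;\rho)$ and, substituting $t=\bx^\bal$ in the identity $f(\bx)=(Tf)(\bx^\bal,\bx)$,
$$
\norm{f(\bx)}\leq \norma{Tf(\bx^\bal)\mid_{\EE_r}}_{\EE_r}\leq C\exp\!\left(-B/|\bx^\bal|^{1/s}\right).
$$
Setting $R'=r$ yields the desired estimate, and since $r\in ]0,\tilde R[$ is arbitrary, the conclusion holds for all sufficiently small $R'$.

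The argument is essentially automatic once Lemma~\ref{T-sector} and Definition/Proposition~\ref{defprop36} are in place; nothing substantial has to be done by hand. The only mild point to verify is that the image of $\{\bx\in\Pi\mid |x_j|<r\}$ under the map $\bx\mapsto \bx^\bal$ is indeed contained in the sector $V(a,b;\rho)$ on which the Watson bound is valid, which is immediate from the definition of a $\bx^\bal$-sector.
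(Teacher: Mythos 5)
Your proof is correct, but it takes a slightly different route from the paper. The paper's own proof works directly with condition (2) of Definition/Proposition \reff{defprop36}: with $\hat f=0$ all the coefficients $g_n$ vanish, so $\norm{f(\bx)}\leq L(r)A(r)^N N!^s\norm{\bx^\bal}^N$ for every $N$, and one chooses $N$ near the optimal value $(A\norm{\bx^\bal})^{-1/s}$ and invokes Stirling's formula. You instead use condition (1): $Tf(t)\mid_{\EE_r}\sim_s 0$ as a Banach-valued one-variable asymptotic expansion, quote the classical exponential-smallness statement (Theorem \reff{borel-ritt-watson}, part (2)), and return to $f$ via the identity $f(\bx)=(Tf)(\bx^\bal,\bx)$ and the norm-one evaluation functional on $\EE_r$. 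The two arguments rest on the same optimization (the classical Watson-type bound is itself proved by it), so the mathematical content is identical; your version merely delegates the computation to the one-variable theorem, in the same reduction spirit the paper uses for Theorems \reff{ramissibuya-monom} and \reff{Watson}, while the paper's version is shorter and self-contained. Two small points to make explicit in your write-up: the operator $T$ of Lemma \reff{T-sector} is stated for $\CC$-valued functions, so for $f\in\OO(\pp;E)$ you should remark that the construction and estimates carry over verbatim to $E$-valued functions (or apply the scalar statement to $\lambda\circ f$ for $\lambda$ in the unit ball of $E'$); and the exponential bound from Theorem \reff{borel-ritt-watson}(2) is valid on the whole sector $V(a,b;\rho)$ because the Gevrey estimates in (\reff{onevar}) hold uniformly there, which is what lets you conclude for all $\bx\in\pp$ with $\norm{\bx}<R'=r$.
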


\begin{proof}
As in the classical case, we choose $N$ close to the optimal value
$(A|\bx^\bal|)^{-1/s}$ in the definition of an $s$-Gevrey asymptotic expansion
in $\bx^\bal$. Stirling's formula yields the statement.
\end{proof}

Using the first condition in the
definition of an $s$-Gevrey asymptotic expansion in $\bx^\bal$
and using Lemma \reff{T-sector} with $K(u)=\exp(-\gamma/u^{1/s})$,
the theorem of Ramis-Sibuya (theorem \reff{ramissibuya-classic}) (together with
Lemma \reff{borel-ritt-watson}, (1)) immediately implies
\begin{teorema}\labl{ramissibuya-monom}
Suppose that
the sectors $\pp_j= \pp(a_j,b_j;r), 1\leq j\leq m$ in $\bx^\bal$,
form a cover of $D(\bo;r)\setminus\{\bx;\ \bx^\bal=\bo\}$.
Given $f_j: \pp_j \rightarrow  E$ bounded and analytic, assume
that for every subsector $\pp'$ of  $\pp_{j_1}\cap \pp_{j_2}$
(provided that $\pp_{j_1}\cap \pp_{j_2}\neq \emptyset$)
there is a
constant $\gamma(\pp')>0$ such that
\begin{equation}\labl{expabove}
\norm{f_{j_1}(\bx)-f_{j_2}(\bx)} =
O(\exp(-\gamma(\pp')/|\bx^\bal|^{1/s}))
\end{equation}
for $\bx\in \pp'$.
Then the functions $f_j$ have asymptotic expansions in $\bx^\bal$
with a common right hand side and
the expansions are $s$-Gevrey.

Conversely, if a function $f:\pp\rightarrow E$ having an $s$-Gevrey asymptotic
expansion in $\bx^\bal$ is given, then a cover $\pp_j, 1\leq j\leq m$ and functions
$f_j: \pp_j \rightarrow E$ can be found that satisfy estimates
like (\reff{expabove})  and $f=f_1$.
\end{teorema}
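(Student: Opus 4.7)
The whole idea is to push the problem through the operator $T$ of Lemma \reff{T-sector} so that it becomes a problem about $\EE_r$-valued holomorphic functions of a single variable $t$, to which the classical Ramis--Sibuya theorems \reff{ramissibuya-classic} and \reff{compl-classic} apply, and then to pull the result back by substituting $t=\bx^\bal$.

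\emph{Forward direction.} Fix $r\in]0,\min_jR_j[$. For each $j$, the function $f_j$ is bounded and holomorphic on $\pp_j=\pp(a_j,b_j;r)$, so Lemma \reff{T-sector} produces $Tf_j\in\OO(V(a_j,b_j;r^\bal)\times D(\bo;r))$, and the sectors $V_j:=V(a_j,b_j;r^\bal)$ form a cover of the punctured disk $D(0;r^\bal)\setminus\{0\}$. I plan to regard $Tf_j$ as an $\EE_{r'}$-valued function on $V_j$ for each $r'<r$; this is legitimate by the remark following Lemma \reff{T-sector}. On the overlaps I would apply Lemma \reff{T-sector} to the differences $f_{j_1}-f_{j_2}$ using $K(u)=C\exp(-\gamma/u^{1/s})$ (with a suitable $\gamma$ from the assumption, restricted to a proper subsector with bounded factors $(1-|x_k|/R_k)^{-1}$). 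This yields
\[
\bigl\|T(f_{j_1}-f_{j_2})(t)\bigr\|_{\EE_{r'}}=O\bigl(\exp(-\gamma'/|t|^{1/s})\bigr)
\]
on $V_{j_1}\cap V_{j_2}$ for some $\gamma'>0$. The classical Ramis--Sibuya theorem \reff{ramissibuya-classic} then gives a common $s$-Gevrey asymptotic expansion $\hat F\in\EE_{r'}\lf t\rf$ for all $Tf_j$. Setting $\hat f\in\hat\OO$ to be the series obtained by substituting back (equivalently $\hat F=T\hat f$), Definition/Proposition \reff{defprop36} in its first form tells us that each $f_j$ has $\hat f$ as $s$-Gevrey asymptotic expansion in $\bx^\bal$.

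\emph{Converse direction.} Given $f:\pp=\pp(a,b;R)\to E$ with $f\sim_s\hat f$ in $\bx^\bal$, apply Lemma \reff{T-sector} to obtain $Tf\in\OO(V(a,b;R^\bal),\EE_{r'})$ for $0<r'<R$ small; by Definition/Proposition \reff{defprop36} one has $Tf\sim_s T\hat f$ in the classical sense as an $\EE_{r'}$-valued function. The classical Ramis--Sibuya theorem (second part of \reff{ramissibuya-classic}) produces a cover $V_1,\ldots,V_m$ of some punctured disk $D(0;\rho)\setminus\{0\}$ with $V_1=V(a,b;\rho)$ (shrinking $\rho$ if necessary so that $V_1\subset V(a,b;R^\bal)$) and bounded holomorphic $F_j:V_j\to\EE_{r'}$ such that $F_1=Tf|_{V_1}$ and $\|F_{j_1}-F_{j_2}\|_{\EE_{r'}}=O(\exp(-\gamma/|t|^{1/s}))$ on overlaps. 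Then I define $\pp_j:=\{\bx\in D(\bo;r'')\setminus\{\bx^\bal=0\}\mid \bx^\bal\in V_j\}$ (a $\bx^\bal$-sector, with $r''=\min(r',\rho^{1/\bal})$) and $f_j(\bx):=F_j(\bx^\bal)(\bx)$. The identity $(Tf)(\bx^\bal,\bx)=f(\bx)$ from Lemma \reff{T-sector} gives $f_1=f$, and the exponential decay of $\|F_{j_1}-F_{j_2}\|_{\EE_{r'}}$ transfers pointwise to $|f_{j_1}(\bx)-f_{j_2}(\bx)|$ since evaluation at $\bx\in D(\bo;r'')$ is a bounded linear functional on $\EE_{r'}$.

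\emph{Main obstacle.} The real work is the forward direction: controlling the exponential decay constant as one passes through $T$. Lemma \reff{T-sector} gives an estimate for $Tf$ in terms of a bound $K$ for $f$ depending only on $|\bx^\bal|$, but the hypothesis of the theorem only gives such $K$ on proper subsectors of the intersection (with $\gamma$ depending on the subsector). One therefore has to restrict to a slightly smaller cover $\pp_j'\subset\pp_j$ and smaller radius $r'<r$ before applying $T$; this is harmless because classical Ramis--Sibuya already tolerates loss of sector aperture. A secondary point to verify is that the restriction maps $\EE_r\to\EE_{r'}$ are compatible with the asymptotic expansions, so that the $s$-Gevrey series $T\hat f\in\EE_{r'}\lf t\rf$ produced for various $r'$ indeed comes from a single $\hat f\in\hat\OO$; this follows from uniqueness in Lemma \reff{defT-one}.
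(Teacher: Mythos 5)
Your proposal is correct and takes essentially the same route as the paper: the paper also deduces the theorem by applying the operator $T$ of Lemma \reff{T-sector} with $K(u)=\exp(-\gamma/u^{1/s})$ to the differences $f_{j_1}-f_{j_2}$ and then invoking the classical Ramis--Sibuya theorem \reff{ramissibuya-classic} (together with Borel--Ritt, Theorem \reff{borel-ritt-watson}\,(1)) for the $\EE_{r'}$-valued functions $Tf_j$ of the single variable $t$, pulling the expansions back through Definition/Proposition \reff{defprop36}. Your additional care about the subsector-dependent constants $\gamma(\pp')$ and the compatibility of restrictions $\EE_r\to\EE_{r'}$ merely spells out what the paper treats as immediate.
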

As a consequence, Gevrey asymptotics in a monomial are compatible with the elementary
operations (sum, product,\ldots ). This is not obvious from the definition, except for addition.

Also, a Watson's lemma for Gevrey asymptotics in a monomial follows from
Lemma \reff{T-sector} and the one-variable version in Lemma \reff{borel-ritt-watson}, (2).
\begin{teorema}     \labl{Watson}
Let $\pp=\pp (a,b;R)$ be a sector in $\bx^\bal$ with $b-a>s\pi$
and suppose that $f\in {\mathcal O}(\pp ;E)$ has $\hat f=0$ as its
$s$-Gevrey asymptotic expansion. Then $f\equiv 0$.
\end{teorema}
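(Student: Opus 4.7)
The plan is to reduce the statement to the classical one-variable Watson's lemma (Theorem \reff{borel-ritt-watson}, (3)) via the transform $T$ of Lemma \reff{T-sector}. The key observation is that $T$ converts an asymptotic expansion in the monomial $\bx^\bal$ on $\pp(a,b;R)$ into an asymptotic expansion in the single variable $t$ on $V(a,b;\bR^{\bal})$ (with values in the Banach space $\EE_r$), and it preserves the opening of the sector.

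First, I would fix some $0 < r < \tilde R$ (where $\tilde R$ is as in Definition/Proposition \reff{defprop36}) and apply Lemma \reff{T-sector} to produce the holomorphic function $Tf(t)\mid_{\EE_r} : V(a,b;\bR^{\bal}) \to \EE_r$. Since by hypothesis $f$ has the $s$-Gevrey asymptotic expansion $\hat f = 0$ in $\bx^\bal$, condition (1) of Definition/Proposition \reff{defprop36} gives that $Tf(t)\mid_{\EE_r} \sim_s 0$ as $t \to 0$ in $V(a,b;\bR^{\bal})$, in the classical one-variable Gevrey sense.

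Now the sector $V(a,b;\bR^{\bal})$ has opening $b - a > s\pi$ by hypothesis. Applying the classical Watson's lemma (part (3) of Theorem \reff{borel-ritt-watson}) to the Banach-space valued function $Tf(t)\mid_{\EE_r}$, we conclude that $Tf(t)\mid_{\EE_r} \equiv 0$ on $V(a,b;\bR^{\bal})$. Using the defining property $(Tf)(\bx^\bal, \bx) = f(\bx)$ from Lemma \reff{T-sector}, we obtain $f(\bx) = 0$ for every $\bx \in \pp(a,b;r)$. Since $r \in {]0,\tilde R[}$ was arbitrary, $f$ vanishes on a full neighbourhood of the boundary $\{\bx^\bal = 0\}$ inside $\pp$, and by analytic continuation $f \equiv 0$ on all of $\pp$.

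I do not expect any serious obstacle: the proof is essentially a direct application of $T$ followed by the one-variable Watson. The only point requiring a touch of care is verifying that the opening of the image sector $V(a,b;\bR^{\bal})$ is indeed $b-a$ (built into the construction of $T$) and that part (3) of Theorem \reff{borel-ritt-watson} applies in the Banach-space valued setting, which the excerpt already guarantees since that theorem is stated for arbitrary complex Banach spaces $E$.
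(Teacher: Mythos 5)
Your proof is correct and follows essentially the same route the paper intends: it deduces the theorem from Lemma \reff{T-sector} (via condition (1) of Definition/Proposition \reff{defprop36}) together with the one-variable Watson lemma of Theorem \reff{borel-ritt-watson} applied to the Banach-space valued function $Tf$. The only point you leave implicit is that every connected component of $\pp(a,b;R)$ meets the smaller sector $\pp(a,b;r)$ where the vanishing is first obtained (radial scaling $\bx\mapsto\lambda\bx$, $\lambda\in\,]0,1]$, keeps $\arg\bx^\bal$ fixed), which is what legitimizes the final identity-theorem step.
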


\begin{defin}\labl{summxeps}
Let $s>0,k=1/s$ and a formal series
$\hat f(\bx)=\sum_{\bm\in\NN^d} a_{\bm}x^\bm$
be given.
\be\item
We say that {\em $\hat f$ is $k$-summable in $\bx^\bal$ on $\pp=\pp(a,b;R)$}
if $b-a>s\pi$
and there exists a holomorphic bounded function $f:\pp\rightarrow
E$ such that
$f$ has $\hat f$ as its $s$-Gevrey asymptotic expansion in
$\bx^\bal$ on $\pp$ in the sense of Definition/Proposition \reff{defprop36}. Then
$f$ is called the $k$-sum of $\hat f$ in $\bx^\bal$ on $\pp$. If it exists, it
is unique, by Theorem \reff{Watson}.
\item
The formal series $\hat f$ is called $k$-summable in $\bx^\bal$ in the direction
$\theta\in\RR$, if there exist $\delta,r>0$ such that $\hat{f}$
is
$k$-summable in $\bx^\bal$ on the sector
$\pp(\theta-s\frac\pi2-\delta,\theta+s\frac\pi2+\delta;r)$ in $\bx^\bal$.
\item
The formal series $\hat f$ is simply called $k$-summable, if it is
$k$-summable in every direction $\theta\in\RR$ with finitely many
exceptions mod $2\pi$ (called singular directions).\ee
\end{defin}

The first condition in Definition/Proposition \reff{defprop36}
shows that
$\hat f$ is $k$-summable in $\bx^\bal$ on $\pp(a,b;R)$ if and only
if the
formal series $T\hat f =\sum_{n=0}^\infty g_n t^n$ has coefficients in $\EE_r$
and if it is $k$-summable on $V(a,b;r^d)$ for $r>0$ sufficiently small
as series in one variable with coefficients in a Banach space. 
This allows us to carry over classical theorems to $k$-summability in a monomial.

It would be tempting to define summability in a monomial (and also
Gevrey asymptotics in a monomial) using only a fixed radius $r>0$,
but an example in \cite{CDMS} shows that $r$ might have to be chosen smaller and smaller
if the direction $\theta$ approaches a singular direction.

\section{Asymptotics with respect to an analytic germ}

Consider a germ of analytic function $P(\bx)\in \OO=\CC\{x_1,\ldots,x_d\}$, not a
unit (i.e.\ $P(\bo)=0$)  and not identically vanishing,
defined in some neighbourhood of $\bo\in \CC^d$, say in 
$D(\bo; \rho)$.
\begin{defin}
A sequence $\{ f_n \}_{n=0}^{\infty }$ in $\OO_b (D(\bo; \rho))$ is
an {\em asymptotic sequence (for $\hat f$)} if $J(f_n)$ converges in the $\mathfrak{m}$-adic
topology of $\hat\OO=\formal$ towards an element $\hat{f}\in \hat{\OO}$.

If, moreover, $J(f_n)\equiv \hat{f} \mod P^n\cdot\hat{\OO}$ for all $n$, then we
will say that $\{ f_n\}_n$ is a {\em $P$-asymptotic sequence (for $\hat f$)}.

If $\hat f\in\hat \OO$ is the limit of some $P$-asymptotic sequence, then we say that
{\em $\hat f$ is a $P$-asymptotic series}.
\end{defin}
\begin{defin} Given $a<b$, $0<R_j\leq +\infty $, $j=1,\ldots ,d$, $\bR=(R_1,\ldots ,R_d)$,
the $P$-sector $\Pi_P (a, b; \bR)$ is the set
$$ \Pi_P (a, b; \bR)= \{ \bx\in \CC^d;\ a<\arg P(x,y)<b,\
0<\norm{x_j}<R_j\mbox{ for }j=1,\ldots ,d\} .  $$
\end{defin}
By abuse of notation, we sometimes write $\Pi_P(a, b;R)$ for
$ \Pi_P (a, b; (R,R,\ldots ,R)$.

\begin{defin}  \labl{desarrollo}
Given a $P$-sector $\Pi$, $f\in \OO (\Pi)$ and $\hat f\in\formal$, we will say that
{\em $\hat{f}$ is the $P$-asymptotic expansion of $f$ on $\Pi$} if there exist $\rho>0$ and
a $P$-asymptotic sequence $\{ f_n \}_{n=1}^\infty $ in $\OO_b (D(\bo;\rho))$ for $\hat f$,
such that, for every $n\in \NN$, there exists $K_n>0$ such that
\begin{equation}\labl{Kn}
\norm{f(\bx)-f_n (\bx)}\leq K_n \cdot \norm{P(\bx)}^n
\end{equation}
on $D(\bo;\rho)\cap\Pi$. 
We will denote this by $f\sim_{\Pi}^P \hat{f}$. Observe that $\hat f$ is a $P$-sysmptotic series in this case.
\end{defin}

\begin{nota} \labl{nota-desarrollo}
\benot
\item In Theorem \reff{st-form}, we will show that the above definition is equivalent
to statements that reduce to Proposition/Definition \reff{defprop36} in
the case of a monomial.
\item \label{unidad}
If $U$ is a unit and $Q=U\cdot P$ then it is immediate to verify that $f$ has a series
$\hat f$ as a $P$-asymptotic expansion if and only if $f$ has
the same series as $Q$-asymptotic expansion.\medskip


\item \textbf{Specialization}. Consider a disk $D(\bo; \rho' )\in \CC^m$, and $Q: D(\bo; \rho' )\rightarrow \CC^d$ such that $Q(\bo )\in D(\bo; \rho )\subseteq \CC^d$, and $P\circ Q (\bo )=0$ but $P\circ Q\not\equiv 0$. Let $f\in \OO (\Pi_P (a,b;R))$ and consider a $P$-asymptotic sequence $\{ f_n\}_{n=1}^\infty$ for $f$. There exists $R'>0$ such that, if $\by \in \CC^m$ verifies $0<\norm{y_i}<R'$ for every $i$, $1\leq i\leq m$, then $Q(\by )\in D(\bo;R) \subseteq \CC^d$. Under these conditions, $f\circ Q$ is well defined on a $P\circ Q$-sector $\tilde{\Pi}= \Pi_{P\circ Q} (a,b; R')$ and has the sequence $\{f_n\circ Q\}_n$ as $P\circ Q$-asymptotic sequence.

This applies in particular when $Q(\bo)=\bo$.
Another interesting special case of this property can be given in the context of monomial asymptotic expansions, i.e.\ $P=\bx^\bal$, and $Q:\CC\rightarrow \CC^d$ is defined by $Q(x)=(x,t_2,\ldots , t_d)$, with $(t_2,\ldots ,t_d)\in \CC^{d-1}$, $0<\norm{t_i}<R$. We obtain that monomial asymptotic expansions can be specialized, fixing the values of some of the variables.


\item The notion of $P$-asymptotic expansion agrees with the
usual notion of asymptotic
expansion in one variable if $P=x$.
Indeed, suppose that $f$ is a holomorphic function defined on
a sector $V$, and that there is a family of holomorphic functions
$\{ f_n\}_n$, defined on a common neighbourhood of the origin
$D(\bo; \rho)$, and such that  there exists $C_n$ with
$$
\norm{f(x)-f_n(x)}\leq C_n \norm{x}^n
$$
on $V\cap D(\bo ; \rho )$. The sequence $\{ f_n\}_n$ turns out to
be an asymptotic sequence. Indeed, observe that
$$
\norm{f_n(x)-f_{n+1}(x)}\leq \norm{f_n(x)-f(x)}+
\norm{f(x)-f_{n+1}(x)}\leq (C_n+C_{n+1} \norm{x})\norm{x}^n,
$$
and therefore the meromorphic functions ${(f_n(x)-f_{n+1}(x))}/{x^n}$ are
bounded on $V\cap D(\bo;\rho )$, thus holomorphic at the origin.
Therefore we have $J_{n-1}(f_n)= J_{n-1}(f_{n+1})$ for all $n$ and $J(f_n)$
converges in the ${\mathfrak m}$-adic topology of $\CC \lf x\rf $
towards some series $\hat{f}$, such that $J_{n-1}(f_m)=
J_{n-1}(\hat{f})$ whenever $m\geq n$.

As we have $\norm{f_n(x)- J_{n-1}(f_n)(x)}\leq K_n
\norm{x}^n$ for every $n\in\NN$ with some $K_n$, we finally obtain
$$
\norm{f(x)- J_{n-1}(\hat f)(x)}\leq \norm{f(x)- f_n (x)}+
\norm{f_n(x)- J_{n-1}(f_n)(x)} \leq (C_n+K_n) \norm{x}^n,
$$
on $V\cap D(\bo;\rho')$.

The converse is trivial.
\ee
\end{nota}

\begin{lema} \labl{existfhat}
1.\ If a sequence $\{ f_n\}_n$ of functions on some polydisk $D(\bo;\rho)$
and a function $f$ on some $P$-sector satisfy the inequalities (\reff{Kn}),
then $\{ f_n\}_n$ is a $P$-asymptotic sequence.\\
2.\ The $P$-asymptotic expansion of a function $f$ on a $P$-sector, if it exists, is unique.

\end{lema}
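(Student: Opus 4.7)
The plan is to use Lemma \reff{lemadivision} as the essential tool: it converts boundedness of a quotient on a $P$-sector into genuine divisibility in $\OO$, which is exactly what is needed to pass from the analytic estimates (\reff{Kn}) to statements about formal power series mod $P^n\hat\OO$.

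For part 1, I would first compare consecutive terms. Fix $m\geq n$. By the triangle inequality applied to (\reff{Kn}),
\[ \norm{f_n(\bx)-f_m(\bx)} \leq (K_n+K_m\norm{P(\bx)}^{m-n})\,\norm{P(\bx)}^n \quad \text{on } D(\bo;\rho)\cap\Pi.\]
Since $\norm{P(\bx)}$ is bounded on $D(\bo;\rho)$, this shows that the quotient $(f_n-f_m)/P^n$ is bounded on $D(\bo;\rho)\cap\Pi$, and in particular on any set $A_{\theta,\rho}=\{\bx\in D(\bo;\rho)\mid P(\bx)\neq 0,\,\arg P(\bx)=\theta\}$ with $\theta\in(a,b)$. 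Lemma \reff{lemadivision}, applied with $g=P^n$ (nonzero, vanishing at the origin), then yields $P^n\mid (f_n-f_m)$ in $\OO$, hence $J(f_n)\equiv J(f_m)\pmod{P^n\hat\OO}$ for all $m\geq n$.

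Because $P\in\mathfrak m$, we have $P^n\hat\OO\subseteq\mathfrak m^n$, so $(J(f_n))_n$ is Cauchy in the $\mathfrak m$-adic topology of $\hat\OO$ and converges to some $\hat f\in\hat\OO$. The ideal $P^n\hat\OO$ is closed in the $\mathfrak m$-adic topology (being an ideal in a Noetherian local ring that is already complete), so letting $m\to\infty$ in $J(f_n)-J(f_m)\in P^n\hat\OO$ gives $J(f_n)\equiv\hat f\pmod{P^n\hat\OO}$. This is exactly the definition of a $P$-asymptotic sequence.

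For part 2, if $\hat f$ and $\hat g$ are both $P$-asymptotic expansions of $f$ on $\Pi$, coming from $P$-asymptotic sequences $\{f_n\}$ and $\{g_n\}$ with constants $K_n$ and $L_n$, then
\[ \norm{f_n(\bx)-g_n(\bx)} \leq (K_n+L_n)\,\norm{P(\bx)}^n \]
on $D(\bo;\rho)\cap\Pi$. The same application of Lemma \reff{lemadivision} gives $P^n\mid (f_n-g_n)$, so $J(f_n)\equiv J(g_n)\pmod{P^n\hat\OO}$. Combining with $J(f_n)\equiv\hat f$ and $J(g_n)\equiv\hat g\pmod{P^n\hat\OO}$ shows $\hat f-\hat g\in\bigcap_n P^n\hat\OO\subseteq\bigcap_n\mathfrak m^n=\{0\}$ (by Krull), hence $\hat f=\hat g$.

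The only real subtlety, which I regard as the main thing to verify carefully rather than a serious obstacle, is that the hypotheses of Lemma \reff{lemadivision} are met: one needs to check that a set $A_{\theta,\rho}$ of fixed argument of $P$ is actually contained in the $P$-sector where the estimate holds. This is built into the definition of $\Pi_P(a,b;\bR)$, so it suffices to pick any $\theta$ in the (nonempty) opening $(a,b)$.
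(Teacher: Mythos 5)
Your proof is correct and follows essentially the same route as the paper: use the triangle inequality on the estimates (\ref{Kn}) to bound differences of the $f_n$ by constants times $\norm{P}^n$, invoke Lemma \ref{lemadivision} to upgrade these bounds to divisibility by $P^n$ in $\OO$, and conclude via $\mathfrak m$-adic convergence (the paper works with consecutive differences $f_n-f_{n+1}$ rather than general $f_n-f_m$, and leaves the closedness of $P^n\hat\OO$ and the Krull intersection implicit, but these are cosmetic differences). No gaps.
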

\begin{proof}
For 1.: Such a sequence satisfies for all $n\in\NN$
\begin{equation*}
\begin{split}
\norm{f_n(\bx)-f_{n+1}(\bx)} & \leq \norm{f_n(\bx)-f(\bx)}+
\norm{f(\bx)-f_{n+1}(\bx)} \\ & \leq (K_n+K_{n+1} \norm{P(\bx)})
\norm{P(\bx)}^n\leq K'_n \norm{P(\bx)}^n
\end{split}
\end{equation*}
on the $P$-sector $\Pi$ mentioned in the statement. By Lemma \reff{lemadivision}, $P(\bx)^n$
divides $f_n(\bx)-f_{n+1}(\bx)$ for all $n$. As $P(\bo)=0$,
$\{f_n\}_{n\in\NN}$ is a Cauchy sequence for the $\mathfrak m$-adic topology and converges to some
$\hat f\in\hat\OO$. Moreover, $f_n\equiv\hat f \mod P^n\hat\OO$ for all $n$ and the
statement follows.

\noindent For 2.:  Let $\{ f_n\}_n$, $\{ \tilde{f}_n \}_n $  be two asymptotic sequences on a $P$-sector $\Pi$, such that a family of constants $C_n>0$ exists satisfying
\begin{align*}
\norm{f(\bx)-f_n(\bx)} & \leq C_n\cdot \norm{P(\bx)}^n, \\
\norm{f(\bx)-\tilde{f}_n(\bx)} & \leq C_n\cdot \norm{P(\bx)}^n.
\end{align*}
Then,
$$
\norm{f_n(\bx)- \tilde{f}_n (\bx)} \leq 2C_n \norm{P(\bx)}^n
$$
on $\Pi$, and by Lemma \reff{lemadivision}, $P(\bx)^n$ divides $f_n(\bx)-\tilde{f}_n (\bx)$. So, the families $\{ f_n \}_n$, $\{ \tilde{f}_n\} $ have the same limit in the ${\mathfrak m}$-topology.
\end{proof}

Let us see now that Definition \reff{desarrollo} is independent of
the chosen
$P$-asymptotic
sequence with limit $\hat f$. Assume that $\{ f_n \}_n$ in $\OO_b (D(\bo; \rho))$
is a $P$-asymptotic sequence, $f\in \OO (\Pi )$, $\Pi$ a $P$-sector,
such that for all $n\in\NN$
$$
\norm{f(\bx)-f_n (\bx)}\leq K_n\cdot \norm{P(\bx)}^n,
$$
for $\bx\in \Pi \cap D(\bo; \rho )$,
where $K_n>0$ are certain constants.

Let $ \{ \tilde{f_n}\}_n$ be another $P$-asymptotic sequence with $\tilde{f_n}\in \OO_b
(D(\bo; \tilde\rho))$ and such that $\{ f_n\}_n$ and $\{ \tilde{f}_n\}_n$
have the same limit in the ${\mathfrak m}$-adic topology.
Without loss of generality we may assume that $\tilde \rho=\rho$.

For any given $n\in\NN$, we have $J(f_n)\equiv
J(\tilde{f}_n)  \mod P^n\hat{\OO}$. Applying Lemma \reff{WDT}  for formal and 
convergent power series, it follows that actually $J(f_n)\equiv
J(\tilde{f}_n)  \mod P^n{\OO}$ for all $n$. Applying Lemma \reff{WDT-Ob}, it follows
that there
exists some positive $\rho'<\rho$  such that for every $n\in\NN$ we can write
$f_n - \tilde{f}_n = h_n P^n$ with some $h_n\in\OO_b(D(\bo;\rho'))$.

On ${D} (\bo; \rho')\cap \Pi$ we have
\begin{equation*}
\begin{split}
\norm{f(\bx) - \tilde{f}_n (\bx)} & \leq \norm{f(\bx)-f_n (\bx)}
+ \norm{h_n (\bx)}\cdot \norm{P(\bx)}^n \\
 & \leq (K_n+C_n)\cdot \norm{P(\bx)}^n,
\end{split}
\end{equation*}
where $C_n$ denotes some bound of $h_n$ on ${D} (\bo; \rho')$.
This proves that $\{\tilde f_n\}_{n}$ also satisfies the inequalities (\reff{Kn}) and
thus can be used to define $f\sim_\Pi^P\hat f$ on $\Pi$.\medskip

Contrary to monomial asymptotics, there is no canonical expansion (like in Definition/Proposition
\reff{defprop36}).
Using Generalised Weierstrass Division in the form of Lemma \reff{WDT-Ob}, we are going to present
standard expansions in an expression, but they cannot be called canonical, as they depend
on the choice of the linear form $\ell$ or equivalently on the choice of the
leading monomial of the analytic germ.

The only case where this expansion is canonical is precisely when this leading monomial does not depend
upon the linear form $\ell$, or, in geometric terms, when the Newton polyhedron of
$P(\bx )$ has only one vertex.
In this case, $P(\bx )= \bx^{\bal} U(\bx )$ with some unit $U(\bx )$, and
the Remark \reff{nota-desarrollo} (2) reduces the situation to the monomial case.

It is convenient to construct operators $T_\ell$ (for injective linear forms $\ell:\NN^d\to\RR_+$)
analogous to the operator $T$ used in monomial asymptotics.
First we restate the Corollaries \reff{coro-P-serie} and \reff{coro-P-germ}
in a slightly different way :
$\CC$ is replaced by an arbitrary $\CC$-vector space $E$.
For an injective linear form $\ell:\NN^d\to\RR_+$, $P\in\OO\setminus\{0\}$, $P(\bo)=0$ and a vector space $E$,
let $\Delta_\ell(P,E)$ denote the subset
of $E\lf \bx\rf $ defined analogously to (\reff{delta}). We abbreviate
$\Delta_\ell(P)=\Delta_\ell(P,\CC)$.

\begin{lema} \labl{defT-poly} Let $\ell:\NN^d\to\RR_+$ an injective linear form,
$P\in\OO\setminus\{0\}$, $P(\bo)=0$.
For any vector space $E$, there exists an isomorphism
$$T_\ell:E\lf \bx\rf \to\Delta_\ell(P,E)\lf t\rf $$
with the property $(T_\ell f)(P)=f$ for all series $f\in E\lf \bx\rf $. Here the symbol
$(T_\ell f)(P)$ means that $t$ is replaced by $J(P)$ in the series $T_\ell f$.
If $E$ is a normed vector space and
$f\in E\{\bx\}$ then $T_\ell f\in\EE_\ell\{t\}$, where $\EE_\ell=\Delta_\ell(P,E)\cap E\{\bx\}$.
\end{lema}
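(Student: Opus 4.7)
The plan is to extend Corollaries \reff{coro-P-serie} and \reff{coro-P-germ} verbatim to $E$-valued series. First I would observe that although Lemma \reff{WDT} is stated for $S=\CC\lf\bx\rf$ or $S=\mcR\lf\bx\rf$, both the formal version and its $\Sc_\mu=\OO_b(D_\mu)$ counterpart remain valid when scalars are replaced by an arbitrary complex vector space $E$ (respectively, a complex Banach space, for the $\OO_b$ version). Indeed, the operators $T_j$, $Q_0$, $R_0$ in the proof of Lemma \reff{WDT} act coefficient-wise on power series and extend to linear operators on $E\lf\bx\rf$ and on $\OO_b(D_\mu, E)$ with the same operator norms; the fixed-point equation (\reff{FP}) still defines a contraction because its Lipschitz constant depends only on the scalar quantities attached to $\tilde P$ and $\bx^\ba$, while $P_{v_\ell(P)}\in\CC\setminus\{0\}$ is in any case invertible. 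Thus for every $g\in E\lf\bx\rf$ there exist unique $q\in E\lf\bx\rf$ and $r\in\Delta_\ell(P,E)$ with $g=q\cdot P+r$, and similarly in $\OO_b(D_s,E)$.

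Once this $E$-valued division is in hand, I would define $T_\ell$ exactly as in Corollary \reff{coro-P-serie}: iterating division produces unique $g_0,g_1,\ldots\in\Delta_\ell(P,E)$ and remainders $q_N\in E\lf\bx\rf$ with
$$f=g_0+g_1P+\cdots+g_{N-1}P^{N-1}+q_NP^N.$$
Since $v_\ell(q_N P^N)\geq N\,\ell(v_\ell(P))\to\infty$, the series $\sum_n g_nP^n$ converges to $f$ in the $\mathfrak m$-adic topology of $E\lf\bx\rf$. Set $T_\ell(f)=\sum_n g_n t^n\in\Delta_\ell(P,E)\lf t\rf$; by construction $(T_\ell f)(P)=f$. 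Linearity is obvious, injectivity follows from uniqueness in the division, and surjectivity is immediate: given any $\sum_n g_n t^n\in\Delta_\ell(P,E)\lf t\rf$, the $\mathfrak m$-adically convergent series $\sum_n g_nP^n\in E\lf\bx\rf$ is a preimage (its own Weierstrass expansion, again by uniqueness, returns the original $g_n$).

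For the convergent statement, suppose $E$ is a normed space and $f\in E\{\bx\}$, so $f\in\OO_b(D_s,E)$ for some $s>0$. The $E$-valued analogue of Lemma \reff{WDT-Ob} yields continuous linear operators $Q,R:\OO_b(D_s,E)\to\OO_b(D_s,E)$ with $g_n=(RQ^n)f\in\OO_b(D_s,E)$ and $J(g_n)\in\Delta_\ell(P,E)$, so each $g_n$ belongs to $\EE_\ell$. Choosing $\rho\in]0,s]$ with $M:=\sup_{D(\bo,\rho)}|P|<1/\norma Q$, the same estimate used in the proof of Corollary \reff{coro-P-germ},
$$\sup_{\norm\bx<\rho}\Bigl\|f(\bx)-\sum_{n=0}^{N-1}g_n(\bx)P(\bx)^n\Bigr\|_E\leq (M\norma Q)^N\sup_{D_s}\|f\|_E,$$
shows that $\sum_n g_n(\bx) P(\bx)^n$ converges to $f(\bx)$ on $D(\bo,\rho)$ and, after dividing through, that $\sum_n g_n t^n$ has positive radius of convergence as a series in $t$ with coefficients in $\EE_\ell$. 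Hence $T_\ell f\in\EE_\ell\{t\}$.

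I do not anticipate any substantial obstacle: the only point requiring care is verifying that the contraction estimates in the proofs of Lemmas \reff{WDT} and \reff{WDT-Ob} are insensitive to whether coefficients lie in $\CC$ or in $E$, which is true because the divisor $P$ and the multiplicands $\tilde P$, $\bx^\ba$ always have scalar coefficients, so the relevant operator norms are unchanged.
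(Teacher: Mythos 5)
Your argument is correct and is essentially the paper's own: the paper states this lemma without a separate proof, presenting it as a restatement of Corollaries \reff{coro-P-serie} and \reff{coro-P-germ} with $\CC$ replaced by $E$, which is exactly the coefficient-wise extension of Lemmas \reff{WDT} and \reff{WDT-Ob} that you verify (the contraction data depend only on the scalar divisor $P$). The only point to polish is that for a merely normed, non-complete $E$ the fixed-point argument behind the $\OO_b$-version should be run in the completion $\hat E$; by uniqueness of the formal division the resulting $g_n$ still have coefficients in $E$, so the conclusion $T_\ell f\in\EE_\ell\{t\}$ is unaffected.
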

For $r>0$ let $\EE_{\ell,r}$ denote the Banach space of all functions $f\in\OO_b(D(\bo;r))$
the series expansion of which $J(f)\in\Delta_\ell(P)$. { If $r'<r$ there is a natural restriction map $\EE_{\ell,r}\rightarrow \EE_{\ell,r'}$, linear and continuous. The image of $f\in\EE_{\ell,r}$ will be denoted $f|_{\EE_{\ell,r'}}$. Similarly, if $f(t)=\sum_{n=0}^\infty f_nt^n\in\EE_{\ell,r} \lf t\rf $ is a formal series, $f(t)|_{\EE_{\ell,r'}}$ will represent $\sum_{n=0}^\infty f_n|_{\EE_{\ell,r'}} t^n$.
}

In the subsequent theorem, we establish an analogue of the operator $T_\ell$ for functions
defined on sectors in a germ. This theorem generalizes
Lemma \reff{T-sector} to arbitrary germs.
\begin{teorema}\labl{T-sector-germ} Let $\ell:\NN^d\to\RR_+$ an injective linear form,
$P\in\OO\setminus\{0\}$, $P(\bo)=0$.
Let $\pp=\pp_P(a,b;\bR)$ a sector in $P$.
Then there exists $\rho,\sigma,L>0$ with $P(D(\bo,\rho))\subset D(\bo,\sigma)$ and
the following properties:
\be\item If $f:\pp\to\CC$ is a holomorphic function 
on $\pp$, then there exists
a uniquely determined holomorphic function
$T_\ell f:V(a,b;\sigma)\times D(\bo;\rho)\to\CC$ such that
$J((T_\ell f)(t,.))\in\Delta_\ell(P)$
for any $t$ and
$(T_\ell f)(P(\bx), \bx)=f(\bx)$ for all $\bx\in\pp$, $\norm{\bx}<\rho$.
%
\item Moreover,
given a function $K:]0,S]\to\RR_+$, $S\geq\sup_{\bx\in \pp}\norm{P(\bx)}$,
such that $\norm{f(\bx)}\leq K(\norm{P(\bx)})$ for $\bx\in\pp$ we have
$$\norm{(T_\ell f)(t,\bx)}\leq \frac{L}{\norm t} K(\norm t)\mbox{ for }
t\in V(a,b,\sigma),\  \bx\in D(\bo;\rho).$$\ee
\end{teorema}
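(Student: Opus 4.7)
My proof proposal is by induction on the integer $h(P)$ from Lemma \reff{reduction}, in the same spirit as the proof of Lemma \reff{lemadivision}. The two defining conditions on $T_\ell f$ are manifestly stable under right-composition with a diffeomorphism $D \in \diff(\CC^d,\bo)$ (with $\ell$ and $P$ replaced by their pullbacks), so the diffeomorphisms produced by Lemma \reff{reduction} may be absorbed into the problem at each inductive step. I propose to treat uniqueness in parallel with the construction, modeled on the uniqueness proof in Lemma \reff{T-sector}: one combines the $\Delta_\ell(P)$-jet condition with the diagonal identity $T_\ell f(P(\bx),\bx) = f(\bx)$ to force the coefficients in a Weierstrass-type expansion to vanish, and the details are naturally spelled out at the end of the construction.

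Base case $h(P)=0$: here $P$ has normal crossings, so after the absorbing diffeomorphism $P(\bx)=\bx^\bal U(\bx)$ with $U(\bo)\neq 0$. Since $\arg U(\bx)$ is close to $\arg U(\bo)$ on a small polydisc, the $P$-sector $\pp$ restricted to such a polydisc contains, after a fixed rotation in the monomial variable, a $\bx^\bal$-sector $\Pi'$. Lemma \reff{T-sector} applied to $f|_{\Pi'}$ produces $\tilde F(s,\bx)$ on $V'\times D(\bo;\bR')$ with $\tilde F(\bx^\bal,\bx)=f(\bx)$ and $J(\tilde F(s,\cdot))\in\Delta(\bx^\bal)=\Delta_\ell(P)$. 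To convert $\tilde F$ into an operator attached to $P$ rather than $\bx^\bal$, expand $\tilde F(s,\bx)=\sum_n \tilde g_n(\bx)\,s^n$ and apply iteratively the Weierstrass operators $Q,R$ of Lemma \reff{WDT-Ob} to rewrite each $\tilde g_n(\bx)\bx^{n\bal}$ as a series in powers of $P$ with coefficients in $\Delta_\ell(P)$. The operator-norm bounds in Lemma \reff{WDT-Ob} provide the geometric convergence that yields $T_\ell f(t,\bx)$ on $V(a,b;\sigma)\times D(\bo;\rho)$; the growth estimate (2) is inherited from the bound of Lemma \reff{T-sector} through the continuity of $Q$ and $R$.

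Inductive step $h(P)>0$: by Lemma \reff{reduction}, either (i) there exists $k$ with $h(P\circ r_k)<h(P)$, or (ii) $h(P\circ b_\xi)<h(P)$ for every $\xi\in\PC$. Case (i) is straightforward: apply the inductive hypothesis to $f\circ r_k$ on its $P\circ r_k$-sector to obtain $\tilde F(t,\bt)$. The data and both defining conditions are equivariant under $\bt\mapsto(e^{2\pi i/k}t_1,\bt')$ (choose $\ell$ compatible with this grading), so uniqueness forces $\tilde F$ to be invariant and to descend to $T_\ell f$, exactly as in the ramification step of Lemma \reff{lemadivision}. Case (ii) is the main obstacle. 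Apply the inductive hypothesis in each chart $M_\xi$ to produce $F_\xi=T_\ell^{(\xi)}(f\circ b_\xi)$, and transport through $\phi_\xi$ to holomorphic $\tilde F_\xi$ on $V\times U_\xi$ for an open cover $(U_\xi)_{\xi\in\PC}$ of the exceptional divisor $\PC\times\{\bo\}\subset M$. On overlaps the discrepancies $D_{\xi\zeta}=\tilde F_\xi-\tilde F_\zeta$ form a holomorphic cocycle depending on the parameter $t\in V$; the parameterised Cousin problem of Lemma \reff{cover-E} together with Remark \reff{rem-cover} provides trivialising corrections $G_\xi$, and one checks, via uniqueness in each chart, that the corrections preserve both the diagonal identity and the $\Delta_\ell(P)$-jet condition. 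Compactness of $\PC$ together with Hartogs' theorem, as in the final step of Lemma \reff{lemadivision}, then lets the resulting coherent family on a neighborhood of $\PC\times\{\bo\}$ descend via $b$ to a holomorphic function $T_\ell f$ on $V(a,b;\sigma)\times D(\bo;\rho)$. The bound (2) is transported through the norm of $\Sigma$ in Lemma \reff{cover-E} combined with the inductive bound, the whole chain producing only a finite amplification of constants at each of the $h(P)$ reduction steps.
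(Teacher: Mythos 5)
Your overall framework — induction on $h(P)$, absorb diffeomorphisms, ramification by averaging, blow-up by gluing — is the paper's framework. But there are two real problems with the way you have set up the inductive step, and they are not fixable by "one checks."

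First, you try to carry the condition $J((T_\ell f)(t,\cdot))\in\Delta_\ell(P)$ through the whole induction. This cannot work in the blow-up step: the set $\Delta_\ell(P\circ b_\xi)$ depends on the chart (through $v_\ell(P\circ b_\xi)$, which varies with $\xi$), and, more to the point, even if $F$ is the unique function with $F(P(\bx),\bx)=f(\bx)$ and $J(F(t,\cdot))\in\Delta_\ell(P)$, its pullback $F(t,b_\xi(\bv))$ does \emph{not} satisfy $J(\cdot)\in\Delta_\ell(P\circ b_\xi)$ at the origin of the chart. So the objects $T_\ell^{(\xi)}(f\circ b_\xi)$ you construct chart by chart are not restrictions of a single downstairs object, and there is no reason for the Cousin corrections to bring them into that form. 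The paper deliberately separates the two jobs: Lemma~\ref{main-step} produces, by induction on $h(P)$, \emph{some} $F$ with $F(P(\bx),\bx)=f(\bx)$ and the $1/|t|$ bound, with no jet condition at all; only afterwards (end of Section~\ref{proof33}) is $F(t,\cdot)$ re-expanded in powers of $P$ via Lemma~\ref{WDT-Ob} and Corollary~\ref{coro-P-germ} to produce $T_\ell f(t,\bx)=\sum_n (RQ^n F(t,\cdot))(\bx)\,t^n$, which is where $\Delta_\ell(P)$ enters. You need to restructure your induction in the same two-stage way.

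Second, and independently, your gluing step is missing the essential division. The discrepancies $D_{\xi\zeta}=\tilde F_\xi-\tilde F_\zeta$ vanish on the diagonal $\{t=(P\circ b)(p)\}$, but a Cousin trivialization $D_{\xi\zeta}=G_\xi-G_\zeta$ has no reason to produce $G_\xi$ vanishing there, so $\tilde F_\xi-G_\xi$ will no longer satisfy the diagonal identity. The paper's proof first divides: by Lemma~\ref{division2} there are $Q_{\xi\zeta}$ with $D_{\xi\zeta}=(t-(P\circ b))\,Q_{\xi\zeta}$ (with controlled norm), Cousin (Lemma~\ref{cover-E}) is applied to the quotient cocycle $Q_{\xi\zeta}$ to give $R_\xi$, and the correction $(t-(P\circ b))R_\xi$ then manifestly vanishes on the diagonal. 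Without Lemma~\ref{division}/\ref{division2} — which your write-up never invokes — the glued function does not satisfy $F(P(\bx),\bx)=f(\bx)$, and your claim that "the corrections preserve the diagonal identity" is simply false.

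Your base case and ramification case are essentially right (the base case could be phrased more cleanly as $F(t,\bx)=\tilde F(t/U(\bx),\bx)$ with $\tilde F$ from Lemma~\ref{T-sector}, deferring the $\Delta_\ell(P)$ normalization to the end), and the uniqueness argument you sketch is the right one. But the two gaps above are the content of the theorem, so the proposal as written does not go through.
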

\noindent Theorem \reff{T-sector-germ} will be proved in the next section. 
\begin{nota}\labl{notaTl}\benot \item
It is important in some applications, that the numbers $\sigma,\rho,L$ are independent of the function
$f$ to which $T_\ell$ is to be applied.
\item
Unfortunately, $T_\ell f$ is in general defined on a small set only unlike
$Tf$ in Lemma \reff{T-sector} for monomial asymptotics. As in our theory of asymptotics
in a germ, the radius of the sectors or polydisks has to be reduced frequently, this
is not crucial.
The authors were surprised that such an operator $T_\ell$ for
asymptotics in a germ exists.
\item The unicity of $T_\ell f$ in statement (1) implies that the operator $T_\ell$ is independent
of the given $P$-sector in the following sense: If $a\leq a'<b'\leq b$, $f\in \OO_b(\Pi_P(a,b;\bR))$,
$F_1=T^{a,b}_\ell f:V(a,b,\sigma)\times D(\bo;\rho)$ is the function of statement (1) and
$F_2=T^{a',b'}_\ell f\mid_{\Pi_P(a',b';\bR)}:V(a',b',\sigma')\times D(\bo;\rho')$ is the function of statement (1)
for $f$ restricted to the $P$-subsector $\Pi_P(a',b';\bR)$, then
the restrictions of $F_1$ and $F_2$ to $V(a',b',\tilde\sigma)\times D(\bo;\tilde\rho)$,
$\tilde\sigma=\min(\sigma,\sigma')$, $\tilde\rho=\min(\rho,\rho')$ coincide.
This justifies our notation and will become important later.\ee
\end{nota}

The first crucial application of the above Theorem generalizes  Proposition/Definition 
\reff{defprop36} to asymptotics with respect to an analytic germ.

\begin{teorema}\labl{st-form} Let $\ell:\NN^d\to\RR_+$ be an injective linear form, $P\in\OO\setminus\{0\}$,
$P(\bo)=0$ and let $\Delta(P)$ be defined by (\reff{delta}).
Let $\Pi$ be a $P$-sector, $f\in\OO(\Pi)$ and $\hat f\in\hat\OO$.
Then $f$ has $\hat f$ as $P$-asymptotic expansion on $\Pi$ if and only if
there exists $\rho>0$ such that $T_\ell\hat f\in\OO_b(D(\bo;\rho))\lf t\rf $ and one of the following two
equivalent conditions holds:
\be
\item $T_\ell \hat f=\sum_{n=0}^\infty g_n  (\bx )t^n$ and for every $N$ there exists $L_N>0$ such that
$$\norm{f(\bx)-\sum_{n=0}^{N-1}g_n(\bx)P(\bx)^n}\leq
   L_N\norm{P(\bx)}^N\mbox{ for }\bx\in\Pi\cap D(\bo;\rho).$$
\item The function $T_\ell f$
from Theorem \reff{T-sector-germ} is defined on $V(a,b;\sigma)\times D(\bo;\rho)\to\CC$ for some
positive $\sigma$ and satisfies
$$T_\ell f\sim T_\ell \hat f\mbox{ as }V(a,b;\sigma)\ni t\to0.$$
\ee
\end{teorema}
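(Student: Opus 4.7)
The plan is to prove the theorem as a triangle of implications, $f\sim_\Pi^P\hat f \Longleftrightarrow\ (1) \Longleftrightarrow\ (2)$. The main ingredients are Theorem \reff{T-sector-germ}, Corollary \reff{coro-P-germ} via Lemma \reff{WDT-Ob}, and the independence of the $P$-asymptotic expansion from the choice of asymptotic sequence, established just before the statement.

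Given $\hat f$ with $T_\ell\hat f = \sum_{n\geq 0} g_n(\bx)\,t^n$, Lemma \reff{defT-poly} gives formally $\hat f = \sum_{n\geq 0} g_n(\bx)P(\bx)^n$. Assuming $T_\ell\hat f\in\OO_b(D(\bo;\rho))\lf t\rf$, the canonical truncations $f_n:=\sum_{k=0}^{n-1}g_kP^k\in\OO_b(D(\bo;\rho'))$ form a $P$-asymptotic sequence for $\hat f$, since $J(f_n)\equiv\hat f\bmod P^n\hat\OO$. Condition (1) is by definition the statement that $\{f_n\}$ witnesses $f\sim_\Pi^P\hat f$, so (1) immediately implies the $P$-asymptotic expansion holds; conversely, if any $P$-asymptotic sequence $\{\tilde f_n\}$ witnesses the expansion, then by the independence of the choice of sequence the specific sequence $\{f_n\}$ also does, producing (1). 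The only subtlety is to establish upfront, starting from an arbitrary $\{\tilde f_n\}$, that $T_\ell\hat f$ actually has coefficients in a common disk. I would handle this by fixing $s>0$ small enough that the division operators of Lemma \reff{WDT-Ob} act continuously on $\OO_b(D_s)$, then setting $g_k=R(Q^k\tilde f_n)$ with $Q,R$ as in that lemma, and checking via formal uniqueness and $\mathfrak m$-adic convergence of $J(\tilde f_n)\to\hat f$ that this value is independent of $n$ for $n>k$, so that all $g_k$ lie in the single Banach space $\OO_b(D_s)$.

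The implication (2) $\Longrightarrow$ (1) is a direct substitution: the identity $(T_\ell f)(P(\bx),\bx)=f(\bx)$ from Theorem \reff{T-sector-germ} lets one set $t=P(\bx)$ in the bound in (2), provided $\bx$ is small enough that $\norm{P(\bx)}<\sigma$. The converse (1) $\Longrightarrow$ (2) is the main obstacle. I would set $h_N(\bx):=f(\bx)-\sum_{k=0}^{N-1}g_k(\bx)P(\bx)^k$, so that $\norm{h_N(\bx)}\leq L_N\norm{P(\bx)}^N$ on $\Pi\cap D(\bo;\rho)$; applying Theorem \reff{T-sector-germ} to $h_N$ with $K(u)=L_Nu^N$ gives
\[
\norm{T_\ell h_N(t,\bx)}\leq LL_N\norm{t}^{N-1}\quad\text{for }t\in V(a,b;\sigma),\ \bx\in D(\bo;\rho).
\]
The uniqueness clause of Theorem \reff{T-sector-germ}, together with the observation that $(t,\bx)\mapsto g_k(\bx)t^k$ trivially satisfies the required $\Delta_\ell(P)$ and substitution conditions, yields $T_\ell(g_kP^k)(t,\bx)=g_k(\bx)t^k$; by linearity, $T_\ell h_N(t,\bx)=T_\ell f(t,\bx)-\sum_{k<N}g_k(\bx)t^k$. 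The estimate loses one power of $t$, so the naive conclusion is weaker than (2). The remedy is to apply the inequality with $N+1$ in place of $N$ and absorb the extra term $g_N(\bx)t^N$ using the boundedness of $g_N$ on $D(\bo;\rho)$; this upgrades the bound to order $\norm{t}^N$ and yields (2). Throughout, $\rho$ must be shrunk only finitely many times and independently of $N$, which is ensured by the fact that all $g_k$ belong to the single Banach space $\OO_b(D_s)$ produced in the previous step.
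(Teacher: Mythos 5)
Your proof is correct and follows essentially the same route as the paper's: construct the canonical coefficients $g_k$ via the division operators of Lemma \ref{WDT-Ob} applied to an arbitrary $P$-asymptotic sequence, use independence of the expansion from the choice of sequence, then transfer the estimate to $T_\ell f$ by Theorem \ref{T-sector-germ} together with the uniqueness clause identifying $T_\ell h_N$ with $T_\ell f-\sum_{k<N}g_k t^k$. The only place you add detail beyond what the paper writes is the explicit $N\mapsto N+1$ shift to recover $|t|^N$ from the $|t|^{N-1}$ bound, which the paper leaves implicit (it carries out this step explicitly only in the monomial case in Definition/Proposition \ref{defprop36}).
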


It is worth noting separately that series that are $P$-asymptotic expansions, i.e.\ $P$-asymptotic series, 
cannot be arbitrary. The Theorem will be proved after the subsequent Corollary and several remarks.
\begin{coro}\labl{P-adic-clos}If $\hat f$ is a $P$-asymptotic series
then there exists $\rho >0$ such that
$T_\ell \hat f \in \OO_b(D_\rho)\lf t\rf $, i.e.\ if $\hat f$ is written
according to Corollary \reff{coro-P-serie}
$$\hat f=\sum_{n=0}^\infty g_nP^n,\ \ g_n\in\Delta_\ell(P),$$
then there exists $\rho>0$
such that for all $n\in\NN$, $g_n$ defines an element of $\OO_b(D(\bo;\rho))$.
\end{coro}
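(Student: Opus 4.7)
The plan is to combine the uniqueness assertion in Corollary \reff{coro-P-serie} with the analytic Weierstrass-type decomposition provided by Lemma \reff{WDT-Ob}. Fix a $P$-asymptotic sequence $\{f_n\}_{n\in\NN}$ with limit $\hat f$, so that each $f_n$ lies in $\OO_b(D(\bo;\rho'))$ for a common $\rho'>0$ and $J(f_n)\equiv \hat f\pmod{P^n\hat\OO}$. I would then choose $s>0$ small enough that both the operators $Q,R$ of Lemma \reff{WDT-Ob} are defined on $\OO_b(D_s)$ and $D_s\subseteq D(\bo;\rho')$, so that each $f_n$ restricts to an element of $\OO_b(D_s)$.

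For each fixed $n$, iterating Lemma \reff{WDT-Ob} exactly $n$ times on $f_n$ produces uniquely determined functions $g_0^{(n)},\ldots,g_{n-1}^{(n)},q_n\in\OO_b(D_s)$ with $J(g_k^{(n)})\in\Delta_\ell(P)$ and
$$f_n=g_0^{(n)}+g_1^{(n)}P+\cdots+g_{n-1}^{(n)}P^{n-1}+q_nP^n.$$
Applying $J$ and using the congruence $J(f_n)\equiv\hat f\pmod{P^n\hat\OO}$ yields
$$\hat f\equiv J(g_0^{(n)})+J(g_1^{(n)})P+\cdots+J(g_{n-1}^{(n)})P^{n-1}\pmod{P^n\hat\OO},$$
which exhibits the first $n$ terms of a decomposition of $\hat f$ of the type in Corollary \reff{coro-P-serie}. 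The uniqueness assertion there forces $g_k=J(g_k^{(n)})$ for every $k<n$, so each $g_k$ is the Taylor series of a function in $\OO_b(D_s)$. Choosing any $\rho>0$ with $D(\bo;\rho)\subseteq D_s$ yields the result.

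The argument is essentially bookkeeping; the only delicate point is to align the analytic uniqueness of Lemma \reff{WDT-Ob} with the purely formal uniqueness of Corollary \reff{coro-P-serie} through the coefficient-extraction map $J$. This matching is straightforward, since the iterated Weierstrass decomposition of $f_n$ at the analytic level coincides, modulo $P^n$, with the initial segment of the $P$-adic expansion of $\hat f$, and the constant $s$ (hence $\rho$) does not depend on $n$.
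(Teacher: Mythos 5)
Your proof is correct and essentially the paper's own argument (carried out in the first part of the proof of Theorem \reff{st-form}): restrict a $P$-asymptotic sequence to a polydisk $D_s$, chosen once and for all so that the operators of Lemma \reff{WDT-Ob} apply, iterate the division, and identify the resulting analytic coefficients with the formal coefficients $g_k$ of Corollary \reff{coro-P-serie}. The only cosmetic difference is the identification step: you invoke the formal uniqueness of the $P$-adic expansion after applying $J$, whereas the paper notes $f_n\equiv f_m \bmod P^n\OO_b(D_\mu)$ so that the coefficients $RQ^\nu(f_m)$ stabilize in $\nu<n\leq m$; both are valid.
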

\begin{nota} \labl{nota-P-clos}
\benot
\item The converse is also true. Indeed, $f_n=\sum_{k=0}^n g_k P^k$ defines a $P$-asymptotic sequence
converging to $\hat f$.
\item The set of the above series is  a subset of the completion
of $\CC\{\bx\}$ with respect to the valuation defined by
the powers of the ideal $(P)$. Observe that their union over all $\rho>0$ does
not exhaust the completion: the latter also contains series
$\sum_{n=0}^\infty g_nP^n$, where the radii of convergence of the $g_n$
tend to 0.

\item In the case of a monomial $P=\bx^\a$, Theorem \reff{st-form} and
Corollary \reff{P-adic-clos} confirm that the ``new" Definition \reff{desarrollo}
of $\bx^\a$-asymptotic expansions is equivalent to the ``classical" Definition
\reff{defprop36} from \cite{CDMS}.

\item If $f\in \OO_b(D(\bo;\rho ))$, and $\hat{f}$ is the Taylor expansion of $f$ at the origin, $f$ has $\hat{f}$ as $P$-asymptotic expansion, as $f$ can be written in powers of $P$ by Corollary \reff{coro-P-germ} of the Division Theorem \reff{WDT}.

\item With $\EE=\Delta_{\ell} (P) \cap \con$, the set  $\EE \lf t\rf $ is not an algebra, as it is not closed under multiplication. Nevertheless, from Definition \reff{desarrollo} it can be seen that the product of functions having a $P$-asymptotic expansion also has a $P$-asymptotic expansion. 
Indeed, consider functions $f,g$ on some $P$-asymptotic sector and $P$-asymptotic sequences
$\{f_n\}_{n},\,\{g_n\}_n$ satisfying (\reff{Kn}) 
corresponding to $P$-asymptotic expansions of $f,g$. Then we can write
    $$
    f(\bx) g(\bx) -f_n(\bx) g_n(\bx) = f(\bx) (g(\bx) -g_n(\bx))+ (f(\bx) -f_n(\bx ))g_n(\bx).
    $$
    So, $\{f_n(\bx) g_n (\bx )\}_n$ is a $P$-asymptotic sequence, and it converges to $\hat{f} (\bx ) \hat{g} (\bx)$. In fact, if $(T_{\ell}\hat{f} )(t,\bx ) =\sum_{n=0}^\infty a_n(\bx ) t^n$, $(T_{\ell} \hat{g} ) (t,\bx ) = \sum_{n=0}^{\infty} b_n (\bx ) t^n$, and decompose
    $$
    \sum_{k=0}^n a_k(\bx ) b_{n-k} (\bx ) =\sum_{m=0}^\infty h_{nm} (\bx ) P(\bx )^m,
    $$
    with $h_{nm}\in\EE$, we have
    $$
    T_{\ell} (\hat{f}\cdot \hat{g} ) (t,\bx )= \sum_{n=0}^\infty \left( \sum_{m=0}^n h_{n-m,m} (\bx ) \right) \cdot t^n.
    $$

\item It is not evident from Definition  \reff{desarrollo} and the characterization given in Theorem \reff{st-form} that the set of functions having a $P$-asymptotic expansion is stable by partial derivatives. Let $\Pi$ be a $P$-sector, $f\in \OO (\Pi) $ having $\hat{f}\in \hat{\OO}$ as a $P$-asymptotic expansion. Using the notation of Theorem \reff{st-form} (2), $T_{\ell} f\sim T_{\ell }{\hat{f}}$. From the equality $f(\bx )= T_{\ell} f (P(\bx), \bx )$, we deduce that
    $$
    \frac{\partial f}{\partial x_i}(\bx )= \frac{\partial P}{\partial x_i}(\bx ) \cdot \frac{\partial (T_{\ell}f)}{\partial t} (P(\bx ), \bx )+\frac{\partial (T_{\ell} f)}{\partial x_i} (P(\bx ), \bx ).
    $$
    As $\dfrac{\partial (T_{\ell} f)}{\partial t}(t,\bx )$ and $\dfrac{\partial (T_{\ell }f) }{\partial x_i}(t, \bx )$ have asymptotic expansion with respect to $t$, by Cauchy's formula, considerations about products made in (3) and (4) imply that $\dfrac{\partial f}{\partial x_i} (\bx )$ has a $P$-asymptotic expansion. Moreover, if we write
    $$
    (T_{\ell} \hat{f} ) (t,\bx ) = \sum_{n=0}^\infty f_n (\bx ) t^n,
    $$
    and expand
    $$
    \frac{\partial P}{\partial x_i} (\bx )\cdot f_n (\bx )=\sum_{m=0}^\infty g_{nm}(\bx ) P(\bx )^m,
    $$
    with $g_{nm} (\bx )\in \Delta_{\ell} (P)$, then a straightforward computation shows that
    $$
    T_{\ell} \left( \frac{\partial f}{\partial x_i} \right) (t,\bx )= \sum_{n=0}^\infty \left( \frac{\partial f_n}{\partial x_i} (\bx ) + \sum_{k=1}^{n+1} kg_{k,n-k+1} (\bx ) \right) t^n.
    $$
    Observe that $\dfrac{\partial f_n}{\partial x_i} (\bx )\in \Delta_{\ell}(P)$.

\ee
\end{nota}

\begin{proof}[Proof of Theorem \reff{st-form}]:
Assume that $\{f_n\}_{n\in\NN}$ is a $P$-asymptotic sequence defined on $D(\bo;R)$
for some positive $R$ satisfying the inequalities (\reff{Kn}) of Definition
\reff{desarrollo} with the constants $K_n$:
$$ \norm{f(\bx)-f_n (\bx)}\leq K_n \cdot \norm{P(\bx)}^n$$
for $\bx\in D(\bo;R)\cap\Pi$, $\Pi$ some $P$-sector, and such that $J(f_n) \equiv \hat{f} \mod P^n\cdot \hat{\OO}$.

According to Lemma \reff{WDT-Ob}, we can choose $\mu >0$ such that
$D_\mu \subset D(\bo;R)$ for the set $D_\mu$ of Lemma \reff{WDT-Ob}; let $Q,R$
denote the operators on $\OO_b(D_\mu )$ introduced there.
Then we can write for all $m\in\NN$
$$f_n (\bx )=\sum_{\nu=0}^{m-1} RQ^\nu(f_n)P(\bx )^{\nu}+Q^m(f_n)P(\bx )^m.$$
As in the proof of Lemma \reff{existfhat}, we find that
$f_n\equiv f_{n+1}\mod P^n\OO_b(D_\mu)$
and hence $RQ^\nu (f_n) =RQ^\nu (f_m)$, if $\nu<n\leq m$.
So, define $g_n:=RQ^n(f_{n+1})\in \OO_b (D_\mu)$. We have $J(g_n)\in \Delta_\ell (P)$ and
$g_n=RQ^n(f_{m})$ for all $m>n$.

Then for all $n$,
\begin{equation*}
\begin{split}
\norm{f_n(\bx)-\sum_{\nu=0}^{n-1} g_\nu(\bx) P^{\nu}(\bx)} = \norm{f_n(\bx)-\sum_{\nu=0}^{n-1} RQ^\nu (f_{\nu+1}) P^{\nu}(\bx)} \\
 = \norm{f_n(\bx)-\sum_{\nu=0}^{n-1} RQ^\nu (f_{n}) P^{\nu}(\bx)} = \norm{Q^n (f_n)P^n (\bx )} \leq M_n\cdot \norm{P(\bx )}^n.
 \end{split}
 \end{equation*}
for $\bx \in D_\mu$ with some constant $M_n$.
This first implies that $\hat f(\bx)=\sum_{n=0}^\infty g_n(\bx) P(\bx)^n$ and hence
$T_\ell \hat f\in\OO_b(D_\mu)\lf t\rf $.
Together with (\reff{Kn}), this yields
\begin{equation}\labl{st-esti}
\norm{f(\bx)-\sum_{\nu=0}^{n-1} g_\nu(\bx) P^{\nu}(\bx)}\leq(K_n+ M_n)\norm{P(\bx)}^n
\end{equation}
for $n\in\NN$ and $\bx\in\Pi$. Thus we have proved (1).
Application of Theorem \reff{T-sector-germ} to (\reff{st-esti}) with $K(s)=(K_n+M_n) s^n$
yields the existence of some positive $\sigma,\tilde\rho\leq\rho $ and $L$ such that
$$\norm{(T_\ell f)(t,\bx)-\sum_{\nu=0}^{n-1}g_\nu(\bx) t^\nu}\leq L (K_n+M_n)\norm t^{n-1}$$
for $(t,\bx)\in V(a,b;\sigma)\times D(\bo;\tilde\rho)$. This proves (2).

The proof of the converses is trivial.
\end{proof}

Corollary \reff{P-adic-clos} raises the question, whether all formal
series $\sum g_nP^n$, the coefficients $g_n\in\OO_b(D(\bo;\rho))$ of which satisfy
$J(g_n)\in\Delta_\ell(P)$
can be attained as $P$-asymptotic expansions of some function $f$ on an arbitrary $P$-sector.
Using the classical Borel-Ritt Theorem \reff{borel-ritt-watson} (1), it follows
easily that this ``Borel-Ritt Theorem for asymptotics in a germ" is valid.
Details are left to the reader.

\section{Proof of Theorem \protect{\ref{T-sector-germ}}}\labl{proof33}

The main problem is to find {\em any} function $F$ analytic on $V(a,b;\sigma)\times D(\bo;\rho)$
satisfying $F(P(\bx),\bx)=f(\bx)$ for small $\bx$ in $\pp_P(a,b;\bR)$ because subsequently
Corollary \reff{coro-P-germ} can be applied to $F(t,.)$.
In the construction of such a function $F$ using induction on $h(P)$, we need to
study functions $F$ satisfying $F(P(\bx),\bx)=0$. This will be done in
the two subsequent lemmas.\newcommand{\D}{{\mathcal D}}

In this section, we fix a linear form $\ell$ and a germ $P$ as in the hypothesis of the Theorem
and suppose that $P\in\OO(D(\bo;R))$ for some $R>0$. We begin with a simple observation.
\begin{lema}\labl{division}Let $a,b,r>0$ and $\D\subset D(\bo;\bR)$
some domain. For every analytic $F:V(a,b;r)\times\D\to\CC$ satisfying $F(P(\bx),\bx)=0$
for $\bx\in\D$ with $P(\bx)\in V(a,b;r)$, there exist a unique analytic function
$H:V(a,b;r)\times\D\to\CC$ such that
\begin{equation}\labl{t-P}
F(t,\bx)=(t-P(\bx))H(t,\bx)\mbox{ for all }t\in V(a,b;r),\bx\in\D.
\end{equation}
\end{lema}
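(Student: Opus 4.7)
Uniqueness is immediate. On the open set $\Omega=\{(t,\bx)\in V(a,b;r)\times\D\mid t\neq P(\bx)\}$, the identity $(t-P(\bx))H(t,\bx)=F(t,\bx)$ forces $H(t,\bx)=F(t,\bx)/(t-P(\bx))$. As $P$ is not identically constant (or the zero set $Z=\{(t,\bx)\in V(a,b;r)\times\D\mid t=P(\bx)\}$ is a complex hypersurface), $\Omega$ is open and dense in $V(a,b;r)\times\D$, so continuity of $H$ determines it everywhere.

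For existence, the plan is to define $H(t,\bx):=F(t,\bx)/(t-P(\bx))$ on $\Omega$, where it is evidently holomorphic, and then to show that $H$ extends holomorphically across $Z$. At any point $(t_0,\bx_0)\in Z$, the hypothesis gives $t_0=P(\bx_0)$ and the function $\psi(t,\bx)=t-P(\bx)$ satisfies $\partial_t\psi\equiv1$. Thus $(s,\bx)\mapsto(s+P(\bx),\bx)$ is a biholomorphism from a neighborhood of $(0,\bx_0)$ to a neighborhood of $(t_0,\bx_0)$, and in these new coordinates, $F$ transforms into $\tilde F(s,\bx):=F(s+P(\bx),\bx)$, holomorphic in a neighbourhood of $(0,\bx_0)$ and satisfying $\tilde F(0,\bx)=F(P(\bx),\bx)=0$ for all $\bx$ near $\bx_0$ (assuming $(P(\bx),\bx)\in V(a,b;r)\times\D$, which holds after shrinking).

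The key step is then to factor out $s$ from $\tilde F$, for which I would use the standard integral representation
$$\tilde H(s,\bx):=\int_0^1\frac{\partial \tilde F}{\partial s}(us,\bx)\,du,$$
which is jointly holomorphic in $(s,\bx)$ and satisfies $\tilde F(s,\bx)=s\,\tilde H(s,\bx)$ by the fundamental theorem of calculus together with $\tilde F(0,\bx)=0$. Pulling back to the original coordinates yields a holomorphic function on a neighborhood of $(t_0,\bx_0)$ which agrees with $F(t,\bx)/(t-P(\bx))$ on the complement of $Z$. By uniqueness, all such local extensions coincide on overlaps and with the original definition on $\Omega$, so they glue to a single holomorphic $H:V(a,b;r)\times\D\to\CC$ satisfying (\ref{t-P}).

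I do not expect a serious obstacle: the only subtlety is checking joint (as opposed to separate) analyticity on $Z$, which is handled cleanly by the local change of coordinates $s=t-P(\bx)$ together with the integral formula above. The hypothesis $F(P(\bx),\bx)=0$ is used only to ensure $\tilde F$ vanishes on $\{s=0\}$; no other information about $F$ or $P$ is needed.
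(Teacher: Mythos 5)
Your proposal is correct and follows essentially the same route as the paper: the paper also defines $H(t,\bx)=F(t,\bx)/(t-P(\bx))$ off the set $t=P(\bx)$ and establishes joint analyticity near a critical point $(P(\bx_0),\bx_0)$ via the formula $H(t,\bx)=\int_0^1 \frac{\partial F}{\partial t}(\tau t+(1-\tau)P(\bx),\bx)\,d\tau$, which is exactly your integral representation after undoing the substitution $s=t-P(\bx)$.
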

\begin{proof} $H$ is determined by $H(t,\bx)=F(t,\bx)/(t-P(\bx))$ on the set of
$(t,\bx)$ with $t\neq P(\bx)$. If there is no $\bx\in\D$ with $P(\bx)\in V(a,b;r)$
then $H$ is obviously analytic on $V(a,b;r)\times\D$.

If there exists $\bx\in\D$ such that $P(\bx)\in V(a,b;r)$,
then the hypothesis implies that $\lim_{t\to P(\bx)}H(t,\bx)=
\frac{\partial F}{\partial t}(P(\bx),\bx)$ exists. Using Riemann's theorem on removable
singularities, this shows that, for any fixed
$\bx$, the function $t\mapsto H(t,\bx)$ can be analytically continued to a function holomorphic
on $V(a,b;r)$.

The simplest way to establish analyticity of this continuation with respect to
$(t,\bx)$ in the neighborhood of some ``critical '' point of the form
$(P(\bx_0),\bx_0)\in V(a,b;r)\times\D$ is to write
$$H(t,\bx)=\int_0^1 \frac{\partial F}{\partial t}(\tau t+(1-\tau) P(\bx),\bx)\,d\tau$$
for all $(t,\bx)$ in its neighborhood.
\end{proof}
\begin{lema}\labl{division2}
Let $a,b,r>0$ and $\D,\D'\subset\CC^d$
two domains such that the closure of $\D'$ is compact and contained in $\D$.
Then there exists $L>0$ with the following property:
For every analytic $F:V(a,b;r)\times\D\to\CC$ satisfying $F(P(\bx),\bx)=0$
for $\bx\in\D$ with $P(\bx)\in V(a,b;r)$
and $\sup_{\bx\in\D}\norm{F(t,\bx)}\leq K(t)$ with some $K:V(a,b;r)\to\RR_+$,
the unique analytic function
$H:V(a,b;r)\times\D\to\CC$ of Lemma \reff{division} with (\reff{t-P})
satisfies
$$\sup_{\bx\in\D'}\norm{H(t,\bx)}\leq L\,K(t)\mbox{ for }t\in V(a,b;r).$$
\end{lema}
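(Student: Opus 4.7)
The plan is to localize. By compactness of $\cl(\D')$, it suffices to show that each $\bx_0\in\cl(\D')$ admits a neighbourhood $U_{\bx_0}\subset\D$ and a constant $L_{\bx_0}>0$ such that $\norm{H(t,\bx)}\leq L_{\bx_0}K(t)$ for all $\bx\in U_{\bx_0}$ and all $t\in V(a,b;r)$; a finite subcover of $\cl(\D')$ then provides the global constant $L$. For each $\bx_0$ I would distinguish two cases, according to whether $P(\bx_0)$ lies in $\cl(V(a,b;r))$.

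The easy case is $P(\bx_0)\notin\cl(V(a,b;r))$: with $d_0:=\dist(P(\bx_0),V(a,b;r))>0$, continuity of $P$ lets me shrink $U_{\bx_0}$ so that $\norm{P(\bx)-P(\bx_0)}<d_0/2$ there, giving $\norm{t-P(\bx)}\geq d_0/2$ uniformly in $t\in V(a,b;r)$ and $\bx\in U_{\bx_0}$, and hence $\norm{H(t,\bx)}=\norm{F(t,\bx)}/\norm{t-P(\bx)}\leq 2K(t)/d_0$.

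The main obstacle is the remaining case $P(\bx_0)\in\cl(V(a,b;r))$, where $t$ may come arbitrarily close to $P(\bx)$ for $\bx$ near $\bx_0$, so no direct lower bound on $\norm{t-P(\bx)}$ is available. I would bypass this with a one-dimensional Cauchy slice. Since $P$ is not locally constant (automatic in the applications of the lemma, where $P$ is a non-trivial germ on a connected domain), I can pick a direction $\bv\in\CC^d$ such that $p(\zeta):=P(\bx_0+\zeta\bv)$ is a non-constant holomorphic function of $\zeta$; then $p-P(\bx_0)$ vanishes to finite order at $0$, and for sufficiently small $\delta>0$ with $\{\bx_0+\zeta\bv:|\zeta|\leq\delta\}\subset\D$ the quantity $m:=\min_{|\zeta|=\delta}\norm{p(\zeta)-P(\bx_0)}$ is strictly positive. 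Now split on $\norm{t-P(\bx_0)}$. For $\norm{t-P(\bx_0)}>m/2$, a direct continuity argument in the spirit of the easy case gives $\norm{t-P(\bx)}\geq m/4$ on a small polydisk around $\bx_0$, and hence $\norm{H(t,\bx)}\leq 4K(t)/m$. For $\norm{t-P(\bx_0)}\leq m/2$ I would apply Cauchy's formula along the slice,
$$H(t,\bx)=\frac{1}{2\pi i}\oint_{|\zeta|=\delta}\frac{H(t,\bx+\zeta\bv)}{\zeta}\,d\zeta,$$
valid for $\bx$ in a small polydisk around $\bx_0$ (chosen so that every slice $\{\bx+\zeta\bv:|\zeta|\leq\delta\}$ stays in $\D$). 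Using the decomposition $t-P(\bx+\zeta\bv)=(p(\zeta)-P(\bx_0))-(t-P(\bx_0))-(P(\bx+\zeta\bv)-p(\zeta))$ together with uniform continuity of $P$ to make the last term less than $m/4$, I get $\norm{t-P(\bx+\zeta\bv)}\geq m/4$ along the contour, so $\norm{H(t,\bx+\zeta\bv)}\leq 4K(t)/m$ there, and finally $\norm{H(t,\bx)}\leq 4K(t)/m$ by the Cauchy integral. The crucial step — the one I expect to be the heart of the argument — is the construction of this non-constant slice together with the uniform lower bound $m$, which is exactly what allows the Cauchy integral to absorb the apparent singularity of $1/(t-P(\bx))$ uniformly in $t\in V(a,b;r)$.
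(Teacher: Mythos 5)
Your proof is correct, and it takes a genuinely different route from the paper's. The paper localizes at pairs $(t_0,\bx_0)$ with $t_0$ in the closure of $V(a,b;r)$ and, in the critical case $t_0=P(\bx_0)$, invokes the Generalized Weierstrass Division operators of Lemma \reff{WDT-Ob} applied to $P(\bx_0+\cdot)-t_0$: the relation $F=(t-P)H$ becomes the fixed--point equation $h=(t-t_0)\tilde Q(h)-\tilde Q(f)$ in $\OO_b({\mathcal U})$, and the Neumann--series bound $\norma h\leq\frac{\norma{\tilde Q}}{1-\delta\norma{\tilde Q}}\norma f$ gives the constant. You instead localize only in $\bx$, and in the delicate regime replace division theory by a Cauchy integral over a one--dimensional slice $\zeta\mapsto\bx+\zeta\bv$ through $\bx_0$ along which $P-P(\bx_0)$ is non-constant, so that $\norm{p(\zeta)-P(\bx_0)}\geq m>0$ on the circle $\norm\zeta=\delta$ and the splitting on $\norm{t-P(\bx_0)}$ yields $\norm{t-P(\bx+\zeta\bv)}\geq m/4$ uniformly in $t\in V(a,b;r)$. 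This is more elementary (no Weierstrass division, no localization in $t$), and the non-constancy you need is automatic: in the setting of Section 5, $P\in\OO(D(\bo;R))\setminus\{0\}$ with $P(\bo)=0$, so by the identity theorem $P$ is nowhere locally constant and a suitable direction $\bv$ exists at every $\bx_0$. What the paper's approach buys is uniformity of method: the same Lemma \reff{WDT-Ob} machinery is reused in Lemma \reff{lemadivision2}, in Theorem \reff{st-form} and in the proof of Theorem \reff{T-sector-germ}, whereas your slice argument is self-contained. One small repair: your displayed identity for $t-P(\bx+\zeta\bv)$ has a sign slip (as written the right-hand side equals $2p(\zeta)-t-P(\bx+\zeta\bv)$); what you actually use, and what is correct, is the triangle inequality $\norm{t-P(\bx+\zeta\bv)}\geq\norm{p(\zeta)-P(\bx_0)}-\norm{t-P(\bx_0)}-\norm{P(\bx+\zeta\bv)-p(\zeta)}\geq m-\tfrac m2-\tfrac m4=\tfrac m4$, after shrinking the polydisk around $\bx_0$ so that the last term is below $m/4$ uniformly for $\norm\zeta=\delta$.
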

\begin{nota}Lemma \reff{division} implies that $t-P(\bx)$ divides $F(t,\bx)$. Therefore
Lemma \reff{lemadivision2} could be applied to these functions of $(t,\bx)$. Unfortunately this
does not yield the desired result as we would have estimates for $H$ only on
$(t,\bx)$-subsets compactly contained in $V(a,b,r)\times\D$ which cannot have points with
$t=0$ on their boundary.\end{nota}
\begin{proof}It is close to that of Lemma \reff{lemadivision2}, but use of
Lemma \reff{WDT-Ob} in this special situation improves the domains of validity
of the estimates.

By a classical argument of compacity, it is sufficient to prove that for
every $t_0$ in the closure of $V(a,b,r)$ and every $\bx_0\in\D$, there exist $\delta,L>0$
and a neighborhood $\U\subset\D$ of $\bx_0$ such that for every function $F:V(a,b;r)\times\D\to\CC$
fufilling the hypothesis of the theorem with some majorant $K$, the quotient
$H$ from the previous Lemma satisfies
$$\sup_{\bx\in \U}\norm{H(t,\bx)}\leq L\,K(t)\mbox{ for }t\in V(a,b;r)\cap D(t_0,\delta).$$
For the proof of this statement, we have to distinguish two cases.

If $P(\bx_0)\neq t_0$, then $\norm{t-P(\bx)}$ is bounded below by some positive constant
if $t$ is sufficiently close to $t_0$ and $\bx$ sufficiently close to $\bx_0$. In this case,
the existence of $\delta,L,\U$ is immediate.

If $P(\bx_0) = t_0$, then we apply the results of subsection 2.3 to
$\tilde P(\bx)=P(\bx_0+\bx)-t_0$. We choose some injective linear form $\tilde\ell$ and define
$\Delta_{\tilde\ell}(\tilde P)$ accordingly (see (\reff{delta})). We choose some neighborhood
$\tilde \U$ of $\bo$ such that Lemma \reff{WDT-Ob} can be applied. This yields bounded linear
operators $\tilde Q,\tilde R:\OO_b(\U)\to\OO_b(\U)$, $\U:=\bx_0+\tilde\U$, such that
for all functions $g,q,r\in\OO_b(\U)$, we have
$g=(P-t_0)q+r$, $J_{\bx_0}(r)\in\Delta_{\tilde\ell}(\tilde P)$ if and only if
$q=\tilde Q(g)$ and $r=\tilde R(g)$. Here $J_{\bx_0}(r)$ denotes the Taylor expansion of
the function $\bx\to r(\bx_0+\bx)$.

Equation (\reff{t-P}) is equivalent to
$$(t-t_0)H(t,\bx)-F(t,\bx)=(P(\bx)-t_0)H(t,\bx)+0$$
for all $(t,\bx)$. Thus for $t\in V(a,b,r)$, the functions $h,f:\U\to\CC$ defined by
$h(\bx)=H(t,\bx)$, $f(\bx)=F(t,\bx)$ for $\bx\in\U$ satisfy
\begin{equation}\labl{star}h=(t-t_0)\tilde Q(h)-\tilde Q(f)\ .
\end{equation}
with the above operator $\tilde Q$ on $\OO_b(\U)$.
If $\delta>0$ is sufficiently small, the fixed point principle can be applied
to (\reff{star}) if $\norm{t-t_0}<\delta$ and
yields that $\norma{h}\leq\frac{\norma{\tilde Q}}{1-\delta\norma{\tilde Q}}\norma{f}$, where $\norma{\cdot}$ denotes the maximum norm.
This yields
$$\sup_{\bx\in \U}\norm{H(t,\bx)}\leq\frac{\norma{\tilde Q}}{1-\delta\norma{\tilde Q}}
  \sup_{\bx\in \U}\norm{F(t,\bx)}$$
if $t\in V(a,b,r)$, $\norm{t-t_0}<\delta$. Hence we can choose
the above $\delta$, $L=\frac{\norma{\tilde Q}}{1-\delta\norma{\tilde Q}}$ and the above
neighborhood $\U$ of $\bx_0$ to obtain the wanted statement.
This completes the proof.
\end{proof}

The main step is
\begin{lema}\labl{main-step}
Let $P\in\OO\setminus\{0\}$, $P(\bo)=0$ and let $\pp=\pp_P(a,b;\bR)$ a sector in $P$.
Then there exist
$\rho,\sigma,L>0$ with $P(D(\bo;\rho))\subset D(\bo;\sigma)$ and the following properties:
\be\item If $f:\pp\to\CC$ is a holomorphic function,
then there exists a holomorphic function
$F:V(a,b;\sigma)\times D(\bo;\rho)\to\CC$ such that
$F(P(\bx), \bx)=f(\bx)$ for all $\bx\in\pp$, $\norm{\bx}<\rho$.
\item Moreover,
given a function $K:]0,S]\to\RR_+$, $S\geq\sup_{\bx\in \pp}\norm{P(\bx)}$,
such that $\norm{f(\bx)}\leq K(\norm{P(\bx)})$ for $\bx\in\pp$, the function $F$ of statement (1)
satisfies
$$\norm{F(t,\bx)}\leq \frac{L}{\norm t} K(\norm t)\mbox{ for }
t\in V(a,b;\sigma),\  \bx\in D(\bo;\rho).$$\ee
\end{lema}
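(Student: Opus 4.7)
The plan is to prove Lemma \reff{main-step} by induction on the invariant $h(P)$ given by Lemma \reff{reduction}. Precomposing $P$ and $f$ with any analytic diffeomorphism $D$ carries the $P$-sector to a $(P \circ D)$-sector in the new coordinates, so at each step of the induction I am free to use the dichotomy of Lemma \reff{reduction}. In the base case $h(P) = 0$, after a diffeomorphism $P(\bx) = \bx^{\bal} U(\bx)$ with $U(\bo) \neq 0$, so on a sufficiently small polydisk $|U|$ is bounded above and below and the $P$-sector contains a $\bx^{\bal}$-sector up to a rotation. Applying Lemma \reff{T-sector} to $f$ yields $Tf(s, \bx)$ with $Tf(\bx^{\bal}, \bx) = f(\bx)$, and then $F(t, \bx) := Tf(t/U(\bx), \bx)$ satisfies $F(P(\bx), \bx) = f(\bx)$, with the bound of part (2) following from that of Lemma \reff{T-sector} combined with the bounds on $|U|$.

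For the inductive step, assume $h(P) > 0$ and first consider the ramification case $h(P \circ r_k) < h(P)$. The induction hypothesis applied to $f \circ r_k$ on the $(P \circ r_k)$-sector $r_k^{-1}(\pp)$ produces $\tilde F(t, \bt)$ satisfying $\tilde F((P \circ r_k)(\bt), \bt) = f(r_k(\bt))$. Since both $f \circ r_k$ and $P \circ r_k$ are invariant under $\bt \mapsto (\omega t_1, \bt')$ with $\omega = e^{2\pi i/k}$, the symmetrized function
\[
\tilde F^{\mathrm{avg}}(t, \bt) := \frac{1}{k} \sum_{j=0}^{k-1} \tilde F(t, \omega^j t_1, \bt')
\]
preserves the defining identity and is itself $\omega$-invariant in $t_1$. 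Factoring through the $k$-th power in $t_1$, $\tilde F^{\mathrm{avg}}(t, \bt) = F(t, t_1^k, \bt')$ for a unique holomorphic $F$, and setting $\bx = (t_1^k, \bt')$ gives $F(P(\bx), \bx) = f(\bx)$, with the estimate inherited directly from that of $\tilde F$.

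The blow-up case $h(P \circ b_\xi) < h(P)$ for every $\xi \in \PC$ is the more intricate one. The induction hypothesis gives, for each $\xi$, a holomorphic $F_\xi$ on $V(a, b; \sigma_\xi) \times D(\bo; \rho_\xi)$ with $F_\xi((P \circ b_\xi)(\bv), \bv) = f(b_\xi(\bv))$; I transport these to the blow-up variety $M$ by $\tilde F_\xi(t, p) := F_\xi(t, \phi_\xi(p))$, so that $\tilde F_\xi(P(b(p)), p) = f(b(p))$ in each chart. The differences $D_{\xi\zeta} := \tilde F_\xi - \tilde F_\zeta$ therefore vanish on the hypersurface $\{t = P(b(p))\}$, and Lemma \reff{division} provides holomorphic $H_{\xi\zeta}$ with $D_{\xi\zeta} = (t - P(b(p))) H_{\xi\zeta}$; the $H_{\xi\zeta}$ automatically form a cocycle. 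Compactness of $\PC$ reduces us to a finite subcover of the exceptional divisor $E = \PC \times \{\bo\}$, so the parametric Cousin statement (Lemma \reff{cover-E} via Remark \reff{rem-cover}) yields bounded $G_\xi$ on a refinement of the cover with $H_{\xi\zeta} = G_\zeta - G_\xi$. Then $\Phi := \tilde F_\xi + (t - P(b(p))) G_\xi$ is globally well-defined and holomorphic on $V(a, b; \sigma) \times U$ for a neighborhood $U$ of $E$, and still satisfies $\Phi(P(b(p)), p) = f(b(p))$. For each fixed $t$, $\Phi(t, \cdot)|_E$ is holomorphic on $E \simeq \PC$, hence constant, so $\Phi$ descends through the proper map $b$ to a holomorphic $F$ on $V(a, b; \sigma) \times D(\bo; \rho)$ with $F(P(\bx), \bx) = f(\bx)$, as required.

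The quantitative bound (2) is propagated through each step: in the base case directly from Lemma \reff{T-sector}, in the ramification case by averaging, and in the blow-up case by invoking Lemma \reff{division2} to control $H_{\xi\zeta}$ by a $\frac{1}{|t|} K(|t|)$-type bound on a compactly contained refinement, then the continuity of the parametric Cousin operator to bound $G_\xi$, and finally the uniform boundedness of $|t - P(b(p))|$ on the restricted domain to bound $\Phi$ and $F$. I expect the blow-up case to be the principal technical obstacle: achieving uniform estimates across the finite subcover, composing bounds through Lemmas \reff{division2} and \reff{cover-E} in their parametric form, and ensuring the estimate survives both the Cousin gluing and the descent through $b$ are where most of the care is required.
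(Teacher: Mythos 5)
Your proof is correct and is essentially the paper's own argument: induction on $h(P)$, with the blow-up case handled exactly as in the paper (transport of the $F_\xi$ by $\phi_\xi$, division of the differences by $t-P(b(p))$ via Lemmas \reff{division} and \reff{division2}, the parametric Cousin statement of Lemma \reff{cover-E} and Remark \reff{rem-cover}, then gluing and descent through $b$ using constancy along the exceptional divisor, with the estimates propagated just as in the paper), and with the ramification case -- which the paper leaves to the reader -- correctly settled by your averaging over the rotation $t_1\mapsto e^{2\pi i/k}t_1$ followed by factoring through $t_1^k$. The one point to tighten is the base case: when the unit $U$ is non-constant the $P$-sector is only approximately a rotated $\bx^{\bal}$-sector, so $F(t,\bx)=Tf(t/U(\bx),\bx)$ as written is neither defined on all of $V(a,b;\sigma)\times D(\bo;\rho)$ nor does it yield the identity on all of $\pp\cap D(\bo;\rho)$ (a small loss of aperture that would propagate through the induction); this is repaired by first absorbing $U/U(\bo)$ into one of the variables occurring in $\bx^{\bal}$ by a further change of coordinates, after which $P=U(\bo)\,\bx^{\bal}$, the $P$-sector is exactly a rotated monomial sector, and $F(t,\bx)=Tf(t/U(\bo),\bx)$ gives the full statement via Lemma \reff{T-sector}.
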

\begin{proof} 
%
%
The statements can be formally combined if we allow a function $K$ with $K(s)\equiv\infty$.
Thus we prove both statements together by induction on $h(P)$ using Lemma \reff{reduction}. 

If $h(P)=0$ then $P$ has normal
crossings  and the statement can be reduced to the monomial version, where
Lemma \reff{T-sector} even gives a better result.
Assume now that the statement is true whenever $h(Q)\leq m$  and prove it if $h(P)=m+1$.
As the statement does not change by right composition of $P$ with a diffeomorphism,
we can assume that $h(P\circ b_\xi)\leq m$ for all $\xi\in \PC$
or $h(P\circ r_k)\leq m$ for some $k\in\NN$. See Subsection \reff{normal} for notation.

We first assume that the statement is true for all $P\circ b_\xi$, $\xi\in\PC$.
Then for every $\xi\in\PC$ and every sector $\pp_\xi:=\pp_{P\circ b_\xi}(a,b;\tilde\bR)$,
there exist $\rho_\xi,\sigma_\xi, L_\xi>0$ such that
for every holomorphic $\tilde f:\pp_\xi\to\CC$ and $K:]0,\sigma_\xi]\to\RR_+\cup\{\infty\}$ with
$\norm{\tilde f(\bz)}\leq K(\norm{(P\circ b_\xi)(\bz)})$  for
$\bz\in\pp_{\xi}$
there exists a holomorphic $\tilde F:V(a,b;\sigma_\xi)\times D(\bo;\rho_\xi)\to\CC$
with
$$\tilde f(\bz)=\tilde F((P\circ b_\xi)(\bz),\bz)\mbox{ for }\bz\in\pp_\xi\cap D(\bo,\rho_\xi)$$
and $\norm{\tilde F(t,\bz)}\leq\frac{L_\xi}{\norm t}K(\norm t)$ for
$(t,\bz)\in V(a,b;\sigma_\xi)\times D(\bo;\rho_\xi)$.

Given some analytic $f:\pp\to\CC$ and $K:]0,S]\to\RR_+\cup\{\infty\}$ with
$\norm{f(\bx)}\leq K(\norm{P(\bx)})$ for $\bx\in\pp$,
let $F_\xi$ denote the holomorphic  function on $V(a,b;\sigma_\xi)\times D(\bo;\rho_\xi)$
corresponding to $\tilde f=f\circ b_\xi$.
As before we use $\phi_\xi$ to carry over these statements to neigborhoods of points of
the exceptional divisor.
So define $G_\xi$ on $V(a,b;\sigma_\xi)\times U_\xi$, $U_\xi=\phi_\xi^{-1}(D(\bo,\rho_\xi))$
such that $G_\xi(t,p)=F_\xi(t,\phi_\xi(p))$. By construction, we have
$$G_\xi((P\circ b)(p),p)=(f\circ b)(p)\mbox{ for }p\in U_\xi, \phi_\xi(p)\in \pp_\xi$$
and $\norm{G_\xi(t,p)}\leq \frac{L_\xi}{\norm t}K(\norm t)$ for
$(t,p)\in V(a,b,\sigma_\xi)\times U_\xi$.
The differences $D_{\xi\eta}(t,p):=G_\xi(t,p)-G_\eta(t,p)$ are then defined and holomorphic
for $t\in V(a,b;\sigma_{\xi\eta})$, $\sigma_{\xi\eta}:=\min(\sigma_\xi,\sigma_\eta)$,
and $p\in U_\xi\cap U_\eta$. They satisfy
$$D_{\xi\eta}((P\circ b)(p),p)=0\mbox{ for small }p\in U_\xi\cap U_\eta,\arg(P(b(p)))\in]a,b[$$
and $\norm{D_{\xi\eta}(t,p)}\leq \frac{L_\xi+L_\eta}{\norm t}K(\norm t)$ for
$t\in V(a,b;\sigma_{\xi\eta}))$, $p\in U_\xi\cap U_\eta$.

In order to apply Lemma \reff{division2} (resp.\ Lemma \reff{division} in the case $K(s)\equiv\infty$), we also consider
$\hat U_\xi=\phi_\xi^{-1}(D(0;\hat r_\xi))$ with some positive
$\hat r_\xi<r_\xi$.
Then this Lemma, applied to $D_{\xi\eta}$ -- more precisely to their right
composition with $\phi_\xi$ -- yields holomorphic functions
$Q_{\xi\eta}:V(a,b;\sigma_{\xi\eta})\times (\hat U_\xi\cap\hat U_\eta) \to\CC$
satisfying
$$D_{\xi\eta}(t,p)=(t-(P\circ b)(p))Q_{\xi\eta}(t,p)$$
and
$\norm{Q_{\xi\eta}(t,p)}\leq {C_{\xi\eta}(L_\xi+L_\eta)}\frac1{\norm t}K(\norm t)$
on the domain of $Q_{\xi\eta}$ with some constant $C_{\xi\eta}$ depending only upon
$a,b,\sigma_{\xi\eta},U_\xi\cap U_\eta$ and $\hat U_\xi\cap \hat U_\eta$.

As the $\hat U_\xi$, $\xi\in\PC$ cover the exceptional divisor
in the blow-up variety $M$, there exists a finite subcover, say corresponding to
$\xi_j$, $j=0,\ldots ,N$. We now apply Lemma \reff{cover-E} and Remark \reff{rem-cover} to the collection
$(Q_{\xi_j\xi_k})_{j,k=0,\ldots ,N}$ of holomorphic functions
$Q_{\xi_j\xi_k}: V(a,b;\tilde\sigma)\times(\hat U_{\xi_j}\cap \hat U_{\xi_k})\to\CC$, $j,k=0,\ldots ,N$,
$\tilde\sigma$ the minimum of $\sigma_{\xi_j}$, $j=0,\ldots ,N$.
We obtain a collection of holomorphic functions
$R_{\xi_j}:V(a,b;\tilde\sigma)\times \tilde U_{\xi_j}\to\CC$,  satisfying
$$Q_{\xi_i,\xi_j}(t,p)=R_{\xi_i}(t,p)-R_{\xi_j}(t,p)\mbox{ for }(t,p)\in
    V(a,b;\tilde\sigma)\times (\tilde U_{\xi_i}\cap \tilde U_{\xi_j})$$
and $\sup_{p\in\tilde U_{\xi_j}}\norm{R_{\xi_j}(t,p)}\leq \frac{\tilde C}{\norm t}K(\norm t)$
for $t\in V(a,b,\tilde\sigma)$.
Here $\tilde U_{\xi_j}\subset M$ are the open subsets of $U_{\xi_j}$ in Lemma \reff{cover-E} 
covering $E$ and with the constant $C$ from
Lemma \reff{cover-E}, the constant $\tilde C$ is the maximum of
$C\,C_{\xi_j\xi_k}(L_{\xi_j}+L_{\xi_k})$, $j,k=0,\ldots,N$. 

Now we can define holomorphic functions
$\tilde G_{\xi_j}:V(a,b;\tilde\sigma)\times \tilde U_{\xi_j}\to\CC$,
$j=0,\ldots,N$ by
$$\tilde G_{\xi_j}(t,p)=G_{\xi_j}(t,p)-(t-(P\circ b)(p))R_{\xi_j}(t,p).$$
By the construction of $R_{\xi_j}$, the family $\tilde G_{\xi_j}$ glues
together, i.e.\ $\tilde G_{\xi_i}(t,p)=\tilde G_{\xi_j}(t,p)$ whenever
$p\in \tilde U_{\xi_i}\cap \tilde U_{\xi_j}$.
As at the end of the proof of Lemma \reff{lemadivision},
this implies that there exists a positive $\bar\rho$ and a holomorphic function
$F:V(a,b;\tilde\sigma)\times D(\bo,\bar\rho)\to\CC$ such that
$F(t,b(p))=\tilde G_{\xi_j}(t,p)$ for $t\in V(a,b;\tilde\sigma)$ and
$p\in \tilde U_{\xi_j}$ with $b(p)\in  D(\bo,\bar\rho)$.
We can assume without loss in generality that $P(D(\bo;\bar\rho))\subset D(\bo;\tilde\sigma)$.
An easy calculation shows that $F(P(\bx),\bx)=f(\bx)$ for $\bx\in\pp_P(a,b;\bar\rho)$.

By their definition, we have $\sup_{p\in \tilde U_{\xi_j}}
\norm{\tilde G_{\xi_j}(t,p)}\leq \left(L_{\xi_j}+2\tilde\sigma\tilde C\right)\frac1{\norm{t}}K(\norm t)$
for $j=0,\ldots ,N$ and hence
$$\sup_{\bx\in D(0;\bar\rho)}\norm{F(t,\bx)}\leq
\left(\max_{j=0,\ldots ,N}L_{\xi_j}+2\tilde\sigma\tilde C\right)\frac1{\norm{t}}K(\norm t)$$
for $t\in V(a,b;\tilde\sigma)$.
This completes the proof of the lemma in the case of blow-ups.

The case of a ramification is much simpler and left to the reader.
\end{proof}

Now we are in a position to prove Theorem \reff{T-sector-germ}, combining the two statements as in the above
proof of Lemma \reff{main-step}.
This Lemma provides positive $\tilde\rho,\tilde\sigma,\tilde L$ and,
for given holomorphic $f:\pp\to\CC$ with an estimate $\norm{f(\bx)}\leq K(\norm{P(\bx)})$ on $\pp$, it yields
a holomorphic function $F:V(a,b;\tilde\sigma)\times D(\bo;\tilde\rho)\to\CC$
satisfying $F(P(\bx),\bx)=f(\bx)$ for $\bx\in D(\bo;\tilde\rho)$
and $\norm{F(t,\bx)}\leq \frac {\tilde L}{\norm t}K(\norm t)$ on $V(a,b;\tilde\sigma)\times D(\bo;\tilde\rho)$.
In order to apply Lemma \reff{WDT-Ob}, we restrict $F$ to
$V(a,b;\tilde\sigma)\times D_s$, where $D_s\subset D(\bo;\tilde\rho)$ is chosen such that the operators
$Q,R$  of Lemma \reff{WDT-Ob} are defined.

As in the proof of Corollary \reff{coro-P-germ}, we can write
$$F(t,\bx)=\sum_{n=0}^{N-1}((RQ^n)(F(t,.)))(\bx)P(\bx)^n+(Q^NF(t,.))(\bx)P(\bx)^N$$
for $t\in V(a,b;\tilde\sigma)$, $N\in\NN$ and $\bx\in D_s$ by repeated application of Lemma
\reff{WDT-Ob}.
If $\rho>0$ is so small that $D(\bo;\rho)\subset D_s$ and
$B=\sup\{\norm{P(\bx)};\  \bx\in D(\bo;\rho)\}  <\frac1{\norma{Q}}$
then $\norm{(Q^NF(t,.))(\bx)P(\bx)^N}\leq(B\norma Q)^N\norma{F(t,.)}\to0$
as $N\to\infty$ for $\bx\in D(\bo,\rho)$ and we obtain
$F(t,\bx)=\sum_{n=0}^{\infty}((RQ^n)(F(t,.)))(\bx)P(\bx)^n$
for these $\bx$.
Now we define the desired function\footnote{
The function $T_\ell f$ is independent of the choice of $F$ in Lemma \reff{main-step}.
This follows from the uniqueness established at the end of this proof.  Note also that the choice of $F$ does not depend on the linear form $\ell$.}
$T_\ell f$ by
$$(T_\ell f)(t,\bx)=\sum_{n=0}^{\infty}((RQ^n)(F(t,.)))(\bx)t^n$$
for $t\in V(a,b;\sigma)$, $\bx\in D(\bo;\rho)$;
here $\sigma=\min(\tilde\sigma,B)$ and we reduce $\rho$ if necessary so that
$P(D(\bo;\rho))\subset D(\bo;\sigma)$.
By construction, we then have $(T_\ell f)(P(\bx),\bx)=F(P(\bx),\bx)=f(\bx)$ for
$\bx\in\pp_P(a,b,\rho)$. As
$$\sup_{\bx\in D(\bo,\rho)}\norm{((RQ^n)(F(t,.)))(\bx)}\leq
   \norma R \norma Q ^n \sup_{\bx\in D(\bo,\tilde\rho)}\norm{F(t,\bx)},$$
we obtain that
$$\sup_{\bx\in D(\bo,\rho)}\norm{ (T_\ell f)}(t,\bx)\leq
 \frac{\norma R}{1- B \norma Q} \sup_{\bx\in D(\bo,\tilde\rho)}\norm{F(t,\bx)}
\leq \frac L{\norm t}{K(\norm t)} $$
for $t\in V(a,b,\sigma)$, where $L=\dfrac{ \tilde L\norma R}{1- B \norma Q}$.
By the definition of $R$, the expansion $J((RQ^n)(F(t,.))$ is in $\Delta_\ell(P)$ for
any $t,n$. Hence also $J((T_\ell f)(t,.))\in\Delta_\ell(P)$ for $t\in V(a,b,\sigma)$
as desired.
Hence the function $T_\ell f$ satisfies the properties wanted in the theorem.
It is defined and holomorphic on $V(a,b,\sigma)\times D(\bo,\rho)$ and the above construction of
$\sigma,\rho,L$ is independent of $f$. 

Thus it remains to show the uniqueness of $T_\ell f$.
If $G:V(a,b,\sigma)\times D(\bo,\rho)\to\CC$ is another holomorphic
function satisfying $G(P(\bx),\bx)=f(\bx)$ for sufficiently
small $\bx\in\pp_P(a,b,R)$ and $J(G(t,.))\in\Delta_\ell(P)$ for $t\in V(a,b,\sigma)$,
then $\delta=T_\ell f - G:V(a,b,\sigma)\times D(\bo,\rho)\to\CC$ satisfies
$$J(\delta(t,.))\in\Delta_\ell(P)\mbox{ for }t\in V(a,b,\sigma),\ \delta(P(\bx),\bx)=0
  \mbox{ for small }\bx\in \pp_P(a,b,\mu),$$
if $\mu>0$ is small enough.
We will show that this implies $\delta=0$.

Indeed, let $H:V(a,b,\sigma)\times D(\bo,\rho)\to\CC $
denote the function of Lemma \reff{division} with
\begin{equation}\labl{divi-delta}
\delta(t,\bx)=(t-P(\bx))H(t,\bx)\end{equation}
for all $t,\bx$. For sufficiently small positive $s$, the operators
$Q,R$ of Lemma \reff{WDT-Ob} are defined on $\OO_b(D_s)$ and
the restriction $h_t$ of $H(t,.)$ to $D_s$ is bounded, i.e.\ in $\OO_b(D_s)$.
For each fixed $t$, we can apply $Q$ to equation (\reff{divi-delta}) and obtain
$$h_t=t\,Q(h_t)$$
because $J(\delta(t,.))\in\Delta_\ell(P)$ implies $Q( \delta (t,.)\mid_{D_s})=0$.
As $Q$ is a bounded linear operator on $\OO_b(D_s)$, this is only
possible if $h_t=0$, provided $t$ is sufficiently small.
This means that $H(t,\bx)=0$ for all sufficiently small $t$ and all $\bx\in D_s$.
By the identity theorem, $H$ must vanish and hence also $\delta$. This
proves that $G=T_\ell f$ and thus the last assertion of
the theorem.

\section{Behaviour under blow-ups \green{ and ramification}}\labl{blu}
In this section, we study how the notion of asymptotics with respect to an analytic germ
behaves under blow-ups \green{ and ramification}. This will be useful to reduce
the notion to monomial asymptotics when necessary. The statements and proofs 
of this section also prepare analogous ones in the Section \reff{Gevrey}.

Consider a nonzero germ $P\in\OO=\CC\{x_1,\ldots ,x_d\}$, not a unit,
and suppose it is defined on $D(\bo;R).$ All radii of polydisks in this section are assumed 
to be smaller than $R$, but we will not mention this below.
Consider some $P$-sector $\Pi=\Pi_P(\alpha,\beta;\rho)$ and a function $f$ holomorphic on
$\Pi$.

If we suppose that some $\hat f$ is the $P$-asymptotic expansion of $f$ on $\Pi$,
then it is straightforward that $\hat f\circ b_\xi$ is the ($P\circ b_\xi$)-asymptotic
expansion of $f\circ b_\xi$ on any $(P\circ b_\xi)$-sector $\Pi_{P\circ b_\xi}(\alpha,\beta;r)$
with sufficiently small $r>0$. The converse is much more interesting, also for applications.

\begin{propo} \labl{asymp-blu}
Consider $P$ on $D(\bo;\rho)$, $\Pi=\Pi_P(\alpha,\beta;\rho)$ and $f:\Pi\to\CC$ holomorphic as above.
Suppose that for every $\xi\in\PC$, the function $f\circ b_\xi$, restricted to
$\Pi_\xi=\Pi_{P\circ b_\xi}(\alpha,\beta;r_\xi)$ with some sufficiently small $r_\xi$,
has some formal series $\hat g_\xi\in\hat\OO$ as $(P\circ b_\xi)$-asymptotic
expansion on its domain.

Then there exists a formal series $\hat f\in\hat\OO$  that is the $P$-asymptotic expansion of $f$
on $\Pi$ and it satisfies $\hat f\circ b_\xi=\hat g_\xi$ for all $\xi\in\PC$.
\end{propo}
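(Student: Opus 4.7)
My plan is to glue the given blow-up asymptotic sequences on the blow-up variety $M$ into a single $P$-asymptotic sequence $\{f_n\}_n$ on a fixed polydisk $D(\bo;\tilde\rho)$, in the same spirit as the gluing argument at the end of the proof of Lemma \reff{lemadivision} and in the proof of Lemma \reff{main-step}. By compactness of $\PC$, I first choose finitely many $\xi_1,\ldots ,\xi_N\in\PC$ such that the sets $U_{\xi_j}:=\phi_{\xi_j}^{-1}(D(\bo;\rho_{\xi_j}))$ cover the exceptional divisor $E\subset M$; then Lemma \reff{cover-E} provides open $\tilde U_{\xi_j}\subset U_{\xi_j}$ still covering $E$, together with a single constant $C>0$ that depends only on this cover. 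For each $\xi_j$, the hypothesis yields a $(P\circ b_{\xi_j})$-asymptotic sequence $\{g_{n,\xi_j}\}_n$ for $\hat g_{\xi_j}$ and constants $K_n^j$ with $\norm{f\circ b_{\xi_j}(\bz)-g_{n,\xi_j}(\bz)}\leq K_n^j\norm{P\circ b_{\xi_j}(\bz)}^n$ on $\Pi_{\xi_j}\cap D(\bo;\rho_{\xi_j})$.

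For each fixed $n$, I would transport everything to $M$ by setting $G_{n,j}:=g_{n,\xi_j}\circ\phi_{\xi_j}$, and form the differences $D_{n,i,j}:=G_{n,i}-G_{n,j}$ on $U_{\xi_i}\cap U_{\xi_j}$. These satisfy the cocycle condition and, wherever $b(p)\in\Pi$, the triangle inequality yields $\norm{D_{n,i,j}(p)}\leq (K_n^i+K_n^j)\norm{P(b(p))}^n$. Reading $D_{n,i,j}$ in local charts and translating to the origin, Lemma \reff{lemadivision} then shows that $(P\circ b)^n$ divides $D_{n,i,j}$ as a germ at every point, and consequently $D_{n,i,j}=(P\circ b)^n\, Q_{n,i,j}$ with $Q_{n,i,j}$ holomorphic on $U_{\xi_i}\cap U_{\xi_j}$ and again a cocycle. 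Lemma \reff{cover-E} (applied to this cocycle) produces holomorphic $F_{n,j}:\tilde U_{\xi_j}\to\CC$ with $Q_{n,i,j}=F_{n,i}-F_{n,j}$ on overlaps, the sup-norms of $F_{n,j}$ being controlled by $C$ times the sup-norms of $Q_{n,i,j}$.

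The local functions $\tilde f_n:=G_{n,j}-(P\circ b)^n F_{n,j}$ then glue to a single holomorphic function on $\bigcup_j\tilde U_{\xi_j}$. Exactly as in the final step of the proof of Lemma \reff{lemadivision}, compactness of $\PC$ and Hartogs' theorem let me descend $\tilde f_n$ to $f_n\in\OO_b(D(\bo;\tilde\rho))$ satisfying $f_n\circ b=\tilde f_n$; crucially, $\tilde\rho$ depends only on the fixed cover $(\tilde U_{\xi_j})$ and not on $n$. For $\bx\in\Pi\cap D(\bo;\tilde\rho)$, picking $j$ such that $b^{-1}(\bx)\in \tilde U_{\xi_j}$ and using the boundedness of $F_{n,j}$ on $\tilde U_{\xi_j}$ yields
$$\norm{f(\bx)-f_n(\bx)}\leq \norm{f\circ b_{\xi_j}-g_{n,\xi_j}}+\norm{P(\bx)}^n\cdot\norma{F_{n,j}}_\infty\leq K'_n\norm{P(\bx)}^n$$
for new constants $K'_n$. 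By Lemma \reff{existfhat}, $\{f_n\}_n$ is a $P$-asymptotic sequence converging $\mathfrak m$-adically to some $\hat f\in\hat\OO$, and $f\sim_\Pi^P\hat f$.

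The delicate remaining point, which I expect to be the main obstacle together with the uniformity in $n$ of the cover, is verifying $\hat f\circ b_\xi=\hat g_\xi$ for \emph{every} $\xi\in\PC$, not just those in the finite selection. For such a $\xi$, the two estimates $\norm{f-f_n}\leq K'_n\norm P^n$ and $\norm{f\circ b_\xi-g_{n,\xi}}\leq K_n^\xi\norm{P\circ b_\xi}^n$ combine to $\norm{f_n\circ b_\xi-g_{n,\xi}}\leq (K'_n+K_n^\xi)\norm{P\circ b_\xi}^n$ on a small $(P\circ b_\xi)$-sector; Lemma \reff{lemadivision} forces $J(f_n\circ b_\xi)\equiv \hat g_\xi\pmod{(P\circ b_\xi)^n}$. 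Since $P\circ b_\xi$ vanishes at the origin, $(P\circ b_\xi)^n\in\mathfrak m^n$, and since precomposition with $b_\xi$ is $\mathfrak m$-adically continuous on $\hat\OO$, passing to the $\mathfrak m$-adic limit gives $\hat f\circ b_\xi=\hat g_\xi$ by separatedness of the $\mathfrak m$-adic topology, completing the proof.
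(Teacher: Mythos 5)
Your strategy is essentially the paper's own: transport the blow-up asymptotic sequences to the blow-up variety, divide the differences of the local data by $(P\circ b)^n$, solve the resulting additive Cousin problem with Lemma \reff{cover-E}, correct and glue, descend by Hartogs, and conclude with Lemma \reff{existfhat}; your final step (comparing $J(f_n\circ b_\xi)$ with $\hat g_\xi$ modulo $(P\circ b_\xi)^n$ and passing to the $\mathfrak{m}$-adic limit) is a harmless variant of the paper's appeal to uniqueness of $(P\circ b_\xi)$-asymptotic expansions.

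There is, however, a genuine gap at the division step. You produce $Q_{n,i,j}=D_{n,i,j}/(P\circ b)^n$ only as holomorphic functions on the full intersections $U_{\xi_i}\cap U_{\xi_j}$, by applying Lemma \reff{lemadivision} pointwise, and then assert that Lemma \reff{cover-E} bounds $\norma{F_{n,j}}_\infty$ by $C$ times the sup-norms of the $Q_{n,i,j}$ and that $F_{n,j}$ is bounded on $\tilde U_{\xi_j}$. Nothing guarantees that $\sup_{U_{\xi_i}\cap U_{\xi_j}}\norm{Q_{n,i,j}}$ is finite: $P\circ b$ vanishes on the exceptional divisor, which meets the boundary of $U_{\xi_i}\cap U_{\xi_j}$, so the quotient of the bounded function $D_{n,i,j}$ by $(P\circ b)^n$ may blow up there; the estimate in Lemma \reff{cover-E} then only reads ``$\leq\infty$'' and yields nothing, and holomorphy of $F_{n,j}$ on the open set $\tilde U_{\xi_j}$ does not by itself give boundedness on it. Since finiteness of $\norma{F_{n,j}}_\infty$ is exactly what produces your constants $K'_n$ in the key inequality $\norm{f(\bx)-f_n(\bx)}\leq K'_n\norm{P(\bx)}^n$, the argument is incomplete as written. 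The missing ingredient is the quantitative division Lemma \reff{lemadivision2}: one must first pass to intermediate neighborhoods $\tilde U_\xi=\phi_\xi^{-1}(D(\bo;\tilde r_\xi))$, $\tilde r_\xi<r_\xi$, whose closures are compact and contained in the $U_\xi$, where the quotients are bounded with explicit constants (for each fixed $n$), and only then apply Lemma \reff{cover-E} to that finite cover. This is precisely how the paper proceeds; with this insertion your proof coincides with it.
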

\begin{proof} Using Definition \reff{desarrollo}, we can assume that,
for every $\xi\in\PC$, there are
sequences $\{g_n^{(\xi)}\}_n$ of bounded holomorphic functions
$g_n^{(\xi)}:D(\bo;r_\xi)\to\CC$ with
$J(g_n^{(\xi)})\to \hat g_\xi$ as $n\to\infty$ and positive constants $C_n^{(\xi)}$
such that
\begin{equation}\labl{fn-xi}
\norm{(f\circ b_\xi)(\bv)-g_n^{(\xi)}(\bv)}\leq C_n^{(\xi)}\norm{(P\circ b_\xi)(\bv)}^n
\end{equation}
for $n\in\NN$ and $\bv\in\Pi_\xi$.

As in the proof of Lemma \reff{lemadivision}, we consider the neighborhoods
$U_\xi=\phi_\xi^{-1}(D(0;r_\xi))$ of $(\xi,\bo)$ in $M$ and the functions
$G_n^{(\xi)}=g_n^{(\xi)}\circ \phi_\xi$ holomorphic on $U_\xi$.
In order to apply Lemma \reff{lemadivision2} at some point, we also consider
$\tilde U_\xi=\phi_\xi^{-1}(D(0;\tilde r_\xi))$ with some positive
$\tilde r_\xi<r_\xi$.

By (\reff{fn-xi}), we have
$$\norm{G_n^{(\xi)}-G_n^{(\zeta)}}(p) \leq (C_n^{(\xi)}+C_n^{(\zeta)})\norm{P(b(p))}^n$$
for $\xi,\zeta\in\PC$, $p\in U_\xi\cap U_\zeta$ with $\a<\arg(P(b(p)))<\beta$.
Now we apply Lemma \reff{lemadivision2} -- more precisely after right composition
with $\phi_\xi$ --  and obtain that there are bounded holomorphic functions
$H_n^{(\xi,\zeta)}: \tilde U_\xi\cap \tilde U_\zeta\to\CC$ such that
\begin{equation}\labl{Hn}
G_n^{(\xi)}-G_n^{(\zeta)} = H_n^{(\xi,\zeta)}\cdot (P\circ b)^n.
\end{equation}
More precisely, we have $\norma{H_n^{(\xi,\zeta)}}\leq L^{(\xi,\zeta)} (C_n^{(\xi)}+C_n^{(\zeta)})$,
where $L^{(\xi,\zeta)}$ denotes the constant of Lemma \reff{lemadivision2} for the domains
$U_\xi\cap U_\zeta$ and $\tilde U_\xi\cap \tilde U_\zeta$, and, as usual, $\norma{\cdot}$ denotes the maximum norm.

Now the neighborhoods $\tilde U_\xi$, $\xi\in\PC$ of $(\xi,\bo)$
cover the compact set $E=\PC\times\{\bo\}$
and therefore there is a finite subcover, say corresponding to
$\xi_0,\ldots ,\xi_K$. 

At this point, Lemma \reff{cover-E} can be applied. It yields open sets
$\bar U_{\xi_j}\subset \tilde U_{\xi_j}$ 
forming an open cover of $E$
and a constant $C$ used later in the estimates.
For $n\in\NN$, we now apply it to the family $H_n^{(\xi_i,\xi_j)}$, $i,j=0,\ldots ,K$, and obtain
bounded holomorphic functions $L_n^{(\xi_j)}:\bar U_{\xi_j}\to\CC$, $j=0,\ldots ,K$ satisfying
$$\norma{L_n^{(\xi_j)}}_\infty\leq C
\max\left\{\norma{ H_n^{(\xi_\ell,\xi_k)}}_\infty\mid \ell,k\in\{0,\ldots ,K\}\right \}$$
and $H_n^{(\xi_j,\xi_k)}=L_n^{(\xi_j)}-L_n^{(\xi_k)}$ for $j,k=0,\ldots ,K$.

%

With these functions $L_n^{(\xi_j)}$ we now define
\begin{equation}\labl{asymp-glue}
F_n^{(\xi_j)}=G_n^{(\xi_j)}- L_n^{(\xi_j)}\cdot (P\circ b)^n
\end{equation}
for $j=0,\ldots ,K$, $n\in\NN$. Then $F_n^{(\xi_j)}$ are defined on
$\bar U_{\xi_j}$ for $j=0,\ldots,K$.
The domains of the functions $F_n^{(\xi_j)}$, $j=0,\ldots,K$ again cover a neighborhood
$\tilde V$
of the exceptional divisor $E$ in $M$. By construction, the functions
satisfy
\begin{equation}\labl{majFn}
\norm{f(b(p))- F_n^{(\xi_j)}(p)}\leq \left(C_n^{(\xi_j)}+\norma{L_n^{(\xi_j)}}_\infty\right)
\norm{P(b(p))}^n\end{equation}
for $n\in\NN$, $j=1,\ldots,K$ and $p$ in the domain of $F_n^{(\xi_j)}$,
$\alpha<\arg(P(b(p)))<\beta$.
Again by construction, the functions $F_n^{(\xi_j)}$, $j=0,\ldots,K$ coincide on the
intersections of their domains, i.e.\ they glue together to functions $F_n:\tilde V\to\CC$.
As in the proof of Lemma
\reff{lemadivision}, this implies that there are holomorphic bounded
functions $f_n:V\to\CC$, $n\in\NN$,
defined on the neighborhood $V=b(\tilde V)$ of $\bo$ in $\CC^d$ such that
$F_n=f_n\circ b$ on $\tilde V$.
The inequalities (\reff{majFn}) yield that
$$\norm{f(\bx)-f_n(\bx)}\leq B_n \norm{P(x)}^n\mbox{ for }\bx\in V, \alpha<\arg(P(\bx))<\beta$$
i.e.\ on $V\cap \Pi$, where $B_n$ is the maximum of the above constants
$C_n^{(\xi_j)}+\norma{L_n^{(\xi_j)}}_\infty,$ $j=0,\ldots,K$.
Taking the above estimates into account, we find altogether
\begin{equation}\labl{bn}
B_n\leq (1+2\,L\,C)\max\{C_n^{(\xi_j)}\mid j=0,\ldots,K\},
\end{equation}
where $L$ denotes the maximum of the constants $L^{(\xi_k,\xi_\ell)}$, $k,\ell\in\{0,\ldots,K\}$
associated  in Lemma \reff{lemadivision2} to the domains
$U_{\xi_k}\cap U_{\xi_\ell}$ and $\tilde U_{\xi_k}\cap \tilde U_{\xi_\ell}$ and
$C$ denotes the constant associated to the cover $\{\tilde U_{\xi_j}\}_{j=0,\ldots,K}$
in Lemma \reff{cover-E}.

By Lemma \reff{existfhat}, (1), the sequence $\{f_n\}_n$ is a $P$-asymptotic sequence, i.e.\ there
exists $\hat f\in\OO$ satisfying $J(f_n)\equiv \hat f$ for $n\in\NN$
that is the $P$-asymptotic expansion of $f$ on $V\cap\Pi$.
As we have seen this implies for every $\xi\in\PC$ that $\hat f\circ b_\xi$ is the $(P\circ b_\xi)$-asymptotic
expansion of $f\circ b_\xi$ on any $(P\circ b_\xi)$-sector $\Pi_{P\circ b_\xi}(\alpha,\beta;\mu)$
if $\mu$ is sufficiently small. The uniqueness of the $(P\circ b_\xi)$-asymptotic expansion
(see Lemma \reff{existfhat}, (2)) now implies that $\hat f\circ b_\xi=\hat g_\xi$ and the last assertion of the theorem is proved.
\end{proof}

In the sequel, we want to improve Proposition \reff{asymp-blu} and also give a variant for $P$-asymptotic series.
For this purpose, we need some additional notation and exploit the crucial observation
that for any formal series $\hat f\in\hat\OO$, the compositions $\hat f\circ b_\xi(\bv)$, $\xi\in\PC$, are
not arbitrary formal series. Indeed, by the formulas of Subsection \reff{normal}, we see that for each term
in the series expansion
of such a composition, the exponent of $v_1$ is smaller or equal to the one of $v_2$.
Our charts were chosen so that we have this property for $\xi\in\CC$ and for $\xi=\infty$.
As a consequence, if we write $\hat f\circ b_\xi(\bv)$ as a series in powers of $v_2,\ldots,v_d$,
the coefficients are polynomials of $v_1$, i.e.\ 
\begin{equation}\labl{comppoly}\hat f\circ b_\xi(\bv)\in\CC[v_1]\lf v_2,\ldots,v_d\rf \mbox{ for }\hat f\in\hat\OO,\,\xi\in\PC.
\end{equation}

We will work in a larger set than $\CC[v_1]\lf v_2,\ldots,v_d\rf $. We consider for positive $\rho$ the algebra
$\AA_\rho=\OO_b(D(0,\rho))\lf v_2,\ldots,v_d\rf $ and regard it as a subset of $\hat\OO$.
It is endowed with the \newcommand{\mfm}{{\mathfrak m}} $\mfm'$-adic topology, where
$\mfm'=(v_2,\ldots,v_d)$. $\AA_\rho$ is complete
for this topology. It is finer than the one inherited from $\hat\OO$: Any sequence converging for
the $\mfm'$-adic topology also converges for the $\mfm$-adic topology.
As $P\circ b_\xi\in\mfm'$ for any $\xi$, the $P\circ b_\xi$-adic topology
generated by the powers of the ideal $(P\circ b_\xi(\bv))\AA_\rho$ is even finer than the $\mfm'$-adic topology.
It is readily shown that $\AA_\rho$ is also complete for the  $P\circ b_\xi$-adic topology.
\begin{lema}\labl{comp}Consider $\hat g\in\AA_\rho$ for some positive $\rho$ and $\xi\in\PC$.
Then there exists a $\rho'\in]0,\rho[$ such that for all $\zeta\in\CC$ sufficiently close
to $\xi$, the composition $\hat g\circ(\phi_\xi\circ\phi_\zeta^{-1})(\bv)$ is well defined and an element of $\AA_{\rho'}$.
\end{lema}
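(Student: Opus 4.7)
The plan is to compute the transition map $\phi_\xi\circ\phi_\zeta^{-1}$ explicitly from the chart formulas of Subsection \reff{normal} and substitute directly into $\hat g$. Since $\zeta\in\CC$, two cases must be distinguished: either $\xi\in\CC$ (so $\zeta$ is a finite point close to $\xi$), or $\xi=\infty$ (so $|\zeta|$ is large). A short calculation using $\phi_\zeta^{-1}(\bv)=\bigl([1,\zeta+v_1],(v_2,(\zeta+v_1)v_2,\bv'')\bigr)$ gives, for $\xi\in\CC$,
\[
\phi_\xi\circ\phi_\zeta^{-1}(\bv)=(v_1+(\zeta-\xi),\,v_2,\,\bv''),
\]
and for $\xi=\infty$,
\[
\phi_\infty\circ\phi_\zeta^{-1}(\bv)=\bigl(1/(\zeta+v_1),\,(\zeta+v_1)v_2,\,\bv''\bigr).
\]

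Writing $\hat g=\sum_{\bbeta'\in\NN^{d-1}}g_{\bbeta'}(v_1)\,v_2^{\beta_2}\cdots v_d^{\beta_d}$ with $g_{\bbeta'}\in\OO_b(D(0,\rho))$, the key observation that makes everything work is that in either transition map the components $v_3,\ldots,v_d$ are preserved and $v_2$ is only multiplied by a function of $v_1$; in particular the variables $v_2,\ldots,v_d$ are never mixed with one another. Consequently the coefficient of each monomial $v_2^{\beta_2}\cdots v_d^{\beta_d}$ in $\hat g\circ(\phi_\xi\circ\phi_\zeta^{-1})$ is a single term, and no issue of convergence of an infinite sum has to be addressed.

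In the first case I will pick any $\rho'\in\,]0,\rho[$ and require $|\zeta-\xi|\leq\rho-\rho'$; then $v_1\mapsto v_1+(\zeta-\xi)$ sends $D(0,\rho')$ into $D(0,\rho)$, so each $g_{\bbeta'}(v_1+(\zeta-\xi))$ lies in $\OO_b(D(0,\rho'))$ with the same bound as $g_{\bbeta'}$, and the composed series belongs to $\AA_{\rho'}$. The slightly more delicate second case is where I expect the only real work: I will pick $\rho'>0$ with $\rho\rho'<1$ and restrict to $\zeta$ with $|\zeta|\geq 1/\rho+\rho'$, so that $|\zeta+v_1|\geq 1/\rho$ whenever $|v_1|<\rho'$. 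Then $v_1\mapsto 1/(\zeta+v_1)$ maps $D(0,\rho')$ into $D(0,\rho)$, so $g_{\bbeta'}(1/(\zeta+v_1))$ is bounded on $D(0,\rho')$ by $\sup_{D(0,\rho)}|g_{\bbeta'}|$; combined with the polynomial factor $(\zeta+v_1)^{\beta_2}$ this shows that the coefficient of $v_2^{\beta_2}\cdots v_d^{\beta_d}$ lies in $\OO_b(D(0,\rho'))$, and hence the composition is in $\AA_{\rho'}$. The main obstacle is really just identifying the correct transition formulas and noticing the non-mixing property; once these are in hand the estimates are elementary.
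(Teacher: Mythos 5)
Your proof is correct and takes essentially the same route as the paper: the same explicit transition formulas $\phi_\xi\circ\phi_\zeta^{-1}(\bv)=(v_1+\zeta-\xi,\bv')$ for $\xi\in\CC$ and $\bigl((\zeta+v_1)^{-1},(\zeta+v_1)v_2,\bv''\bigr)$ for $\xi=\infty$, together with the coefficient-by-coefficient observation that the coefficient of $(\bv')^{\bbeta}$ becomes $(\zeta+v_1)^{\beta_2}g_{\bbeta}\bigl((\zeta+v_1)^{-1}\bigr)$, bounded on $D(0,\rho')$ once $\zeta$ is large enough. Your explicit choices of $\rho'$ and of the neighborhood of $\xi$ merely make quantitative what the paper leaves as ``sufficiently small/large''.
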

\begin{proof}In the case $\xi\in\CC$ this follows immediately from the formula
$\phi_\xi\circ\phi_\zeta^{-1}(\bv)=(v_1+\zeta-\xi,\bv')$.

In the case $\xi=\infty$, we have $\phi_\xi\circ\phi_\zeta^{-1}(\bv)=((\zeta+v_1)^{-1},(\zeta+v_1)v_2,\bv'')$.
If $f_\bbeta(v_1)$, $\bbeta\in\NN^{d-1}$, denotes the coefficient of $\hat f(\bv)$ in front of $(\bv')^{\bbeta}$ then
the corresponding coefficient of $\hat f\circ(\phi_\xi\circ\phi_\zeta^{-1})(\bv)$ is
of the form $(\zeta+v_1)^{\beta_2}f_\bbeta((\zeta+v_1)^{-1})$ which defines elements of $O_b(D(0,\rho'))$
for sufficiently small $\rho'$ if $\zeta\in\CC$ is sufficiently large.
\end{proof}

Another consequence of (\reff{comppoly}) is used in Generalized Weierstrass Division (see Subsection \reff{GWD}).
For a given injective linear form $\ell:\NN^d\to\RR_+$, the minimal exponent of a series $g$ is important in this context and
determines the set $\Delta_\ell(g)$ used in the statements. It will be important for us to have
an injective linear form such that the minimal exponent of $P\circ b_\xi(\bv)$ is the same for all but finitely many
$\xi\in\PC.$

In the case $d=2$, we simply consider the leading terms of $P(x_1,x_2)$ with respect to the homogeneous valuation.
Let $H(x_1,x_2)$ be their sum and $h>0$ their valuation. As $H\circ b_\xi(v_1,v_2)=H(1,\xi+v_1)v_2^h$ for $\xi\in\CC$
and $H\circ b_\xi(v_1,v_2)=H(v_1,1)v_2^h$ for $\xi=\infty$, the leading term is $v_2^h$ whatever the choice of $\ell$ provided
$H(\xi)\neq0$\ \footnote{As we have identified $\xi\equiv[1,\xi]$ in the case $\xi\in\CC$ resp.\ $\infty\equiv[0,1]$,
we use here and in the sequel $H(\xi)=H(1,\xi)$ resp.\ $H(\infty)=H(0,1)$.}.
Observe that $\xi$ with $H(\xi)=0$ correspond to the tangent directions of the curve $P(x_1,x_2)=0$ 
in the origin. Geometrically, we have shown above that the germ $P$ is monomialised by blow-ups for
all but some of these tangent directions. The tangent directions are also the intersection points of the
exceptional divisor with the strict transform of $P(x_1,x_2)=0$ under blow up.

In the case $d>2$, consider any injective linear form $\LL:\NN^{d-2}\to\RR_+$. Let $\MM$ be the set
of exponents $\bm=(m_3,\ldots,m_d)\in\NN^{d-2}$ such that there exists a nonvanishing term
$\alpha x_1^{m_1}x_2^{m_2}(\bx'')^{\bm}$ in the series of $P(\bx)$ and denote by $\ba$ the minimum of
$\MM$ with respect to the ordering $<_\LL$ induced by $\LL$. Now consider the sum $P_\LL(x_1,x_2)$ of
all terms in the series of $P(\bx)$ containg $(\bx'')^{\ba}$ and let $h\geq 0$ denote the homogenous valuation
of $P_\LL(x_1,x_2)$. Denote by $H_\LL(x_1,x_2)$ the terms of $P_\LL(x_1,x_2)$ of valuation
$h$. In order to complete $\LL$ to an injective linear form $\ell:\NN^d\to\RR_+$, choose a convenient $\ell_2$
such that 
$$\ell_2 h < \min_{\bbeta\in\MM\setminus\{\ba\}}\LL(\bbeta)-\LL(\ba)$$
and with a convenient $\ell_1$ the linear form $\ell(\bbeta)=\ell_1 \beta_1+\ell_2 \beta_2 +\LL(\bbeta'')$.
Evaluating $H_\LL\circ b_\xi(v_1,v_2)$ as above, it follows that $v_2^h(\bv'')^\ba$ is the minimal term
of $P\circ b_\xi(\bv)$ for the ordering induced by $\ell$ provided $H_\LL(\xi)\neq0$. 
Observe that $H_\LL$ can be a nonzero constant in the case $d>3$.
In order to combine the cases $d=2$ and $d>2$, we put $\LL=\emptyset$ in the former case and
choose an arbitrary injective linear form $\ell$.

We summarize the discussion in the following lemma.
\begin{lema}\labl{HLL}Consider an injective linear form $\LL:\NN^{d-2}\to\RR_+$ in the case $d>2$ and 
$\LL=\emptyset$ in the case $d=2$. There exist an injective linear form $\ell:\NN^d\to\RR_+$ completing $\LL$,
a nonnegative integer $h$ and a homogeneous polynomial $H_\LL(x_1,x_2)$ of degree $h$ such that for $\xi\in\PC$
the dominant term of $P\circ b_\xi(\bv)$ is $H_\LL(\xi)v_2^h(\bv'')^{\ba}$ provided $H_\LL(\xi)\neq0$.
\end{lema}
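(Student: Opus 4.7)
The plan is to decompose $P$ according to the exponents in $\bx''$ and compute $P\circ b_\xi$ explicitly. Write $P(\bx)=\sum_{\bm\in\MM}P_\bm(x_1,x_2)(\bx'')^\bm$ with $P_\bm\in\CC\{x_1,x_2\}$; by definition $P_\ba=P_\LL$, with homogeneous valuation $h$ and leading form $H_\LL$. Let $h_\bm$ denote the homogeneous valuation of $P_\bm$ and $H_\bm$ its leading form. Since $b_\xi$ fixes $\bx''$, for $\xi\in\CC$ one has
$$P\circ b_\xi(\bv)=\sum_{\bm\in\MM}P_\bm(v_2,(\xi+v_1)v_2)\,(\bv'')^\bm,$$
with the analogous formula using $P_\bm(v_1v_2,v_2)$ at $\xi=\infty$. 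An explicit substitution gives $P_\bm(v_2,(\xi+v_1)v_2)=v_2^{h_\bm}H_\bm(1,\xi+v_1)+v_2^{h_\bm+1}(\ldots)$, so the $v_2$-valuation is exactly $h_\bm$ (independent of $\xi$, since $H_\bm(1,\xi+v_1)$ is a nonzero polynomial in $v_1$), and the coefficient of $v_2^{h_\bm}$ at $v_1=0$ equals $H_\bm(1,\xi)$. In particular, for $\bm=\ba$ this coefficient is $H_\LL(\xi)$; the analogous computation at $\xi=\infty$ gives $H_\LL(\infty)=H_\LL(0,1)$.

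Next I construct $\ell$. Since $\LL$ is injective and $\ba$ is the $<_\LL$-minimum of $\MM$, the quantity $\delta:=\min_{\bb\in\MM\setminus\{\ba\}}(\LL(\bb)-\LL(\ba))$ is strictly positive (the minimum is attained because $\LL$-level sets in $\NN^{d-2}$ are finite). Choose $\ell_2>0$ with $\ell_2 h<\delta$, then $\ell_1>0$ sufficiently generic that $\ell(\bbeta):=\ell_1\beta_1+\ell_2\beta_2+\LL(\bbeta'')$ is injective on $\NN^d$ (this excludes only countably many values of $\ell_1$). I claim that for every $\xi\in\PC$ with $H_\LL(\xi)\neq 0$, the $<_\ell$-minimal exponent of $P\circ b_\xi(\bv)$ is $(0,h,\ba)$, with coefficient $H_\LL(\xi)$; since $H_\LL$ is a nonzero homogeneous polynomial of degree $h$ in two variables, the set $\{H_\LL=0\}\subset\PC$ is finite, which yields the lemma.

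To verify the claim, note that every nonzero monomial of $P\circ b_\xi(\bv)$ has the shape $v_1^iv_2^j(\bv'')^\bm$ with $\bm\in\MM$ and $j\geq h_\bm$, so its $\ell$-weight is at least $\ell_2 h_\bm+\LL(\bm)\geq\LL(\bm)$. For $\bm\neq\ba$ this weight is at least $\LL(\ba)+\delta>\LL(\ba)+\ell_2 h$, strictly larger than the $\ell$-weight $\ell_2 h+\LL(\ba)$ of the candidate. For $\bm=\ba$, any contribution with $i>0$ or $j>h$ has strictly larger $\ell$-weight (as $\ell_1,\ell_2>0$), so only $v_2^h(\bv'')^\ba$ attains the minimum, with coefficient $H_\LL(\xi)$ by the first paragraph. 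The same argument handles $\xi=\infty$ using $b_\infty$. The main subtlety is ensuring uniform control in $\xi$: this relies on the $v_2$-valuation $h_\bm$ of $P_\bm\circ b_\xi$ being $\xi$-independent, and the key numerical constraint is $\ell_2 h<\delta$, which balances the $\ell_2$-cost of the extra $v_2^h$ factor against the $\LL$-gap $\delta$.
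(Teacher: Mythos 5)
Your proof is correct and takes essentially the same route as the paper's own argument: the decomposition of $P$ by its $\bx''$-exponents, the identification of $P_\LL$, $h$, $H_\LL$, the choice of $\ell_2$ with $\ell_2 h$ strictly below the $\LL$-gap $\delta$, and the computation of $H_\LL\circ b_\xi$ to see that $v_2^h(\bv'')^{\ba}$ is the $<_\ell$-minimal term with coefficient $H_\LL(\xi)$ — you merely spell out the weight comparisons in more detail. The only slight imprecision is attributing injectivity of $\ell$ to a generic choice of $\ell_1$ alone: injectivity also constrains $\ell_2$ (for instance $\ell_2$ must differ from the coefficients of $\LL$), but since the bad values of $\ell_2$ form a countable set, one may choose $\ell_2$ generically subject to $\ell_2 h<\delta$ and the argument is unaffected.
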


Our next goal is to establish a statement analogous to Proposition \reff{asymp-blu} for $P$-asymptotic
series. The first step is some kind of continuity for the notion of $P\circ b_\xi$-asymptotic sequences
with respect to $\xi\in\PC$.
\begin{lema}\labl{close}Suppose that $\xi\in\PC$ and $\{g_n\}_{n\in\NN}$ is a $P\circ b_\xi$-asymptotic
sequence in $\OO_b(D(\bo,\rho))$ for some formal series $\hat g\in\hat\OO$.
Then there exists a $\rho'\in]0,\rho[$ such that $\hat g\in\AA_{\rho'}$ and hence
there exists a neighborhood $\mathcal V$ of $\xi$ in $\PC$ such that for $\zeta \in {\mathcal V}$,
$\hat g_\zeta(\bv)=\hat g(\phi_\xi\circ\phi_\zeta^{-1}(\bv))$ defines a series in $\AA_{\rho_\zeta}$
with some $\rho_\zeta>0$ and such that
$\tilde g_n^{(\zeta)}(\bv)=g_n\left(\phi_\xi\circ\phi_\zeta^{-1}(\bv) \right)$
defines a $P\circ b_\zeta$-asymptotic sequence for $\hat g_\zeta$ in $\OO_b(D(\bo;\rho_\zeta))$
\end{lema}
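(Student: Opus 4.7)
The plan is to combine Corollary~\reff{P-adic-clos} (which expresses $\hat g$ as a bona fide $P\circ b_\xi$-series with bounded holomorphic coefficients) with Lemma~\reff{comp} (composability of elements of $\AA_{\rho'}$ with the chart change $\phi_\xi\circ\phi_\zeta^{-1}$), and then to exploit the elementary identity $b_\xi\circ(\phi_\xi\circ\phi_\zeta^{-1})=b_\zeta$ to transport the asymptotic sequence property from $P\circ b_\xi$ to $P\circ b_\zeta$.

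First, pick an injective linear form $\ell$. Since $\hat g$ is a $P\circ b_\xi$-asymptotic series, Corollary~\reff{P-adic-clos} provides $\rho'\in\;]0,\rho[$ and $f_k\in\OO_b(D(\bo,\rho'))$ with $J(f_k)\in\Delta_\ell(P\circ b_\xi)$ satisfying $\hat g=\sum_{k\geq 0}f_k\,(P\circ b_\xi)^k$. Because $P(\bo)=0$ forces $P\circ b_\xi\in\mathfrak m'=(v_2,\ldots,v_d)$, the partial sums converge $\mathfrak m'$-adically and $\hat g\in\AA_{\rho'}$ as claimed. Lemma~\reff{comp} then furnishes a neighborhood $\mathcal V$ of $\xi$ in $\PC$ and, for each $\zeta\in\mathcal V$, a number $\rho_\zeta>0$ such that $\hat g_\zeta\in\AA_{\rho_\zeta}$; applying the composition term by term and using $b_\xi\circ(\phi_\xi\circ\phi_\zeta^{-1})=b_\zeta$ yields
\begin{equation*}
\hat g_\zeta=\sum_{k\geq 0}\bigl(f_k\circ(\phi_\xi\circ\phi_\zeta^{-1})\bigr)(P\circ b_\zeta)^k.
\end{equation*}
Shrinking $\rho_\zeta$ so that $\phi_\xi\circ\phi_\zeta^{-1}$ maps $D(\bo,\rho_\zeta)$ into $D(\bo,\rho)$ ensures that every $\tilde g_n^{(\zeta)}$ lies in $\OO_b(D(\bo,\rho_\zeta))$.

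The heart of the proof is to upgrade the formal congruence $J(g_n)\equiv \hat g\pmod{(P\circ b_\xi)^n\hat\OO}$ to an identity holding \emph{analytically} on a polydisk, since a purely formal division cannot be composed with $\phi_\xi\circ\phi_\zeta^{-1}$ (which does not fix $\bo$). To this end, I apply Corollary~\reff{coro-P-germ} to expand $g_n=\sum_{k\geq 0}g_{n,k}(P\circ b_\xi)^k$ on some $D(\bo,\rho_n)$ with $g_{n,k}\in\OO_b(D(\bo,\rho_n))$ and $J(g_{n,k})\in\Delta_\ell(P\circ b_\xi)$. The uniqueness in Corollary~\reff{coro-P-serie}, combined with the hypothesis $J(g_n)\equiv\hat g\pmod{(P\circ b_\xi)^n\hat\OO}$, forces $J(g_{n,k})=J(f_k)$ and therefore $g_{n,k}=f_k$ on the common domain for $k<n$. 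Consequently, on a sufficiently small polydisk,
\begin{equation*}
g_n=\sum_{k=0}^{n-1}f_k\,(P\circ b_\xi)^k+(P\circ b_\xi)^n\,q_n,
\end{equation*}
with $q_n$ analytic. Composing with $\phi_\xi\circ\phi_\zeta^{-1}$, passing to Taylor series at $\bo$, and absorbing the tail $\sum_{k\geq n}\bigl(f_k\circ(\phi_\xi\circ\phi_\zeta^{-1})\bigr)(P\circ b_\zeta)^k$ of $\hat g_\zeta$ yields
\begin{equation*}
J\bigl(\tilde g_n^{(\zeta)}\bigr)\equiv \hat g_\zeta\pmod{(P\circ b_\zeta)^n\hat\OO},
\end{equation*}
which is exactly the $P\circ b_\zeta$-asymptotic sequence condition for $\hat g_\zeta$.

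The main obstacle is precisely this upgrade from formal to analytic divisibility; once it is in hand, everything else reduces to routine composition and bookkeeping under the chart change. The Generalized Weierstrass Division tools of subsection~\reff{GWD} (Corollaries~\reff{coro-P-serie} and~\reff{coro-P-germ}) are exactly what make this upgrade possible, via the uniqueness of the $\Delta_\ell$-decomposition.
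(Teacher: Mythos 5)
Your overall strategy --- use the Generalized Weierstrass Division machinery to upgrade the formal congruences to analytic identities and then compose with the chart change --- is the right one, and it is essentially the mechanism of the paper's own proof (which divides the differences $g_n-g_{n+1}$ by $(P\circ b_\xi)^n$ via the operators of Lemma \reff{WDT-Ob} and obtains $g_n\equiv\hat g\bmod (P\circ b_\xi)^n\AA_{\rho'}$ on a \emph{fixed} polydisk before composing). However, as written there is a genuine gap at precisely the step you call routine bookkeeping: the polydisk on which your identity $g_n=\sum_{k<n}f_k\,(P\circ b_\xi)^k+(P\circ b_\xi)^n q_n$ holds is allowed to depend on $n$ (you write ``on some $D(\bo,\rho_n)$'' and ``on a sufficiently small polydisk''). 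Composing with $\phi_\xi\circ\phi_\zeta^{-1}$ and reading off Taylor coefficients at $\bv=\bo$ amounts to re-expanding that identity at the displaced point $\phi_\xi\circ\phi_\zeta^{-1}(\bo)$ (equal to $(\zeta-\xi,\bo)$ when $\xi,\zeta\in\CC$), which lies at a fixed positive distance from the origin once $\zeta\neq\xi$ is fixed, while $\mathcal V$ and $\rho_\zeta$ were already chosen in your second paragraph. If $\rho_n\to0$, the identity is simply not available near that point for large $n$, so the congruence $J(\tilde g_n^{(\zeta)})\equiv\hat g_\zeta\bmod(P\circ b_\zeta)^n\hat\OO$ does not follow; note also that divisibility by $(P\circ b_\xi)^n$ known only near $\bo$ does not by itself propagate to a larger polydisk, since the zero set of $P\circ b_\xi$ may contain points away from $\bo$ about which nothing is known.

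The gap is repairable with the tools you already cite, and this repair is exactly the content the paper's proof secures: the operators $Q,R$ of Lemma \reff{WDT-Ob} act on a polydisk $D_s$ depending only on $P\circ b_\xi$ and $\ell$, and all the $g_n$ lie in the \emph{common} space $\OO_b(D(\bo,\rho))$, so the expansions of Corollary \reff{coro-P-germ} hold on one polydisk $D(\bo,\rho_*)$ independent of $n$, with $g_{n,k}=RQ^k(g_n)$ and $q_n=\sum_{k\geq n}g_{n,k}(P\circ b_\xi)^{k-n}$ convergent there; the quantifier order must then be: fix this uniform radius (and $\rho'$) first, and only afterwards shrink $\mathcal V$ and $\rho_\zeta$ so that $\phi_\xi\circ\phi_\zeta^{-1}(D(\bo,\rho_\zeta))$ is contained in $D(\bo,\rho_*)\cap D(\bo,\rho')\cap D(\bo,\rho)$. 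With this uniformity made explicit your argument is correct; the remaining ingredients (boundedness of $\tilde g_n^{(\zeta)}$, term-by-term composition of the ${\mathfrak m}'$-adically convergent series, and $b_\xi\circ(\phi_\xi\circ\phi_\zeta^{-1})=b_\zeta$) are fine, and your route differs from the paper's only in that you invoke Corollary \reff{P-adic-clos} for $\hat g$ and treat each $g_n$ through Corollary \reff{coro-P-germ} plus uniqueness, whereas the paper divides consecutive differences and states the result as congruences in $\AA_{\rho'}$.
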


\begin{proof} By definition, we have $J(g_n)\equiv \hat g\mod(P\circ b_\xi)^n\hat\OO$ and
thus $J(g_n-g_{n+1})\equiv0\mod(P\circ b_\xi)^n\hat\OO$  for all $n$. 
It is well known that this implies that   $J(g_n-g_{n+1})\equiv0\mod(P\circ b_\xi)^n \OO$ for all $n$.
This is a very special case of Artin's approximation Theorem.
In the context of our work, it also follows
using Lemma \reff{WDT}
for some arbitrary injective linear form $\ell$ in the cases $S=\CC\lf \bv\rf $ and $S=\CC\{\bv\}$.

Here we apply Lemma \reff{coro-P-germ} with the same $\ell$ for $P\circ b_\xi$ and obtain some
$D_s$, $s$ small, such that the linear operators $Q,R:\OO_b(D_s)\to \OO_b(D_s)$ are continuous.
Applying them several times, we obtain the existence of $q_n\in\OO_b(D_s)$ such that
$g_n-g_{n+1}=q_n\cdot(P\circ b_\xi)^n$ for all $n\in\NN$. If $\rho'\in]0,\rho[$ is sufficiently small,
this relation remains valid for the restrictions to $D(\bo,\rho')$.

This means that $\{g_n\}_{n\in\NN}$ is a Cauchy sequence in the $P\circ b_\xi$-adic topology of
$\AA_{\rho'}$ and thus has a limit $\hat h\in\AA_{\rho'}$ in that topology, \i.e.
\begin{equation}\labl{limgn}g_n\equiv\hat h\mod(P\circ b_\xi)^n\AA_{\rho'}\mbox{ for }n\in\NN.
\end{equation}
Therefore also
$J(g_n)\equiv \hat h\mod\mfm^n$ for all $n$. By the uniqueness of the limit we obtain that
$\hat g=\hat h$ and hence the first statement.

If $\zeta\in\PC$ is sufficiently close to $\xi$ then right composition of every term in the equations
(\reff{limgn}) with $\phi_\xi\circ\phi_\zeta^{-1}$ 
is well defined by Lemma \reff{comp}. This implies that $\tilde g_n^{(\zeta)}$ tends to $\hat g_\zeta$
in the $P\circ b_\zeta$-adic topology of $\AA_{\rho_\zeta}$ for some small $\rho_\zeta$
and hence the second statement.
\end{proof}
Now we can prove the announced statement for $P$-asymptotic series analogous to Proposition \reff{asymp-blu}.
\begin{propo}\labl{aserie-blu}
Let $\hat f\in\hat\OO$ be such that for all $\xi\in\PC$, the composition
$\hat f\circ b_\xi$ is a $P\circ b_\xi$-asymptotic series. Then $\hat f$
is a $P$-asymptotic series.
\end{propo}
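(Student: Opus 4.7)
The plan is to mimic the proof of Proposition \reff{asymp-blu}, constructing a $P$-asymptotic sequence $\{f_n\} \subset \OO_b(D(\bo;\rho))$ with limit $\hat f$. The substantive new difficulty is that there is no analytic function $f$ providing \emph{a priori} bounds on the differences $G_n^{(\xi)}-G_n^{(\zeta)}$ across the charts of the blow-up; we must instead derive the needed divisibility from the formal asymptotic data alone. By hypothesis, for each $\xi \in \PC$, fix a $(P\circ b_\xi)$-asymptotic sequence $\{g_n^{(\xi)}\}_n \subset \OO_b(D(\bo;r_\xi))$ with limit $\hat f \circ b_\xi$, and lift to the blow-up variety by setting $G_n^{(\xi)} = g_n^{(\xi)}\circ \phi_\xi$ on $U_\xi = \phi_\xi^{-1}(D(\bo;r_\xi))$, a neighborhood of $(\xi,\bo)\in E$.

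To obtain the overlap data, I invoke Lemma \reff{close}: for $\zeta$ sufficiently close to $\xi$ the transferred sequence $\tilde g_n^{(\zeta)}(\bv) = g_n^{(\xi)}(\phi_\xi\circ\phi_\zeta^{-1}(\bv))$ is itself a $(P\circ b_\zeta)$-asymptotic sequence for $\hat f \circ b_\zeta$; since $\{g_n^{(\zeta)}\}_n$ is another one, we obtain
\[
J\bigl(\tilde g_n^{(\zeta)} - g_n^{(\zeta)}\bigr) \equiv 0 \mod (P\circ b_\zeta)^n \hat\OO.
\]
Formal Weierstrass division (Lemma \reff{WDT}) together with its bounded analytic counterpart (Lemma \reff{WDT-Ob}) promote this formal congruence to an actual factorization $\tilde g_n^{(\zeta)}-g_n^{(\zeta)} = h_n^{(\xi,\zeta)}\cdot(P\circ b_\zeta)^n$ with $h_n^{(\xi,\zeta)}$ bounded holomorphic on a slightly smaller polydisk. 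Translating back through the charts yields holomorphic $H_n^{(\xi,\zeta)}$ on $\tilde U_\xi \cap \tilde U_\zeta$ satisfying $G_n^{(\xi)}-G_n^{(\zeta)} = H_n^{(\xi,\zeta)}\cdot(P\circ b)^n$.

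From this point the argument proceeds exactly as in Proposition \reff{asymp-blu}: extract a finite subcover $\tilde U_{\xi_0},\ldots,\tilde U_{\xi_K}$ of the exceptional divisor $E$; apply Lemma \reff{cover-E} to the cocycle $(H_n^{(\xi_i,\xi_j)})$ to produce $L_n^{(\xi_j)}$ with $H_n^{(\xi_i,\xi_j)} = L_n^{(\xi_i)}-L_n^{(\xi_j)}$; and glue the adjusted functions $F_n^{(\xi_j)} := G_n^{(\xi_j)} - L_n^{(\xi_j)}\cdot(P\circ b)^n$ to a holomorphic $F_n$ on a neighborhood of $E$ in $M$, which descends by Hartogs to $f_n \in \OO_b(D(\bo;\rho))$ with $F_n = f_n\circ b$ and $\rho$ independent of $n$.

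The main obstacle will be the final verification that $J(f_n) \equiv \hat f \mod P^n \hat\OO$, which is what qualifies $\{f_n\}$ as a $P$-asymptotic sequence for $\hat f$. By construction $J(f_n \circ b_{\xi_j}) \equiv \hat f \circ b_{\xi_j}\mod (P\circ b_{\xi_j})^n \hat\OO$ for each $j$, so the formal series $\hat u := J(f_n) - \hat f$ satisfies $\hat u \circ b_\xi \in (P\circ b_\xi)^n \hat\OO$ for every $\xi \in \PC$. The required formal descent $\hat u \in P^n \hat\OO$ is a formal analog of Lemma \reff{lemadivision}; I would establish it by iterating Lemma \reff{WDT}, writing $\hat u = qP + r_0$ with $r_0 \in \Delta_\ell(P)$, then showing that any $r \in \Delta_\ell(P)$ with $r \circ b_\xi \in (P\circ b_\xi)\hat\OO$ for all $\xi$ must vanish, and iterating with $q$ in place of $\hat u$. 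This sub-claim can in turn be proved by the same induction on $h(P)$ used in Lemma \reff{lemadivision}, carried out in the formal category and invoking Lemma \reff{reduction} to reduce to the normal-crossings case.
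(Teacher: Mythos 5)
Your construction of the sequence $f_n$ on a fixed polydisk follows the paper's line closely, with a minor omission: the step ``translating back through the charts yields $H_n^{(\xi,\zeta)}$ on $\tilde U_\xi\cap\tilde U_\zeta$'' glosses over the need to patch the local quotients furnished by Lemma~\reff{WDT-Ob}, which live only on a small polydisk around $(\zeta,\bo)$; the paper handles this with an extra layer of neighborhoods $D(\xi)$, $U_i'$. The genuine gap is the final step. You need $J(f_n)-\hat f\in P^n\hat\OO$, and you propose to deduce it from $\bigl(J(f_n)-\hat f\bigr)\circ b_\xi\in(P\circ b_\xi)^n\hat\OO$ by running the induction of Lemma~\reff{lemadivision} ``in the formal category.'' That transfer fails for two reasons. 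First, the inductive step of Lemma~\reff{lemadivision} produces convergent quotients $q_\xi\in\OO$, uses the fact that they are defined on actual open neighborhoods of the exceptional divisor to glue them by the identity theorem, and then descends to a germ on $\CC^d$ by Hartogs' theorem --- none of which has a formal analogue, since a formal power series has no domain and $\hat\OO$ admits no Hartogs extension. Second, the shape of the hypothesis is wrong for the induction: the boundedness hypothesis of Lemma~\reff{lemadivision} reproduces itself after composing with $b_\eta$, which is what feeds the induction on $h$, whereas your sub-claim's hypothesis already concerns a single blow-up of $r$ and does not reappear in the same form for $r\circ b_\eta$ and $P\circ b_\eta$. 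The paper explicitly records this obstruction in its proof (``it is not clear how to deduce directly that $f_n\equiv\hat f\mod P^n\hat\OO$'').

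The paper sidesteps the formal descent by working with the consecutive differences $f_n-f_{n+1}$, which are convergent germs. From the congruences one gets $(f_n-f_{n+1})\circ b_\xi\in(P\circ b_\xi)^n\hat\OO$; applying Lemma~\reff{WDT} for both $S=\hat\OO$ and $S=\OO$ upgrades this to an analytic divisibility $(f_n-f_{n+1})\circ b_\xi\in(P\circ b_\xi)^n\OO$, and then the Hartogs argument in the proof of Lemma~\reff{lemadivision} is applicable and yields $f_n-f_{n+1}\in P^n\OO$. Hence $\{f_n\}$ is Cauchy for the $P$-adic topology; letting $\hat g$ denote its limit, one has $f_n\equiv\hat g\mod P^n\hat\OO$ for all $n$, and comparing blow-ups forces $\hat g\circ b_\xi=\hat f\circ b_\xi$ for every $\xi$, hence $\hat g=\hat f$. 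You should adopt this reformulation in place of the direct formal descent.
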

\begin{proof}By definition, for every $\xi\in\PC$ there exist $\rho_\xi>0$ and a sequence $\{g_n^{(\xi)}\}_n$
in $\OO_b(D(\bo;\rho_\xi))$ such that $g_n^{(\xi)}\equiv \hat f\circ b_\xi\mod(P\circ b_\xi)^n\hat\OO$.
By the above Lemma, $g_{n\zeta}^{(\xi)}:=g_n^{(\xi)}\circ(\phi_\xi\circ \phi_\zeta^{-1})$ are well defined
for $\zeta$ in some neighborhood ${\mathcal V}_\xi$ of $\xi$ in $\PC$ and they satisfy
\begin{equation}\labl{congr}
g_{n\zeta}^{(\xi)}\equiv\hat f\circ b_\zeta\mod(P\circ b_\zeta)^n\hat\OO\mbox{ for }\zeta\in{\mathcal V}_\xi.
\end{equation}
Consider now the neighborhoods $U_\xi=\phi_\xi^{-1}(D(\bo;\rho_\xi))$ of $(\xi,\bo)$ in the blow-up variety $M$.
By reducing ${\mathcal V}_\xi$ or $\rho_\xi$, if necessary, we can asume that 
${\mathcal V}_\xi\times\{\bo\}=U_\xi\cap E$ (recall that $E\cong\PC\subseteq M$ is the exceptional divisor of the blow-up).
Since the family $U_\xi,\,\xi\in\PC,$ covers the exceptional divisor, a compact set, we can choose a finite
subcover, say $U_i,\ i=0,\ldots ,K$, where $U_i$ corresponds to a certain $\xi_i\in\PC$. For the sake of brevity of notation,
we put $g_n^i=g_n^{(\xi_i)}$, $\phi_i=\phi_{\xi_i}$, ${\mathcal V}_i={\mathcal V}_{\xi_i}$ and $G_n^{i}=g_n^i\circ \phi_i$. 
Then (\reff{congr}) can be written
\begin{equation}\labl{congr2}
G_{n}^{i}\circ \phi_\zeta^{-1}\equiv\hat f\circ b_\zeta\mod(P\circ b_\zeta)^n\hat\OO\mbox{ for }\zeta\in{\mathcal V}_i,\,i=0,\ldots,K.
\end{equation}
This implies that
\begin{equation}\labl{congrG}(G_n^{i}-G_n^{j})\circ\phi_\zeta^{-1}\equiv 0\mod(P\circ b_\zeta)^n\hat\OO
\end{equation}
for all $n$ and $\zeta\in {\mathcal V}_i\cap{\mathcal V}_j$, $i,j\in\{0,\ldots,K\}$. 
Unfortunately this does not allow us to conclude that $G_n^{i}-G_n^{j}$ can be divided by $(P\circ b)^n$ on the intersections
$U_i\cap U_j$ of their domains. These domains have to be reduced. 

For every $\xi\in\PC$, we can choose an open neighborhood $D(\xi)$ of $(\xi,\bo)$ in $M$ with the following properties.
\benot\item For every $i\in\{0,\ldots,K\}$, if $\xi\in{\mathcal V}_i$ then $\cl(D(\xi))\subset U_i$.
\item The image $\phi_\xi(D_\xi)=D_{s(\xi)}$ is a neighborhood of 0 such that Lemma \reff{WDT-Ob} is valid for
  $P\circ b_\xi$ and some arbitrarily chosen injective linear form $\ell$.
\end{enumerate}
We obtain from (\reff{congrG}) that for $i,j=0,...,K$ and $\xi\in{\mathcal V}_i\cap{\mathcal V}_j $ there exists
holomorphic functions $H_{n,\xi}^{i,j}\in\OO_b(D(\xi))$, $n\in\NN$, such that
\begin{equation}\labl{divxi}(G_n^{i}-G_n^{j})(p)=H_{n,\xi}^{i,j}(p)(P\circ b(p))^n\mbox{ for }p\in D(\xi).
\end{equation}
We just apply the operator $Q$ of Lemma \reff{WDT-Ob} several times to $(G_n^{i}-G_n^{j})\circ \phi_\xi^{-1}$ and use
(\reff{congrG}).

On nonempty intersections $D(\xi)\cap D(\zeta)$, the equations (\reff{divxi}) imply that $H_{n,\xi}^{i,j}(p)=H_{n,\zeta}^{i,j}(p)$
for $p$ in the dense subset of all $p$ such that $(P\circ b)(p)\neq0$. By continuity, we obtain that
$H_{n,\xi}^{i,j}$ and $H_{n,\zeta}^{i,j}$ coincide on such an intersection $D(\xi)\cap D(\zeta)$.
This allows us to define $U_i'=\bigcup_{\xi\in{\mathcal V}_i}D(\xi)$ and holomorphic functions $H_n^{i,j}:U_i'\cap U_j'\to\CC$ by
$$H_n^{i,j}(p)=H_{n,\xi}^{i,j}(p)\mbox{ if }p\in U_i'\cap U_j',\, p\in D(\xi).$$
By construction, the sets $U_i'$, $i=0,\ldots,K$ are open, satisfy $U_i'\cap E={\mathcal V}_i\times\{\bo\}$ and $U_i'\subset U_i$,
and the restrictions of $G_n^i$, $n\in\NN$, which we denote by the same name, satisfy
$$G_n^{i}-G_n^{j}=H_n^{i,j}(P\circ b)^n\mbox{ for all }n\in\NN,\,i,j\in\{0,\ldots,K\}.$$
Now we can proceed as in the proof of Proposition \reff{asymp-blu}.

For $i=0,\ldots,K$, we choose open sets $\tilde U_i$ such that $\cl(\tilde U_i)\subset U_i'$ and the $\tilde U_i$
still form a cover of $E$. This is possible by the compactness of $E$.  Then the functions
$H_n^{j,k}$ are bounded on $\tilde U_j\cap\tilde U_k$ for all $n\in\NN$ and all $j,k$. 
By Lemma \reff{cover-E} we find bounded holomorphic functions $L_n^{j}$, $j=0,\ldots,K$, on some open sets $\bar U_{j}\subset \tilde U_j$ 
forming an open cover of $E$
satisfying $H_n^{j,k}=L_n^{j}-L_n^{k}$ for all $n$ and all $j,k$.
Then we define $F_n^{j}=G_n^{j}-L_n^{j}(P\circ b)^n$ on $\bar U_{j}$, $j=0,\ldots,K$.

By construction, it follows that the functions $F_n^{j}$, $j=0,\ldots,K$, glue together and are hence restrictions
of some holomorphic functions $F_n:\tilde V\to\CC$, where $\tilde V$ is some neighborhood of $E$. Hence they come
from some holomorphic functions $f_n:V\to\CC$, $V=b(\tilde V)$, i.e.\ $F_n=f_n\circ b$.
Observe that $\tilde V$ and $V$ (as the neighborhoods used before) are independent of $n$.
By their construction and (\reff{congr2}),  it is easily verified that for all $\xi\in\PC$ and $n\in\NN$, we have
\begin{equation}\labl{circbxi}f_n\circ b_\xi\equiv \hat f\circ b_\xi\mod(P\circ b_\xi)^n\hat\OO .
\end{equation}
Unfortunately it is not clear how to deduce directly that $f_n\equiv\hat f\mod P^n\hat\OO$ for all $n$.

It is more convenient to consider the sequence $\{f_n-f_{n+1}\}_{n\in\NN}$. From (\reff{circbxi}) and Lemma \reff{WDT} applied
for $S=\hat\OO$ and $S=\OO$, we find that
$$(f_n-f_{n+1})\circ b_\xi\in (P\circ b_\xi)^n\OO$$
for all $n,\xi$. As in the proof of Lemma \reff{lemadivision} the implies that $f_n-f_{n+1}\in P^n\OO$ for $n$.
Hence $\{f_n\}_n$ is a Cauchy sequence for the $P$-adic topology and hence has a limit $\hat g\in\hat\OO$
in it, i.e.\ $f_n\equiv \hat g\mod P^n\hat\OO$ for all $n$. 
By (\reff{circbxi}) and the uniqueness of the limit we obtain that $\hat g\circ b_\xi=\hat f\circ b_\xi$
for all $\xi\in\PC$. This implies that $\hat g=\hat f$ and completes the proof.

Observe that if $\norm{g_n^{(\xi)}}\leq K_n^{(\xi)}$ on $D(\bo;\rho_\xi)$, then 
$\norm{H_n^{j,k}}\leq M^{j,k}(K_n^{(\xi_j)}+K_n^{(\xi_k)})$, where $M^{j,k}$
denotes the constant of Lemma \reff{lemadivision2} for $U_j'\cap U_k'$ and $\tilde U_j\cap\tilde U_k$.
This implies with 
$$\norm{L_n^{j}}\leq C\max\{\norm{H_n^{j,k}}\mid j,k=0,\ldots,K\},$$
where $C$ is the constant of Lemma \reff{cover-E} for the cover $\tilde U_j$, $j=0,\ldots,K$, that
\begin{equation}\labl{majFn2}
\norm{ F_n^{j}}\leq K_n^{(\xi_j)}+({\textstyle\sup_{\bar U_j}} \norm{ P\circ b})^n\,C\,
   \max\{   M^{k,l}(K_n^{(\xi_k)}+K_n^{(\xi_l)})\mid k,l=0,\ldots,K   \}.
\end{equation}
This will be used later.
\end{proof}

In order to improve Proposition \reff{asymp-blu}, we first note a consequence of Lemma \reff{close}.
\begin{lema}\labl{coroclose}  Suppose that $g$ is holomorphic on $\Pi_{P\circ b_\xi}(a,b,R)$ and has some 
$P\circ b_\xi$-asymptotic expansion $\hat g_\xi$ on  it.
Then for  $\zeta\in\PC$ close the $\xi$, the composition $\hat g_\xi(\phi_\xi\circ\phi_\zeta^{-1}(\bv))$ 
is well defined and it is the $P\circ b_\zeta$-asymptotic expansion of $g\circ(\phi_\xi\circ\phi_\zeta^{-1})$ on 
$\Pi_{P\circ b_\zeta}(a,b,\rho_\zeta)$ for sufficiently small positive $\rho_\zeta$.

In particular, if $f$ holomorphic on $\Pi_P(a,b,R)$ and $\xi\in\PC$ such that $f\circ b_\xi=g$ satisfies the assumption,
then the statement holds for $g\circ(\phi_\xi\circ\phi_\zeta^{-1})=f\circ b_\zeta$ and $\zeta$ close to $\xi$.
\end{lema}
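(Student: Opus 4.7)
The plan is to reduce everything to Lemma \reff{close} by extracting an asymptotic sequence from the hypothesis and transporting it via $\phi_\xi\circ\phi_\zeta^{-1}$.

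First I would unpack the hypothesis using Definition \reff{desarrollo}: there exist $\rho>0$, a $P\circ b_\xi$-asymptotic sequence $\{g_n\}_{n\in\NN}$ in $\OO_b(D(\bo;\rho))$ converging to $\hat g_\xi$, and constants $K_n>0$ such that
$$\norm{g(\bv)-g_n(\bv)}\leq K_n\norm{(P\circ b_\xi)(\bv)}^n$$
on $\Pi_{P\circ b_\xi}(a,b,R)\cap D(\bo;\rho)$. Next I would apply Lemma \reff{close} directly to this sequence: it yields some $\rho'\in]0,\rho[$ with $\hat g_\xi\in\AA_{\rho'}$, a neighborhood ${\mathcal V}$ of $\xi$ in $\PC$, and for each $\zeta\in{\mathcal V}$ a positive $\rho_\zeta$ such that $\hat g_\zeta:=\hat g_\xi\circ(\phi_\xi\circ\phi_\zeta^{-1})$ lies in $\AA_{\rho_\zeta}$ and the composed sequence $\tilde g_n^{(\zeta)}:=g_n\circ(\phi_\xi\circ\phi_\zeta^{-1})$ is a $P\circ b_\zeta$-asymptotic sequence in $\OO_b(D(\bo;\rho_\zeta))$ converging to $\hat g_\zeta$ in the $\mathfrak m$-adic sense. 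This settles the algebraic part.

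To obtain the analytic estimate on a $P\circ b_\zeta$-sector, I would exploit the elementary identity
$$b_\xi\circ(\phi_\xi\circ\phi_\zeta^{-1})=b\circ\phi_\xi^{-1}\circ\phi_\xi\circ\phi_\zeta^{-1}=b\circ\phi_\zeta^{-1}=b_\zeta,$$
so that $(P\circ b_\xi)\circ(\phi_\xi\circ\phi_\zeta^{-1})=P\circ b_\zeta$. Shrinking $\rho_\zeta$ if necessary, the image of $D(\bo;\rho_\zeta)$ under $\phi_\xi\circ\phi_\zeta^{-1}$ lies in $D(\bo;\rho)$, and moreover $\bv\in\Pi_{P\circ b_\zeta}(a,b,\rho_\zeta)$ implies that $\phi_\xi\circ\phi_\zeta^{-1}(\bv)$ belongs to the original $P\circ b_\xi$-sector because $\arg$ is preserved under the identity above. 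Substituting into the basic inequality yields
$$\norm{(g\circ\phi_\xi\circ\phi_\zeta^{-1})(\bv)-\tilde g_n^{(\zeta)}(\bv)}\leq K_n\norm{(P\circ b_\zeta)(\bv)}^n$$
on $\Pi_{P\circ b_\zeta}(a,b,\rho_\zeta)$, which is exactly what is needed for $\hat g_\zeta$ to be the $P\circ b_\zeta$-asymptotic expansion of $g\circ(\phi_\xi\circ\phi_\zeta^{-1})$ by Definition \reff{desarrollo} and Lemma \reff{existfhat}(2).

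For the ``in particular'' clause I would simply note that if $g=f\circ b_\xi$, then the same identity gives
$$g\circ(\phi_\xi\circ\phi_\zeta^{-1})=f\circ b_\xi\circ(\phi_\xi\circ\phi_\zeta^{-1})=f\circ b_\zeta,$$
so the first part of the lemma applies verbatim. The only slightly delicate point is the opening paragraph of Lemma \reff{close}, where one must lift the $\mathfrak m$-adic convergence of $\{J(g_n)\}$ to the $(P\circ b_\xi)$-adic convergence in $\AA_{\rho'}$ and then transport it by composition; but this is already taken care of by that lemma, and no additional work is needed here. There is no real obstacle—the content of the result is essentially that Lemma \reff{close} upgrades smoothly from asymptotic sequences to asymptotic expansions once the identity $(P\circ b_\xi)\circ(\phi_\xi\circ\phi_\zeta^{-1})=P\circ b_\zeta$ is used.
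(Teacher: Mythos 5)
Your proof is correct and follows essentially the same route as the paper's: extract a $P\circ b_\xi$-asymptotic sequence satisfying the defining inequality, feed it to Lemma \ref{close} to obtain the transported sequence $\tilde g_n^{(\zeta)}$, use the identity $(P\circ b_\xi)\circ(\phi_\xi\circ\phi_\zeta^{-1})=P\circ b_\zeta$ together with a continuity argument to show that the composition maps $\Pi_{P\circ b_\zeta}(a,b,\rho_\zeta)$ into $\Pi_{P\circ b_\xi}(a,b,\rho)$, and substitute into the inequality. This matches the paper's argument step by step.
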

\begin{proof} By definition, there exists $\rho>0$ and a $P\circ b_\xi$-asymptotic sequence
$\{g_n\}_n$ for $\hat g_\xi$ on $\OO_b(D(\bo;\rho))$ such that
\begin{equation}\labl{estcoro} \norm{f\circ b_\xi(\bv)-g_n(\bv)}\leq K_n\norm{P\circ b_\xi(\bv)}^n
\end{equation}
for $\bv \in \Pi_{P\circ b_\xi}(a,b,\rho)$. By the above Lemma, 
$\tilde g_n^{(\zeta)}(\bv)=g_n\left(\phi_\xi\circ\phi_\zeta^{-1}(\bv) \right)$
defines a $P\circ b_\zeta$-asymptotic sequence for 
$\hat g_\xi(\phi_\xi\circ\phi_\zeta^{-1}(\bv))$. Substitution of
$\phi_\xi\circ\phi_\zeta^{-1}(\bv)$ for $\bv$ in (\reff{estcoro}) yields that
$$\norm{f\circ b_\zeta(\bv)-\tilde g_n^{(\zeta)}(\bv)}\leq K_n\norm{P\circ b_\zeta(\bv)}^n$$
and the corollary is proved. Observe that for $\zeta$ close to $\xi$ and small $\bv$
with $\arg(P\circ b_\zeta(\bv))\in]a,b[$, we have $\phi_\xi\circ\phi_\zeta^{-1}(\bv)\in\Pi_{P\circ b_\xi}(a,b,\rho)$.
(Indeed, using the formulas for  $\phi_\xi\circ\phi_\zeta^{-1}(\bv)$, we find that $\phi_\xi\circ\phi_\zeta^{-1}(\bo)$ 
is small if $\zeta$ is close to $\xi$. Hence by continuity, there is a $\rho'\in]0,\rho[$ such that
$\norm{  \phi_\xi\circ\phi_\zeta^{-1}(\bv) }<\rho$ if $\norm{\bv}<\rho'$. The rest follows from
$(P\circ b_\xi)\circ(\phi_\xi\circ\phi_\zeta^{-1})=P\circ b_\zeta$.)
\end{proof}

Next, we need a statement concerning the $\xi$-dependence of $T_\ell(f\circ b_\xi)$.
\newcommand{\mcZ}{{\mathcal Z}} \newcommand{\mcD}{{\mathcal D}}
\begin{lema}\labl{FxiG}Consider $\LL,\,\ell,\,h,\,H_\LL$ as in Lemma \reff{HLL} and
let $\mcZ_\LL$ denote the set of zeros of $H_\LL$ in $\PC$.
Let a bounded function $f\in\Pi_P(a,b,R)$ be given 
Then for any $\mcD$ with $\cl(\mcD)$ compact and contained
in $\CC\setminus\mcZ_\LL$, there exist $\sigma,\rho>0$ and a function $G_\LL:V(a,b,\sigma)\times \Omega\to\CC$,
$\Omega=\mcD\times D(0;\rho)^{d-1}$, such that for $\xi\in\mcD$, the function 
$F_\xi(t,\bv)=T_\ell(f\circ b_\xi)(t,\bv)$ of Theorem \reff{T-sector-germ} defined for $P\circ b_\xi$ 
satisfies $F_\xi(t,\bv)=G_\LL(t,(v_1+\xi,\bv'))$ for $t\in V(a,b,\sigma)$ and small 
$\bv$.
\end{lema}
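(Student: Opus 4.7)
The plan is to exploit the identity $b_\xi(\bv) = b_0(v_1+\xi, \bv')$ valid for $\xi \in \CC$, which says that the family $\{f\circ b_\xi\}_{\xi\in \mcD}$ is obtained from the single function $f\circ b_0$ by translation in the $v_1$ coordinate, and similarly for $P\circ b_\xi$. Correspondingly, one expects the family $\{F_\xi\}_\xi$ to arise from a single function $G_\LL$ by that same translation, which is precisely the conclusion of the lemma.

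The first step is to observe that because $H_\LL(\xi)\neq 0$ for $\xi\in \mcD$, Lemma \ref{HLL} guarantees that the minimal exponent of $P\circ b_\xi$ relative to $\ell$ is the fixed vector $(0,h,\ba)$, so the set $\Delta_\ell(P\circ b_\xi)$ is one and the same subspace of $\hat\OO$ for every $\xi\in \mcD$. The crucial feature is that the first coordinate of this minimal exponent vanishes: a substitution $v_1\mapsto v_1+c$ only modifies $v_1$-powers within each monomial, leaving $v_2$- and $\bv''$-exponents intact, so no monomial with exponent in $(0,h,\ba)+\NN^d$ is created. Thus $\Delta_\ell(P\circ b_\xi)$ is translation-invariant in $v_1$.

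Next, for $\xi_0, \xi_1\in \mcD$, define $\tilde F(t,\bv):=F_{\xi_1}(t, v_1+\xi_0-\xi_1, \bv')$ wherever it makes sense. Using the relation $P\circ b_{\xi_0}(\bv)=P\circ b_{\xi_1}(v_1+\xi_0-\xi_1, \bv')$ and the corresponding relation for $f\circ b_{\xi_0}$, one checks immediately that $\tilde F(P\circ b_{\xi_0}(\bv),\bv)=f\circ b_{\xi_0}(\bv)$; by the translation-invariance just noted, $J(\tilde F(t,\cdot))\in \Delta_\ell(P\circ b_{\xi_0})$ as well. The uniqueness assertion in Theorem \ref{T-sector-germ} therefore forces $F_{\xi_0}(t,\bv)=F_{\xi_1}(t, v_1+\xi_0-\xi_1, \bv')$ on the common domain.

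With compatibility in hand I would set $G_\xi(t,\bw):=F_\xi(t,w_1-\xi,\bw')$ on a suitable neighborhood of $\{t\}\times\{\xi\}\times\{\bo'\}$; the previous step shows that these $G_\xi$ glue into a single holomorphic function $G_\LL$ on the union of the local domains. To reach the uniform domain $V(a,b,\sigma)\times \mcD\times D(0,\rho)^{d-1}$ required by the statement, I would use that $\cl(\mcD)$ is a compact subset of $\CC\setminus \mcZ_\LL$ to extract a finite subcover by the neighborhoods supplied by Theorem \ref{T-sector-germ} applied to each $P\circ b_{\xi_j}$, and then take $\sigma,\rho$ to be the minima of the finitely many associated constants. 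The only real obstacle is the translation-invariance of $\Delta_\ell$ in Step 2, and this has been prearranged by the choice of $\ell$ in Lemma \ref{HLL}; the rest is bookkeeping for domains and a standard finite-cover argument.
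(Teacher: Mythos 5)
Your proposal follows essentially the same route as the paper: apply Theorem \ref{T-sector-germ} to each $P\circ b_\xi$, use the translation relation between the charts ($\phi_\xi\circ\phi_\zeta^{-1}(\bv)=(v_1+\zeta-\xi,\bv')$) together with the uniqueness of $T_\ell$ to get $F_{\xi_0}(t,\bv)=F_{\xi_1}(t,v_1+\xi_0-\xi_1,\bv')$, and then glue over a finite subcover of $\cl(\mcD)$ taking minima of the radii. Your explicit check that $\Delta_\ell(P\circ b_\xi)$ is independent of $\xi\in\mcD$ and invariant under translation in $v_1$ (since the minimal exponent $(0,h,\ba)$ has vanishing first component, by Lemma \ref{HLL}) is exactly the point the paper leaves implicit when it asserts the translated function ``has the defining properties of $F_\zeta$'', so the argument is correct.
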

This means essentially that the functions $T_\ell(f\circ b_\xi)$, $\xi\in\CC\setminus\mcZ_\LL$
glue together to a single function except for shifts in the variable $v_1$.
\noindent\begin{proof}
Applying Theorem \reff{T-sector-germ} for $\xi\in\CC\setminus\mcZ_\LL$ and with $P$ replaced by $P\circ b_\xi$
yields positive $\sigma_\xi,\rho_\xi$ and functions $F_\xi:V(a,b,\sigma_\xi)\times D(\bo;\rho_\xi)\to\CC$
such that for any $t$, we have $J(F_\xi(t,.))\in\Delta_\ell(P\circ b_\xi)$ and 
$F_\xi(P\circ b_\xi(\bv),\bv)=f(b_\xi(\bv))$ for $\bv\in\Pi_{P\circ b_\xi}(a,b,\rho_\xi)$.
Observe that $F_\xi$ are uniquely determined by this property except for restrictions.

Putting $\tilde F_\zeta(t,\bv)=F(t,\phi_\xi\circ\phi_\zeta^{-1}(\bv)))=F_\xi(t,(v_1+\zeta-\xi,\bv'))$ for $\zeta$ close to $\xi$
and sufficiently small $\bv$, this function has the defining properties of $F_\zeta$.
By the essential uniqueness of the latter, we obtain
\begin{equation}\labl{eqFxi}F_\zeta(t,\bv)=F_\xi(t,{v_1+\zeta-\xi,\bv'})
\end{equation}
for $\zeta$ close to $\xi$ and sufficiently small $\bv$.
This means that $F_\xi$ and $F_\zeta$ are analytic continuations of each other, except for
a shift in the $v_1$ component.

Consider now  $\mcD$ compactly contained in $\CC\setminus\mcZ_\LL$ and choose a finite subset
$A$ of the closure of $\mcD$ such that the disks $\xi+D(0,\rho_\xi)$ cover $\mcD$.
This allows us to define
$$G(t,\bv)=F_\xi(t,v_1-\xi,\bv')\mbox{ for }t\in V(a,b,\sigma)\mbox{ and }\bv\in\Omega, $$
if $v_1\in\xi+D(0,\rho_\xi)$ where $\sigma=\min\{\sigma_\xi\mid \xi\in A\}$,
$\rho=\min\{\rho_\xi\mid \xi\in A\}$. The property (\reff{eqFxi}) implies that the value of
$G(t,\bv)$ is independent of the choice of $\xi$ with $v_1\in\xi+D(0,\rho_\xi)$.
$G$ has the wanted properties by construction.
\end{proof}
Now we are in a position to prove that the assumption on $f\circ b_\xi$ in Proposition
\reff{asymp-blu} is only needed for finitely many $\xi\in\PC$. 
\begin{teorema}\labl{asymp-blu+}Consider $\LL,\,\ell,\,h,\,H_\LL,\mcZ_\LL$ as in Lemma \reff{FxiG} and
put $\mcZ=\mcZ_\LL$ if $H_\LL$ is not a constant, $\mcZ=\{\xi\}$ with arbitrary $\xi\in\PC$ otherwise. 
If $f$ is holomorphic  and bounded on $\Pi=\Pi_P(a,b,R)$ such that for $\xi\in\mcZ$, the composition
$f\circ b_\xi$ has some $P\circ b_\xi$-asymptotic expansion $\hat g_\xi$ on
$\Pi_{P\circ b_\xi}(a,b,R)$ then
$f$ has a $P$-asymptotic expansion $\hat f$ on $\Pi$ such that $\hat f\circ b_\xi=\hat g_\xi$
for $\xi\in\mcZ$.
\end{teorema}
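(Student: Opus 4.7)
The plan is to reduce to Proposition \reff{asymp-blu}: once a $(P\circ b_\zeta)$-asymptotic expansion of $f\circ b_\zeta$ is established for \emph{every} $\zeta\in\PC$, that proposition produces an $\hat f\in\hat\OO$ that is the $P$-asymptotic expansion of $f$ on $\Pi$ and automatically satisfies $\hat f\circ b_\zeta=\hat g_\zeta$ for every $\zeta$. Restricting to $\zeta\in\mcZ$ then yields the claimed compatibility. The hypothesis directly supplies the expansion at $\zeta\in\mcZ$, so the task is to establish it at $\zeta\in\PC\setminus\mcZ$.

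For such a $\xi$ we have $H_\LL(\xi)\neq0$, so by Lemma \reff{HLL} the germ $P\circ b_\xi$ is substantially simpler than $P$: its dominant term in the $<_\ell$-ordering is the monomial $H_\LL(\xi)v_2^h(\bv'')^\ba$. Choosing a disk $\mcD\Subset\CC\setminus\mcZ_\LL$ containing $\xi$, Lemma \reff{FxiG} produces a single holomorphic function $G_\LL$ on $V(a,b;\sigma)\times\mcD\times D(0;\rho)^{d-1}$ satisfying $F_\zeta(t,\bv):=T_\ell(f\circ b_\zeta)(t,\bv)=G_\LL(t,(v_1+\zeta,\bv'))$ for every $\zeta\in\mcD$. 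By Theorem \reff{st-form}, finding a $(P\circ b_\zeta)$-asymptotic expansion of $f\circ b_\zeta$ amounts to finding one for $F_\zeta(t,\cdot)$ as $V(a,b;\sigma)\ni t\to 0$, with coefficients in $\Delta_\ell(P\circ b_\zeta)$.

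To extract such an expansion for $\zeta\in\mcD$, I would transport the known expansion of $F_{\xi_0}$ at a point $\xi_0\in\mcZ\cap\mcZ_\LL$ lying on the boundary of the connected component of $\PC\setminus\mcZ_\LL$ containing $\mcD$, through the glued function $G_\LL$. Concretely, the coefficients $g_n^{(\xi_0)}\in\Delta_\ell(P\circ b_{\xi_0})$ obtained from $\hat g_{\xi_0}$ (via Theorem \reff{st-form} and Corollary \reff{P-adic-clos}) are propagated by Lemma \reff{close} to yield candidate sequences $\{g_n^{(\zeta)}\}_n$ in some $\AA_{\rho_\zeta}$. The remainder estimate of Theorem \reff{T-sector-germ}(2) applied to $F_\zeta$, combined with Cauchy's formula in the first variable of $G_\LL$ for $\zeta$ close to $\xi_0$, should then give the bounds
\[
\bigl\lVert f\circ b_\zeta(\bv)-g_n^{(\zeta)}(\bv)\bigr\rVert \;\le\; C_n\,\bigl\lVert P\circ b_\zeta(\bv)\bigr\rVert^n,
\]
which is precisely the desired $(P\circ b_\zeta)$-asymptotic expansion.

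The main obstacle is exactly this transport step: $G_\LL$ does not extend analytically to the omitted points of $\mcZ_\LL$, so deriving asymptotic information on $\mcD$ from the local expansion at $\xi_0$ is delicate. The cleanest route is likely to mimic the proof of Proposition \reff{aserie-blu}: use the hypothesis to construct the analogues of the functions $G_n^{i}$ and $H_n^{i,j}$ appearing there, apply the Cousin-type gluing of Lemma \reff{cover-E} over $\PC$ to assemble bounded holomorphic $f_n$ on a neighbourhood of $\bo$, and verify the inequalities (\reff{Kn}) by combining the propagation lemma \reff{close}, the division estimates of Lemma \reff{lemadivision2}, and the explicit bounds (\reff{bn})--(\reff{majFn2}) already established in the previous proofs.
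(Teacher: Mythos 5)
Your overall frame is the right one -- reduce to Proposition \ref{asymp-blu} by establishing the expansion at every $\zeta\in\PC$, and use Lemma \ref{FxiG} to glue the functions $T_\ell(f\circ b_\zeta)$ into a single $G_\LL$ on $V(a,b;\sigma)\times\mcD\times D(0;\rho)^{d-1}$ -- but the heart of the argument, the transport of asymptotic information across $\mcD$, is missing, and neither of the two mechanisms you sketch can supply it. Propagation by Lemma \ref{close} (or Lemma \ref{coroclose}) together with Cauchy's formula near a boundary point $\xi_0$ is purely local: it only yields the expansion for $\zeta$ in a small neighbourhood of $\xi_0$, not on all of $\mcD$, and in the case where $H_\LL$ is constant the set $\mcZ\cap\mcZ_\LL$ you want to start from may even be empty. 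Your fallback -- mimicking the proof of Proposition \ref{aserie-blu} with the Cousin-type gluing of Lemma \ref{cover-E} -- is circular: that argument needs local asymptotic data (the functions $G_n^{i}$) on a cover of the \emph{whole} exceptional divisor, which is exactly what the hypothesis at finitely many points fails to give and what has to be proved.

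The missing idea is global, not local: first normalise, by a linear change in $(x_1,x_2)$, so that $\infty\in\mcZ$; then Lemma \reff{coroclose} gives the expansion of $f\circ b_\xi$ for $\xi$ large and for $\xi$ close to $\mcZ'=\mcZ\setminus\{\infty\}$, i.e.\ on a full neighbourhood of $\partial\mcD$ with $\mcD=D(0,R)\setminus\bigcup_{\chi\in\mcZ'}\cl(D(\chi,r))$. Via (\reff{FG}) this means $G_\LL(t,\bv)$ satisfies a uniform Cauchy criterion in $t$ (and, iterating with $\frac1t(G_\LL-g_0)$ etc., uniform asymptotics) for $v_1$ near $\partial\mcD$. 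The decisive step is then to apply the \emph{maximum modulus principle in the variable $v_1$ on $\mcD$} to the differences $G_\LL(t_1,\bv)-G_\LL(t_2,\bv)$: smallness on a neighbourhood of $\partial\mcD$ propagates to all of $\mcD$, yielding a uniform asymptotic expansion of $G_\LL$ on $\Omega$ and hence, by Theorem \reff{st-form}, a $P\circ b_\zeta$-asymptotic expansion of $f\circ b_\zeta$ for every $\zeta\in\mcD$. Only after this does Proposition \reff{asymp-blu} apply. Without this maximum-principle step (or a substitute of comparable global reach) your proof does not close.
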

\begin{proof}
As we can always achieve this by a linear transformation in the $(x_1,x_2)$-space, we assume that
$\infty\in\mcZ_\LL$ if $H_\LL$ is not constant. Indeed, if we replace $(x_1,x_2)$ by $(x_1,x_2)A$ with some invertible $2\times2$ matrix
$A$, then $\mcZ_\LL$ changes to $\mcZ_\LL A^{-1}$, the set of all $[\alpha,\beta]$ such that $[\alpha,\beta]A\in\mcZ_\LL$.
The same transformation allows us to assume that  $\mcZ=\{\infty\}$ in the case that $H_\LL$ is constant.

This means that we have now $\infty\in\mcZ$ in all the cases.
Put $\mcZ'=\mcZ\setminus\{\infty\}$. This set may be empty, in which case the proof is a little simplified.

By Corollary \reff{coroclose}, $f\circ b_\xi$ has a $P\circ b_\xi$-asymptotic expansion for large $\xi\in\CC$ and
for $\xi$ close to $\mcZ'$ \footnote{If $\mcZ'\neq\emptyset$; we only give the proof in the case.}.
To fix notation assume this is the case for $\norm\xi>R/2$ resp.\ $\dist(\xi,\mcZ')<2r$.

Then we apply Lemma \reff{FxiG} with $\mcD=D(0,R)\setminus\bigcup_{\chi\in\mcZ'}\cl(D(\chi,r))$
to $f$. We obtain 
$\sigma,\rho>0$ and a holomorphic function $G_\LL:V(a,b,\sigma)\times \Omega\to\CC$,
$\Omega=\mcD\times D(0;\rho)^{d-1}$, such that for $\xi\in\mcD$, the function 
$T_\ell(f\circ b_\xi)(t,\bv)$ of Theorem \reff{T-sector-germ} 
satisfies 
\begin{equation}\labl{FG} T_\ell(f\circ b_\xi)(t,\bv)=G_\LL(t,(v_1+\xi,\bv'))\mbox{ for }
t\in V(a,b,\sigma)\mbox{ and small }\bv.\end{equation}

By Theorem \reff{st-form}, $T_\ell(f\circ b_\xi)(t,\bv)$ has a uniform asymptotic expansion
as $V(a,b,\sigma_\xi)\ni t\to 0$ if $\xi$ is in some neighborhood of $\partial \mcD$, the boundary of $\mcD$.
Using (\reff{FG}) and the compactness of $\partial \mcD$, this implies that  there exist some positive $\sigma,\rho$ such that
$G(t,\bv)$ has a uniform limit as $V(a,b,\sigma)\ni t\to 0$ for
$\bv$ with $\norm{\bv'}<\rho$, $\dist(v_1,\partial \mcD)<\rho$. 
By the Cauchy criterion, this is equivalent to
\begin{multline}\labl{cauchy}
\forall\,\eps>0\,\exists\,\delta>0\,\forall\, t_1,t_2\in V(a,b,\sigma) \left( \norm{t_1},\norm{t_2}<\delta,\norm{\bv'}<\rho,\dist(v_1,\partial \mcD)<\rho \right.
 \\ \left. \implies \norm{G(t_1,\bv)-G(t_2,\bv)}<\eps \right).
\end{multline}
Here we can apply the maximum modulus principle to $G(t,\bv)$ in the variable $v_1$ on the domain $\mcD$.
This implies that 
\begin{multline}\labl{cauchy2}
\forall\,\eps>0\,\exists\,\delta>0\,\forall\, t_1,t_2\in V(a,b,\sigma) \left( \norm{t_1},\norm{t_2}<\delta,\norm{\bv'}<\rho, v_1\in \mcD \right.
\\ \left. \implies \norm{G(t_1,\bv)-G(t_2,\bv)}<\eps \right).
\end{multline}
This means that $G(t,\bv)$ has some uniform limit as $V(a,b,\sigma)\ni t\to 0$, say $g_0(\bv)$, for $\bv\in\Omega$.

In the same manner, we show that $\frac1t(G(t,\bv)-g_0(\bv))$ has a uniform limit as $V(a,b,\sigma)\ni t\to 0$
for $\bv\in\Omega$ etc.\ and obtain that $G(t,\bv)$ has an asymptotic expansion as $V(a,b,\sigma)\ni t\to 0$, uniformly for
$\bv\in\Omega$. By (\reff{FG}) and Theorem \reff{st-form}, this means that $f\circ b_\xi$ has a $P\circ b_\xi$-asymptotic expansion
for every $\xi\in\mcD$. Now this is also the case for the remaining $\xi$ as discussed in the beginning of the proof.
Proposition \reff{asymp-blu} implies the statement of the theorem.
\end{proof}

In the last part of this Section, we want to improve Proposition \reff{aserie-blu} in a way similar to
Theorem \reff{asymp-blu+}. We first show for any formal series $\hat f\in\hat\OO$
that the coefficients series of all but finally many of the series $T_\ell(\hat f\circ b_\xi)$, $\xi\in\PC$, 
can be combined into one formal series.

\begin{lema}\labl{hatfhatG}Consider $\LL,\,\ell,h,\,\,H_\LL$ as in Lemma \reff{HLL} and
let $\mcZ_\LL$ denote the set of zeros of $H_\LL$ in $\PC$. 
Let $\mcR=\CC[v_1,1/H_\LL(1,v_1)]$ and $\BB=\mcR\lf v_2,\ldots,v_d\rf $. Finally consider $\hat f\in\hat\OO$. 
Then there exists a formal series $\hat G(t,\bv)=\sum_{n=0}^\infty \hat g_n(\bv)t^n\in\BB\lf t\rf $
with the following property. For all $\xi\in\CC\setminus\mcZ_\LL$, the series 
$$T_\ell(\hat f\circ b_\xi)(t,\bv)=\sum_{n=0}^\infty \hat f_{n\xi}(\bv)t^n\in\Delta_\ell(P\circ b_\xi)\lf t\rf $$
of Lemma \reff{defT-poly} applied with $P\circ b_\xi$ in place of $P$ satisfies
\begin{equation}\labl{hatfn}\hat f_{n\xi}(\bv)=J_\xi(\hat g_n)(\bv)\mbox{ for }n\in\NN.
\end{equation}
\end{lema}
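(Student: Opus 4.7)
The plan is to realize the formal series $\hat f\circ b_\xi$ and $P\circ b_\xi$, for all $\xi\in\CC\setminus\mcZ_\LL$ simultaneously, as specializations of a single pair of ``generic'' series in $\BB$, and then perform Generalized Weierstrass Division once in $\BB$ rather than separately for each $\xi$. Concretely, I would set
$$\hat F(\bv):=\hat f(v_2,v_1v_2,\bv''),\qquad \hat P(\bv):=P(v_2,v_1v_2,\bv'').$$
In every monomial of $\hat f$, the substitution $x_1=v_2$, $x_2=v_1v_2$ forces the $v_1$-degree to be bounded by the $v_2$-degree, so both $\hat F$ and $\hat P$ lie in $\CC[v_1]\lf v_2,\ldots,v_d\rf\subset\BB$. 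The formula $b_\xi(\bv)=(v_2,(v_1+\xi)v_2,\bv'')$ then immediately gives $\hat F(v_1+\xi,v_2,\bv'')=\hat f\circ b_\xi(\bv)$ and $\hat P(v_1+\xi,v_2,\bv'')=P\circ b_\xi(\bv)$.

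Next I would apply Weierstrass Division to $\hat P$ in $\BB$ with respect to the injective linear form $\LL'(\beta_2,\bbeta''):=\ell_2\beta_2+\LL(\bbeta'')$ on $\NN^{d-1}$. Inspecting lowest terms exactly as in the derivation of Lemma \ref{HLL}, one checks that the $\LL'$-minimal exponent of $\hat P$ is $(h,\ba)$, with coefficient $H_\LL(1,v_1)\in\CC[v_1]$. By the very definition of $\mcR=\CC[v_1,1/H_\LL(1,v_1)]$, this coefficient is a unit, so Lemma \ref{WDT} applies in $\BB$ (with integral domain $\mcR$) and Corollary \ref{coro-P-serie} provides a unique expansion
$$\hat F = \sum_{n=0}^\infty \hat g_n \cdot \hat P^n,\qquad \hat g_n\in\Delta_{\LL'}(\hat P,\mcR)\subset\BB.$$
This is the candidate for the lemma: set $\hat G(t,\bv):=\sum_{n\geq0}\hat g_n(\bv)t^n\in\BB\lf t\rf$.

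It then remains to specialize at each $\xi$. For $\xi\in\CC\setminus\mcZ_\LL$ we have $H_\LL(1,\xi)\neq 0$, so $H_\LL(1,v_1+\xi)$ is a unit in $\CC\lf v_1\rf$ and every element of $\mcR$ admits a substitution $v_1\mapsto v_1+\xi$ valued in $\CC\lf v_1\rf$. This extends coefficient-by-coefficient to a continuous homomorphism $J_\xi:\BB\to\hat\OO$ for the appropriate adic topologies. Applying $J_\xi$ to the identity $\hat F=\sum\hat g_n\hat P^n$ --- which is legitimate since the right side converges in $\hat\OO$ because $P\circ b_\xi\in\mathfrak m$ --- yields
$$\hat f\circ b_\xi \;=\; \sum_{n=0}^\infty J_\xi(\hat g_n)\cdot(P\circ b_\xi)^n.$$
A direct computation shows that the $\ell$-minimal exponent of $P\circ b_\xi$ is $(0,h,\ba)$, so $\Delta_\ell(P\circ b_\xi)$ consists of those series whose coefficient of $v_1^{\alpha_1}v_2^{\alpha_2}(\bv'')^{\bal''}$ vanishes whenever $\alpha_2\geq h$ and $\bal''\geq\ba$ componentwise. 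The condition defining $\Delta_{\LL'}(\hat P,\mcR)$ is exactly the same componentwise vanishing in the $(\beta_2,\bbeta'')$-exponents, and since $J_\xi$ acts only on the $v_1$-variable this condition is transported unchanged; thus $J_\xi(\hat g_n)\in\Delta_\ell(P\circ b_\xi)$. The uniqueness statement of Corollary \ref{coro-P-serie}, applied now to $\hat f\circ b_\xi$ with respect to $P\circ b_\xi$, then forces $\hat f_{n\xi}=J_\xi(\hat g_n)$, which is precisely \eqref{hatfn}.

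The conceptual heart of the argument is the design of $\BB$: the ring $\mcR$ is introduced precisely to invert the obstruction $H_\LL(1,v_1)$ to a uniform Weierstrass division, so that a single decomposition in $\BB$ controls all the $\xi$-wise decompositions in $\hat\OO$. Once that is accepted, the only point requiring attention is the identification of the lowest-order term of $\hat P$ and the compatibility of $\Delta_{\LL'}(\hat P,\mcR)$ with the family $\Delta_\ell(P\circ b_\xi)$ under $J_\xi$; both verifications follow directly from the construction of $\ell$ and $H_\LL$ in Lemma \ref{HLL}.
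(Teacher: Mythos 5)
Your proposal is correct and follows essentially the same route as the paper: your substitution $\hat F(\bv)=\hat f(v_2,v_1v_2,\bv'')$ is exactly the paper's operator $B_0$, and the rest (division by $B_0(P)$ in $\BB=\mcR\lf v_2,\ldots,v_d\rf$ using that $H_\LL(1,v_1)$ is a unit of $\mcR$, specialization by $J_\xi$, matching of $\Delta_{\LL'}(B_0(P),\mcR)$ with $\Delta_\ell(P\circ b_\xi)$, and the uniqueness of the expansion to identify $\hat f_{n\xi}=J_\xi(\hat g_n)$) coincides with the paper's argument.
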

Here $\dis J_\xi(h)(\bv)=\sum_{k=0}^\infty \frac1{k!}\frac{\partial^k h}{\partial (v_1)^k}(\xi,\bv')v_1^k$
is obtained from some element $h\in\BB$ by replacing each of its coefficients  
by its Taylor series in the point $\xi$. Since the coefficients are elements of $\mcR$
and hence rational functions of $v_1$ the denominator of which is a power of $H_\LL(1,v_1)$, this is possible
for $\xi\in\CC\setminus\mcZ_\LL$. Observe that $J_\xi$ is compatible with multiplication.
\begin{proof}
We consider the linear operator $B_0:\CC\lf \bx\rf \to\CC[v_1]\lf \bv'\rf $ determined by
$B_0(\bx^\bal)=v_1^{\alpha_2}v_2^{\alpha_1+\alpha_2}(\bv'')^{\bal''}$ and continuity with respect to
$\mfm$-adic topology of $\CC\lf \bx\rf $ and the $\mfm'$-adic topology of $\CC[v_1]\lf \bv'\rf $.

Observe that $J_\xi(B_0(\hat g))=\hat g\circ b_\xi\in\CC\lf \bv\rf $ for $\hat g\in\CC\lf \bx\rf $. 
The difference of $B_0(\hat g)$ and $\hat g\circ b_0$ is essentially that the former is in  $\CC[v_1]\lf \bv'\rf $
whereas the latter is in $\CC\lf \bv\rf $. Expansion of the polynomial coefficients in their Taylor series
at the origin maps $B_0(\hat g)$ to $\hat g\circ b_0$. The introduction of $B_0$ becomes more useful
if  $\CC[v_1]\lf \bv'\rf $ is considered as a subset of $\BB$; $\BB$ cannot always be identified with a subset
of $\CC\lf \bv\rf $.

We consider $B_0(\hat f)$ and $B_0(P)$ as elements of $\BB=\mcR\lf v_2,\ldots,v_d\rf $ and use
the injective linear form $\ell'(a_2,\ldots,a_d)=\ell_2a_2+\ldots+\ell_d a_d$ on $\NN^{d-1}$.
By Lemma \reff{HLL}, the dominant term of $B_0(P)$ is 
$H_\LL(1,v_1)v_2^h(\bv'')^{\ba}$ with $\ba=(a_3,\ldots,a_d)$ and certain nonnegative $a_j$.

Now we apply Corollary \reff{coro-P-serie}; this is possible because $H_\LL(1,v_1)$
is a unit of $\mcR$ by construction. Therefore we can write uniquely
\begin{equation}\labl{expand} B_0(\hat f)=\sum_{n=0}^\infty \hat g_n B_0(P)^n
\end{equation}
with certain $\hat g_n\in\BB\cap \Delta_{\ell'}(B_0(P))$. These can be written
$$
\hat g_n(\bv)=\sum_{\bbeta\in\NN^{d-1}} g_{n,\bbeta}(v_1) (\bv')^{\bbeta}
$$
with certain $g_{n,\bbeta}\in \mcR$; by definition of $\Delta_{\ell'}(B_0(P))$ we have
$g_{n,\bbeta}=0$ if $\bbeta\in(h,\ba)+\NN^{d-1}$.
We define 
$$\hat G(t,\bv)=\sum_{n=0}^{\infty}\hat g_n(\bv)t^n\in\BB\lf t\rf $$
and it remains to show that
$\hat G$ has the wanted properties.


We can apply $J_\xi$, $\xi\in\CC\setminus\mcZ_\LL$,  to the equality (\reff{expand}) and obtain
\begin{equation}\labl{expandJ}
\hat f \circ b_\xi(\bv)=\sum_{n=0}^\infty J_\xi(\hat g_n)(\bv) (P\circ b_\xi(\bv))^na.
\end{equation}

Observe that $\hat g_n\in\Delta_{\ell'}(B_0(P))$ implies that for $\xi\in\CC\setminus\mcZ_\LL$,
we have $J_\xi(\hat g_n)\in\Delta_\ell(P\circ b_\xi)$ because the leading terms used to define these vector spaces
are $ v_2^h v_3^{a_3}\cdot..\cdot v_d^{a_d}$ respectively  $v_1^0 v_2^h v_3^{a_3}\cdot..\cdot v_d^{a_d}$.
This implies that (\reff{expandJ}) is actually the unique way to write $\hat f\circ b_\xi$ as a series
$\hat f\circ b_\xi=\sum_{n=0}^\infty \hat f_{n\xi}(P\circ b_\xi)^n$
with $\hat f_{n\xi}\in\Delta_\ell(P\circ b_\xi)$ if $\xi\in\CC\setminus\mcZ_\LL$.
This means that $T_\ell(\hat f\circ b_\xi)(t,\bv)=\sum_{n=0}^\infty J_\xi(\hat g_n)(\bv)t^n$
thus proving  the Lemma.
\end{proof}

Now we can also improve Proposition \reff{aserie-blu}.
\begin{teorema}\labl{aserie-blu+} Consider $\LL,\,\ell,h,\,\,H_\LL,\mcZ_\LL$ and $\mcZ$ as in Theorem 
\reff{asymp-blu+}. Let $\hat f\in\OO$ be given such that $\hat f \circ b_\xi$ is a 
$P\circ b_\xi$-asymptotic series for $\xi\in\mcZ$. Then $\hat f$ is a
$P$-asymptotic series.\end{teorema}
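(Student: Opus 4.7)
The plan is to exploit Lemma \reff{hatfhatG}, which provides a formal series $\hat G(t,\bv)=\sum_{n=0}^\infty \hat g_n(\bv)\,t^n \in \BB\lf t\rf$ (where $\BB = \mcR\lf v_2,\ldots,v_d\rf$ and $\mcR = \CC[v_1,1/H_\LL(1,v_1)]$) satisfying $T_\ell(\hat f \circ b_\xi)(t,\bv) = \sum_n J_\xi(\hat g_n)(\bv)\,t^n$ for every $\xi \in \CC\setminus\mcZ_\LL$. Writing $\hat g_n(\bv) = \sum_{\bbeta\in\NN^{d-1}} g_{n,\bbeta}(v_1)(\bv')^\bbeta$ with $g_{n,\bbeta}\in\mcR$, I intend to show that every $g_{n,\bbeta}$ is actually a constant; the result will then drop out by verifying the hypothesis of Proposition \reff{aserie-blu} for every $\xi\in\PC$.

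As in the proof of Theorem \reff{asymp-blu+}, I would first normalize by a linear change of coordinates in $(x_1,x_2)$ so that $\infty\in\mcZ$; in particular $\PC\setminus\mcZ\subseteq\CC\setminus\mcZ_\LL$. For each fixed $\xi\in\mcZ$, the hypothesis combined with Corollary \reff{P-adic-clos} produces $\rho_\xi>0$ and a $P\circ b_\xi$-asymptotic sequence $\{g_n^{(\xi)}\}_n\subset\OO_b(D(\bo;\rho_\xi))$ for $\hat f\circ b_\xi$. By Lemma \reff{close}, for every $\zeta$ in some $\PC$-neighborhood $\mathcal V_\xi$ of $\xi$, the compositions $\tilde g_n^{(\zeta)} = g_n^{(\xi)}\circ(\phi_\xi\circ\phi_\zeta^{-1})$ form a $P\circ b_\zeta$-asymptotic sequence in $\OO_b(D(\bo;\rho_\zeta))$, and Corollary \reff{P-adic-clos} then yields $T_\ell(\hat f\circ b_\zeta)\in\OO_b(D(\bo;\rho_\zeta))\lf t\rf$. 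Now for $\zeta\in\mathcal V_\xi\cap(\CC\setminus\mcZ_\LL)$, Lemma \reff{hatfhatG} identifies this series with $\sum_n J_\zeta(\hat g_n)\,t^n$, so $J_\zeta(\hat g_n)\in\OO_b(D(\bo;\rho_\zeta))$ for each $n$; Cauchy estimates in $\bv'$ give $|g_{n,\bbeta}(w)|\leq M_{n,\zeta}\rho_\zeta^{-|\bbeta|}$ on $D(\zeta,\rho_\zeta)$. Letting $\zeta$ range over $\mathcal V_\xi\setminus\mcZ_\LL$ covers a punctured $\PC$-neighborhood of $\xi$, and doing this for every $\xi\in\mcZ$ (the case $\xi=\infty$ being treated analogously, with $\phi_\infty\circ\phi_\zeta^{-1}$ bringing large $\zeta\in\CC$ into play) will show that $g_{n,\bbeta}$ is locally bounded in a punctured $\PC$-neighborhood of every point of $\mcZ_\LL\cup\{\infty\}$ and automatically holomorphic on the complement.

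Since $g_{n,\bbeta}\in\mcR$ is a rational function on $\PC$ with poles only possible at $\mcZ_\LL\cap\CC$, Riemann's removable singularity theorem will upgrade local boundedness near the finite points of $\mcZ_\LL$ to holomorphicity, after which boundedness near $\infty$ will force $g_{n,\bbeta}$ to be a bounded entire function on $\PC$, hence constant by Liouville. Thus $\hat g_n\in\CC\lf v_2,\ldots,v_d\rf$ and $J_\zeta(\hat g_n)=\hat g_n$ for every $\zeta\in\CC\setminus\mcZ_\LL$. Returning to the Cauchy bound at a single $\zeta$ from the previous paragraph, the constancy of the $g_{n,\bbeta}$ will give $\hat g_n\in\OO_b(D(\bo;\rho)^{d-1})$ for some common $\rho>0$; consequently for every $\zeta\in\PC\setminus\mcZ$ we shall have $T_\ell(\hat f\circ b_\zeta)=\sum_n\hat g_n\,t^n\in\OO_b(D(\bo;\rho)^d)\lf t\rf$, and Remark \reff{nota-P-clos}(1) will show that $\hat f\circ b_\zeta$ is a $P\circ b_\zeta$-asymptotic series. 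Since this is also the case for $\zeta\in\mcZ$ by hypothesis, Proposition \reff{aserie-blu} will conclude. The main obstacle is that Lemma \reff{hatfhatG} provides the key identification $T_\ell(\hat f\circ b_\xi) = \sum J_\xi(\hat g_n)\,t^n$ only for $\xi\in\CC\setminus\mcZ_\LL$, i.e.\ precisely away from the points of $\mcZ$ where the hypothesis is given; the Lemma \reff{close}-based transfer of the hypothesis to generic nearby directions is what will bridge this gap and make the Liouville argument available.
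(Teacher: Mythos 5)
Your overall architecture follows the paper's (spread the hypothesis to nearby charts via Lemma \reff{close}, glue the coefficient data via Lemma \reff{hatfhatG}, finish with Proposition \reff{aserie-blu}), but the central analytic step is based on a false claim: the coefficients $g_{n,\bbeta}\in\mcR$ are in general \emph{not} constant, so the Riemann--Liouville argument cannot succeed. A trivial counterexample: take $P=x_1x_2$ and the convergent series $\hat f=x_2$, which certainly satisfies the hypothesis of the theorem; then $B_0(\hat f)=v_1v_2$, so $\hat g_0(\bv)=v_1v_2$ and $g_{0,1}(v_1)=v_1$. Likewise the example of Remark \reff{nota-gblu}, $\hat f=\sum_n n!\,x_2^{2n}(x_1x_2)^n$, gives $\hat g_{2n}(\bv)=n!\,v_1^n$, non-constant, although all hypotheses hold. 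The step of yours that breaks is "boundedness near $\infty$" (and likewise near the finite points of $\mcZ_\LL$): the estimates $\norm{g_{n,\bbeta}(w)}\leq M_{n,\zeta}\rho_\zeta^{-\norm\bbeta}$ on $D(\zeta,\rho_\zeta)$ carry constants and radii depending on $\zeta$, and there is no uniformity as $\zeta\to\infty$, because the chart change $\phi_\infty\circ\phi_\zeta^{-1}(\bv)=((\zeta+v_1)^{-1},(\zeta+v_1)v_2,\bv'')$ rescales $v_2$ by $\zeta+v_1$; Lemma \reff{close} only gives a $P\circ b_\zeta$-asymptotic series for each individual large $\zeta$. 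So neither removability at $\mcZ_\LL\cap\CC$ nor boundedness at $\infty$ follows --- and indeed both must fail, since the $g_{n,\bbeta}$ are generally non-constant elements of $\mcR$.

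What is actually needed, and what the paper proves, is much weaker than constancy: for each $n$ a bound $\norm{H_{n,\bbeta}(v_1)}\leq K_n\rho^{-\norm\bbeta}$ uniform in $\bbeta$ and in $v_1\in\mcD=D(0,R)\setminus\bigcup_{\chi\in\mcZ'}\cl(D(\chi,r))$. The paper gets this by using the $\zeta$-dependent bounds only at finitely many $\zeta$ covering the \emph{compact boundary} $\partial\mcD$ (where they are available, since the hypothesis transfers to $\zeta$ large and to $\zeta$ close to $\mcZ'$), and then propagating them to all of $\mcD$ by the maximum modulus principle applied to $H_{n,\bbeta}$, which is holomorphic on a neighborhood of $\cl(\mcD)$. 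This yields convergence of $J_\xi(\hat g_n)$ on a common polydisk for every $\xi\in\mcD$, hence $\hat f\circ b_\xi$ is a $P\circ b_\xi$-asymptotic series for every $\xi\in\PC$, and Proposition \reff{aserie-blu} concludes. If you replace your Liouville step by this boundary-plus-maximum-modulus argument, keeping the rest of your outline, the proof goes through.
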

Hence as in Theorem \reff{asymp-blu+}, consideration of $\hat f\circ b_\xi$ for finitely
many $\xi\in\PC$ is already sufficient.\\
\noindent\begin{proof}
We can again assume that $\infty\in\mcZ$; otherwise we proceed as in the beginning of the
proof of Theorem \reff{asymp-blu+}. By assumption and Lemma \reff{comp}, $\hat f\circ b_\xi$
is a $P\circ b_\xi$-asymptotic series for large $\xi$ and for $\xi$ close to
$\mcZ'=\mcZ\setminus\{\infty\}$. Therefore in the series expansions
$$T_\ell(\hat f\circ b_\xi)=\sum_{n=0}^\infty \hat f_{n\xi}(\bv)t^n$$
of Corollary \reff{P-adic-clos}, the coefficients $\hat f_{n\xi}(\bv)$ are convergent series
with a common radius of convergence, say $\rho_\xi>0$, for these $\xi$.

We can also apply the above Lemma \reff{hatfhatG} and obtain a series 
$\hat G(t,\bv)=\sum_{n=0}^\infty \hat g_n(\bv)t^n\in\BB\lf t\rf $ such that
for $\xi\in\CC\setminus\mcZ'$ we have
\begin{equation}\labl{Tlg}T_\ell(\hat f\circ b_\xi)=\sum_{n=0}^\infty J_\xi(\hat g_n)(\bv)t^n.
\end{equation}
Here $J_\xi(\hat g_n)=\hat f_{n\xi}$ are convergent series for all $n$ and 
$\xi$ large or $\xi$ close to $\mcZ'$. Therefore if we write
$$J_\xi(\hat g_n)(\bv)=\sum_{\bbeta\in\NN^{d-1}}h_{n,\bbeta}^{\xi}(v_1)(\bv')^\bbeta$$
then there exist for these $\xi$ constants $K_{n,\xi}$ and $\rho_\xi>0$ such that
$$\norm{h_{n,\bbeta}^{\xi}(v_1)}\leq K_{n,\xi}\rho_\xi^{-\norm\bbeta}\mbox{ for }\bbeta\in\NN^{d-1},\ \norm{v_1}<\rho_\xi.$$
For the coefficients $H_{n,\bbeta}(v_1)$ in the expansion
$\hat g_n(\bv)=\sum_{\bbeta\in\NN^{d-1}}H_{n,\bbeta}(v_1)\bv'^\bbeta,$
which are elements of $\mcR=\CC[v_1,1/H_\LL(1,v_1)]$ and hence can be considered as
holomorphic functions on $\CC\setminus\mcZ'$,
this means that for $\xi$ large or $\xi$ close to $\mcZ'$ there exist positive $\rho_\xi$ such that
\begin{equation}\labl{Hnbeta}\norm{H_{n,\bbeta}(\xi+v_1)}\leq K_{n,\xi}\rho_\xi^{-\norm\bbeta}
   \mbox{ if }\norm{v_1}<\rho_\xi.
\end{equation}

Consider now the domain  $\mcD=D(0,R)\setminus\bigcup_{\chi\in\mcZ'}\cl(D(\chi,r))$
where $r,R$ were chosen such that (\reff{Hnbeta}) holds for $\norm\xi>R/2$ or
$0<\dist(\xi,\mcZ')<2r$. Then $H_{n,\bbeta}$ is  holomorphic on a neighborhood of the closure of $\mcD$ 
and, by compactness, there are $K_n,\rho>0$ such that for $v_1$ on the boundary of $\mcD$
we have $\norm{H_{n,\bbeta}(v_1)}\leq K_n\rho^{-\norm\bbeta}$.
The maximum modulus principle implies here that
$$\norm{H_{n,\bbeta}(v_1)}\leq K_n\rho^{-\norm\bbeta}\mbox{ for all }v_1\in\mcD.$$
Using that  $\hat g_n(\bv)=\sum_{\bbeta\in\NN^{d-1}}H_{n\bbeta}(v_1)(\bv')^\bbeta,$
we obtain that the coefficients of $t^n$ in  (\reff{Tlg}) are convergent series on some common polydisk
{\em for all $\xi\in\mcD$}. By Remark \reff{nota-P-clos}, (1) this proves that $\hat f\circ b_\xi$
is a $P\circ b_\xi$-asymptotic series for every $\xi\in\mcD$.
As we already know this for the remaining $\xi\in\PC$, we can apply Proposition
\reff{aserie-blu} and finally obtain the statement.
\end{proof}

\green{We end this section with a discussion of the compatibility of asymptotics with respect to
an analytic germ and ramification. The proofs are much simpler here.
\begin{propo} \labl{asymp-ram}
Consider $P$ on $D(\bo;\rho)$, $\Pi=\Pi_P(\alpha,\beta;\rho)$ and $f:\Pi\to\CC$ holomorphic as above.
Suppose that for some integer $k\geq2$, the function $f\circ r_k$, restricted to
$\Pi_k=\Pi_{P\circ r_k}(\alpha,\beta;\tilde\rho)$ with some sufficiently small $\tilde\rho$,
has some formal series $\hat g\in\hat\OO$ as $(P\circ r_k)$-asymptotic
expansion on its domain.

Then there exists a formal series $\hat f\in\hat\OO$  that is the $P$-asymptotic expansion of $f$
on $\Pi$ and it satisfies $\hat f\circ r_k=\hat g$.
\end{propo}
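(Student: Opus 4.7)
The plan is to exploit the simplicity of ramifications compared to blow-ups: $r_k$ is a proper finite covering, and the key tool is symmetrization under the cyclic group generated by $\zeta = e^{2\pi i/k}$ acting as $R(\bt) = (\zeta t_1, \bt')$. Since $P\circ r_k$ and $f\circ r_k$ are both invariant under $R$, I expect the invariance to transfer to the asymptotic data.

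First I would take a $(P\circ r_k)$-asymptotic sequence $\{g_n\}_n$ in $\OO_b(D(\bo;\tilde\rho))$ for $\hat g$ satisfying $\|(f\circ r_k)(\bt)-g_n(\bt)\|\leq K_n|(P\circ r_k)(\bt)|^n$ on $\Pi_k\cap D(\bo;\tilde\rho)$. Applying the triangle inequality to the $R$-invariant function $f\circ r_k$ gives $\|g_n-g_n\circ R\|\leq 2K_n|(P\circ r_k)(\bt)|^n$ on the same set, so by Lemma \ref{lemadivision} we get $g_n\equiv g_n\circ R\mod (P\circ r_k)^n\OO$. Taking the $\mathfrak{m}$-adic limit shows that $\hat g$ is $R$-invariant, and since an $R$-invariant formal series involves only exponents of $t_1$ divisible by $k$, there is a unique $\hat f\in\hat\OO$ with $\hat g=\hat f\circ r_k$.

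Next I would symmetrize: set $\tilde g_n(\bt)=\tfrac1k\sum_{j=0}^{k-1}g_n(\zeta^j t_1,\bt')$, which is $R$-invariant and still satisfies the same estimate against $f\circ r_k$ (using again that both $f\circ r_k$ and $(P\circ r_k)$ are $R$-invariant). Moreover $\{\tilde g_n\}_n$ is still a $(P\circ r_k)$-asymptotic sequence for $\hat g$, since the symmetrization fixes $\hat g$. By $R$-invariance, $\tilde g_n(\bt)=f_n(t_1^k,\bt')$ for a unique bounded holomorphic $f_n$ on a suitable polydisk (writing $\tilde g_n$ as a series in $t_1$ only powers divisible by $k$ survive, and $f_n$ is obtained by the obvious substitution $s=t_1^k$).

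Finally, for $\bx\in\Pi\cap D(\bo;\rho')$ with sufficiently small $\rho'$, I would pick any preimage $\bt=(x_1^{1/k},\bx')\in\Pi_k$, giving $(P\circ r_k)(\bt)=P(\bx)$, $f(\bx)=(f\circ r_k)(\bt)$, and $f_n(\bx)=\tilde g_n(\bt)$, so the estimate $\|f(\bx)-f_n(\bx)\|\leq K_n|P(\bx)|^n$ follows directly. To verify $J(f_n)\equiv\hat f\mod P^n\hat\OO$, I would compose with $r_k$: since $J(\tilde g_n)\equiv\hat g\mod (P\circ r_k)^n\hat\OO$, the series $(J(f_n)-\hat f)\circ r_k$ is divisible by $(P\circ r_k)^n$ in $\hat\OO$, and as both factors are $R$-invariant the quotient is also $R$-invariant and descends to the required factorization in $\hat\OO$, yielding $J(f_n)\equiv\hat f\mod P^n\hat\OO$. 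The only delicate point, which I do not expect to be a real obstacle, is the bookkeeping of radii to ensure that the various polydisks and the sector $\Pi$ interact correctly under $r_k$; everything else is routine once the symmetrization is in place.
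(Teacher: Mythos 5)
Your proof is correct and follows essentially the same symmetrization approach as the paper's proof. The only procedural difference is the order in which the $R$-invariance of $\hat g$ is established: you prove it upfront by applying Lemma~\ref{lemadivision} to $g_n-g_n\circ R$ and passing to the $\mathfrak m$-adic limit, so that the symmetrized sequence $\{\tilde g_n\}_n$ is known from the start to remain a $(P\circ r_k)$-asymptotic sequence for $\hat g$ itself; the paper instead symmetrizes $\hat g$ as well (obtaining $\hat h=\frac1k\sum_{j}\hat g\circ R^j$), runs the descent to $f_n$ and $\hat f$, and only concludes a posteriori from the uniqueness of $(P\circ r_k)$-asymptotic expansions (Lemma~\ref{existfhat}) that $\hat h=\hat g$. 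Both routes are valid; your ordering makes the invariance of $\hat g$ explicit at the cost of one extra division argument, while the paper's ordering keeps the proof slightly shorter by letting uniqueness do that work at the end.
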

\begin{proof} Using Definition \reff{desarrollo}, we can assume that there exists a sequence
$\{g_n\}_n$ of bounded holomorphic functions $g_n:D(\bo;\tilde\rho)\to\CC$ with $J(g_n)\to\hat g$
as $n\to\infty$ and constants $C_n$ such that
\begin{equation}\labl{maj-ram}
\norm{f\circ r_k(\bv)-g_n(\bv)}\leq C_n\norm{P\circ r_k(\bv)}^n
\end{equation}
for $n\in\NN$ and $\bv\in\Pi_k$.

By construction, $f\circ r_k$ and $P\circ r_k$ are invariant under right composition with the
rotation $R:\bv\mapsto (e^{2\pi i/k}v_1,\bv')$ and as a consequence also $\Pi_k$. 
This implies using (\reff{maj-ram}) that
$$
\norm{f\circ r_k(\bv)-g_n\circ R^j(\bv)}\leq C_n\norm{P\circ r_k(\bv)}^n
$$
for $n\in\NN$, $j=0,...,k-1$ and $\bv\in\Pi_k$.

Consider now the sequence $\{h_n\}$ in $\OO_b(\Pi_k)$ defined by $h_n=\frac1k\sum_{j=0}^{k-1}g_n\circ R^j$.
Clearly, we have 
$$
\norm{f\circ r_k(\bv)-h_n(\bv)}\leq C_n\norm{P\circ r_k(\bv)^n}\mbox{ for }n\in\NN,\ \bv\in\Pi_k
$$
and $J(h_n)\to\hat h$ as $n\to\infty$ with $\hat h=\frac1k\sum_{j=0}^{k-1}\hat g\circ R^j$.
Furthermore, $h_n$ and $\hat h$ are invariant under right composition with the rotation $R$. Hence there
exist bounded holomorphic $f_n:\tilde\Pi\to\CC$, $\tilde\Pi=\Pi_P(\alpha,\beta;r)$ with some small positive $r$
and $\hat f\in\hat\OO$ such that $f_n\circ r_k=h_n$ and $\hat f\circ r_k=\hat h$.

We obtain the wanted properties
$$
\norm{f(\bx)-f_n(\bx)}\leq C_n\norm{P(\bx)^n}\mbox{ for }n\in\NN,\ \bx\in\tilde\Pi
$$
and $J(f_n)\to\hat f$ as $n\to\infty$. This proves that $f$ has $\hat f$ as $P$-asymptotic expansion.
The last assertion of the proposition follows from the fact that $f\circ r_k$ has $\hat f\circ r_k$ and
$\hat g$ as $P\circ r_k$-asymptotic expansions.
\end{proof} 

We can also prove a statement for $P$-asymptotic series analogous to Proposition \reff{asymp-ram}.
\begin{propo}\labl{aserie-ram}
Let $\hat f\in\hat\OO$ and $k\in\NN$, $k\geq2$ be such that  the composition
$\hat f\circ r_k$ is a $P\circ r_k$-asymptotic series. Then $\hat f$
is a $P$-asymptotic series.
\end{propo}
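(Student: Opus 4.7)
The plan is to adapt the symmetrization argument of Proposition \reff{asymp-ram} to the purely formal setting. By hypothesis (and the definition of $P$-asymptotic series), there exist $\tilde\rho>0$ and a sequence $\{g_n\}_n$ in $\OO_b(D(\bo;\tilde\rho))$ satisfying $J(g_n)\equiv \hat f\circ r_k\mod (P\circ r_k)^n\hat\OO$ for all $n\in\NN$; we may assume $\tilde\rho\leq 1$.

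Consider the rotation $R:\bv\mapsto(e^{2\pi i/k}v_1,\bv')$. Since $r_k\circ R=r_k$, both $P\circ r_k$, $\hat f\circ r_k$, and the polydisk $D(\bo;\tilde\rho)$ are $R$-invariant. Define the symmetrized sequence
$$h_n:=\frac{1}{k}\sum_{j=0}^{k-1} g_n\circ R^j\in\OO_b(D(\bo;\tilde\rho)),$$
which is $R$-invariant and still satisfies $J(h_n)\equiv \hat f\circ r_k\mod (P\circ r_k)^n\hat\OO$. The $R$-invariance of $h_n$ implies that only monomials $\bv^\bal$ with $k\mid\alpha_1$ occur in $J(h_n)$, so there is a unique $f_n\in\OO_b(D(\bo;\tilde\rho^k))$ with $h_n=f_n\circ r_k$.

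To show that $\{f_n\}_n$ is a $P$-asymptotic sequence for $\hat f$, write
$$J(h_n)-\hat f\circ r_k=\hat q_n\cdot(P\circ r_k)^n$$
with $\hat q_n\in\hat\OO$ uniquely determined ($\hat\OO$ being a domain and $P\circ r_k\neq 0$ ensure that multiplication by $(P\circ r_k)^n$ is injective on $\hat\OO$). The left-hand side and $(P\circ r_k)^n$ being $R$-invariant, uniqueness forces $\hat q_n$ to be $R$-invariant as well; hence $\hat q_n=\hat Q_n\circ r_k$ for some $\hat Q_n\in\hat\OO$. The identity $(J(f_n)-\hat f)\circ r_k=(\hat Q_n\cdot P^n)\circ r_k$ then follows, and the injectivity of the substitution $\phi\mapsto\phi\circ r_k$ on $\hat\OO$ yields $J(f_n)\equiv\hat f\mod P^n\hat\OO$, establishing that $\{f_n\}_n$ is the sought $P$-asymptotic sequence.

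The main technical point is the descent of $R$-invariance through division by $(P\circ r_k)^n$, which relies on the domain property of $\hat\OO$. All other steps reduce to straightforward manipulations of formal series and the bijection between $R$-invariant elements of $\hat\OO$ and elements pulled back via $r_k$.
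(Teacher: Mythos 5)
Your proof is correct and takes essentially the same symmetrization-and-descent approach as the paper. You supply the algebraic details---injectivity of multiplication by $(P\circ r_k)^n$ and of the pullback $\phi\mapsto\phi\circ r_k$ on the domain $\hat\OO$---that the paper's terse proof leaves implicit; this is exactly what is needed to justify descending the congruence $J(h_n)\equiv\hat f\circ r_k\bmod (P\circ r_k)^n\hat\OO$ to $J(f_n)\equiv\hat f\bmod P^n\hat\OO$.
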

\begin{proof} Is it analogous to the previous one. If $\{g_n\}_n$ is a $P\circ r_k$-asymptotic sequence
in $\Pi_{P\circ r_k}(\alpha,\beta;\tilde\rho)$ for some $\alpha,\ \beta,\ \tilde\rho$
with limit $\hat f\circ r_k$ then so is $\{h_n\}_n$ where $h_n=\frac1k\sum_{j=0}^{k-1}g_n\circ R^j$,
$R$ the rotation $R:\bv\mapsto (e^{2\pi i/k}v_1,\bv')$.
Again the sequence $\{f_n\}_n$ in $\Pi_{P}(\alpha,\beta;r)$, $r>0$ sufficiently small, where $f_n\circ r_k$
is a restriction of $g_n$, is a $P$-asymptotic sequence for $\hat f$. 
\end{proof}

}

\section{Gevrey asymptotic expansions and summability with respect to a germ}\labl{Gevrey}
\subsection{Gevrey asymptotic expansions with respect to an analytic germ}
We first give a definition analogous to Definition \reff{desarrollo} and then a characterization
analogous to Theorem \reff{st-form}. Theorem \reff{T-sector-germ} is again crucial in order to
establish a relation between asymptotics involving powers of the germ and single variable asymptotics
with coefficients in a certain Banach space.

We consider again a nonzero germ of analytic function $P(\bx)\in \OO=\CC\{x_1,\ldots,x_d\}$, not a
unit (i.e. $P(\bo)=0$), defined in $D(\bo; \rho)$, $\rho>0$.

\begin{defin}\labl{desarrolloGevrey}
Given  a $P$-sector $\Pi=\Pi_P(a, b; \br)$, and
$f\in\OO (\Pi)$, we will say that $\hat{f}\in  \hat{\OO}$ is
the $P$-Gevrey asymptotic expansion of order $s$ of $f$, or
more briefly its ``$P$-$s$-(Gevrey) asymptotic expansion'', if there exist
$\rho>0$, a family $\{ f_n \in \OO_b ({D} (\bo; \rho))\}_n$, converging to
$\hat{f}$ in the ${\mathfrak m}$-adic topology, and constants
$K$, $A>0$ such that\vspace{0.2cm}
\be
\item $\forall\, n\in \NN,\ \forall\,\bx\in D(\bo;\rho),\ \norm{f_n (\bx)} \leq KA^n \Gamma
(sn+1)$.\vspace{0.2cm}
\item $\forall\, n\in \NN,\ \forall\,\bx\in \Pi\cap D(\bo;\rho),\ \norm{f(\bx)-f_n (\bx)}
    \leq KA^n\Gamma (sn+1) \norm{P(\bx)}^n$.
\ee
A sequence $\{f_n\}_{n\in\NN}$ satisfying (1) and
$J(f_n)\equiv\hat f\mod P^n\hat\OO$
will be called a {\em $P$-$s$-asymptotic sequence for } $\hat f$.
\end{defin}

\begin{nota} \benot \item As for Definition \reff{desarrollo}, this definition is compatible with
changes of variables and with multiplication of $P$ by a unit $U\in\OO$. This is verified in the
same way as in Remark \reff{nota-desarrollo}.
\item Again as for Definition \reff{desarrollo}, the definition is
independent of the choice of the  $P$-$s$-asymptotic sequence
for $\hat f$.
Indeed, if $\{f_n\}_{n\in\NN}$ is as in the definition and if
$\{g_n\}_{n\in\NN}$ is another $P$-$s$-asymptotic sequence for
$\hat f$, then $J(f_n)\equiv J(g_n)\mod P^n\hat\OO$ for all $n$.
Here Lemma \reff{WDT-Ob} can be applied in $\OO_b(D_s)$, $D_s=D(\bo;(s^{\ell_1},\ldots,s^{\ell_d}))$,
for sufficiently small positive $s$ as it was done below Lemma \reff{existfhat}.
It shows that we can write
$f_n-g_n=h_n\,P^n$, $h_n=Q^n(f_n-g_n)\in \OO_b(D_s)$.

Hence, there exist some positive $\rho'<\rho$
and positive constants $L,M$ such that
$$\norm{f_n(\bx)-g_n(\bx)}\leq L M^n \Gamma(sn+1)\norm{P(\bx)}^n$$
for all $\bx\in D(\bo;\rho')$ and all $n$. This implies that property (2) also
holds for $\{g_n\}_n$ with certain $K,A$.
\item This definition agrees with the well-known definition of
Gevrey asymptotics in one variable, i.e., if $P=x$. In fact,
suppose that $\norm{f(x)-f_n(x)}\leq KA^n \Gamma (ns+1)
\norm{x}^n$ on a sector $V$, with $f_n \in \OO_b (D(0;\rho))$ satisfying
$\norm{f_n(x)}\leq KA^n \Gamma (ns+1)$. Write
$f_n(x)=J_{n-1}(f_n)+ \tilde{f}_n (x)$. Let $S$ again denote the operator defined by
$S(f)= \frac{f(x)-f(0)}{x}$ on $\OO_b (D(0; \rho))$. The Maximum Modulus Principle
implies that $\norma{S(f)}_\infty \leq \frac{2}{\rho} \cdot
\norma{f}_{\infty } $. Moreover, $\tilde{f}_n (x)= x^n S^n (f_n)$.
Hence
\begin{equation*}
\begin{split}
\norm{f(x)- J_{n-1}(f)} & \leq \norm{f(x)-f_n(x)} +
\norm{\tilde{f}_n (x)} \\
& \leq KA^n \Gamma (ns+1) \norm{x}^n + \norm{x}^n \left( \frac{2}{\rho}\right)^n
KA^n \Gamma (ns+1),
\end{split}
\end{equation*}
as wanted. The converse follows from the Gevrey property of the formal series $\hat f$
and is left to the reader.
\ee
\end{nota}

As for general asymptotic expansions in a germ (see Theorem \reff{st-form}),
we want to write Gevrey expansions in an expression in a standard form. Recall that
this standard form depends on the choice of $\ell$ in the subsequent theorem.
\begin{teorema}\labl{st-form-gevrey} Let $\ell:\NN^d\to\RR_+$ an injective linear form, $P\in\OO\setminus\{0\}$,
$P(\bo)=0$ and let 
$T_\ell$ be defined by Lemma \reff{defT-poly} resp.\ Theorem \reff{T-sector-germ}.
Let $\Pi$ be a $P$-sector, $f\in\OO(\Pi)$, $\hat f\in\hat\OO$ and $s>0$.
Then $f$ has $\hat f$ as $P$-$s$-asymptotic expansion on $\Pi$ if and only if
there exist $\rho>0$ such that $T_\ell \hat f=\sum_{n=0}^\infty g_n t^n\in\OO_b(D(\bo;\rho))\lf t\rf $ is a formal $s$-Gevrey series and one of the following two
equivalent conditions holds:
\be\item there exist constants $K$ and $A$ such that
$$\norm{f(\bx)-\sum_{n=0}^{N-1}g_n(\bx)P(\bx)^n}\leq
   K A^N \Gamma(sN+1)\norm{P(\bx)}^N$$
for $\bx\in\Pi\cap D(\bo;\rho),N\in\NN.$
\item The function $T_\ell f$
from Theorem \reff{T-sector-germ} is defined on $V(a,b;\sigma)\times D(\bo,\rho)\to\CC$ for some
positive $\sigma$ and satisfies
$$T_\ell f\sim_s T_\ell \hat f\mbox{ as }V(a,b,\sigma)\ni t\to0.$$
\ee
\end{teorema}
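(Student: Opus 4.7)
The plan is to mirror the proof of Theorem \ref{st-form}, but carefully tracking Gevrey constants through every step. I will prove the chain of implications $(\text{definition}) \Rightarrow (1) \Rightarrow (2) \Rightarrow (\text{definition})$, with the last implication being essentially trivial.

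First, suppose $f$ has $\hat f$ as $P$-$s$-asymptotic expansion via a sequence $\{f_n\}$ on $D(\bo;R)$ satisfying the bounds in Definition \ref{desarrolloGevrey}. As in the proof of Theorem \ref{st-form}, I fix $\mu>0$ small enough that the operators $Q,R:\OO_b(D_\mu)\to\OO_b(D_\mu)$ of Lemma \ref{WDT-Ob} are defined, and set $g_n=RQ^n(f_{n+1})$. The key new observation is that since $Q$ and $R$ are continuous linear operators, the Gevrey bound $\norma{f_{n+1}}_\infty\leq KA^{n+1}\Gamma(s(n+1)+1)$ propagates to give $\norma{g_n}_\infty\leq \norma R\norma Q^nKA^{n+1}\Gamma(s(n+1)+1)\leq K'(A')^n\Gamma(sn+1)$ for suitable constants $K',A'$ (using the elementary estimate $\Gamma(s(n+1)+1)\leq C\,B^n\Gamma(sn+1)$). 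This immediately shows $T_\ell\hat f=\sum_n g_n t^n\in\OO_b(D(\bo;\mu))\lf t\rf$ is $s$-Gevrey. Exactly as in Theorem \ref{st-form}, writing $f_n-\sum_{\nu=0}^{n-1}g_\nu P^\nu=Q^n(f_n)P^n$ and combining with the Gevrey estimate on $\norma{Q^n(f_n)}_\infty\leq \norma Q^n KA^n\Gamma(sn+1)$ and with the defining estimate for $\{f_n\}$ yields statement (1).

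To obtain (2) from (1), I apply Theorem \ref{T-sector-germ} to the holomorphic remainder $\delta_N(\bx)=f(\bx)-\sum_{\nu=0}^{N-1}g_\nu(\bx)P(\bx)^\nu$ on the sector $\Pi$ with majorant $K_N(u)=KA^N\Gamma(sN+1)u^N$. The theorem furnishes some positive $\sigma,\tilde\rho,L$ \emph{independent of $N$} (this is the content of Remark \ref{notaTl}(1)) and a function $T_\ell\delta_N$ with
\[
\norm{(T_\ell\delta_N)(t,\bx)}\leq \frac{L}{\norm t}K_N(\norm t)=LKA^N\Gamma(sN+1)\norm t^{N-1}
\]
on $V(a,b;\sigma)\times D(\bo;\tilde\rho)$. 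Since by linearity and uniqueness $T_\ell\delta_N(t,\bx)=(T_\ell f)(t,\bx)-\sum_{\nu=0}^{N-1}g_\nu(\bx)t^\nu$, this is exactly $T_\ell f\sim_s T_\ell\hat f$ in the sense of (\ref{onevar}) with coefficients in the Banach space $\EE_{\ell,\tilde\rho}$, after a standard shift $N\mapsto N+1$ to absorb the $1/\norm t$.

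For the converse, given (2) (or just (1)), define $f_N(\bx):=\sum_{\nu=0}^{N-1}g_\nu(\bx)P(\bx)^\nu$. The Gevrey bound on the $g_\nu$ shows $\norma{f_N}_\infty\leq K''(A'')^N\Gamma(sN+1)$ on a slightly smaller polydisk, and condition (1) is precisely the second Gevrey defining bound for $\{f_N\}$; convergence in the $\mathfrak m$-adic topology toward $\hat f$ is clear since $J(f_N)\equiv\hat f\bmod P^N\hat\OO$ by Corollary \ref{coro-P-serie}. That (2) implies (1) is obtained by substituting $t=P(\bx)$ into the single-variable Gevrey estimate defining $T_\ell f\sim_s T_\ell\hat f$. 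The main delicate point is purely bookkeeping on the Gevrey constants when passing through $Q,R$ and through the operator $T_\ell$ of Theorem \ref{T-sector-germ}; the fact that the latter theorem provides constants $\sigma,\rho,L$ \emph{uniform in the input function} is what makes the proof work without difficulty.
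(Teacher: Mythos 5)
Your proof is correct and follows the same route as the paper: use the operators $Q,R$ of Lemma \ref{WDT-Ob} to define $g_m = RQ^m(f_{m+1})$, estimate the remainder $Q^n(f_n)P^n$ by the operator norms to get (1), and then push (1) through Theorem \ref{T-sector-germ} to get (2). You are more explicit than the paper on two minor points — deriving the $s$-Gevrey bound for the $g_n$ themselves (via $\Gamma(s(n+1)+1)\le CB^n\Gamma(sn+1)$) and the $N\mapsto N+1$ shift to absorb the factor $1/|t|$ — but these are the same elementary steps the paper relies on implicitly.
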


\begin{nota} \benot \item In the case of $P(\bx)=\bx^\bal$, statement (1) agrees with the second definition
of monomial asymptotics of Gevrey type (see Definition/Proposition \reff{defprop36}).
\item As stated for general asymptotic expansions in a germ
in Remark \reff{nota-P-clos}, products  of functions having $P$-$s$-Gevrey asymptotic expansions also have $P$-$s$-asymptotic expansions. For a proof, consider $P$-$s$-asymptotic sequences $\{ f_n\}_{n\in\NN}$, $\{ g_n\}_{n\in \NN}$ satisfying the inequalities in Definition
\reff{desarrolloGevrey} for $f$, $g$, respectively. Define
    $$
    h_n=\sum_{k=0}^n \sum_{j=0}^{k} (f_j-f_{j-1})\cdot (g_{k-j}-g_{k-j-1}),
    $$
    (where $f_{-1}=g_{-1}=0$). A straightforward computation shows that
    $$
    f(\bx) \cdot g(\bx) -h_n(\bx )=(f(\bx )-f_n (\bx))\cdot g(\bx ) + \sum_{k=0}^n (f_k (\bx )-f_{k-1} (\bx ))\cdot (g(\bx )-g_{n-k} (\bx )).
   $$
Then it is first shown that $f_n (\bx )-f_{n-1} (\bx )$ and $g_n (\bx )-g_{n-1} (\bx ) $ satisfy $s$-Gevrey estimates and then
using the inequality
\begin{equation}\labl{prodgamma}{
\sum_{\nu=0}^n\Gamma((n-\nu)s+1)
\Gamma( \nu s+1)\leq K_s 
\Gamma(n\,s+1)}\end{equation}
with some constant $K_s$ independent of $n$, $s$-Gevrey bounds for the modulus of this expression are obtained.

For a proof of (\reff{prodgamma}), first use that the $\Gamma$-function is logarithmically convex and 
hence $\Gamma(x')\Gamma(y')\leq \Gamma(x)\Gamma(y)$ if $0<x<x'\leq y'<y$ with $x'+y'=x+y$.
Therefore, if $N$ is the smallest integer $N\geq\frac1s$ and 
$n>2N$ then the left hand side of (\reff{prodgamma}) is 
smaller than
$2N\Gamma(ns+1)+(n-2N+1)\Gamma(ns-Ns+1)\Gamma(Ns+1)$. Since $1<ns-Ns+1\leq ns$ and $ns>2$, we obtain
the bound $2N\Gamma(ns+1)+n\Gamma(ns)\Gamma(Ns+1)\leq K_s\Gamma(ns+1)$ with 
$K_s=2N+\frac1s\Gamma(Ns+1)$ provided $n>2N$. For the remaining finitely many cases it is sufficent
to increase $K_s$ if necessary.

The compatibility of  $P$-$s$-asymptotic expansions with partial derivatives could also be shown using Definition \reff{desarrolloGevrey}, but we prefer to prove it using our generalization \reff{ramissibuya-poly} of the Ramis-Sibuya Theorem.

\item As is Remark \ref{caracterGevreyseries}, the series $T\hat f$ turns out to be $s$-Gevrey if we only suppose that
it is the $s$-Gevrey asymptotic expansion of $Tf$ in the sense of statement (2) of the Theorem.
\ee
\end{nota}
\noindent\begin{proof}[Proof of Theorem \reff{st-form-gevrey}] Suppose $\{ f_n \}_n$ is a
$P$-$s$-asymptotic sequence for $\hat f$ satisfying the conditions
of Definition \reff{desarrolloGevrey} on a certain $P$-sector $\Pi$
and a certain polydisk $D(\bo,\rho)$.
As in the proof of Theorem \reff{st-form}, we choose a positive $\mu$ such that
$D_\mu \subset D(\bo,\rho)$ for the set $D_\mu $ of Lemma \reff{WDT-Ob} and write
(the restrictions to $D_\mu $)
$$
{f_n}= \sum_{m=0}^{n-1} g_m P^m + Q^n(f_n)P^n,
$$
where $g_m=RQ^m ({f_{m+1}})=RQ^m ({f_{\nu}})$ for all $\nu>m$. Then
$$\begin{array}{rcl}
\norm{f_n(\bx)-\dis\sum_{\nu=0}^{n-1} g_\nu(\bx) P^{\nu}(\bx)}& = &
\norm{Q^n(f_n)(\bx)P(\bx)^n}\\&\leq &K\,\norma Q^n A^n \Gamma(sn+1)\norm{P(\bx)}^n
\end{array}$$
for $n\in\NN$ and $\bx\in\Pi\cap D_\mu $. Together with condition (2) of Definition
\reff{desarrolloGevrey}, this proves (1).

Application of Theorem \reff{T-sector-germ} with $K(t)=K\,A^N\,\Gamma(sN+1)\,t^N$
to the inequalities in (1) yields (2).

The proof of the converses is again trivial.
\end{proof}

The same proof shows the following characterisation of $P$-$s$-asymptotic sequences.
\begin{defprop}\labl{PsGevrey}Let $\ell:\NN^d\to\RR_+$ an injective linear form
and let $\Delta_\ell(P)$ be defined by (\reff{delta}).
Let $P\in\OO\setminus \{ \bo\}$, $P(\bo)=0$, and $\hat f\in\hat\OO$ a formal series.
Then the following statements are equivalent:
\be\item There exists a $P$-$s$-asymptotic sequence for $\hat f$ in the sense of
Definition \reff{desarrolloGevrey}.
\item There exist $\rho>0$ and a sequence $\{g_n\}_n$ in $\OO_b(D(\bo;\rho))^{\NN}$
with $J(g_n)\in\Delta(P)$ for all $n$ such that $\dis\hat f=\dis\sum_{n=0}^\infty J(g_n)J(P)^n$
and $T_\ell\hat f=\dis\sum_{n=0}^\infty g_nt^n\in\OO_b(D(\bo;\rho))\lf t\rf $ is a formal Gevrey-$s$ series.
\ee
If one and hence both statements are true, then $\hat f$ is called $P$-$s$-Gevrey.
\end{defprop}

As for general $P$-asymptotic expansions, $P$-$s$-asymptotic expansions are compatible with blow-ups.
\begin{propo} \labl{gevrey-blu}
Consider $P\in \OO (D(\bo ; \rho ))\setminus\{0\}$, $P(\bo)=0, $$\Pi_P=\Pi_P (a,b;r)$, and $f\in \OO (\Pi_P)$. Then, $f$ has a $P$-$s$-Gevrey asymptotic expansion on $\Pi_P$ if and only if for every $\xi \in \PC$, there exists a positive $r_\xi$ such that
$f\circ b_{\xi}$ has a $P\circ b_{\xi}$-$s$-Gevrey asymptotic expansion on $\Pi_{P\circ b_{\xi}} (a,b;r_{\xi})$.
\end{propo}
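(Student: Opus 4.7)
The ``only if'' direction is immediate by composition. Given a $P$-$s$-asymptotic sequence $\{f_n\}_n\subset\OO_b(D(\bo;\rho))$ for $\hat f$ with constants $K,A$, for each $\xi\in\PC$ I would choose $r_\xi>0$ so that $b_\xi(D(\bo;r_\xi))\subset D(\bo;\rho)$; then $\{f_n\circ b_\xi\}_n\subset\OO_b(D(\bo;r_\xi))$ is a $P\circ b_\xi$-$s$-asymptotic sequence for $\hat f\circ b_\xi$ with the same constants, since $P(b_\xi(\bv))=(P\circ b_\xi)(\bv)$ and $J(f_n\circ b_\xi)\to\hat f\circ b_\xi$ in the $\mathfrak m$-adic topology. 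Hence $f\circ b_\xi$ has $\hat f\circ b_\xi$ as its $P\circ b_\xi$-$s$-Gevrey asymptotic expansion on any sufficiently small $\Pi_{P\circ b_\xi}(a,b;r_\xi)$.

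For the converse, the plan is to re-run the proof of Proposition \ref{asymp-blu} while tracking Gevrey estimates at every step. That proposition already supplies $\hat f\in\hat\OO$ that is the $P$-asymptotic expansion of $f$ on $\Pi_P$ and satisfies $\hat f\circ b_\xi=\hat g_\xi$ for every $\xi\in\PC$; what remains is to verify that the sequence $\{f_n\}_n$ produced there is in fact a $P$-$s$-asymptotic sequence for $\hat f$ in the sense of Definition \ref{desarrolloGevrey}. I would rewrite the hypothesis as
\[
\norm{g_n^{(\xi)}(\bv)}\leq K_\xi A_\xi^n\Gamma(sn+1),\quad \norm{(f\circ b_\xi)(\bv)-g_n^{(\xi)}(\bv)}\leq K_\xi A_\xi^n\Gamma(sn+1)\norm{(P\circ b_\xi)(\bv)}^n,
\]
so that the constants $C_n^{(\xi)}$ appearing in that proof are already $s$-Gevrey in $n$ and $G_n^{(\xi)}=g_n^{(\xi)}\circ\phi_\xi$ inherits an $s$-Gevrey uniform bound on $U_\xi$.

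Then I would check that each auxiliary object built in the proof of Proposition \ref{asymp-blu} satisfies an $s$-Gevrey bound. The differences $G_n^{(\xi)}-G_n^{(\zeta)}$ do so on $U_\xi\cap U_\zeta$. Lemma \ref{lemadivision2} yields $H_n^{(\xi,\zeta)}$ on $\tilde U_\xi\cap\tilde U_\zeta$; its constant grows at most geometrically in $n$ (of the form $K^n$) and is absorbed into $A^n$, preserving the $s$-Gevrey character. After reducing to a finite subcover of the exceptional divisor, Lemma \ref{cover-E} splits $(H_n^{(\xi_j,\xi_k)})$ as $L_n^{(\xi_j)}-L_n^{(\xi_k)}$ with a constant $C$ independent of $n$, so the $L_n^{(\xi_j)}$ remain $s$-Gevrey bounded on $\bar U_{\xi_j}$. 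Since $P\circ b$ is bounded on these compacts, the functions $F_n^{(\xi_j)}=G_n^{(\xi_j)}-L_n^{(\xi_j)}(P\circ b)^n$ are $s$-Gevrey bounded, and they glue to $f_n\circ b$ on a neighborhood of $E$, which descends to $f_n\in\OO_b(D(\bo;\rho'))$ also $s$-Gevrey bounded -- giving condition (1) of Definition \ref{desarrolloGevrey}. The estimate $B_n\leq(1+2LC)\max_j C_n^{(\xi_j)}$ from (\ref{bn}) is then itself an $s$-Gevrey bound, giving condition (2), and taking the maximum of the various Gevrey constants concludes.

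The main obstacle I anticipate is the precise bookkeeping of the $n$-dependence of the constants supplied by Lemma \ref{lemadivision2}: only a geometric growth in $n$ is tolerable, because any superexponential growth would destroy the Gevrey character. Once this is verified, the remaining pieces (Lemma \ref{cover-E}, the gluing of the $F_n^{(\xi_j)}$, and the descent from the blow-up variety $M$ to $\CC^d$) involve only constants independent of $n$, so the Gevrey bounds propagate without additional work.
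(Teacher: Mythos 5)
Your proposal is correct and follows essentially the same route as the paper: the converse is proved by re-running the construction of Proposition \ref{asymp-blu} with $C_n^{(\xi)}=C_\xi A_\xi^n\Gamma(sn+1)$, reducing to a finite subcover of the exceptional divisor, and observing that the constants coming from Lemma \ref{lemadivision2}, Lemma \ref{cover-E} and the estimates (\ref{majFn}), (\ref{bn}) preserve the $s$-Gevrey character. Your remark that the division constant of Lemma \ref{lemadivision2} may grow like $K^n$ but is harmlessly absorbed into the geometric factor is a welcome (and slightly more careful) piece of bookkeeping than what the paper records, but it does not change the argument.
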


\begin{proof}
We follow the proof of Theorem \reff{asymp-blu}, taking $C_n^{(\xi)}= C_{\xi} A_{\xi}^n \Gamma (ns+1)$. By the compacity of $\PC$, it suffices 
to consider only a finite number of points in $\PC$, say $\xi_0, \xi_1,\ldots , \xi_K$, so we can omit the dependence on $\xi$ of the above constants, taking the maximum of their values. Using the notation of Theorem \reff{asymp-blu}, there exists a constant $L$ such that
$$
\norma{H_n^{(\xi_i,\xi_j)}} \leq L\cdot 2CA^n\Gamma (ns+1),
$$
and $\tilde{C}$ such that
$$
\norma{L_n^{(\xi_j)}}\leq \tilde{C} L\cdot 2CA^n\Gamma (ns+1).
$$
Similarly, the constants in (\reff{majFn}) and (\reff{bn}) are of $s$-Gevrey type, i.e.\ the construction in the proof
of  Theorem \reff{asymp-blu} yields a $P$-$s$-Gevrey asymptotic expansion if the given expansions
of $f\circ b_\xi$ are $P\circ b_\xi$-$s$-Gevrey asymptotic expansions for every $\xi$ in  $\PC$.
\end{proof}

As for ordinary $P$-asymptotic expansions, the above Proposition can be improved.
Its assumption is in fact only necessary for a finite number of $\xi$ in $\PC$.
\begin{teorema}\labl{gevrey-blu+}Consider $\LL,\,\ell,\,h,\,H_\LL,\mcZ_\LL$ as in Lemma \reff{FxiG} and
put $\mcZ=\mcZ_\LL$ if $H_\LL$ is not a constant, $\mcZ=\{\xi\}$ with arbitrary $\xi\in\PC$ otherwise. Let $s>0.$
If $f$ is holomorphic  and bounded on $\Pi=\Pi_P(a,b,R)$ such that for $\xi\in\mcZ$, the composition
$f\circ b_\xi$ has some $P\circ b_\xi$-$s$-asymptotic expansion $\hat g_\xi$ on
$\Pi_{P\circ b_\xi}(a,b,R)$ then
$f$ has a $P$-$s$-asymptotic expansion $\hat f$ on $\Pi$ such that $\hat f\circ b_\xi=\hat g_\xi$
for $\xi\in\mcZ$.
\end{teorema}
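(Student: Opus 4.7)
The plan is to parallel the proof of Theorem \reff{asymp-blu+}, tracking Gevrey estimates at every step and invoking the Gevrey analogues of the tools used there. First, a linear change of coordinates in $(x_1,x_2)$ reduces to the case $\infty\in\mcZ$; a direct reading of the proof of Corollary \reff{coroclose} shows that the substitution $\bv\mapsto \phi_\xi\circ\phi_\zeta^{-1}(\bv)$ leaves the Gevrey constants $K_n=KA^n\Gamma(sn+1)$ unchanged, so with $\mcZ'=\mcZ\setminus\{\infty\}$ there exist $r,R>0$ such that $f\circ b_\xi$ admits a $P\circ b_\xi$-$s$-asymptotic expansion for $|\xi|>R/2$ and for $\dist(\xi,\mcZ')<2r$. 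Setting $\mcD=D(0,R)\setminus\bigcup_{\chi\in\mcZ'}\cl(D(\chi,r))$ as in Theorem \reff{asymp-blu+}, it suffices by Proposition \reff{gevrey-blu} to establish a $P\circ b_\xi$-$s$-asymptotic expansion for every $\xi\in\mcD$.

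Since the Gevrey hypothesis implies the non-Gevrey one, Theorem \reff{asymp-blu+} already provides a $P$-asymptotic expansion $\hat f\in\hat\OO$ of $f$; Lemma \reff{hatfhatG} then associates to $\hat f$ a formal series $\sum_n\hat g_n(\bv)t^n$ whose coefficients $\hat g_n\in\mcR\lf v_2,\ldots,v_d\rf$, $\mcR=\CC[v_1,1/H_\LL(1,v_1)]$, are in particular rational in $v_1$ with no poles in $\mcD$. Applying Lemma \reff{FxiG} produces a single holomorphic $G_\LL:V(a,b;\sigma)\times\mcD\times D(0;\rho)^{d-1}\to\CC$ with $T_\ell(f\circ b_\xi)(t,\bv)=G_\LL(t,(v_1+\xi,\bv'))$ for $\xi\in\mcD$ and small $\bv$. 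The main analytic step is to prove
\begin{equation*}
\norm{G_\LL(t,\bv)-\sum_{n=0}^{N-1}\hat g_n(\bv)t^n}\leq KA^N\Gamma(sN+1)\norm t^N
\end{equation*}
on $V(a,b;\sigma)\times\mcD\times D(0;\rho')^{d-1}$ for suitable $K,A,\rho'>0$ and every $N$. Applying Theorem \reff{st-form-gevrey} to the hypothesis at the points of $\mcZ$ and to the Gevrey-preserving version of Corollary \reff{coroclose} for $\xi$ near $\partial\mcD$, and exploiting compactness of $\partial\mcD$, yields this bound on a neighborhood $\mathcal N$ of $\partial\mcD$ in $v_1$ (for $\bv'$ in a small polydisk and $t\in V(a,b;\sigma)$); the coefficient identification is forced by the identity $J_\xi(\hat g_n)(\bv)=\hat g_n(v_1+\xi,\bv')$ from Lemma \reff{hatfhatG} together with uniqueness of asymptotic expansions.

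The principal obstacle is then to extend this bound from $\mathcal N$ to the whole of $\mcD$ without sacrificing the Gevrey growth in $N$. For fixed $t$, $\bv'$ and $N$, the remainder $R_N(t,\bv)=G_\LL(t,\bv)-\sum_{n=0}^{N-1}\hat g_n(\bv)t^n$ is a single bounded holomorphic function of $v_1$ on $\mcD$, with values on $\partial\mcD\subset\mathcal N$ bounded by $KA^N\Gamma(sN+1)|t|^N$; the maximum modulus principle in $v_1$ transfers this bound verbatim to $\mcD$, and applying it separately for each $N$ preserves the Gevrey growth in $N$. Theorem \reff{st-form-gevrey} combined with the glueing identity then yields the required $P\circ b_\xi$-$s$-asymptotic expansion of $f\circ b_\xi$ for every $\xi\in\mcD$, and Proposition \reff{gevrey-blu} furnishes the $P$-$s$-asymptotic expansion of $f$. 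Its underlying series must coincide with $\hat f$ by uniqueness of the $P$-asymptotic expansion (Lemma \reff{existfhat}), and $\hat f\circ b_\xi=\hat g_\xi$ for $\xi\in\mcZ$ follows from uniqueness of the $P\circ b_\xi$-asymptotic expansion.
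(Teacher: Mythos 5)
Your proposal is correct and follows essentially the same route as the paper's proof: reduce to $\infty\in\mcZ$, use the Gevrey-preserving version of Corollary \ref{coroclose} (the paper's Lemma \ref{gevreyclose}), glue the transforms $T_\ell(f\circ b_\xi)$ via Lemma \ref{FxiG} into one function $G_\LL$, obtain the Gevrey remainder bounds near $\partial\mcD$ by Theorem \ref{st-form-gevrey} and compactness, propagate them to all of $\mcD$ by the maximum modulus principle in $v_1$ applied for each $N$ separately, and conclude with Theorem \ref{st-form-gevrey} and Proposition \ref{gevrey-blu}. The only cosmetic deviation is your explicit identification of the expansion coefficients through Theorem \ref{asymp-blu+} and Lemma \ref{hatfhatG}, where the paper simply works with the uniform asymptotic expansion of $G_\LL$ already established; this changes nothing of substance.
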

\begin{proof}
We follow the proof of Theorem \reff{asymp-blu+} and essentially add Gevrey estimates.
We again can assume that 
$\infty\in\mcZ$. We first carry Corollary \reff{coroclose} over to $P$-$s$-Gevrey asymptotics.
It suffices to use $K_n=C\,A^n\,\Gamma(sn+1)$ in its proof. As we will need this statement again,
we write it down as a lemma.
\begin{lema}\labl{gevreyclose}  Suppose that $g$ is holomorphic on $\Pi_{P\circ b_\xi}(a,b,R)$ and has some 
$P\circ b_\xi$-$s$-Gevrey asymptotic expansion $\hat g_\xi$ on  it.
Then for  $\zeta\in\PC$ close the $\xi$, the composition $\hat g_\xi(\phi_\xi\circ\phi_\zeta^{-1}(\bv))$ 
is well defined and it is the $P\circ b_\zeta$-$s$-Gevrey asymptotic expansion of $g\circ(\phi_\xi\circ\phi_\zeta^{-1})$ on 
$\Pi_{P\circ b_\zeta}(a,b,\rho_\zeta)$ for sufficiently small positive $\rho_\zeta$.

In particular, if $f$ holomorphic on $\Pi_P(a,b,R)$ and $\xi\in\PC$ such that $f\circ b_\xi=g$ satisfies the assumption,
then the statement holds for $g\circ(\phi_\xi\circ\phi_\zeta^{-1})=f\circ b_\zeta$ and $\zeta$ close to $\xi$.
\end{lema}

We obtain that
$f\circ b_\xi$ has a $P$-$s$-Gevrey asymptotic expansion for $\xi$ large or $\xi$ close to
$\mcZ'=\mcZ\setminus\{\infty\}$.

In a second step, we use again Lemma \reff{FxiG} on some domain $\mcD=D(0,R)\setminus\bigcup_{\chi\in\mcZ'}\cl(D(\chi,r))$.
It yields $\sigma,\rho>0$ and a holomorphic function $G_\LL:V(a,b,\sigma)\times \Omega\to\CC$,
$\Omega=\mcD\times D(0;\rho)^{d-1}$, such that for $\xi\in\mcD$, the function 
$T_\ell(f\circ b_\xi)(t,\bv)$ of Theorem \reff{T-sector-germ}. 
satisfies 
\begin{equation}\labl{FG2} T_\ell(f\circ b_\xi)(t,\bv)=G_\LL(t,(v_1+\xi,\bv'))\mbox{ for }
t\in V(a,b,\sigma)\mbox{ and small }\bv.\end{equation}

It had been shown in the proof of Theorem \reff{T-sector-germ} that $G_\LL(t,\bv)$ has a uniform asymptotic
expansion on $\Omega$ as $V(a,b,\rho)\ni t\to0$. Denote it by $G_\LL(t,\bv)\sim\sum_{n=0}^\infty g_n(\bv)t^n$.
As $T_\ell(f\circ b_\xi)(t,\bv)$ has a Gevrey asymptotic expansion by assumption and
Theorem \reff{st-form-gevrey}, provided $\xi$ is close to the boundary of $\mcD$, we obtain using (\reff{FG2}) and the compactness
of $\partial\mcD$ again that there are positive constants $K,A$ such that 
\begin{equation}\labl{gevreyG}\norm{G_\LL(t,\bv)-\sum_{n=0}^{N-1} g_n(\bv)t^n}\leq K\, A^N\,\Gamma(sN+1)\,\norm t^N
\end{equation}
for all $n\in\NN$, all $t\in V(a,b,\rho)$ and all $\bv$ such that $\norm{\bv'}<\rho$ and 
$\dist(v_1,\partial \mcD)<\rho$. 
Here we use again the maximum modulus principle in the variable $v_1$ on the domain $\mcD$ and obtain
that (\reff{gevreyG}) is valid for all the above $n,t$ and all $\bv\in\Omega.$
This shows using (\reff{FG2}) and Theorem \reff{st-form-gevrey} again that $f\circ b_\xi$
has a $P\circ b_\xi$-$s$-Gevrey asymptotic expansion for all $\xi\in\mcD$. As this is known for
the remaining $\xi$ already, we have it for all $\xi\in\PC$. Proposition
\reff{gevrey-blu} allows us to conclude.
\end{proof}

We can also carry over the statements of Section \reff{blu} concerning  $P$-asymptotic series to $P$-$s$-Gevrey 
asymptotic series. We first do so for Proposition \reff{aserie-blu}.
\begin{propo}\labl{gserie-blu}
Let $\hat f\in\hat\OO$ be such that for all $\xi\in\PC$, the composition
$\hat f\circ b_\xi$ is a $P\circ b_\xi$-$s$-Gevrey asymptotic series. Then $\hat f$
is a $P$-$s$-Gevrey asymptotic series.
\end{propo}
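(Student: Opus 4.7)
The plan is to mimic the proof of Proposition \reff{aserie-blu} step by step, tracking Gevrey bounds at every stage. By hypothesis, for each $\xi\in\PC$ there is $\rho_\xi>0$ and a $P\circ b_\xi$-$s$-asymptotic sequence $\{g_n^{(\xi)}\}_n$ in $\OO_b(D(\bo;\rho_\xi))$ for $\hat f\circ b_\xi$, so that $J(g_n^{(\xi)})\equiv \hat f\circ b_\xi\mod (P\circ b_\xi)^n\hat\OO$ and $\norm{g_n^{(\xi)}}\leq K_\xi A_\xi^n\Gamma(sn+1)$. The first step is a Gevrey version of Lemma \reff{close}: if $\zeta\in\PC$ is close to $\xi$, then $\tilde g_n^{(\zeta)}(\bv)=g_n^{(\xi)}(\phi_\xi\circ\phi_\zeta^{-1}(\bv))$ is a $P\circ b_\zeta$-$s$-asymptotic sequence for $\hat f\circ b_\zeta$ on a suitable smaller polydisk, with Gevrey constants only mildly inflated (the map $\phi_\xi\circ\phi_\zeta^{-1}$ is an analytic diffeomorphism fixing a neighborhood of $\bo$, so its right-composition is a bounded linear operator on $\OO_b$ between appropriate polydisks). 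This is essentially the proof of Lemma \reff{close} together with the Gevrey bookkeeping already done in the proof of Lemma \reff{gevreyclose}.

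By compactness of $\PC$, extract a finite subcover of the exceptional divisor from the $U_\xi=\phi_\xi^{-1}(D(\bo;\rho_\xi))$, corresponding to $\xi_0,\ldots,\xi_K$. Taking maxima of all Gevrey constants, we may assume uniform $K,A>0$ with $\norm{g_n^{(\xi_i)}}\leq K A^n\Gamma(sn+1)$ for $i=0,\ldots,K$. Now follow the proof of Proposition \reff{aserie-blu} exactly: introduce $G_n^i=g_n^{(\xi_i)}\circ\phi_i$ on $U_i$, shrink domains to obtain $U_i'$ and, via the operators $Q,R$ of Lemma \reff{WDT-Ob} applied to $(G_n^i-G_n^j)\circ\phi_{\xi}^{-1}$, construct holomorphic $H_n^{i,j}:U_i'\cap U_j'\to\CC$ with $G_n^i-G_n^j=H_n^{i,j}\cdot(P\circ b)^n$. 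Because $Q$ is bounded and $\norma{G_n^i-G_n^j}\leq 2KA^n\Gamma(sn+1)$, we obtain $\norma{H_n^{i,j}}\leq L\cdot 2KA^n\Gamma(sn+1)$ with $L$ independent of $n$.

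Next, apply Lemma \reff{cover-E} (with its constant $C$) to split $H_n^{i,j}=L_n^i-L_n^j$, yielding $\norma{L_n^j}\leq C\cdot 2KLA^n\Gamma(sn+1)$ on open sets $\bar U_j\subset\tilde U_j$ still covering $E$. Set $F_n^j=G_n^j-L_n^j(P\circ b)^n$; these glue to a holomorphic $F_n$ on a neighborhood $\tilde V$ of $E$, hence to $f_n:V\to\CC$ with $F_n=f_n\circ b$ where $V=b(\tilde V)$. The estimate analogous to (\reff{majFn2}) then gives, on some polydisk $D(\bo;\rho')\subset V$,
\[
\norm{f_n(\bx)}\leq \Big(K+C\cdot 2KL\cdot\big({\textstyle\sup_{\bar U_j}}\norm{P\circ b}\big)^n\Big) A^n\Gamma(sn+1)\leq \tilde K\tilde A^n\Gamma(sn+1)
\]
with constants $\tilde K,\tilde A>0$ independent of $n$, which is exactly the Gevrey bound condition (1) of Definition \reff{desarrolloGevrey}.

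It remains to show $J(f_n)\equiv\hat f\mod P^n\hat\OO$. This is purely algebraic and works verbatim as in the proof of Proposition \reff{aserie-blu}: one first checks that $f_n\circ b_\xi\equiv \hat f\circ b_\xi\mod(P\circ b_\xi)^n\hat\OO$ for every $\xi\in\PC$, then applies Lemma \reff{WDT} (for both $S=\OO$ and $S=\hat\OO$) together with the blow-up argument of Lemma \reff{lemadivision} to deduce $f_n-f_{n+1}\in P^n\OO$, so $\{f_n\}_n$ is Cauchy for the $P$-adic topology and its limit $\hat g$ satisfies $\hat g\circ b_\xi=\hat f\circ b_\xi$ for all $\xi\in\PC$, whence $\hat g=\hat f$. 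Combined with the Gevrey bounds, this shows $\{f_n\}_n$ is a $P$-$s$-asymptotic sequence for $\hat f$ in the sense of Definition \reff{desarrolloGevrey}, so $\hat f$ is a $P$-$s$-Gevrey asymptotic series. The main obstacle is simply the careful bookkeeping of the Gevrey constants through the several rounds of division (Lemma \reff{WDT-Ob}) and Cousin gluing (Lemma \reff{cover-E}); fortunately each operator is bounded and $n$-independent, so the Gevrey shape $A^n\Gamma(sn+1)$ is preserved up to multiplication by constants.
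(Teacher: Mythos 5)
Your proposal follows essentially the same route as the paper, which treats Proposition \ref{gserie-blu} as a short remark: carry the proof of Proposition \ref{aserie-blu} over verbatim with the special constants $K_n^{(\xi)}=C_\xi A_\xi^n\Gamma(sn+1)$ and check via the estimate (\ref{majFn2}) that the Gevrey shape is preserved for the $F_n^j$ and hence for the $f_n$. Your expanded bookkeeping is faithful to that plan, but there is one small inaccuracy you should fix: the constant $L$ in your claimed bound $\norma{H_n^{i,j}}\leq L\cdot 2KA^n\Gamma(sn+1)$ is \emph{not} $n$-independent. The division by $(P\circ b)^n$ — whether obtained by applying $Q$ from Lemma \ref{WDT-Ob} $n$ times or directly via Lemma \ref{lemadivision2} — has operator norm growing geometrically in $n$ (like $\norma{Q}^n$, or $K^n$ in the notation of Lemma \ref{lemadivision2}), and the same goes for your closing remark that ``each operator is bounded and $n$-independent.'' This does no damage to the argument: a geometric factor in $n$ is absorbed into the $A^n$ in the Gevrey bound, and in the final estimate (\ref{majFn2}) it is in any case multiplied by $\bigl(\sup_{\bar U_j}\norm{P\circ b}\bigr)^n$, which can be made small by shrinking the neighborhood. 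With that correction the argument is complete and matches the paper.
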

\begin{proof} The proof of Proposition \reff{aserie-blu} carries over unchanged. Just observe that
we have here special constants $K_n^{(\xi)}=C_\xi\,A_\xi^n\,\Gamma(sn+1)$ with $C_\xi,\,A_\xi$
independent of $n$ and that by (\reff{majFn2}), we obtain such Gevrey estimates also for the
$F_n^j$ and hence for the functions $f_n$.
\end{proof}
As for Proposition \reff{aserie-blu}, the above Proposition can be improved insofar as it is sufficient to
assume the Gevrey character of $\hat f\circ b_\xi$ for finitely many $\xi$ only.
\begin{teorema}\labl{gserie-blu+}Consider $\LL,\,\ell,h,\,\,H_\LL,\mcZ_\LL$ and $\mcZ$ as in Theorem 
\reff{gevrey-blu+}. Let $\hat f\in\OO$ be given such that $\hat f \circ b_\xi$ is a 
$P\circ b_\xi$-$s$-Gevrey asymptotic series for $\xi\in\mcZ$. Then $\hat f$ is a
$P$-$s$-Gevrey asymptotic series.
\end{teorema}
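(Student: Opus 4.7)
The plan is to follow the proof of Theorem \reff{aserie-blu+} step by step, inserting Gevrey bounds at each stage, just as Theorem \reff{gevrey-blu+} is obtained from Theorem \reff{asymp-blu+} by tracking constants. As in the proof of Theorem \reff{aserie-blu+}, by a linear transformation in $(x_1,x_2)$ I would first reduce to the case $\infty\in\mcZ$, and set $\mcZ'=\mcZ\setminus\{\infty\}.$

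The first step is to establish a Gevrey analogue of Lemma \reff{close}: if $\hat f\circ b_\xi$ is a $P\circ b_\xi$-$s$-Gevrey asymptotic series for some $\xi\in\PC$, then so is $\hat f\circ b_\zeta$ for every $\zeta\in\PC$ sufficiently close to $\xi$. The proof of Lemma \reff{close} carries over essentially unchanged once one replaces the constants $K_n$ by $C\,A^n\,\Gamma(sn+1)$: right composition with the biholomorphism $\phi_\xi\circ\phi_\zeta^{-1}$ preserves Gevrey bounds up to a change of the constant $A$ and the radius $\rho$. Hence from the hypothesis that $\hat f\circ b_\xi$ is $P\circ b_\xi$-$s$-Gevrey for $\xi\in\mcZ$, I obtain that $\hat f\circ b_\xi$ is $P\circ b_\xi$-$s$-Gevrey for all $\xi$ large and for all $\xi$ in some neighborhood of $\mcZ'$.

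Next, I would apply Lemma \reff{hatfhatG} to $\hat f$ to obtain a formal series $\hat G(t,\bv)=\sum_{n=0}^\infty\hat g_n(\bv)t^n\in\BB\lf t\rf$ with $\BB=\mcR\lf v_2,\ldots,v_d\rf$, $\mcR=\CC[v_1,1/H_\LL(1,v_1)]$, such that $T_\ell(\hat f\circ b_\xi)=\sum_{n=0}^\infty J_\xi(\hat g_n)(\bv)\,t^n$ for $\xi\in\CC\setminus\mcZ_\LL$. Writing
\[
\hat g_n(\bv)=\sum_{\bbeta\in\NN^{d-1}}H_{n,\bbeta}(v_1)(\bv')^{\bbeta},
\]
the previous step together with Definition/Proposition \reff{PsGevrey} yields that for $\xi$ large or $\xi$ close to $\mcZ'$ there exist $\rho_\xi>0$ and constants $K,A>0$ (independent of $n$ after shrinking to a compact set of such $\xi$) satisfying
\[
\norm{H_{n,\bbeta}(\xi+v_1)}\leq K\,A^n\,\Gamma(sn+1)\,\rho_\xi^{-\norm\bbeta}\mbox{ for }\norm{v_1}<\rho_\xi.
\]

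Now I choose $r,R>0$ such that the above estimate holds uniformly on the boundary of the domain $\mcD=D(0,R)\setminus\bigcup_{\chi\in\mcZ'}\cl(D(\chi,r))$. Since each $H_{n,\bbeta}$ is holomorphic on a neighbourhood of $\cl(\mcD)$, by compactness there exist common constants $K',A',\rho>0$ such that
\[
\norm{H_{n,\bbeta}(v_1)}\leq K'\,{A'}^n\,\Gamma(sn+1)\,\rho^{-\norm\bbeta}\quad\text{on }\partial\mcD.
\]
The maximum modulus principle in $v_1$ then extends this bound to all $v_1\in\mcD$. Consequently, for every $\xi\in\mcD$, the coefficients $J_\xi(\hat g_n)$ of $T_\ell(\hat f\circ b_\xi)$ converge on a common polydisk with $s$-Gevrey bounds in $n$, so by Definition/Proposition \reff{PsGevrey} again, $\hat f\circ b_\xi$ is $P\circ b_\xi$-$s$-Gevrey for $\xi\in\mcD$. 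Combined with the first step this covers all $\xi\in\PC$, and an application of Proposition \reff{gserie-blu} finishes the proof.

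The only nontrivial point is the uniformity of the Gevrey constants as $\xi$ ranges over the neighbourhood of $\partial\mcD$ used at the third step; this is handled by compactness of $\partial\mcD$ together with the fact that only finitely many reference points $\xi\in\mcZ$ and $\xi=\infty$ carry hypotheses, so one simply takes the maximum of the finitely many constants provided by the Gevrey analogue of Lemma \reff{close}.
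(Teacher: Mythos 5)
Your proposal is correct and follows the same route the paper intends: the paper's proof of Theorem \ref{gserie-blu+} simply says to repeat the proof of Theorem \ref{aserie-blu+} with $K_{n,\xi}=C_\xi A_\xi^n\Gamma(sn+1)$ and consequently $K_n=C A^n\Gamma(sn+1)$, leaving the details to the reader, and you have filled in those details faithfully --- reduction to $\infty\in\mcZ$, a Gevrey version of Lemma \ref{close} to control a neighbourhood of each point of $\mcZ$, Lemma \ref{hatfhatG} to globalise the coefficients $H_{n,\bbeta}$ over $\mcD$, compactness of $\partial\mcD$ to obtain uniform constants, the maximum modulus principle in $v_1$, and finally Proposition \ref{gserie-blu}. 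The one place worth being a bit more careful in a final write-up is your parenthetical ``(independent of $n$ after shrinking to a compact set of such $\xi$)'': the Gevrey constants start out $\xi$-dependent, $K_\xi, A_\xi$, and become uniform only after you use the Gevrey analogue of Lemma \ref{close} to get constants stable on a neighbourhood of each of the finitely many reference points $\xi\in\mcZ$ (including $\infty$), and then take the maximum; stating the $\xi$-dependent estimates first and deriving uniformity afterwards would make the logical order cleaner, but the argument as you present it is sound.
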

\begin{proof}In the proof of Theorem \reff{aserie-blu+}, only the constants have to be modified:
We have $K_{n,\xi}=C_\xi\,A_\xi^n\,\Gamma(sn+1)$ with some positive $C\xi,A_\xi$ independent of $n$
and as a consequence later $K_n=C\,A^n\,\Gamma(sn+1)$ with certain positive $C,A$. Details are left to the readers.
\end{proof}

 \begin{nota}\labl{nota-gblu}
Consider the formal series $\hat f(x_1,x_2)=\sum_{n=0}^{\infty}n!\,x_2^{2n}\,(x_1x_2)^{n}$ and 
the monomial $P(x_1,x_2)=x_1x_2$ as a special germ of an analytic function.
Whatever $\ell$, we have $(T_\ell\hat f)(t)(x_1,x_2)=\sum_{n=0}^{\infty}n!\,x_2^{2n}\,t^{n}$
and therefore $\hat f$ is a $P$-1-Gevrey asymptotic series.

Whatever $\ell$, we have $\mcZ_\ell=\{0,\infty\}$. We calculate
$$\hat f\circ b_0(v_1,v_2)=\hat f(v_2,v_1v_2)=\sum_{n=0}^{\infty}n!\, v_1^{3n}\,v_2^{4n}=\sum_{n=0}^{\infty}n!\, v_1^{n}\,(v_1v_2^2)^{2n}
       \mbox{ and }P\circ b_0(v_1,v_2)=v_1v_2^2.$$
Hence $\hat f\circ b_0$ is a $P\circ b_0$-$\frac12$-Gevrey series. We also calculate
$$\hat f\circ b_\infty(v_1,v_2)=\hat f(v_1v_2,v_2)=\sum_{n=0}^{\infty}n!\, v_1^{n}\,v_2^{4n}=\sum_{n=0}^{\infty}n!\, v_2^{2n}\,(v_1v_2^2)^{n}
       \mbox{ and again }P\circ b_\infty(v_1,v_2)=v_1v_2^2.$$
Hence  $\hat f\circ b_\infty$ is $P\circ b_\infty$-$1$-Gevrey.

Using Theorem \reff{gserie-blu+}, this confirms that $\hat f$ is a $P$-1-Gevrey asymptotic series.
It also shows that we need to consider the blow-ups $\hat f\circ b_\xi$ {\em for all $\xi\in\mcZ_\ell$}
in order to conclude: Theorem \reff{gserie-blu+} seems to be sharp with respect to the number of points
to be considered in the exceptional divisor. 
\end{nota}

\green{$P$-$s$-Gevrey asymptotic expansions and series are also compatible with
ramification. 
\begin{propo} \labl{gevrey-ram}
Consider $P$ on $D(\bo;\rho)$, $\Pi=\Pi_P(\alpha,\beta;\rho)$ and $f:\Pi\to\CC$ holomorphic.
Suppose that for some integer $k\geq2$ and positive $s$, the function $f\circ r_k$, restricted to
$\Pi_k=\Pi_{P\circ r_k}(\alpha,\beta;\tilde\rho)$ with some sufficiently small $\tilde\rho$,
has some formal series $\hat g\in\hat\OO$ as $(P\circ r_k)$-$s$-Gevrey asymptotic
expansion on its domain.

Then there exists a formal series $\hat f\in\hat\OO$  that is the $P$-$s$-Gevrey asymptotic expansion of $f$
on $\Pi$ and it satisfies $\hat f\circ r_k=\hat g$.
\end{propo}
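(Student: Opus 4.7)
The plan is to adapt the proof of Proposition \reff{asymp-ram} step by step, simply tracking the $s$-Gevrey bounds through the symmetrization argument. The key observation is that averaging over the cyclic group action $R:\bv\mapsto(e^{2\pi i/k}v_1,\bv')$ preserves $s$-Gevrey estimates up to the universal factor $k$, since we are taking a convex combination of $k$ functions with identical bounds.

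First, using Definition \reff{desarrolloGevrey} for the $P\circ r_k$-$s$-Gevrey expansion of $f\circ r_k$, we obtain a sequence $\{g_n\}_n$ in $\OO_b(D(\bo;\tilde\rho))$ with $J(g_n)\to\hat g$ in the $\mathfrak m$-adic topology and constants $K,A>0$ satisfying both
$$\norm{g_n(\bv)}\leq KA^n\Gamma(sn+1)\ \text{on }D(\bo;\tilde\rho),\quad \norm{(f\circ r_k)(\bv)-g_n(\bv)}\leq KA^n\Gamma(sn+1)\norm{(P\circ r_k)(\bv)}^n\ \text{on }\Pi_k.$$

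Next, I would set $h_n=\frac1k\sum_{j=0}^{k-1}g_n\circ R^j$ as in the proof of Proposition \reff{asymp-ram}. Since $R$ leaves $\Pi_k$, $f\circ r_k$, and $P\circ r_k$ invariant, applying the second inequality after right composition with $R^j$ and averaging gives the same $s$-Gevrey bound for $h_n$, and similarly for the first bound on $D(\bo;\tilde\rho)$. By construction, $h_n$ is $R$-invariant, so there exists $f_n\in\OO_b(\tilde\Pi)$ with $\tilde\Pi=\Pi_P(\alpha,\beta;r)$ (for a suitable small $r$) such that $f_n\circ r_k=h_n$, and the Gevrey bounds on $h_n$ descend to Gevrey bounds on $f_n$ (the sup norm of $f_n$ on $D(\bo;r)$ equals that of $h_n$ on $D(\bo;r^{1/k}\cdot(\text{factor}))$).

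Similarly, the $R$-symmetrization $\hat h=\frac1k\sum_{j=0}^{k-1}\hat g\circ R^j$ is $R$-invariant, hence of the form $\hat f\circ r_k$ for some $\hat f\in\hat\OO$. The convergence $J(h_n)\to\hat h$ in the $\mathfrak m$-adic topology transfers to $J(f_n)\to\hat f$. Verifying that conditions (1) and (2) of Definition \reff{desarrolloGevrey} hold for $\{f_n\}$ with respect to $\hat f$, $P$, and $\tilde\Pi$ is then routine. This shows $f$ has $\hat f$ as a $P$-$s$-Gevrey asymptotic expansion on $\tilde\Pi$. Finally, since both $\hat f\circ r_k$ and $\hat g$ are $P\circ r_k$-$s$-Gevrey asymptotic expansions of $f\circ r_k$, the uniqueness (Lemma \reff{existfhat} applied in this Gevrey context) forces $\hat f\circ r_k=\hat g$.

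There is no real obstacle here; the only thing one must verify carefully is that the symmetrization does not destroy the Gevrey character, which is immediate because one is averaging $k$ functions (a fixed finite number independent of $n$) all sharing the same bound of the form $KA^n\Gamma(sn+1)$. The descent from $R$-invariant functions on the $\bv$-coordinates to functions on the $\bx$-coordinates via $r_k$ is standard and was already used in the non-Gevrey version.
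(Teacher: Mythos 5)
Your proposal is correct and is exactly the argument the paper intends: the paper's proof of Proposition \ref{gevrey-ram} simply refers to the proof of Proposition \ref{asymp-ram} (averaging over the rotation $R$ and descending through $r_k$) "adding Gevrey estimates", which is precisely what you carry out. The averaging of $k$ functions sharing the bound $KA^n\Gamma(sn+1)$ indeed preserves the Gevrey constants, and the final identity $\hat f\circ r_k=\hat g$ follows from uniqueness of $(P\circ r_k)$-asymptotic expansions (Lemma \ref{existfhat}), just as in the non-Gevrey case.
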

\begin{propo}\labl{gserie-ram}
Let $\hat f\in\hat\OO$, $k\in\NN$, $k\geq2$ and $s>0$ be such that  the composition
$\hat f\circ r_k$ is a $P\circ r_k$-$s$-Gevrey asymptotic series. Then $\hat f$
is a $P$-$s$-Gevrey asymptotic series.
\end{propo}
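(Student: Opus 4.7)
The plan is to adapt the argument of Proposition \reff{aserie-ram} (the non-Gevrey analogue) by carrying Gevrey bounds through every step; the ramification case is considerably simpler than the blow-up case treated in Theorem \reff{gserie-blu+} because symmetrization by a single cyclic rotation immediately produces the required $r_k$-invariant objects.

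By Definition/Proposition \reff{PsGevrey}, the hypothesis gives $\tilde\rho>0$, constants $K,A>0$ and a sequence $\{g_n\}_n$ in $\OO_b(D(\bo;\tilde\rho))$ with $\norma{g_n}_\infty \leq K A^n \Gamma(sn+1)$ and $J(g_n)\equiv \hat f\circ r_k \mod (P\circ r_k)^n\hat\OO$ for all $n$. First, I will symmetrize: put $h_n=\frac{1}{k}\sum_{j=0}^{k-1} g_n\circ R^j$, where $R:(v_1,\bv')\mapsto (e^{2\pi i/k}v_1,\bv')$ is the cyclic rotation. The same Gevrey bound is preserved by the triangle inequality. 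Since $h_n\circ R = h_n$, the function $h_n$ descends through $r_k$: on a suitable polydisk there exists $f_n\in\OO_b(D(\bo;\rho))$ with $h_n=f_n\circ r_k$ and $\norma{f_n}_\infty \leq K A^n \Gamma(sn+1)$.

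Next, I will verify the congruence $J(f_n)\equiv \hat f \mod P^n\hat\OO$. The series $\hat f\circ r_k$ is itself $R$-invariant (it is a formal power series in $v_1^k$), so averaging the relation $J(g_n)\equiv \hat f\circ r_k \mod (P\circ r_k)^n\hat\OO$ under $R$ yields $J(h_n)\equiv \hat f\circ r_k \mod (P\circ r_k)^n\hat\OO$, i.e., $(J(f_n)-\hat f)\circ r_k\in (P\circ r_k)^n\hat\OO$. The quotient by $(P\circ r_k)^n$ is then forced to be $R$-invariant, hence of the form $\hat H\circ r_k$ for some $\hat H\in\hat\OO$; since the pullback $r_k^*:\hat\OO\to\hat\OO$ is injective, it follows that $J(f_n)-\hat f = P^n\hat H$ in $\hat\OO$. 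Combining, $\{f_n\}_n$ is a $P$-$s$-asymptotic sequence for $\hat f$ in the sense of condition (1) of Definition/Proposition \reff{PsGevrey}, and hence $\hat f$ is $P$-$s$-Gevrey, as desired.

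The main (and rather mild) obstacle is the descent step: one has to check that $R$-invariance of $(J(f_n)-\hat f)\circ r_k$ forces $R$-invariance of its quotient by $(P\circ r_k)^n$ (which uses that $P\circ r_k$ is $R$-invariant together with the fact that $\hat\OO$ is an integral domain), and that $R$-invariant elements of $\hat\OO$ are exactly those of the form $\hat H\circ r_k$. Both points are elementary but worth making explicit in the writeup.
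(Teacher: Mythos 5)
Your argument is correct and is exactly the route the paper intends: the paper's proof is the one-liner ``analogous to Propositions \ref{asymp-ram} and \ref{aserie-ram}, just add Gevrey estimates,'' which amounts to averaging over the rotation $R$, descending the $R$-invariant $h_n$ through $r_k$, and tracking the $K A^n\Gamma(sn+1)$ bound via the triangle inequality. You additionally make explicit the descent of the congruence (that the quotient by $(P\circ r_k)^n$ inherits $R$-invariance because $\hat\OO$ is a domain and $P\circ r_k$ is $R$-invariant, and that $R$-invariant series are exactly pullbacks under $r_k$), which the paper leaves implicit but is indeed the content of the step.
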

The proofs are analogous to the ones of Propositions \reff{asymp-ram} and \reff{aserie-ram}. One just has to add
Gevrey estimates.}

As for monomial Gevrey asymptotics, functions having a $P$-$s$-asymptotic expansion with vanishing
series are exponentially small.
\begin{lema} \labl{expsmallP}If $\pp$ is a $P$-sector for a certain
$P$ and if $f\in{\mathcal O}(\pp)$ has a $P$-$s$-Gevrey asymptotic
expansion where $\hat f=0$, then for all
sufficiently small $R'>0 $ there exist $C,\ B>0$ such that
on
${\tilde \pp}$
$$
\norm{f(\bx )}\leq C\cdot \exp\left( {-\frac{B}{\norm{P(\bx)}^{1/s}}}\right)\
\mbox{ for }\bx\in\pp,\ \norm\bx<R'.
$$
\end{lema}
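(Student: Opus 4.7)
The plan is to reduce the statement to the classical one-variable Watson-type estimate already recorded in Theorem \ref{borel-ritt-watson}(2), using Theorem \ref{st-form-gevrey} to pass from the $P$-$s$-Gevrey asymptotic expansion to an ordinary $s$-Gevrey asymptotic expansion in a single variable.

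First, fix any injective linear form $\ell:\NN^d\to\RR_+$ and apply Theorem \ref{st-form-gevrey} to the hypothesis $f\sim_\Pi^P 0$ of $s$-Gevrey type. Since $\hat f=0$ we have $T_\ell\hat f=0$, which is trivially an $s$-Gevrey series, and statement~(2) of Theorem \ref{st-form-gevrey} yields positive numbers $\sigma,\rho,K,A$ such that $T_\ell f$ is holomorphic on $V(a,b;\sigma)\times D(\bo;\rho)$ and satisfies
$$
\norm{(T_\ell f)(t,\bx)}\;\leq\; K\,A^N\,\Gamma(sN+1)\,\norm{t}^N
\qquad\bigl(N\in\NN,\ t\in V(a,b;\sigma),\ \bx\in D(\bo;\rho)\bigr).
$$
Viewing $T_\ell f$ as a holomorphic function on $V(a,b;\sigma)$ with values in the Banach space $\OO_b(D(\bo;\rho))$ equipped with the sup norm, this says precisely that $T_\ell f\sim_s 0$ in the Banach-space-valued sense.

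Next, I apply Theorem \ref{borel-ritt-watson}(2) to this Banach-space-valued function on $V(a,b;\sigma)$. It produces positive constants $C,B$, independent of $\bx\in D(\bo;\rho)$, such that
$$
\norm{(T_\ell f)(t,\bx)}\;\leq\; C\exp\!\left(-\frac{B}{\norm{t}^{1/s}}\right)
\qquad\bigl(t\in V(a,b;\sigma),\ \bx\in D(\bo;\rho)\bigr).
$$
Here no opening restriction on $V(a,b;\sigma)$ is needed: the Watson-type estimate in Theorem \ref{borel-ritt-watson}(2) is purely a consequence of optimizing $K A^N\Gamma(sN+1)\norm t^N$ over $N\in\NN$ (choose $N\approx(eA\norm{t})^{-1/s}$ and apply Stirling's formula), and the optimization is uniform in the parameter $\bx$.

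Finally, choose $R'\in(0,\rho]$ small enough that $P(D(\bo;R'))\subset D(\bo;\sigma)$; this is possible because $P(\bo)=0$. Then for $\bx\in\Pi$ with $\norm{\bx}<R'$ we have $P(\bx)\in V(a,b;\sigma)$ and the defining property of $T_\ell f$ (see Theorem \ref{T-sector-germ}) gives
$$
f(\bx)\;=\;(T_\ell f)\bigl(P(\bx),\bx\bigr).
$$
Substituting $t=P(\bx)$ into the previous estimate yields
$$
\norm{f(\bx)}\;\leq\; C\exp\!\left(-\frac{B}{\norm{P(\bx)}^{1/s}}\right)
\qquad\bigl(\bx\in\Pi,\ \norm{\bx}<R'\bigr),
$$
which is the claim. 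There is no real obstacle: all the work has already been packaged in Theorems \ref{T-sector-germ} and \ref{st-form-gevrey}, and the exponential bound is just the standard single-variable Watson estimate carried over coefficient-by-coefficient through the operator $T_\ell$.
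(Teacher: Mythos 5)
Your proof is correct. It reaches the conclusion by a slightly longer route than the paper: you invoke statement~(2) of Theorem~\ref{st-form-gevrey} to pass to $T_\ell f\sim_s 0$ in a Banach space, apply the classical Watson estimate of Theorem~\ref{borel-ritt-watson}(2) there, and then substitute $t=P(\bx)$ via the defining property $(T_\ell f)(P(\bx),\bx)=f(\bx)$ from Theorem~\ref{T-sector-germ}. The paper instead uses statement~(1) of Theorem~\ref{st-form-gevrey}: since $\hat f=0$ all $g_n$ vanish, so one has directly $\norm{f(\bx)}\leq K A^N\Gamma(sN+1)\norm{P(\bx)}^N$ for all $N$ and small $\bx\in\pp$, and then optimizes over $N\approx(A\norm{P(\bx)})^{-1/s}$ via Stirling. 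The two arguments rest on the same Gevrey--Stirling optimization; the paper performs it on $f$ itself, you perform it on $T_\ell f$ and pull the estimate back. Your version is fine and has the small advantage of literally reducing to the quoted one-variable statement rather than repeating its proof, at the modest cost of routing through the extra machinery of $T_\ell$; the paper's version is the more economical one since statement~(1) already hands the estimate on $f$ directly.
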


\begin{proof} As $\hat f=0$ implies that all $g_n$ of theorem \reff{st-form-gevrey} vanish,
we have that $\norm{f(\bx)}\leq K A^N \Gamma(sN+1)\norm{P(\bx)}^N$  for
all sufficiently small $\bx\in\pp$ and for all $N\in\NN$.
Again we choose $N$ close to the optimal value
$(A\norm{P(\bx)})^{-1/s}$
and Stirling's Formula yields the statement.
\end{proof}

\noindent As a consequence of Definition/Proposition \reff{PsGevrey} we can also construct a function
that have a prescribed  $P$-$s$-Gevrey series as its $P$-$s$ asymptotic expansion.
\begin{propo}[Borel-Ritt-Gevrey]  \labl{BRG}
Let $P\in\OO\setminus\{0\}$, $P(\bo)=0$ as before, $\hat{f}$ a $P$-$s$-Gevrey series and
$\Pi=\Pi_P (a, b; r)$ a $P$-sector of opening $b-a < s\pi$. Then there exist
$\rho>0$ and
$f\in \OO (\Pi\cap D(\bo;\rho))$ such that $f$ has $\hat{f}$ as
$P$-$s$-Gevrey asymptotic expansion on $\Pi\cap D(\bo;\rho)$.
\end{propo}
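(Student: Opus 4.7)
The plan is to reduce the statement to the classical one-variable Borel-Ritt-Gevrey theorem applied with values in the Banach space $\EE_{\ell,\rho}$, and then use the operator $T_\ell$ to pull the resulting function back to a function of $\bx$.

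First I would fix an injective linear form $\ell:\NN^d\to\RR_+$. By Definition/Proposition \reff{PsGevrey}, the assumption that $\hat f$ is $P$-$s$-Gevrey means that there exist $\rho>0$ and a sequence $\{g_n\}_{n\in\NN}$ in $\OO_b(D(\bo;\rho))$ with $J(g_n)\in\Delta_\ell(P)$ for all $n$ such that $T_\ell\hat f=\sum_{n=0}^\infty g_n t^n$ is an $s$-Gevrey formal series with coefficients in the Banach space $\EE_{\ell,\rho}$. Since $b-a<s\pi\leq s\pi$, the classical Borel-Ritt-Gevrey theorem (Theorem \reff{borel-ritt-watson}, (1)) applied with $E=\EE_{\ell,\rho}$ produces, for some $\sigma>0$, a holomorphic function $F\in\OO(V(a,b;\sigma),\EE_{\ell,\rho})$ such that $F\sim_s T_\ell\hat f$.

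Next, I would identify $F$ with a $\CC$-valued holomorphic function on $V(a,b;\sigma)\times D(\bo;\rho)$ (using the canonical isomorphism $\OO(V,\OO_b(D))\cong\OO_b(V\times D)$ after harmless shrinking, as in the proof of Lemma \reff{rem-cousin}). By construction, for every $t\in V(a,b;\sigma)$, the partial function $\bx\mapsto F(t,\bx)$ lies in $\EE_{\ell,\rho}$, so in particular $J(F(t,\cdot))\in\Delta_\ell(P)$. After reducing $\rho$ to some $\rho'\in(0,\rho]$ with $P(D(\bo;\rho'))\subset D(\bo;\sigma)$, I would define
$$
f(\bx)=F(P(\bx),\bx)\quad\text{for}\ \bx\in\Pi\cap D(\bo;\rho').
$$

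Finally, I would verify the conclusion by comparing $F$ with $T_\ell f$. Indeed, $F$ satisfies $F(P(\bx),\bx)=f(\bx)$ and $J(F(t,\cdot))\in\Delta_\ell(P)$ for all $t$; by the uniqueness assertion in Theorem \reff{T-sector-germ}, $F$ coincides with $T_\ell f$ on $V(a,b;\tilde\sigma)\times D(\bo;\tilde\rho)$ for suitably small $\tilde\sigma,\tilde\rho>0$. Hence $T_\ell f\sim_s T_\ell\hat f$ as $V(a,b;\tilde\sigma)\ni t\to 0$, and condition (2) of Theorem \reff{st-form-gevrey} gives that $f$ has $\hat f$ as $P$-$s$-Gevrey asymptotic expansion on $\Pi\cap D(\bo;\tilde\rho)$. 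There is no real obstacle here: the one nontrivial point is the matching between the scalar-valued function $F(t,\bx)$ and the Banach-valued function given by the classical theorem, which is routine but needs to be stated carefully so that the uniqueness clause of Theorem \reff{T-sector-germ} can be invoked.
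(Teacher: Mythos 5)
Your proof is correct and follows essentially the same route as the paper: read off the $s$-Gevrey series $T_\ell\hat f=\sum g_n t^n$ with coefficients in $\EE_{\ell,\rho}$ via Definition/Proposition \ref{PsGevrey}, apply the classical Borel-Ritt-Gevrey theorem (Theorem \ref{borel-ritt-watson}) in this Banach space to get $F$, and set $f(\bx)=F(P(\bx),\bx)$. The paper's final verification is slightly more direct --- it reads the required estimates straight off the one-variable Gevrey bounds for $F$, arriving at condition (1) of Theorem \ref{st-form-gevrey} --- whereas you take the harmless detour of identifying $F$ with $T_\ell f$ via the uniqueness clause of Theorem \ref{T-sector-germ} and then invoking condition (2); both are fine.
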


\begin{proof}
Statement (2) of Definition/Proposition \reff{PsGevrey}
yields the existence of $\rho>0$ and of a sequence
$\{g_n\}_n\in\OO_b(D(\bo;\rho))^{\NN}$
with $J(g_n)\in\Delta(P)$ for all $n$ such that $\dis\hat f=\dis\sum_{n=0}^\infty J(g_n)J(P)^n$
and $\dis\sum_{n=0}^\infty g_nT^n\in\OO_b(D(\bo;\rho))\lf T\rf $ is a formal Gevrey-$s$ series.
Now let $V=V(a,b;\mu )$, where $\mu>\norm{P(\bx)}$
for all $\bx\in D(\bo;\rho)$.  By the
Borel-Ritt-Gevrey Theorem in Banach spaces (see Theorem \reff{borel-ritt-watson}), there exists
$F\in \OO({D} (\bo; r) \times V)$ having
$\sum_{m} g_m T^m$ as $s$-Gevrey asymptotic
expansion at $T=0$. The function $f$ defined by $f(\bx)=F(\bx,P(\bx))$ gives the
result.
\end{proof}

As in the classical and monomial asymptotics, functions having Gevrey asymptotic expansions in a germ
can be characterized by completing them to a family almost covering a neighborhood of $\bo$
such that  the differences of any two of
them is exponentially small if the intersection of their domain is nonempty.
In this context, covers of polydisks $D(\bo;R)$ outside the zero
set of $P$ will be important.
\begin{defin}
A $P$-cover denotes a family ${\Pi_i}_{i\in I}$, $I$ some finite set, of
$P$-sectors that covers the open set $D(\bo;R) \setminus \{ P(\bx )=0\}$ for some $R>0$. 
Given such a $P$-cover ${\mathcal P}=\{\Pi_i\}_{i\in I}$, a
$P$-$k$-quasifunction on $\mathcal P$ is a
family $(f_i)_{i\in I}$ of bounded holomorphic functions $f_i\in \OO_b (\Pi_i)$, such
that whenever $\Pi_i\cap \Pi_j\neq \emptyset$ there exist constants $C$ and $B$
satisfying
$$
\norm{f_i(\bx)-f_j(\bx)}\leq C\cdot \exp \left(
-\frac{B}{\norm{P(\bx)}^k}\right)\mbox{ for all }\bx\in\Pi_i\cap \Pi_j.
$$
\end{defin}

Proposition \reff{BRG} and Lemma \reff{expsmallP} imply that a function having an $s$-Gevrey asymptotic expansion in a germ
can be completed to a $P$-$k$-quasifunction, $k=1/s$.
\begin{propo}\labl{existcover}
Consider a holomorphic function $f\in \OO (\Pi)$ having a $P$-$s$-Gevrey asymptotic expansion
on $\Pi$. Then there exist $\rho>0$, a $P$-cover $\Pi_1,
\Pi_2,\ldots , \Pi_r$ of $D(\bo,\rho)\setminus\{P=0\}$ with $\Pi_1=D(\bo,\rho)\cap\Pi$
and a $P$-$k$-quasifunction $(f_1,f_2,\ldots ,f_r) $ on it such that $k=1/s$ and
$f_1=f\mid_{\Pi_1}$.
\end{propo}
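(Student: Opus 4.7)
The plan is to complete $f$ to a family on a full $P$-cover by using Borel-Ritt-Gevrey (Proposition \ref{BRG}) on each sector of the complementary angular range and then reading off the exponential estimates on overlaps from the Watson-type Lemma \ref{expsmallP}.

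First I note that the hypothesis guarantees, via Definition \ref{desarrolloGevrey}, a $P$-$s$-asymptotic sequence for the series $\hat f$; this is precisely condition (1) of Definition/Proposition \ref{PsGevrey}, so $\hat f$ is a $P$-$s$-Gevrey series in the sense needed to invoke Borel-Ritt-Gevrey. Write $\Pi=\Pi_P(a,b;\br)$ and consider the complementary arc $[b,a+2\pi]$ in the $\arg P$-variable. Choose an integer $N$ with $(a+2\pi-b)/N<s\pi$ and pick $b=c_0<c_1<\cdots<c_N=a+2\pi$ with $c_i-c_{i-1}<s\pi$; to ensure overlaps, fix a small $\eps>0$ such that each arc $(c_{i-1}-\eps,c_i+\eps)$ still has opening strictly less than $s\pi$. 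For $i=2,\ldots,N+1$ put
$$\Pi_i=\Pi_P(c_{i-2}-\eps,\,c_{i-1}+\eps;\,\br),$$
and set $\Pi_1=\Pi$. By construction $\{\Pi_1,\ldots,\Pi_{N+1}\}$ is a family of $P$-sectors whose union, for any fixed $\br$, covers $\{\bx\mid 0<\norm{x_j}<r_j,\ P(\bx)\neq 0\}$ since the sectors cover every possible value of $\arg P(\bx)$ modulo $2\pi$.

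Next, for each $i\geq 2$ I apply Proposition \ref{BRG} to $\hat f$ on $\Pi_i$: since the opening of $\Pi_i$ is $<s\pi$, there exists $\rho_i>0$ and a bounded holomorphic $f_i\in\OO(\Pi_i\cap D(\bo;\rho_i))$ having $\hat f$ as its $P$-$s$-Gevrey asymptotic expansion. Let $\rho>0$ be smaller than every $\rho_i$ and smaller than the common radius on which the $P$-$s$-asymptotic sequence of $f$ is defined; restricting all $f_i$ (and $f_1:=f$) to $\Pi_i\cap D(\bo;\rho)$ produces a family on the $P$-cover $\{\Pi_i\cap D(\bo;\rho)\}_{i=1,\ldots,N+1}$, with $\Pi_1\cap D(\bo;\rho)=\Pi\cap D(\bo;\rho)$ and $f_1=f|_{\Pi_1}$ as required.

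Finally, on any nonempty intersection $\Pi_i\cap\Pi_j\cap D(\bo;\rho)$, both $f_i$ and $f_j$ have $\hat f$ as $P$-$s$-Gevrey asymptotic expansion, so $g_{ij}:=f_i-f_j$ has the zero series as its $P$-$s$-Gevrey asymptotic expansion on this intersection (which is itself a $P$-sector). Applying Lemma \ref{expsmallP} to each such $g_{ij}$ (possibly shrinking $\rho$ once more to fit all finitely many resulting radii $R'$) yields positive constants $C_{ij},B_{ij}$ with
$$\norm{f_i(\bx)-f_j(\bx)}\leq C_{ij}\exp\!\left(-\frac{B_{ij}}{\norm{P(\bx)}^{1/s}}\right)$$
on $\Pi_i\cap\Pi_j\cap D(\bo;\rho)$. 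With $k=1/s$ this is exactly the defining property of a $P$-$k$-quasifunction, completing the proof. The main (and only real) technical input is Lemma \ref{expsmallP}, which was already established; everything else is a straightforward partition of the $\arg P$-circle combined with Borel-Ritt-Gevrey, so no serious obstacle is expected.
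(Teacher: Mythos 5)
Your proof is correct and follows essentially the same route as the paper: cover the complementary range of $\arg P$ by $P$-sectors of opening less than $s\pi$, produce the $f_i$ via the Borel--Ritt--Gevrey Proposition \ref{BRG}, and obtain the exponential estimates on overlaps from Lemma \ref{expsmallP} applied to the differences $f_i-f_j$, whose $P$-$s$-asymptotic expansion vanishes. Your write-up merely makes explicit the choice of overlapping arcs and the common radius $\rho$, details the paper leaves implicit.
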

\begin{proof}
We need to assume that the opening of each $\Pi_i$, $i>1$, is
not greater than $s\pi$. Then there exist $f_i\in \OO (\Pi_i)$
($i>1$) having the same $P$-$s$-asymptotic expansion as $f$;
this is possible thanks to Proposition \reff{BRG} provided $\Pi_i$
are contained in a suffiently small polydisk.
If $\Pi_i\cap \Pi_j\neq \emptyset$ then $f_i-f_j$ have a $P$-$s$-asymptotic expansion
on it and $(f_i-f_j)(\bx)\sim0$. Lemma \reff{expsmallP} now implies that
the $f_i$ can be combined to a $P$-$k$-quasifunction, $k=1/s$.
\end{proof}

As for classical and monomial asymptotics, the converse is also true. This result, which
generalises the classical Ramis-Sibuya Theorem (Theorem \reff{ramissibuya-classic}) and the version 
\reff{ramissibuya-monom} for monomial asymptotics, are the most
important means to establish the existence of $P$-$s$-Gevrey asymptotic expansions.
\begin{teorema}\labl{ramissibuya-poly}
Suppose that
the $P$-sectors $\pp_j= \pp_P(a_j,b_j;r), 1\leq j\leq m$,
form a cover of $D(\bo;r)\setminus\{\bx;\  P(\bx)=0\}$.
Given $f_j: \pp_j \rightarrow  \CC$, $j=1,\ldots ,m$, bounded and analytic, assume
that there exists $\gamma>0$ such that
for every couple $(j_1,j_2)$
\begin{equation}\labl{expabove-germ}
\norm{f_{j_1}(\bx)-f_{j_2}(\bx)} =
O(\exp(-\gamma/|P(\bx)|^{1/s}))
\end{equation}
for $\bx\in\pp_{j_1}\cap \pp_{j_2}$, provided  $\pp_{j_1}\cap \pp_{j_2}\neq \emptyset$.
Then the functions $f_j$ have $P$-$s$-Gevrey asymptotic expansions
with a common right hand side.
\end{teorema}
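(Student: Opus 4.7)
The plan is to reduce the statement to the classical Ramis--Sibuya theorem \ref{ramissibuya-classic} in a suitable Banach space, by applying the operator $T_\ell$ of Theorem \ref{T-sector-germ} to transform the $f_j$ on $P$-sectors into functions of a single complex variable $t$ taking values in a space of holomorphic functions of $\bx$. This mimics exactly the strategy of Theorem \ref{ramissibuya-monom} for monomials, where the operator $T$ played the same role.

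First I would fix an injective linear form $\ell:\NN^d\to\RR_+$ and apply Theorem \ref{T-sector-germ} to each $f_j$. By Remark \ref{notaTl}(1), the constants $\sigma,\rho,L$ produced by that theorem depend only on $P$ and on the opening of the $P$-sector, not on the individual function. Hence, by shrinking $\sigma$ and $\rho$ to a common value (using that there are only finitely many sectors), I obtain functions $F_j:=T_\ell f_j\colon V(a_j,b_j;\sigma)\times D(\bo;\rho)\to\CC$ whose Taylor series in $\bx$ lies in $\Delta_\ell(P)$ for every fixed $t$. Thus each $F_j(t,\cdot)$ takes values in the Banach space $\EE_{\ell,\rho}$ of bounded holomorphic functions on $D(\bo;\rho)$ with expansion in $\Delta_\ell(P)$, and the map $t\mapsto F_j(t,\cdot)\in\EE_{\ell,\rho}$ is holomorphic on $V(a_j,b_j;\sigma)$.

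Next, I would check that the assumption on the differences $f_{j_1}-f_{j_2}$ translates to exponential smallness of $F_{j_1}-F_{j_2}$. Since $T_\ell$ is linear and $T_\ell f_{j_1}-T_\ell f_{j_2}$ satisfies the defining properties that characterize $T_\ell(f_{j_1}-f_{j_2})$ uniquely (Theorem \ref{T-sector-germ}(1) and Remark \ref{notaTl}(3)), the two coincide on the intersection of their domains. Applying Theorem \ref{T-sector-germ}(2) with $K(s)=C\exp(-\gamma/s^{1/s})$ yields an estimate $\|(F_{j_1}-F_{j_2})(t,\bx)\|\le \frac{L'}{|t|}\exp(-\gamma/|t|^{1/s})$ on the corresponding $t$-sector, which is still of $s$-Gevrey exponential decay. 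I also need that the sectors $V(a_j,b_j;\sigma)$ cover some punctured disk in $t$; this follows because the $\Pi_j$ cover $D(\bo;r)\setminus\{P=0\}$ and $P$ is surjective onto a neighborhood of $0$ when restricted to any polydisk around $\bo$ (using $P(\bo)=0$ and $P\not\equiv0$).

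At this point the classical Ramis--Sibuya theorem \ref{ramissibuya-classic}, applied to the $E=\EE_{\ell,\rho}$-valued quasifunction $(F_j)_{j}$, produces a common $s$-Gevrey asymptotic expansion $\hat F(t)=\sum_{n\ge0}g_n\,t^n\in\EE_{\ell,\rho}\lf t\rf$ for all the $F_j$. By Theorem \ref{st-form-gevrey}(2), this precisely means that each $f_j$ has $\hat f:=\sum_{n\ge0}g_n\,P^n\in\hat\OO$ as its $P$-$s$-Gevrey asymptotic expansion on $\Pi_j$, and the expansion is common to all $j$. The main obstacle I foresee is the bookkeeping in step two: one must verify that all the $F_j$ can be arranged to live on a common polydisk in $\bx$ and on $t$-sectors of the same radius $\sigma$, and that the exponential decay estimate on $F_{j_1}-F_{j_2}$ is uniform enough to fit into the hypothesis of Theorem \ref{ramissibuya-classic}; the $1/|t|$ prefactor appearing in Theorem \ref{T-sector-germ}(2) must be absorbed into the exponential by slightly decreasing $\gamma$, which is harmless.
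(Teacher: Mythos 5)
Your proposal is correct and follows essentially the same route as the paper's proof: fix $\ell$, apply $T_\ell$ from Theorem \ref{T-sector-germ} with constants independent of the function and of the sector (Remark \ref{notaTl}), transfer the exponentially small estimates (with the harmless $1/\norm{t}$ prefactor) to the $\EE_{\ell,\rho}$-valued functions $T_\ell f_j$, invoke the classical Ramis--Sibuya Theorem \ref{ramissibuya-classic}, and conclude with Theorem \ref{st-form-gevrey}. The extra verifications you add (that the $t$-sectors cover a punctured disk, and the absorption of the prefactor into the exponential) are details the paper leaves implicit, and they are handled correctly.
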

\begin{proof} Consider some injective linear functional $\ell:\NN^d\to\RR_+$ and the
operators $T_\ell $ corresponding to it for the $P$-sectors $\pp_j$ and their nonempty
intersections according to Theorem \reff{T-sector-germ}. Here the fact that the operators are
independent of the $P$-sector in the sense of remark \reff{notaTl}, (3) is important.
Without loss of generality, we can assume that the constants $\rho,\sigma,L$ of that theorem
are the same for all these finitely many sectors. For convenience, we
identify the Banach spaces $\OO_b(V(a,b;\sigma)\times D(\bo;\rho),\CC)$ with
$\OO_b(V(a,b;\sigma),\OO_b(D(\bo;\rho),\CC))$ in the usual way.
The inequalities (\reff{expabove-germ}) imply that
$$\norma{(T_\ell f_{j_1})(t)-(T_\ell f_{j_2})(t)}_{\OO_b(D(\bo;\rho),\CC)}=
O(\tfrac1{\norm t}\exp(-\gamma \norm{t}^{-1/s}))$$
for $t\in V(a_{j_1},b_{j_1};\sigma)\cap V(a_{j_2},b_{j_2};r)$ provided this intersection
is nonempty. Here the classical Ramis-Sibuya theorem \reff{ramissibuya-classic}
applies and yields that the functions $T_\ell f_{j}$ have common Gevrey-$s$ asymptotic
expansions. We conclude using Theorem \reff{st-form-gevrey}.
\end{proof}

In the same way, the complement 
to the classical Ramis-Sibuya Theorem (Theorem \reff{compl-classic}) will now be carried over to Gevrey 
asymptotics in a germ.
\begin{teorema}\labl{compl-germ}
Suppose that the $P$-sectors $\pp_j= \pp_P(a_j,b_j;r),\, 1\leq j\leq m$,  form
a $P$-cover.
For couples $(j_1,j_2)$ with $\pp_{j_1}\cap \pp_{j_2}\neq\emptyset$, let
holomorphic $d_{j_1,j_2}:\pp_{j_1}\cap \pp_{j_2}: \rightarrow E$ be given
that satisfy the cocycle condition
$d_{j_1,j_2}+d_{j_2,j_3}=d_{j_1,j_3}$ whenever $\pp_{j_1}\cap \pp_{j_2}\cap \pp_{j_3}\neq\emptyset$
and estimates
\begin{equation}
\norm{d_{j_1,j_2}(\bx)} = O(\exp(-\gamma/|P(\bx)|^{1/s}))
\end{equation}
for $j_1,j_2\in\{1,\ldots ,m\}$ and $x\in \pp_{j_1}\cap \pp_{j_2}$ with some  constants
$s,\gamma>0$.

Then there exist $\rho>0$ and bounded holomorphic functions $f_j:\pp_j\cap D(\bo;\rho)\to E$
such that
$d_{j_1,j_2}\mid_{\pp_{j_1}\cap \pp_{j_2}\cap D(\bo;\rho)} =f_{j_1}-f_{j_2}$
whenever $\pp_{j_1}\cap \pp_{j_2}\neq\emptyset$; moreover
the functions $f_j$ have $P$-$s$-Gevrey asymptotic expansions with a common right hand side.
\end{teorema}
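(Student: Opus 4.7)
The plan is to transfer the problem through the operator $T_\ell$ of Theorem \reff{T-sector-germ} into a one-variable Banach-valued Cousin problem, apply the classical complement \reff{compl-classic}, pull back via $P$, and finally invoke the just-established Ramis--Sibuya Theorem \reff{ramissibuya-poly} to produce the common $P$-$s$-Gevrey asymptotic expansion.

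Fix an injective linear form $\ell:\NN^d\to\RR_+$. For each pair $(j_1,j_2)$ with $\pp_{j_1}\cap\pp_{j_2}\neq\emptyset$, I would apply Theorem \reff{T-sector-germ} to $d_{j_1,j_2}$. Using the $f$-independence noted in Remark \reff{notaTl} (1) and the coherence property (3), the constants $\rho,\sigma,L$ may be chosen common to all pairs, so that each $D_{j_1,j_2}:=T_\ell d_{j_1,j_2}$ is defined on $\bigl(V(a_{j_1},b_{j_1};\sigma)\cap V(a_{j_2},b_{j_2};\sigma)\bigr)\times D(\bo;\rho)$ and satisfies $D_{j_1,j_2}(P(\bx),\bx)=d_{j_1,j_2}(\bx)$. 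The hypothesis combined with the estimate in Theorem \reff{T-sector-germ} applied with $K(u)=C\exp(-\gamma/u^{1/s})$ yields
$$
\sup_{\bx\in D(\bo;\rho)}\norm{D_{j_1,j_2}(t,\bx)}\leq\frac{LC}{\norm t}\exp\bigl(-\gamma/\norm t^{1/s}\bigr).
$$
The uniqueness clause of Theorem \reff{T-sector-germ} forces $T_\ell$ to be additive, so the cocycle condition on the $d$'s transfers verbatim to the $D$'s on triple intersections.

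Since the $\pp_j$ form a $P$-cover, the open arcs $(a_j,b_j)$ cover $\RR/(2\pi\ZZ)$, hence the sectors $V(a_j,b_j;\sigma)$ cover the punctured disk in $t$. The Banach-valued form of Theorem \reff{compl-classic} applies with $E=\OO_b(D(\bo;\rho))$ and yields bounded holomorphic $F_j:V(a_j,b_j;\sigma)\to E$ with $F_{j_1}-F_{j_2}=D_{j_1,j_2}$ on all intersections and all sharing a common $s$-Gevrey asymptotic expansion as $t\to 0$. After shrinking $\rho$ so that $P(D(\bo;\rho))\subset D(\bo;\sigma)$, define $f_j(\bx):=F_j(P(\bx),\bx)$ for $\bx\in\pp_j\cap D(\bo;\rho)$.

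By the defining property of $T_\ell$, I would then obtain $f_{j_1}(\bx)-f_{j_2}(\bx)=D_{j_1,j_2}(P(\bx),\bx)=d_{j_1,j_2}(\bx)$ on every nonempty $\pp_{j_1}\cap\pp_{j_2}\cap D(\bo;\rho)$. By hypothesis these differences are $O\bigl(\exp(-\gamma/\norm{P(\bx)}^{1/s})\bigr)$, so Theorem \reff{ramissibuya-poly} applies to the family $(f_j)_j$ and furnishes the required common $P$-$s$-Gevrey asymptotic expansion. The most delicate point is securing uniform constants $\rho,\sigma$ across all pairs and the additivity of $T_\ell$, both handled by the uniqueness clause of Theorem \reff{T-sector-germ}; beyond that, the argument simply translates the one-variable Banach-valued Cousin problem through the essentially invertible operator $T_\ell$.
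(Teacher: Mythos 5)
Your proposal is correct and follows essentially the same route as the paper: apply $T_\ell$ to the differences $d_{j_1,j_2}$, note the cocycle condition survives because $T_\ell$ is sector-independent, solve the resulting Banach-valued cocycle with Theorem \ref{compl-classic}, and set $f_j(\bx)=F_j(P(\bx),\bx)$. The only (harmless) variation is at the end, where you obtain the common $P$-$s$-Gevrey expansions by applying Theorem \ref{ramissibuya-poly} to the family $(f_j)$, while the paper reads them off from the common $s$-Gevrey expansions of the $F_j$.
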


\begin{proof} With the same notation as before, take an injective linear functional $\ell: \NN^d \rightarrow \RR_{+}$, and the operators $T_{\ell}$. If $\Pi_{j_1} \cap \Pi_{j_2}= \Pi_P (a_{j_1j_2}, b_{j_1j_2};r)$, consider $T_{\ell } d_{j_1j_2} (t,\bx )\in \OO (V (a_{j_1j_2}, b_{j_1j_2};\sigma ) \times D(\bo;\rho ))$, for some $\sigma$, $\rho$. If $\Pi_{j_1}\cap \Pi_{j_2}\cap \Pi_{j_3}\neq \emptyset$ 
then we have that $(T_{\ell} d_{j_1j_2} )(t,\bx ) + (T_{\ell } d_{j_2j_3})(t,\bx ) = (T_{\ell } d_{j_1j_3} )(t,\bx )$
because the operators $T_\ell$ are independent of the $P$-sector (Remark \reff{notaTl}). 
The hypotheses of Lemma \reff{compl-classic} are verified, and hence, there exist holomorphic bounded functions 
$F_{j} : V(a_j,b_j; \sigma ) \times D(\bo; \rho ) \rightarrow \CC $ such that 
$F_{{j_1}} -F_{{j_2}} = T_{\ell } d_{j_1j_2}$ whenever $V(a_{j_1}, b_{j_1};  \sigma) \cap V(a_{j_2}, b_{j_2};  \sigma)
\neq \emptyset$. The functions $f_j (\bx )= F_{j} (P(\bx ), \bx )$ satisfy the statement of the Theorem.
\end{proof}

\subsection{Summability with respect to a germ} In the sequel we still consider some
analytic germ $P$ and suppose that $P\in\OO_b(D(\bo,R))\setminus\{0\}$, $P(\bo)=0$. The existence of a summability
result in a germ is based on the following Watson's lemma. It generalizes the theorem for monomial expansions 
(Theorem \reff{Watson}), 
and as before it is easily established
by carrying over the classical version Theorem \reff{borel-ritt-watson} (3) using Theorem
\reff{T-sector-germ}.
\begin{lema}\labl{watson-germ}
Let $\pp=\pp_P (a,b;R)$ a sector in $P$ with $b-a>s\pi$
and suppose that $f\in {\mathcal O}(\pp)$ has $\hat f=0$ as its
$P$-$s$-Gevrey asymptotic expansion. Then $f\equiv 0$.
\end{lema}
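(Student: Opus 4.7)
The plan is to reduce this statement to the classical Watson's lemma in one variable (Theorem \reff{borel-ritt-watson}, (3)) via the operator $T_\ell$, exactly parallel to the proof of Theorem \reff{Watson} for monomial asymptotics. Fix an arbitrary injective linear form $\ell:\NN^d\to\RR_+$ and apply Theorem \reff{T-sector-germ} to obtain positive constants $\sigma,\rho,L$ and the holomorphic function $T_\ell f:V(a,b;\sigma)\times D(\bo;\rho)\to\CC$ with $(T_\ell f)(P(\bx),\bx)=f(\bx)$ for $\bx\in\pp\cap D(\bo;\rho)$.

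By hypothesis $\hat f=0$, and hence $T_\ell\hat f=0$ as a formal series. Statement (2) of Theorem \reff{st-form-gevrey} then tells us that, regarded as a holomorphic function of $t\in V(a,b;\sigma)$ with values in the Banach space $\EE=\OO_b(D(\bo;\rho))$ (equipped with the sup-norm), we have $T_\ell f\sim_s 0$ as $V(a,b;\sigma)\ni t\to 0$. Since the opening of $V(a,b;\sigma)$ is $b-a>s\pi$, the classical Watson's lemma in one variable (Theorem \reff{borel-ritt-watson}, (3), applied with target space $\EE$) forces $T_\ell f\equiv 0$ on $V(a,b;\sigma)\times D(\bo;\rho)$.

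Using the defining relation $(T_\ell f)(P(\bx),\bx)=f(\bx)$, we obtain $f(\bx)=0$ for every $\bx\in\pp\cap D(\bo;\rho)$ such that $P(\bx)\in V(a,b;\sigma)$; since $P(\bx)\in V(a,b;\sigma)$ automatically holds on $\pp\cap D(\bo;\rho)$ (shrinking $\rho$ if necessary so that $P(D(\bo;\rho))\subset D(\bo;\sigma)$), we get $f\equiv 0$ on a nonempty open subset of $\pp$. By the identity principle applied to the connected component(s) of $\pp$ containing this open set, $f$ vanishes identically on $\pp$. (If $\pp$ has several connected components, one applies the same argument on each; the conclusion $T_\ell f\equiv 0$ was global in $t\in V(a,b;\sigma)$, so every component is treated simultaneously.)

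The only point demanding care is the very last step of transferring the vanishing of $T_\ell f$ back to the vanishing of $f$ on all of $\pp$ rather than just on a small neighborhood of the origin; this is not really an obstacle, since $f$ is holomorphic on the connected sector $\pp$ (or on each of its components), so vanishing on any nonempty open subset propagates by analytic continuation. The substantive work has already been done in establishing Theorems \reff{T-sector-germ} and \reff{st-form-gevrey}, which reduce Gevrey asymptotics in a germ to one-variable Banach-valued Gevrey asymptotics where the classical machinery applies directly.
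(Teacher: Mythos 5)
Your proof is correct and is exactly the argument the paper intends: the paper gives only a one-line indication (carry over the classical Watson's Lemma, Theorem \reff{borel-ritt-watson} (3), using Theorem \reff{T-sector-germ}), and your route through Theorem \reff{st-form-gevrey} (2), Banach-valued one-variable Watson, and the substitution $(T_\ell f)(P(\bx),\bx)=f(\bx)$ is precisely that reduction. The only loose thread is your final parenthetical: the identity $(T_\ell f)(P(\bx),\bx)=f(\bx)$ is only available for $\norm{\bx}<\rho$, so a hypothetical connected component of $\pp$ not meeting $D(\bo;\rho)$ is not "treated simultaneously" by the global vanishing of $T_\ell f$ — but the paper glosses over this point as well, and it does not affect the intended conclusion near the origin.
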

\noindent Now the following definition makes sense.
\begin{defin}\labl{P-sumable}
Let $\Pi=\Pi_P(a,b;r)$ be a $P$-sector with $b-a>s\pi$, $k=\frac1s$, and
$\hat{f}\in\hat \OO$.
We will
say that $\hat{f}$ is $P$-$k$-summable in $\Pi$ if there exists $f\in \OO (\Pi )$ having $\hat{f}$ as $P$-$s$-Gevrey asymptotic expansion. In this situation, $f$ is called the $P$-$k$-sum of $\hat{f}$ in $\Pi$ (and it is uniquely determined by Lemma \reff{watson-germ}).

$\hat{f}$ is called $P$-$k$-summable in direction $\theta\in \RR$ if there exists a $P$-sector $\Pi$ as before, bisected by $\theta$,
with $b-a>s\pi$, and such that $\hat{f}$ is $P$-$k$-summable in $\Pi$. $\hat{f}$ is called $P$-$k$-summable if it is summable in every direction $\theta \in \RR$ but a finite number$\mod\ 2\pi$.
\end{defin}

The above notion of $P$-$k$-summability in a direction $\theta$
does not indicate how to obtain a sum from a given series.
Theorem \reff{st-form-gevrey} 
allows us to carry over the classical characterization 
using Laplace integrals (see Proposition \reff{laplace}) to the new concept.
\begin{propo}\labl{laplace-poly}
Let $s=1/k$. Given a $P$-$s$-Gevrey series $\hat f(x)\in\hat\OO$, it is $P$-$k$-summable
in a direction $\theta$ with $P$-$k$-sum $f$ if and only if there exist $\rho>0$ and
a formal Gevrey series $\hat F(t)=\sum_{n=0}^\infty g_nt^n\in\OO_b(D(\bo;\rho))\lf t\rf _s$ with 
$\hat F(P(\bx))(\bx):=\sum_{n=0}^{\infty}J(g_n)(\bx)J(P)(\bx)^n = \hat f(\bx)$ and the following properties: 
\benot\item The formal Borel transform $G(\bx,\tau)=\sum {g_n}(\bx)\tau^n/{\Gamma(1+n/k)}$ 
of $\hat F(t)$ is analytic in a neighborhood of the origin,
\item the function $G$ can be continued analytically with to some domain $D(\bo;\rho)\times V(\theta-\delta,\theta+\delta;\infty)$,
\item it has exponential growth there, i.e.\ there are
positive constants such that
$$
\norm{G(\bx,\tau)}\leq C\cdot \exp\left({A}{|\tau|^{k}}\right) \mbox{ for }(\bx,\tau)\in S;
$$
hence the Laplace integral
$F (\bx,t) =
k\,t^{-k} \int_{\arg \tau=\tilde \theta} e^{-{\tau^k}/{t ^k}}
\,{G} (\bx,\tau)\,{\tau^{k-1}} d\tau$ converges for $\bx\in D(\bo;\rho)$  and 
$t$ in a certain sector
$V=V(\theta-\frac\pi{2k}-\frac{\tilde\delta} k,\theta+\frac\pi{2k}+\frac{\tilde\delta} k;r),$
$0<\tilde\delta<\delta$, and suitably chosen $\tilde\theta$ close to $\theta$;
\item finally $f(\bx)=F(\bx,P(\bx))$ which has $\hat f(\bx)$ as its $P$-$s$-Gevrey asymptotic expansion
on some $\Pi$-sector  $\Pi_P(\theta-{\frac\pi{2k}}-\frac{\tilde\delta} k,\theta+\frac\pi{2k}+\frac{\tilde\delta} k;\tilde \rho)$, $\tilde\rho>0$.
\ee\end{propo}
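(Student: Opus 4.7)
The strategy is to reduce the statement to the classical Laplace-integral characterization of $k$-summability in one variable with values in a Banach space (Proposition \reff{laplace} applied in the space $\OO_b(D(\bo;\rho))$), using Theorem \reff{st-form-gevrey} as the bridge between $P$-$s$-Gevrey asymptotic expansions on a $P$-sector and ordinary $s$-Gevrey asymptotic expansions of $T_\ell f$ on a sector in $t$.

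For the forward implication, suppose $\hat f$ is $P$-$k$-summable in direction $\theta$, with $P$-$k$-sum $f$ on some $\Pi=\Pi_P(a,b;r)$ bisected by $\theta$ and satisfying $b-a>s\pi$. Fix an injective linear form $\ell$ and apply Theorem \reff{T-sector-germ} and Theorem \reff{st-form-gevrey} to obtain $\rho,\sigma>0$ such that $T_\ell f$ is defined on $V(a,b;\sigma)\times D(\bo;\rho)$ and $T_\ell f \sim_s T_\ell\hat f$ as $V(a,b;\sigma)\ni t\to 0$, with $T_\ell\hat f\in\OO_b(D(\bo;\rho))\lf t\rf_s$. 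Since $b-a>s\pi$, this says that $T_\ell \hat f$ is $k$-summable in direction $\theta$ as a one-variable formal series with coefficients in the Banach space $E=\OO_b(D(\bo;\rho))$, with $k$-sum $T_\ell f$. The Banach-space-valued version of Proposition \reff{laplace} then yields $\hat F:=T_\ell\hat f=\sum g_n t^n$, its formal Borel transform $G(\bx,\tau)$, its analytic continuation to $D(\bo;\rho)\times V(\theta-\delta,\theta+\delta;\infty)$ with exponential growth of the stated form, and the representation of $T_\ell f$ as a Laplace integral $F(\bx,t)$ on a sector of opening $>\pi/k$ bisected by $\theta$. Because $T_\ell\hat f$ is obtained from $\hat f$ by Lemma \reff{defT-poly}, we have $\hat F(P(\bx))(\bx)=\hat f(\bx)$, and by the defining property of $T_\ell f$, $f(\bx)=(T_\ell f)(P(\bx),\bx)=F(\bx,P(\bx))$ on a suitable restriction of $\Pi$.

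For the converse, given $\hat F$, $G$, $F$ as in (1)--(4), the Banach-space version of Proposition \reff{laplace} shows that $F(\bx,t)$ is the one-variable $k$-sum of $\hat F(t)$ in direction $\theta$, with values in $E=\OO_b(D(\bo;\rho))$, and in particular
$$\sup_{\bx\in D(\bo;\rho)}\left|F(\bx,t)-\sum_{n=0}^{N-1}g_n(\bx)t^n\right|\leq K A^N\Gamma(sN+1)|t|^N$$
on $V=V(\theta-\tfrac{\pi}{2k}-\tfrac{\tilde\delta}{k},\theta+\tfrac{\pi}{2k}+\tfrac{\tilde\delta}{k};r)$. Choose $\tilde\rho\in(0,\rho]$ with $P(D(\bo;\tilde\rho))\subset D(\bo;r)$ and set $f(\bx)=F(\bx,P(\bx))$ on $\Pi':=\Pi_P(\theta-\tfrac{\pi}{2k}-\tfrac{\tilde\delta}{k},\theta+\tfrac{\pi}{2k}+\tfrac{\tilde\delta}{k};\tilde\rho)$. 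Substituting $t=P(\bx)$ yields
$$\left|f(\bx)-\sum_{n=0}^{N-1}g_n(\bx)P(\bx)^n\right|\leq KA^N\Gamma(sN+1)|P(\bx)|^N$$
for $\bx\in\Pi'$. Together with $\hat F(P(\bx))(\bx)=\hat f(\bx)$, condition (1) of Theorem \reff{st-form-gevrey} is satisfied, so $f$ has $\hat f$ as $P$-$s$-Gevrey asymptotic expansion on $\Pi'$. Since $\Pi'$ has opening $s\pi+2\tilde\delta/k>s\pi$ and is bisected by $\theta$, this is exactly $P$-$k$-summability in direction $\theta$, and Lemma \reff{watson-germ} gives uniqueness.

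The only subtle point is that Proposition \reff{laplace} was stated for scalar-valued series, while we use it here with coefficients in the Banach space $E=\OO_b(D(\bo;\rho))$; this extension is standard and proceeds by replacing $|\cdot|$ by $\|\cdot\|_E$ throughout the Borel--Laplace estimates, so I would only remark on it rather than prove it in detail. The rest is careful bookkeeping of the angular openings (the arc $(a,b)$ in $\arg P(\bx)$ corresponds verbatim to the arc in $\arg t$ via $t=P(\bx)$), matching the radii $\sigma,\rho,\tilde\rho$ so that the Laplace-integral domain in $t$ covers $P(\Pi')$, and invoking Theorem \reff{st-form-gevrey} in both directions.
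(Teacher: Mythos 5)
Your proposal is correct and follows exactly the route the paper intends: the text just before Proposition \reff{laplace-poly} says that Theorem \reff{st-form-gevrey} ``allows us to carry over the classical characterization using Laplace integrals (see Proposition \reff{laplace})'', and you carry this out carefully in both directions, including matching the angular opening and the radii via $T_\ell$. Two small points of precision: in the converse direction the given $\hat F$ need not coincide with $T_\ell\hat f$ (its coefficients need not lie in $\Delta_\ell(P)$), so the substituted estimate does not literally verify condition (1) of Theorem \reff{st-form-gevrey}; instead, the partial sums $f_N=\sum_{n<N}g_nP^n$ form a $P$-$s$-asymptotic sequence and the estimate is exactly condition (2) of Definition \reff{desarrolloGevrey}, which suffices (alternatively, one may pass to $T_\ell\hat f$ via Lemma \reff{WDT-Ob}). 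Also, Proposition \reff{laplace} is already stated for $E$-valued series with $E$ a Banach space, so your closing remark about extending it from the scalar case is superfluous.
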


As the concepts of asymptotic expansion in a germ and Gevrey asymptotic expansions in a germ,  
the notion of summability with respect to a germ behaves well under blow-ups. We first show
\begin{propo} \labl{sumable-blu}
Consider a $P$-sector $\Pi=\Pi_P (a,b;r)$ with $b-a>s\pi$, and $\hat{f}\in \hat{\OO}$, such that for every $\xi\in \PC$, the series $\hat{g}_{\xi} = \hat{f}\circ b_{\xi}$ is $P\circ b_{\xi}$-$k$-summable in $\Pi_{\xi}:= \Pi_{P\circ b_{\xi}} (a,b;r_{\xi})$. Then, $\hat{f}$ is $P$-$k$-summable in $\Pi$.

A formal series $\hat{f}\in\hat{\OO}$ is $P$-$k$-summable in a direction $d$ if and only if for every $\xi\in \PC$, $\hat{f}\circ b_{\xi}$ is $P\circ b_{\xi}$-$k$-summable in direction $d$.
\end{propo}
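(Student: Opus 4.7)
The plan is to exploit the compatibility of $P$-$s$-Gevrey asymptotic expansions with blow-ups (Proposition \ref{gevrey-blu}) together with Watson's lemma in a germ (Lemma \ref{watson-germ}) in order to glue the $P \circ b_\xi$-$k$-sums of $\hat f \circ b_\xi$ into a single $P$-$k$-sum of $\hat f$ on $\Pi$. The key observation is that summability yields an opening strictly greater than $s\pi$, which is exactly what Watson's lemma requires to ensure the uniqueness needed for gluing.

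For the first assertion, denote by $h_\xi \in \OO(\Pi_\xi)$ the $P\circ b_\xi$-$k$-sum of $\hat f \circ b_\xi$ for each $\xi \in \PC$, and lift them to the blow-up variety by $H_\xi := h_\xi \circ \phi_\xi$, defined on $\phi_\xi^{-1}(\Pi_\xi)$. To establish that these lifts agree on overlaps, I compare $H_\xi \circ \phi_\zeta^{-1} = h_\xi \circ (\phi_\xi \circ \phi_\zeta^{-1})$ with $h_\zeta$ for $\zeta$ close to $\xi$. By Lemma \ref{gevreyclose}, both functions admit $\hat f \circ b_\zeta$ as their $P \circ b_\zeta$-$s$-Gevrey asymptotic expansion on a common $P \circ b_\zeta$-sector whose opening $b-a$ exceeds $s\pi$; Lemma \ref{watson-germ} then forces them to coincide on this overlap, and hence by analytic continuation on the full intersection of their domains. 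Consequently the family $(H_\xi)_{\xi \in \PC}$ glues to a holomorphic function $H$ on an open subset $\tilde V \subseteq M \setminus E$.

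By the compactness of the exceptional divisor $E \cong \PC$, finitely many charts and radii suffice to guarantee the existence of $r' > 0$ with $b^{-1}(\Pi \cap D(\bo; r')) \subseteq \tilde V$, by the same descent procedure used at the end of the proof of Lemma \ref{lemadivision} and in Proposition \ref{asymp-blu}. Since $b$ restricts to a biholomorphism off $E$, we obtain $f := H \circ b^{-1} \in \OO(\Pi \cap D(\bo; r'))$ satisfying $f \circ b_\xi = h_\xi$ in a neighborhood of each chart's portion of the exceptional divisor. Therefore $f \circ b_\xi$ has $\hat f \circ b_\xi$ as $P \circ b_\xi$-$s$-Gevrey asymptotic expansion for every $\xi \in \PC$, and Proposition \ref{gevrey-blu} yields a $P$-$s$-Gevrey asymptotic expansion of $f$ on $\Pi$, which by uniqueness must equal $\hat f$. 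Since $b - a > s\pi$, this proves $\hat f$ is $P$-$k$-summable in $\Pi$.

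The second assertion reduces to the first one. The ``only if'' direction is immediate from composition with $b_\xi$. For the ``if'' direction, Lemma \ref{gevreyclose} shows that if $\hat f \circ b_\xi$ is $P \circ b_\xi$-$k$-summable in direction $d$ with opening parameter $\delta_\xi > 0$, then for $\zeta$ in some neighborhood $\mathcal V_\xi$ of $\xi$ in $\PC$, the series $\hat f \circ b_\zeta$ is $P \circ b_\zeta$-$k$-summable in direction $d$ with the same opening. Compactness of $\PC$ furnishes a finite subcover $\mathcal V_{\xi_1}, \ldots, \mathcal V_{\xi_n}$, and $\delta := \min_j \delta_{\xi_j} > 0$ serves as a common opening parameter valid for all $\zeta \in \PC$. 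The first assertion, applied to the $P$-sector $\Pi_P(d - s\pi/2 - \delta, d + s\pi/2 + \delta; r)$ for $r > 0$ sufficiently small, then yields the $P$-$k$-summability of $\hat f$ in direction $d$. The main technical obstacle is the descent step at the end of the first assertion: one must ensure that the $H_\xi$-domains, although living entirely in $M \setminus E$, together cover $b^{-1}(\Pi \cap D(\bo; r'))$ for some uniform $r' > 0$, which relies on a careful chart-by-chart bookkeeping analogous to that of Lemma \ref{lemadivision}.
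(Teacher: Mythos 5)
Your proof is correct and follows essentially the same route as the paper: lift the $P\circ b_\xi$-$k$-sums to the blow-up variety, use Lemma \ref{gevreyclose} together with Watson's Lemma \ref{watson-germ} to identify them on overlaps, descend through $b$ by the compactness argument near the exceptional divisor, and conclude via Proposition \ref{gevrey-blu} and uniqueness of the asymptotic expansion. The only (inessential) difference is organizational: the paper proves the directional statement directly, extracting the uniform opening $\varphi=\min_i\varphi_{\xi_i}$ inside the gluing argument and declaring the sector version analogous, whereas you prove the sector version first and deduce the directional one by the same compactness reduction.
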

\begin{proof}
 We only prove the second statement; the proof of the first is analogous. 
Also, we only prove the nontrivial implication.

Suppose that for every  $\xi\in \PC$, the series $\hat{f}\circ b_{\xi}$ is 
$P\circ b_{\xi}$-$k$-summable on some $P\circ b_\xi$-sector 
$\Pi_\xi=\Pi_{P\circ b_\xi} (d-\varphi_\xi,d+\varphi_\xi;r_\xi)$ with $\varphi_\xi>\pi/k$.
This means that for every $\xi\in\PC$, there exist uniquely determined functions 
$g_\xi:\Pi_\xi\to\CC$ that have $\hat f\circ b_\xi$ as their $P\circ b_{\xi}$-$s$-Gevrey 
asymptotic expansion, $s=1/k$. 
%

With the notation of Subsection \reff{normal}, let 
$U_\xi=\phi_\xi^{-1}(D(\bo,\rho_\xi))\subset M$, a neighborhood of $(\xi,\bo)\in M$.
Then $G_\xi=g_\xi\circ \phi_\xi$ are defined on 
$U_\xi\cap \{p\mid\norm{\arg (P(b(p)))-d}<\varphi_\xi\}$. 

Observe that by Lemma \reff{gevreyclose}, the composition $g_\xi\circ(\phi_\xi\circ\phi_\chi^{-1})$
has the ${P\circ b_\chi}$-$s$-Gevrey asymptotic expansion $\hat f\circ b_\chi$ on
$\Pi_{P\circ b_\chi}(d-\varphi_\xi,d+\varphi_\xi,\mu)$ and hence it is the ${P\circ b_\chi}$-$k$-sum 
of $\hat f\circ b_\chi$ on this sector for some small $\mu$ if $\chi$ is sufficiently
close to $\xi$. Without loss in generality we can assume that this is already the case for $\chi\in U_\xi$.
By Watson's Lemma \reff{watson-germ}, we then have
\begin{equation}\labl{coincg}g_\xi\circ(\phi_\xi\circ\phi_\chi^{-1})(\bv)=g_\chi(\bv)
\end{equation}
for $\chi\in U_\xi$ and small $\bv$ with $\norm{\arg (P\circ b_\chi )(\bv)-d}<\min(\varphi_\xi,\varphi_\chi)$.

Using this property with some $\chi\in U_\xi\cap U_\zeta$ and extending it because of the analyticity
of the functions, we obtain
\begin{equation}\labl{coinci}
G_\xi(p)=G_\zeta(p)\mbox{ for }\xi,\zeta\in\PC,\ p\in U_\xi\cap U_\zeta
\mbox{ with }\norm{\arg (P(b(p)))-d}<\min(\varphi_\xi,\varphi_\zeta).
\end{equation}
Using the compactness of $\PC$ as before, a finite number of members
of the family $U_\xi$, $\xi\in\PC$, covers the 
exceptional divisor $E=\PC\times\{\bo\}$ in $M$, say $E\subset U= \cup_{i=1,\ldots,n}U_{\xi_i}$.
Then property (\reff{coinci}) allows us to define $G$ for $p\in U$ with
$\norm{\arg (P(b(p)))-d}<\varphi=\min_i \varphi_{\xi_i}$ by $G(p)=G_\xi(p)$ if
$p\in U_{\xi}$, $\norm{\arg (P(b(p)))-d}<\varphi$.
In turn, we obtain a function $g$ defined for small $\bx\in \CC^d$ with 
$\norm{\arg (P(\bx))-d}<\varphi$ by setting $g(b(p))=G(p)$.

Then $g$ has $\hat f$ as $P$-$s$-Gevrey asymptotic expansion on 
$\Pi_P(d-\varphi,d+\varphi;\rho)$ for sufficiently small positive $\rho$. Indeed,
using property (\reff{coinci}), we find that
for every $\xi\in\PC$, the function  $g\circ b_\xi=G_\xi\circ\phi_\xi^{-1}$ is 
the restriction of
$g_\xi$ to $\Pi_{P\circ b_\xi}(d-\varphi,d+\varphi;\rho)$ and it has
$\hat f\circ b_\xi$ as its $P\circ b_\xi$-$s$-Gevrey asymptotic expansion.
By Theorem  \reff{gevrey-blu}, $g$ has some
$P$-$s$-Gevrey asymptotic expansion $\hat g$ on $\Pi_P(d-\varphi,d+\varphi;\rho)$.
Obviously, we have $\hat g\circ b_\xi=\hat f\circ b_\xi$ for every $\xi\in\PC$
and thus $\hat g=\hat f$. This means that $g$ is the $P$-$k$-sum of $\hat f$ in direction
$d$.
\end{proof}

As for Propositions \reff{gevrey-blu} and \reff{gserie-blu}, the above Proposition can be improved;
again, it is sufficient to assume summability of $\hat f\circ b_\xi$ for a finite number of well chosen
$\xi.$ 
\begin{teorema}\labl{sumable-blu+}Consider $\LL,\,\ell,h,\,\,H_\LL,\mcZ_\LL$ and $\mcZ$ as in Theorem 
\reff{gevrey-blu+}. Let $\hat f\in\hat\OO$ be given such that $\hat f \circ b_\xi$ is 
$P\circ b_\xi$-$k$-summable in the direction $d$ for $\xi\in\mcZ$. 
Then $\hat f$ is also $P$-$k$-summable in direction $d$.
\end{teorema}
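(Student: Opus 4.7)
My plan parallels the upgrade from Proposition \reff{sumable-blu}, following the pattern established in Theorems \reff{gevrey-blu+} and \reff{gserie-blu+}: I would extend the summability hypothesis from $\mcZ$ to all $\chi \in \PC$ and then apply Proposition \reff{sumable-blu}. After a preliminary linear change in $(x_1,x_2)$, I may assume $\infty \in \mcZ$, and I set $\mcZ' = \mcZ \setminus \{\infty\}$.

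The first step is to establish summability in direction $d$ on an open neighborhood of $\mcZ$. For $\xi \in \mcZ$, let $g_\xi$ denote the $P\circ b_\xi$-$k$-sum of $\hat f \circ b_\xi$ on a $P\circ b_\xi$-sector bisected by $d$ with opening $>s\pi$. Lemma \reff{gevreyclose} shows that $g_\xi \circ (\phi_\xi \circ \phi_\chi^{-1})$ realizes $\hat f \circ b_\chi$ as its $P\circ b_\chi$-$s$-Gevrey asymptotic expansion on a nearby $P\circ b_\chi$-sector bisected by $d$ of opening still $>s\pi$, whence Watson's Lemma \reff{watson-germ} identifies this composition as the $k$-sum. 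Hence there is an open neighborhood $W \subset \PC$ of $\mcZ$ on which $\hat f \circ b_\chi$ is $P\circ b_\chi$-$k$-summable in direction $d$.

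The key step is to extend summability across the complementary region $\mcD = D(0,R) \setminus \bigcup_{\chi'\in\mcZ'}\cl(D(\chi',r))$, where $R,r > 0$ are chosen so that $\partial \mcD \subset W$. Since summability implies Gevrey, Theorem \reff{gserie-blu+} first gives that $\hat f$ is $P$-$s$-Gevrey. Writing $\hat f$ in the standard form of Lemma \reff{hatfhatG} as $\hat G(t,\bv) = \sum \hat g_n(\bv) t^n$, with $T_\ell(\hat f \circ b_\chi)(t,\bv) = \sum J_\chi(\hat g_n)(\bv) t^n$, Proposition \reff{laplace-poly} translates the summability of $\hat f \circ b_\chi$ for $\chi \in \partial \mcD$ into analytic continuation with exponential growth of the Borel transform $\sum J_\chi(\hat g_n)(\bv)\tau^n/\Gamma(1+n/k)$ on a sector $V(d-\delta,d+\delta;\infty)$ in $\tau$. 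Since the coefficients $\hat g_n(\bv)$ are rational in $v_1$ with denominators powers of $H_\LL(1,v_1)$, they are holomorphic in $v_1$ throughout $\mcD$, and the maximum modulus principle in $v_1$, applied coefficient-wise in the $\tau$-expansion exactly as in the proofs of Theorems \reff{aserie-blu+} and \reff{gserie-blu+}, transports both the analytic continuation and the exponential bounds from $\partial\mcD$ into the interior of $\mcD$.

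Combining the two regions, $\hat f \circ b_\chi$ is $P \circ b_\chi$-$k$-summable in direction $d$ for every $\chi \in \PC$, and Proposition \reff{sumable-blu} then yields $P$-$k$-summability of $\hat f$ in direction $d$. The main technical obstacle lies in uniform control of the exponential bounds during the $v_1$-maximum modulus step, where the bounds must be preserved in the $\tau$-variable while propagating across $\mcD$; this is precisely where the combined machinery from Theorems \reff{aserie-blu+} and \reff{gserie-blu+} becomes essential, and where careful bookkeeping of the constants $K_n = C A^n \Gamma(sn+1)$ through the passage from the boundary values to the interior must be verified.
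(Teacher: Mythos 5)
Your overall architecture coincides with the paper's: reduce to Proposition \reff{sumable-blu} by first transporting summability to a neighborhood of $\mcZ$ via Lemma \reff{gevreyclose} and Watson's Lemma, then extending across $\mcD=D(0,R)\setminus\bigcup_{\chi\in\mcZ'}\cl(D(\chi,r))$ using the Gevrey statement (Theorem \reff{gserie-blu+}), the standard form of Lemma \reff{hatfhatG}, and the Borel--Laplace characterization of Proposition \reff{laplace-poly}. However, there is a genuine gap at the key step. You claim that ``the maximum modulus principle in $v_1$, applied coefficient-wise in the $\tau$-expansion, transports both the analytic continuation and the exponential bounds'' into the interior of $\mcD$. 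The maximum modulus principle can only transport \emph{estimates} on a function that is already defined; it cannot create the analytic continuation. Coefficient-wise, there is in fact nothing to continue: the coefficients $g_n(\bv)$ (built from the $\hat g_n\in\BB$, rational in $v_1$) are already holomorphic for $v_1\in\mcD$, and bounding them by maximum modulus only yields convergence of the Borel transform $\tilde G(\tau,\bv)=\sum g_n(\bv)\tau^n/\Gamma(sn+1)$ \emph{near} $\tau=0$ with Gevrey-type bounds --- which is precisely the content of Theorem \reff{gserie-blu+} and does not give summability. What summability requires, and what your argument does not supply, is the continuation of $\tilde G(\tau,\bv)$ in $\tau$ into the unbounded sector $V(d-\delta,d+\delta;\infty)$ for $v_1$ in the \emph{interior} of $\mcD$, where the series in $\tau$ no longer converges and where no a priori continuation is available.

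The paper closes exactly this gap with a two-variable (Hartogs-type) argument: $\tilde G$ is holomorphic on the union of $W\times A$ (where $W=D(0,\rho)\cup V(d-\delta,d+\delta;\infty)$ and $A$ is a neighborhood of the boundary circles of $\mcD$, coming from summability of $\hat f\circ b_\xi$ for $\xi$ near $\partial\mcD$) and of $D(0,\rho)\times\Omega'$ (small $\tau$, full $\bv$-domain); Cauchy's formula in the $v_1$-variable over the boundary circles of $\mcD$ then expresses $\tilde G(\tau,\bv)$ for interior $v_1$ through its values on $A$ alone, hence provides the continuation to $W\times\Omega'$. Only \emph{after} this step is the maximum modulus principle in $v_1$ invoked, to carry the exponential growth bound $\norm{\tilde G(\tau,\bv)}\leq K\exp(L\norm{\tau}^k)$ from the boundary circles to the interior. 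Your proof proposal would be complete if you inserted this continuation argument (Hartogs' lemma or the explicit Cauchy-formula device) between the boundary data and the maximum modulus step; as written, the interior continuation is asserted but not established.
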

\begin{coro}Consider $\LL,\,\ell,h,\,\,H_\LL,\mcZ_\LL$ and $\mcZ$ as above and $\hat f\in\hat\OO$.
If $\hat f \circ b_\xi$ is  $P\circ b_\xi$-$k$-summable for $\xi\in\mcZ$
then $\hat f$ is $P$-$k$-summable.\end{coro}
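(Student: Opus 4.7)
The plan is to deduce this corollary almost immediately from Theorem \reff{sumable-blu+}, using only the definition of summability (Definition \reff{P-sumable}) and the fact that $\mcZ$ is finite.

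First I would unpack the hypothesis. By assumption, for each $\xi\in\mcZ$, the series $\hat f\circ b_\xi$ is $P\circ b_\xi$-$k$-summable, which by Definition \reff{P-sumable} means that it is $P\circ b_\xi$-$k$-summable in every direction $d\in\RR$ with the exception of a finite set $\Sigma_\xi\subset\RR/2\pi\ZZ$ of singular directions. Since $\mcZ$ is a finite subset of $\PC$ (it is either the finite zero set $\mcZ_\LL$ of $H_\LL$ on $\PC$, or a single point), the union $\Sigma:=\bigcup_{\xi\in\mcZ}\Sigma_\xi$ is still finite mod $2\pi$.

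Next, I would fix an arbitrary direction $d\in\RR$ with $d\not\in\Sigma$. For such a $d$, every $\hat f\circ b_\xi$ with $\xi\in\mcZ$ is $P\circ b_\xi$-$k$-summable in the direction $d$. Theorem \reff{sumable-blu+} then applies and yields directly that $\hat f$ itself is $P$-$k$-summable in the direction $d$. Thus $\hat f$ is $P$-$k$-summable in every direction $d\in\RR$ outside the finite set $\Sigma$, which by Definition \reff{P-sumable} means exactly that $\hat f$ is $P$-$k$-summable.

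There is essentially no obstacle here: the entire content lives in Theorem \reff{sumable-blu+}, and the corollary is just the observation that a finite union of finite sets of singular directions is finite. For this reason the proof can be written in a couple of lines, with no extra estimates or constructions needed.
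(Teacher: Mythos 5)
Your argument is correct and is exactly the paper's own proof: by Theorem \reff{sumable-blu+}, the singular directions of $\hat f$ are contained in the union of the (finitely many) singular directions of the finitely many series $\hat f\circ b_\xi$, $\xi\in\mcZ$, so $\hat f$ is $P$-$k$-summable. No gaps.
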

\begin{proof}By the Theorem, the singular directions of $\hat f$, i.e.\ the directions $d$ for which it is not
$P$-$k$-summable, are contained in the finite union of the sets of exceptional
directions for $\hat f \circ b_\xi$, $\xi\in\mcZ$.\end{proof}
\begin{proof}[Proof of the Theorem] 
We can again assume that $\infty\in\mcZ$; otherwise we proceed as in the beginning of the proof of
Theorem \reff{asymp-blu+}.

As summable series are also Gevrey, application of Theorem \reff{gserie-blu+} yields that $\hat f$
is a $P$-$s$-Gevrey series, $s=1/k$. As a consequence $\hat f\circ b_\xi$ are $P\circ b_\xi$-$s$-Gevrey series
for every $\xi\in\PC$.

By Lemma \reff{hatfhatG} there exists a formal series
$\hat G(t,\bv)=\sum_{n=0}^\infty \hat g_n(\bv)t^n\in\BB\lf t\rf $, 
$\BB=\CC[v_1,1/H_\LL(1,v_1)]\lf \bv'\rf $
with the following property. For all $\xi\in\CC\setminus\mcZ_\LL$, the series 
$$T_\ell(\hat f\circ b_\xi)(t,\bv)=\sum_{n=0}^\infty \hat f_{n\xi}(\bv)t^n\in\Delta_\ell(P\circ b_\xi)\lf t\rf $$
of Lemma \reff{defT-poly} applied with $P\circ b_\xi$ in place of $P$ satisfies
\begin{equation}\labl{hatfn2}\hat f_{n\xi}(\bv)=J_\xi(\hat g_n)(\bv)\mbox{ for }n\in\NN.
\end{equation}
Since all $\hat f\circ b_\xi$ are $P\circ b_\xi$-$s$-Gevrey series, we obtain from Definition/Proposition \reff{PsGevrey} that the series
$\hat f_{n,\xi}(\bv)$ are convergent for all $n$ and $\xi\in\CC\setminus \mcZ_\LL$, 
that for every $\xi\in\CC\setminus \mcZ_\LL$
there exists $\rho_\xi>0$ such that $\hat f_{n,\xi}(\bv)$ defines an element of $\OO_b(D(\bo,\rho_\xi))$
that we denote $f_{n,\xi}$ and there exist $K_\xi,A_\xi>0$ such that for all $n$
\begin{equation}\labl{majfn}\norm{f_{n,\xi}(\bv)}\leq K_\xi\, A_\xi^n\,\Gamma(sn+1)\mbox{ for }\norm\bv<\rho_\xi.
\end{equation}
We can now define $g_n$ by $g_n(\bv)=f_{n\xi}(v_1-\xi,\bv')$ if $\norm{v_1-\xi}$ and $\norm{\bv'}$ are sufficiently
small. The value of $g_n(\bv)$ does not depend on $\xi$ because of (\reff{hatfn2}).
This defines functions $g_n$ on some common neighborhood $\Omega$ of $(\CC\setminus\mcZ_\LL)\times\{\bo\}\subset\CC^d$.
By (\reff{majfn}), the formal Borel transform $\tilde G(\tau,\bv)=\sum_{n=0}^\infty g_n(\bv)\tau^n/\Gamma(sn+1)$
defines a holomorphic function $\tilde G$ on some neighborhood of $\Omega\times\{0\}$ in $\CC^{d+1}$.

By assumption and Lemma \reff{gevreyclose}, $\hat f\circ b_\xi$ is $P\circ b_\xi$-summable for all $\xi$
large and $\xi$ close to $\mcZ'=\mcZ\setminus\{\infty\}$.
Consider again the domain 
$\mcD=D(0,R)\setminus\bigcup_{\chi\in\mcZ'}\cl(D(\chi,r))$
where $r,R$ were chosen such that summability holds for $\norm\xi>R/2$ or
$\dist(\xi,\mcZ')<2r$.
By Proposition \reff{laplace-poly}, (\reff{hatfn2}) and compactness, we can find positive $\rho,\delta$ such that
$\tilde G(\tau,\bv)$ can be continued analytically  to the set $W\times A$ where $W$ is the set of all
$\tau$ with $\tau\in D(0,\rho)\cup V(d-\delta,d+\delta,\infty)$, $A$ the union of ``tori'' of all $\bv$
with $\dist(v_1,\partial \mcD)<\rho$, $\norm{\bv'}<\rho$.
Also by Proposition \reff{laplace-poly}, $\tilde G$ has at most exponential growth as $\tau\to\infty$: There exist $K,L$ such that
$\norm{\tilde G(\tau,\bv)}\leq K\exp(L\norm\tau^k)$ for $\bv\in A$, $\tau\in W$, $\norm\tau\geq1$.

We can assume that also the set $\Omega'$ of all $\bv$ with $\dist(v_1,\mcD)<\rho$ and $\norm{\bv'}<\rho$ is contained in
$\Omega$ and that $\tilde G$ is holomorphic on  $D(0,\rho)\times\Omega'$.
Now $\tilde G$ is holomorphic on the union of $W\times A$ and of $D(0,\rho)\times\Omega'$.
This is a ``U-shaped'' domain and hence Hartogs' Lemma can be applied. It yields that $\tilde G$ can be continued analytically
to $W\times \Omega'$. 

{For the convenience of the reader we give a short proof using Cauchy's formula.
For $\tau\in D(0,\rho)$ and $\bv\in\Omega'$, we have
\begin{equation}\labl{cauchy7.2}
2\pi i\, \tilde G(\tau,\bv)=\left(\oint_{\norm z=R+\tilde\rho}-\sum_{\chi\in\mcZ'}\oint_{\norm{z-\chi}=r-\tilde\rho}\right)
\frac{\tilde G(\tau,(z,\bv'))}{z-v_1}\,dz
\end{equation}
for $\tilde\rho\in]0,\rho[$ sufficiently close to $\rho$ \footnote{More precisely, we must have
$\rho>\tilde \rho>\dist(v_1,\mcD)$.}. As the right hand side of (\reff{cauchy7.2}) only
uses values $\tilde G(\tau,(z,\bv'))$ where $(z,\bv')\in A$, it is defined for {\em any}
$\tau\in W$, hence the right hand side can be continued to an analytic function on  $W\times \Omega'$.
As they coincide except for a constant factor on some open subset of  $W\times \Omega'$, the same is true for $\tilde G$.
}

Now the maximum modulus principle applied in the variable $v_1$
permits to carry over the exponential estimate of $\tilde G$ to $W\times\Omega'$:  we
have $\norm{\tilde G(\tau,\bv)}\leq K\exp(L\norm\tau^k)$ for $\bv\in\Omega'$ and  $\tau\in W$, $\norm\tau\geq1$.

As  before, this implies that $\hat f\circ b_\xi$ is $P\circ b_\xi$-$k$-summable in direction $d$
for all $\xi\in\mcD$. As this is already known for the remaining $\xi\in\PC$, we have it for all
$\xi\in\PC$. We conclude with Proposition \reff{sumable-blu}.
\end{proof}

\green{As for $P$- and $P$-$s$-Gevrey asymptotic expansion, we complete the theory of
$P$-$k$-summability with statements concerning the compatibility with
ramification.
\begin{propo} \labl{sumable-ram}
Consider a $P$-sector $\Pi=\Pi_P (a,b;r)$ with $b-a>s\pi$, $k=1/s$, $\hat{f}\in \hat{\OO}$ and a positive integer $m$.
 
If the series $\hat{g}_{m} = \hat{f}\circ r_m$ is $P\circ r_m$-$k$-summable in $\Pi_m:= \Pi_{P\circ r_m} (a,b;\tilde r)$
then $\hat{f}$ is $P$-$k$-summable in $\Pi$.

$\hat{f}$ is $P$-$k$-summable in some direction $d$ 
if and only if $\hat{f}\circ r_m$ is $P\circ r_m$-$k$-summable in direction $d$.

$\hat{f}$ is $P$-$k$-summable
if and only if $\hat{f}\circ r_m$ is $P\circ r_m$-$k$-summable.
\end{propo}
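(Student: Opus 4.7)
The plan is to exploit the factorization: the ramification $r_m$ satisfies $r_m \circ R = r_m$ where $R:\bv \mapsto (e^{2\pi i/m}v_1, \bv')$ is the cyclic rotation of order $m$. Consequently $P \circ r_m$, $\hat f \circ r_m$, and the $(P \circ r_m)$-sector $\Pi_m = \Pi_{P\circ r_m}(a,b;\tilde r)$ are all $R$-invariant (the last because $|e^{2\pi i/m}v_1|=|v_1|$ and $\arg(P\circ r_m)$ is $R$-invariant). Uniqueness from Watson's Lemma \reff{watson-germ} will therefore force any $(P\circ r_m)$-$k$-sum of $\hat f \circ r_m$ on $\Pi_m$ to be $R$-invariant, and one can then descend through $r_m$ back to a function on a $P$-sector, to which Proposition \reff{gevrey-ram} applies.

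Concretely, for the nontrivial implication of the first assertion, let $g \in \OO(\Pi_m)$ be the $(P \circ r_m)$-$k$-sum of $\hat f \circ r_m$, put $s=1/k$, and note $b-a>s\pi$. Since $\Pi_m$ is $R$-invariant, $g \circ R$ is also defined on $\Pi_m$ and has $(\hat f \circ r_m) \circ R = \hat f \circ r_m$ as its $(P \circ r_m)$-$s$-Gevrey asymptotic expansion. Lemma \reff{watson-germ} applied to $g - g \circ R$ yields $g = g \circ R$, so $g$ is $R$-invariant and there exists a uniquely determined holomorphic $f$ on $r_m(\Pi_m) \supset \Pi_P(a,b;r_0)$, with $r_0 \leq \min(\tilde r^m, \tilde r)$, such that $f \circ r_m = g$. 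Proposition \reff{gevrey-ram} then produces a $P$-$s$-Gevrey asymptotic expansion $\hat h$ of $f$ satisfying $\hat h \circ r_m = \hat f \circ r_m$; the injectivity of the map $\hat h \mapsto \hat h \circ r_m$ on $\hat\OO$ (one simply reads off the coefficients of the monomials $v_1^{m\alpha_1}(\bv')^{\bal'}$) gives $\hat h = \hat f$. Hence, after possibly shrinking the radius, $\hat f$ is $P$-$k$-summable on $\Pi$.

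The equivalence in a given direction $d$ is now straightforward: the nontrivial implication follows from the first assertion applied to a $P$-sector bisected by $d$ of opening greater than $s\pi$; the converse is the trivial substitution $\bx = r_m(\bv)$ in the Gevrey estimates of Definition \reff{desarrolloGevrey}, which turns a $P$-$s$-Gevrey expansion of $f$ into a $(P \circ r_m)$-$s$-Gevrey expansion of $f \circ r_m$. The third assertion is then a corollary: a direction $d$ is singular for $\hat f$ with respect to $P$ if and only if it is singular for $\hat f \circ r_m$ with respect to $P \circ r_m$, so both series have finitely many singular directions mod $2\pi$ simultaneously. There is no substantial obstacle in this argument; the whole proof hinges on the uniqueness provided by Watson's Lemma to recover $R$-invariance of the sum, with all other pieces (descent through $r_m$, Proposition \reff{gevrey-ram}, injectivity of $r_m^*$) being standard and already available.
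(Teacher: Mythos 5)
Your proof is correct, and it is in fact more complete than the argument the paper records: the paper disposes of this proposition in two lines, citing the definition of summability in a sector together with Proposition \reff{gserie-ram}, which by itself only yields that $\hat f$ is a $P$-$s$-Gevrey series; the substantive point --- actually producing the $P$-$k$-sum of $\hat f$ --- is precisely the descent through $r_m$ that you carry out. Your route to the $R$-invariance of the sum $g$ (Watson's Lemma \reff{watson-germ}, legitimate because $b-a>s\pi$) differs slightly from the device the paper uses in its proofs of Propositions \reff{asymp-ram} and \reff{aserie-ram}, namely averaging $\frac1m\sum_{j=0}^{m-1}g\circ R^j$ over the rotations; the averaging works without invoking uniqueness (hence also on narrow sectors), but on sectors of opening greater than $s\pi$ the two are equivalent and your version is cleaner. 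The remaining steps --- descent of the $R$-invariant $g$ through the $m$-fold covering $r_m\colon\Pi_m\to\Pi_P(a,b;(\tilde r^m,\tilde r,\dots,\tilde r))$ (possible since $v_1\neq0$ on $\Pi_m$), identification of the expansion via Proposition \reff{gevrey-ram} together with the injectivity of $\hat h\mapsto\hat h\circ r_m$, the trivial substitution for the converse in a direction, and the comparison of singular directions for the last assertion --- are all sound. The only caveat is that you obtain the sum on a $P$-sector of possibly smaller radii than those of $\Pi$, which is consistent with the paper's conventions on shrinking radii throughout.
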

\begin{proof} The first statement follows immediately from the definition of summability in a sector and
Proposition \reff{gserie-ram}. The remaining two then follow from the definitions of summability in a direction
respectively summability.
\end{proof}
}

\subsection{Consequences and further properties}

Theorem \reff{ramissibuya-poly} may be used, as in
classical asymptotics, to show properties about the composition
of functions having $P$-$s$-Gevrey asymptotic expansion, and
analytic functions. More precisely:

\begin{teorema}
Consider $P(\bx )$ as before, with $P(\bo)=0$, and
$\Pi=\Pi_P(a,b;r)$ a $P$-sector. Let $f_1(\bx),\ldots , f_n(\bx)
\in \OO (\Pi)$ be functions having series $\hat{f}_1(\bx),\ldots
, \hat{f}_n (\bx)$, respectively, as $P$-$s_i$-asymptotic
expansions, $i=1,\ldots ,n$, with $\hat{f}_i (\bo)=0$. Let $D$ be
a disk around the origin in $\CC^{d+n}$, and $F(\bx,\by
)=F(x_1,\ldots , x_d,y_1,\ldots , y_n)\in \OO (D)$. Then, if
$s=\max \{ s_1,\ldots ,s_n\}$, we have:

\be
\item $F(\bx,f_1 (\bx),\ldots , f_n (\bx))$ is defined in a
$P$-sector $\tilde{\Pi}=\Pi_P(a,b;\tilde{r})$, with
$\tilde{r}\leq r$ small enough.
\item $F(\bx,f_1(\bx), \ldots , f_n (\bx))$ has a $P$-$s$-Gevrey
asymptotic expansion in $\tilde{\Pi}$.
\ee
\end{teorema}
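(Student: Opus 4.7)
My strategy is to combine Proposition \reff{existcover} (to extend the $f_i$ to $P$-$k_i$-quasifunctions on a common $P$-cover) with the $P$-$s$-Gevrey Ramis--Sibuya Theorem \reff{ramissibuya-poly}. To prove (1), I apply Theorem \reff{st-form-gevrey} with $N=1$: writing $T_\ell \hat f_i = \sum_n g_n^{(i)} t^n$, one has $\norm{f_i(\bx) - g_0^{(i)}(\bx)} \leq K_i A_i \norm{P(\bx)}$ on $\Pi \cap D(\bo;\rho)$; since $g_0^{(i)}(\bo) = \hat f_i(\bo) = 0$ (as $P(\bo) = 0$ kills all other summands in $\hat f_i = \sum_n g_n^{(i)} P^n$) and $g_0^{(i)}$ is holomorphic at $\bo$, I get $f_i(\bx) \to 0$ as $\bx \to \bo$ inside $\Pi$. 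Shrinking $\tilde r \leq r$ forces $(\bx, f_1(\bx), \ldots, f_n(\bx)) \in D$ throughout $\tilde \Pi = \Pi_P(a,b;\tilde r)$, which is (1).

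For (2), I apply Proposition \reff{existcover} to each $f_i$ to obtain a $P$-cover together with a $P$-$k_i$-quasifunction extending $f_i$; after taking a common refinement, all $n$ extensions $f_{i,j}$ live on a single $P$-cover $\{\Pi_j\}_{1 \leq j \leq m}$ of some $D(\bo;\rho) \setminus \{P = 0\}$, with $\Pi_1 \subset \Pi$ and $f_{i,1} = f_i\mid_{\Pi_1}$. Each $f_{i,j}$ has $\hat f_i$ as its $P$-$s_i$-Gevrey expansion on $\Pi_j$, so the argument of (1) applied on each $\Pi_j$ lets me further reduce $\rho$ so that $(\bx, f_{1,j}(\bx), \ldots, f_{n,j}(\bx)) \in D$ for every $j$ and every $\bx \in \Pi_j \cap D(\bo;\rho)$. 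Setting $F_j(\bx) = F(\bx, f_{1,j}(\bx), \ldots, f_{n,j}(\bx))$, the mean value inequality on overlaps yields
$$
\norm{F_{j_1}(\bx) - F_{j_2}(\bx)} \leq \sum_{i=1}^n M_i\, \norm{f_{i,j_1}(\bx) - f_{i,j_2}(\bx)},
$$
where $M_i$ bounds $\norm{\partial F/\partial y_i}$ on the relevant compact subset of $D$. The quasifunction estimate gives $\norm{f_{i,j_1} - f_{i,j_2}} = O(\exp(-\gamma_i/\norm{P(\bx)}^{1/s_i}))$, and since $s \geq s_i$ implies $\norm{P}^{1/s_i} \leq \norm{P}^{1/s}$ whenever $\norm{P} \leq 1$, this is dominated by $\exp(-\gamma_i/\norm{P}^{1/s})$. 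Taking $\gamma = \min_i \gamma_i > 0$, I conclude $\norm{F_{j_1} - F_{j_2}} = O(\exp(-\gamma/\norm{P(\bx)}^{1/s}))$ on overlaps, so Theorem \reff{ramissibuya-poly} produces a common $P$-$s$-Gevrey asymptotic expansion $\hat H$ for all $F_j$; in particular, $F_1 = F(\cdot, f_1, \ldots, f_n)$ has $\hat H$ as its $P$-$s$-Gevrey expansion on $\tilde \Pi$.

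It remains to identify $\hat H$ with the formal composition $F(\bx, \hat f_1, \ldots, \hat f_n) \in \hat\OO$, which is well defined as each $\hat f_i \in \mathfrak{m}$. Truncating the Taylor expansion of $F$ in $\by$ at order $N$, the polynomial substitution $F^{(N)}(\bx, f_1, \ldots, f_n)$ has $P$-$s$-Gevrey expansion $F^{(N)}(\bx, \hat f_1, \ldots, \hat f_n)$ by the stability of $P$-$s$-Gevrey asymptotics under products (noted after Theorem \reff{st-form-gevrey}); the remainder $F - F^{(N)}$ is controlled by $O(\norma{(f_1,\ldots,f_n)}^N)$, which combined with $f_i(\bx) \to 0$ allows matching both expansions modulo $P^N \hat\OO$ for every $N$. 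The main obstacle is precisely this identification: the Taylor expansion of $F$ in $\by$ is genuinely infinite and $P$-asymptotics in a germ does not a priori commute with infinite sums, so care is needed with the tail. The overlap exponential estimate is the only subtle inequality, and it works precisely because $s = \max s_i$ yields a required decay that is weaker than the $s_i$-decay supplied by each quasifunction.
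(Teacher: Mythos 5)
Your proof is correct and follows essentially the same route as the paper's: embed the $f_i$ into quasifunctions on a common $P$-cover via Proposition \reff{existcover}, bound the differences of the composed functions on overlaps by a mean-value (integral) estimate together with the exponential smallness of the quasifunction differences (the comparison of exponents for $s=\max_i s_i$ being the same point the paper handles by upgrading each $s_i$ to $s$ at the outset), and conclude with Theorem \reff{ramissibuya-poly}. The final identification of the resulting expansion with the formal composition $F(\bx,\hat f_1,\ldots,\hat f_n)$, which you only sketch, is not actually required by the statement, which asserts only the existence of a $P$-$s$-Gevrey asymptotic expansion.
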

\begin{proof}
The conditions $\hat{f}_i (\bo )=0$ imply that
$\lim\limits_{\Pi\ni\bx \rightarrow \bo} f_i(\bx) =0$ and the
first statement follows. As a $P$-$s_i$-Gevrey asymptotic expansion 
also is a  $P$-$s$-Gevrey asymptotic expansion, we can assume that
all $s_i=s$. For simplicity of notation, we combine
$\bof(\bx)=(f_1(\bx),\ldots,f_n(\bx))$, $\hat \bof(\bx)=(\hat f_1(\bx),\ldots,\hat f_n(\bx))$.
By Proposition \reff{existcover}, there exist a $P$-cover
$\{ \Pi=\lista{\Pi}{r} \}$, with $\Pi_i= \Pi_P (a_i,b_i; \tilde{r})$,
and  functions $\bof_{i} \in \OO (\Pi;\CC^n)$, $1\leq
i\leq r$, such that $\bof_{i}(\bx)$ has $\hat{\bof}
(\bx)$ as $P$-$s$-asymptotic expansion. Consider the functions
$g_i(  \bx)=F(\bx,\bof(\bx))$, defined
on $\Pi_P (a_i,b_i;\tilde{r})$, reducing $\tilde{r}$ again if
necessary. 
If $\Pi_{j_1}\cap \Pi_{j_2}\neq \emptyset$ then
$$(g_{j_1}- g_{j_2})(\bx )=\bH(\bx)\cdot(\bof_{j_1}-\bof_{j_2})(\bx)\mbox{ where }
    \bH(\bx)=\int_0^1\frac{\partial F}{\partial \by}(\bx,\tau\bof_{j_1}(\bx)+(1-\tau)\bof_{j_2}(\bx))\,d\tau.$$
As with $k=1/s$
$$
\norm{\bof_{j_1} (\bx )-\bof_{j_2} (\bx )} \leq K\exp \left( -\frac{A}{\norm{P(\bx )}^k} \right)
$$
for certain $K$, $A>0$, we obtain that
$$
\norm{g_{j_1} (\bx )-g_{j_2} (\bx )} \leq \tilde{K}\exp \left( -\frac{A}{\norm{P(\bx )}^k} \right)
$$
for appropriate $\tilde{K}>0$, reducing radii if necessary. These estimates and Theorem \ref{ramissibuya-poly} show 
that every $g_i( \bx) $, $i=1,\ldots,r$, has a $P$-$s$-asymptotic expansion, and the result follows.
\end{proof}

\begin{nota}
This result provides an alternative proof that the product of functions having $P$-$s$-Gevrey asymptotic expansions has a 
$P$-$s$-Gevrey asymptotic expansion. Indeed, just take  $F(\bx,y_1,y_2)= y_1\cdot y_2$ in the above Theorem.
\end{nota}

Concerning the partial derivatives of a function having $P$-$s$-Gevrey asymptotic expansion, 
we can proceed as above first embedding the function in a $P$-$k$-quasifunction using Proposition
\reff{existcover} and applying the Ramis-Sibuya Theorem \reff{ramissibuya-poly} to the derivatives.
For this approach, we have to show that, if $f(\bx )\in \OO (\Pi_P )$ verifies an estimate
\begin{equation}
|f(\bx ) | \leq C\cdot \exp \left( -\frac{A}{|P(\bx )|^k} \right), \labl{cotaexpP}
\end{equation}
then their derivatives satisfy similar estimates. 

To show this, consider $\Pi_P=\Pi_P(a,b; r)$, $f\in \OO (\Pi _p)$ verifying (\reff{cotaexpP}).  
Lemma \reff{main-step} shows the existence of a bounded holomorphic function $F: V(a,b;\sigma )\times D(\bo; \rho )\rightarrow \CC$ with $F(P(\bx ), \bx )=f(\bx )$ and
$$
F(t,\bx )\leq \frac{CL}{|t|} \cdot \exp \left( -\frac{A}{|t|^k} \right).
$$
Choosing some positive $A'<A$, there exist $C'>0$ such that
$$
|F(t,\bx )| \leq C'\exp \left( -\frac{A'}{|t|^k} \right).
$$
Taking derivatives, we obtain 
$$
\frac{\partial f}{\partial x_i } (\bx )= \frac{\partial F}{\partial t} (P(\bx ), \bx ) \cdot \frac{\partial P}{\partial x_i} (\bx )+\frac{\partial F}{\partial x_i}(P(\bx ), \bx ).
$$
Now choose $a<\alpha<\beta<b$, $0<\sigma'<\sigma$ and $0<\rho'<\rho$. Applying Cauchy's formula for  the derivative of a holomorphic function,
we obtain in a well known way the existence of certain positive $D,B$ such that
$$\norm{\frac{\partial F}{\partial t} (t, \bx )}\leq  D\exp \left( -\frac{B}{|t|^k} \right)\mbox{ and }
\norm{\frac{\partial F}{\partial x_i}(t,\bx )}\leq  D\exp \left( -\frac{B}{|t|^k} \right)$$
for $(t,\bx)\in V(\alpha,\beta; \sigma') \times D(\bo; \rho')$.
We obtain the existence of positive $\tilde{C}$, $\tilde{A}$ such that
$$
\norm{\frac{\partial f}{\partial x_i} (\bx )} \leq \tilde{C}\cdot \exp \left( -\frac{\tilde{A}}{|P(\bx )|^k} \right).
$$
as needed. Thus we have proved 
\begin{propo}
If $\Pi= \Pi_P (a,b; r)$ is a $P$-sector and $f\in \OO_b (\Pi )$ has a $P$-$s$-Gevrey asymptotic expansion, then for $\alpha$, $\beta$ such that $a<\alpha<\beta<b$, $\frac{\partial f}{\partial x_i} (\bx )$ has a $P$-$s$-Gevrey asymptotic expansion in $\Pi_P (\alpha,\beta; r)$.
\end{propo}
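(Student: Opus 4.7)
The plan is to combine the quasifunction representation of $f$ (Proposition~\reff{existcover}), the exponential bound for derivatives already established in the paragraph preceding the proposition, and the generalized Ramis--Sibuya Theorem~\reff{ramissibuya-poly}. First, I would apply Proposition~\reff{existcover} to $f$, obtaining some $\rho>0$, a $P$-cover $\Pi_1,\ldots,\Pi_m$ of $D(\bo,\rho)\setminus\{P=0\}$ with $\Pi_1=\Pi\cap D(\bo,\rho)$, and a $P$-$k$-quasifunction $(f_1,\ldots,f_m)$ with $k=1/s$ and $f_1=f|_{\Pi_1}$. By the discussion there, each $f_j$ itself has a $P$-$s$-Gevrey asymptotic expansion (with common right-hand side), and on each nonempty intersection $\Pi_{j_1}\cap\Pi_{j_2}$ there are constants $C,A>0$ with
\[
\norm{f_{j_1}(\bx)-f_{j_2}(\bx)}\leq C\exp\bigl(-A/\norm{P(\bx)}^{k}\bigr).
\]

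Next, I would set $g_j:=\partial f_j/\partial x_i$ on $\Pi_j$ for $j=1,\ldots,m$. To apply Ramis--Sibuya to $(g_1,\ldots,g_m)$ I must verify that on overlaps the differences $g_{j_1}-g_{j_2}=\partial(f_{j_1}-f_{j_2})/\partial x_i$ are again exponentially small of the same type, on slightly smaller subsectors. This is exactly the estimate carried out just above the proposition: given the exponential bound for $f_{j_1}-f_{j_2}$ on $\Pi_{j_1}\cap\Pi_{j_2}$, apply Lemma~\reff{main-step} to obtain a holomorphic $F$ on $V(a_{j_1j_2},b_{j_1j_2};\sigma)\times D(\bo;\rho)$ satisfying $F(P(\bx),\bx)=(f_{j_1}-f_{j_2})(\bx)$ and $\norm{F(t,\bx)}\leq C'\exp(-A'/\norm{t}^{k})$ for any $A'<A$. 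Cauchy's formula applied to $F$ in each variable on slightly shrunken polydisks then yields constants $\tilde C,\tilde A>0$ and a proper subsector $\Pi'_{j_1 j_2}\subset\Pi_{j_1}\cap\Pi_{j_2}$ with
\[
\norm{g_{j_1}(\bx)-g_{j_2}(\bx)}\leq \tilde C\exp\bigl(-\tilde A/\norm{P(\bx)}^{k}\bigr)\quad \text{on }\Pi'_{j_1 j_2}.
\]

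To finish, I would choose proper subsectors $\Pi'_j\subset\Pi_j$ for $j=1,\ldots,m$ (shrinking opening and radius slightly, which preserves the $P$-cover property since the original opens cover $D(\bo,\rho)\setminus\{P=0\}$ and only finitely many are involved) so that $\Pi'_1=\Pi_P(\alpha,\beta;r')\cap D(\bo,\rho')$ for some $r'\leq r$, $\rho'>0$, and so that the above exponential estimates hold on all nonempty $\Pi'_{j_1}\cap\Pi'_{j_2}$. Applying Theorem~\reff{ramissibuya-poly} to $(g_1,\ldots,g_m)$ then produces a common $P$-$s$-Gevrey asymptotic expansion for the $g_j$ on the $\Pi'_j$; specializing to $g_1=\partial f/\partial x_i|_{\Pi'_1}$ yields the assertion on $\Pi_P(\alpha,\beta;r')$, which (after harmlessly relabelling $r'$ as $r$) is the claimed statement.

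The only delicate step is the derivative estimate on overlaps, and this has essentially been established in the preceding paragraph of the paper; the remaining work is the bookkeeping needed to keep the shrunken sectors covering a punctured neighborhood of $\bo$ so that Ramis--Sibuya applies, which is routine.
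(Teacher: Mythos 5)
Your proposal is correct and follows essentially the same route as the paper: embed $f$ in a $P$-$k$-quasifunction via Proposition \ref{existcover}, use Lemma \ref{main-step} together with Cauchy's formula and the chain rule to show that exponentially flat differences have exponentially flat partial derivatives on slightly shrunken $P$-sectors, and then apply the generalized Ramis--Sibuya Theorem \ref{ramissibuya-poly} to the derivatives. The shrinking of the angular opening you perform is exactly why the statement passes from $(a,b)$ to $(\alpha,\beta)$, so no gap remains.
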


\newcommand{\dif}[2]{\frac{\partial #1}{\partial #2}}
\newcommand{\gk}[1]{\left(#1\right)}

\section{Examples}\labl{examp}
\subsection{First example}We consider the following singular ordinary differential equation
depending upon a small parameter $\eps$
\newcommand{\ddif}[2]{{\frac{d\,#1}{d\,#2}}}
\begin{equation}\labl{ode1}
P(x,\eps)^2 \ddif yx = (P'(x,\eps)+A(x,\eps))y+B(x,\eps)+y^2f(x,\eps,y),
\end{equation}
where $P(x,\eps)=x^n+...$ is a homogeneous polynomial having $n\geq2$ simple roots if $\eps\neq0$, $P'=\ddif Px$,
$A,B,f$ are holomorphic near the origin in $\CC^2$ resp.\ $\CC^3$, $A$ has a homogeneous valuation
$w(A)\geq n$ and $w(B)\geq 2n-1$ if $f$ is not identically 0. 
If $\eps\neq0$ then (\reff{ode1}) has $n$ finite irregular singular points of Poincar\'e rank 1. If 
$\eps=0$, then $x=0$ is an irregular singular point of Poincar\'e rank $n$ after the reduction
$y=x^{n-1}z$. This means that our equation has $n$ coalescing irregular singular points.
\begin{teorema}Suppose that (\reff{ode1}) has a formal solution $y(x,\eps)\in\CC[[x,\eps]]$. Then it is
$P$-1-summable in every direction $d\not\equiv0\mod2\pi\ZZ$.
\end{teorema}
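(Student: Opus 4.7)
The strategy is a blow-up reduction: first verify that $\hat y$ is $P$-$1$-Gevrey, then apply Theorem \reff{sumable-blu+} to reduce $P$-summability to summability with respect to the blown-up germ $P\circ b_\xi$ at the finitely many tangent directions of $\{P=0\}$. At each such point, $P\circ b_\xi$ is (up to a unit) the monomial $v_1 v_2^n$, and summability in this monomial will follow from the theory of doubly-singular ODEs developed in \cite{CDMS}.

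\textbf{Step 1 (Formal Gevrey).} Write $P(x,\eps)=\prod_{i=1}^n(x-\alpha_i\eps)$. To show that $\hat y$ is $P$-$1$-Gevrey I would expand $\hat y=\sum_{n\geq 0}g_n(x,\eps)P(x,\eps)^n$ by Corollary \reff{coro-P-serie} (with some injective $\ell$), with $g_n\in\Delta_\ell(P)$, and substitute into (\reff{ode1}). Comparison of coefficients of $P^n$ yields a recursion for $g_n$ in which the leading factor is the unit $P'(x,\eps) \bmod P$ (nonvanishing since $P$ has simple roots) multiplied by an arithmetic factor $n$ coming from $P^2 \tfrac{d}{dx}(g_n P^n)=nP'P^{n+1}g_n+\cdots$. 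A standard majorant/Nagumo argument then yields bounds $\|g_n\|_{D(\bo;\rho)}\leq KA^n n!$ on a fixed polydisk, i.e.\ $\hat y$ is $P$-$1$-Gevrey in the sense of Definition/Proposition \reff{PsGevrey}. The hypotheses $w(A)\geq n$, $w(B)\geq 2n-1$ are exactly what is needed so that the arithmetic of this recursion produces Gevrey-$1$ growth and no lower-order obstruction.

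\textbf{Step 2 (Blow-up reduction).} Working in dimension $d=2$, Lemma \reff{HLL} gives $H_\LL=P$ (since $P$ is homogeneous of degree $n$), so $\mcZ=\mcZ_\LL$ is exactly the set of tangent directions of $\{P=0\}$: finite points $\xi_i=1/\alpha_i$ for $\alpha_i\neq 0$, together with $\xi=\infty$ if some $\alpha_i=0$. By Theorem \reff{sumable-blu+}, it suffices to prove that $\hat y\circ b_\xi$ is $(P\circ b_\xi)$-$1$-summable in direction $d$ for every $\xi\in\mcZ$.

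\textbf{Step 3 (Monomialization).} Fix $\xi_i\in\mcZ\cap\CC$. In the chart $\phi_{\xi_i}$ one computes
$$P\circ b_{\xi_i}(v_1,v_2)=v_2^n\prod_{j}\bigl(1-\alpha_j(\xi_i+v_1)\bigr)=v_1 v_2^n\,U_i(v_1),$$
where $U_i$ is a unit because $\xi_i$ is a simple root of $\xi\mapsto P(1,\xi)$. By Remark \reff{nota-desarrollo}(\reff{unidad}), $(P\circ b_{\xi_i})$-$1$-summability coincides with $(v_1 v_2^n)$-$1$-summability, i.e.\ monomial summability in the sense of Section \reff{monomial}. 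The analogous computation at $\xi=\infty$, when relevant, yields the same monomial form.

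\textbf{Step 4 (Monomial summability via \cite{CDMS}).} Pulling back via $\frac{d}{dx}=\partial_{v_2}-\frac{\xi_i+v_1}{v_2}\partial_{v_1}$, the equation becomes
$$v_1^2 v_2^{2n}U_i^2\Bigl(\partial_{v_2}-\tfrac{\xi_i+v_1}{v_2}\partial_{v_1}\Bigr)\tilde y=(\widetilde{P'}+\tilde A)\tilde y+\tilde B+\tilde y^{\,2}\tilde f.$$
After the substitution $\tilde y=v_2^{n-1}z$ (absorbing the formal order in $v_2$ forced by $w(A)\geq n$, $w(B)\geq 2n-1$), this takes the doubly-singular shape $v_1^{\sigma}v_2^{r+1}\partial_{v_2}z=F(v_1,v_2,z)$ with $F(0,0,\bo)=\bo$ and $\partial_z F(0,0,\bo)$ invertible (thanks to the unit leading coefficient of $P'+A$ modulo $P$). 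The monomial summability theorem of \cite{CDMS} then gives $(v_1v_2^n)$-$1$-summability of the formal solution in every direction $d\not\equiv 0\pmod{2\pi\ZZ}$, the exception corresponding to the unique Stokes direction of the Poincar\'e-rank-$1$ irregular singular point at $v_1=0$.

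\textbf{Main obstacle.} The crux is Step 4: the blow-up converts the ODE into a genuine PDE on the blow-up variety, since $\tfrac{d}{dx}|_{\eps}$ picks up a $\partial_{v_1}$ term. Casting this PDE in doubly-singular form that meets the hypotheses of \cite{CDMS} — and checking that its single Stokes direction in the monomial $v_1 v_2^n$ matches the excluded $d\equiv 0$ in every chart — is the technical heart of the proof. Steps 1--3 are formal/geometric, but Step 4 requires careful bookkeeping of the induced normalizing transformations and of how the Stokes phenomena transform under blow-up.
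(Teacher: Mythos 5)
Your overall strategy (reduce via Theorem \reff{sumable-blu+} to summability of $\hat y\circ b_\xi$ at the finitely many tangent directions of $\{P=0\}$, then invoke the doubly singular ODE theory of \cite{CDMS}) is the same as the paper's, and Steps 2--3 are fine; Step 1 is superfluous, since Theorem \reff{sumable-blu+} needs only the summability of the blow-ups at $\xi\in\mcZ$ and no separate Gevrey estimate for $\hat y$. The genuine gap is precisely the step you flag as the ``technical heart'': in the chart you chose, with $x=v_2$, $\eps=(\xi_i+v_1)v_2$, the derivative $d/dx$ at fixed $\eps$ is $\partial_{v_2}-\frac{\xi_i+v_1}{v_2}\partial_{v_1}$, and the $\partial_{v_1}$-term does \emph{not} disappear under the substitution $\tilde y=v_2^{n-1}z$: it cannot be absorbed into a right-hand side $F(v_1,v_2,z)$, so the equation never takes the doubly singular ODE form $v_1^{\sigma}v_2^{r+1}\partial_{v_2}z=F(v_1,v_2,z)$ and \cite{CDMS} does not apply as you claim. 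As written, Step 4 fails, and the Stokes-direction bookkeeping is also left unresolved.

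The missing idea (and the paper's route) is to use the blow-up chart in which $\eps$ itself remains a coordinate: write $x=\eps u$ (in the representative case where the tangent direction is $u=0$, i.e.\ some $\alpha_i=0$; the general case is a translation in $u$). Then $\frac{d}{dx}=\frac1\eps\frac{d}{du}$ at fixed $\eps$, so the equation stays an ODE in $u$ with parameter $\eps$ -- no PDE ever appears. By homogeneity $P(\eps u,\eps)=\eps^nP(u,1)=\eps^n u\,Q(u)$ with $Q(0)=P'(0,1)\neq0$ (simple root), and combining with $y=\eps^{n-1}z$ and the valuation hypotheses $w(P')=n-1$, $w(A)\geq n$, $w(B)\geq2n-1$, one gets exactly the doubly singular equation $\eps^n u^2\frac{dz}{du}=Q(u)^{-2}\bigl((P'(u,1)+\eps\tilde A)z+\eps\tilde B+z^2 f(\eps u,\eps,\eps^{n-1}z)\bigr)$, whose linear part at the origin, $1/P'(0,1)$, is invertible. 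The main theorem of \cite{CDMS} then gives $\eps^n u$-$1$-summability of $z$ in every direction except $-\arg P'(0,1)$, and multiplying the monomial $\eps^n u$ by the unit $Q(u)$, $Q(0)=P'(0,1)$ (Remark \reff{nota-desarrollo}), converts this into $P\circ b_\xi$-$1$-summability of $\hat y\circ b_\xi$ in every direction $d\not\equiv0\bmod2\pi\ZZ$; Theorem \reff{sumable-blu+} then concludes. With this chart choice your Step 4 obstacle dissolves entirely, whereas with your chart it is not merely technical but an actual obstruction to applying \cite{CDMS}.
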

\begin{nota} The existence of such a formal solution is quite exceptional. It is nevertheless possible
to have a divergent formal solution, as the simple example
$y=\sum_{m=0}^\infty m!\,P^{m+1}$ solution of $P^2\ddif yx=P'y-P\,P'$ shows.
\end{nota}
\begin{proof}

We apply Theorem \reff{sumable-blu+} with $x_1$ replaced by $x$, $x_2$ replaced by $\eps$.
Then the set $\mcZ$ is the set of zeroes of the polynomial $P(u,1)$ -- it contains exactly $n$ elements.
We have to show that $y\circ b_\xi$ is $P\circ b_\xi$-1-summable in every direction $d\not\equiv0\mod2\pi\ZZ$
for any $\xi\in\mcZ$.

For simplicity, we consider only the case $\xi=0\in\mcZ$; the modification for arbitrary $\xi\in\mcZ$ is left to the reader.
Here, we perform the blow-up $x=\eps u$ and at the same time change the dependent variable by putting
$y=\eps^{n-1}z$.
We obtain the doubly singular equation
\footnote{As there exists a formal solution, we can assume that $w(B)\geq2n-1$ also in the linear case.} 
\begin{equation}\labl{ode2}
\eps^n u^2\,\ddif zu=\tfrac1{Q(u)^2}\gk{(P'(u,1))+\eps \tilde A( u,\eps))\,z+\eps\tilde B(u,\eps)+z^2f(\eps u,\eps,\eps^{n-1}z)},
\end{equation}
where $Q(u)=P(u)/u$ is a polynomial satisfying $Q(0)=P'(0,1)$ and $\tilde A$, $\tilde B$ are analytic near the origin.

The main result of \cite{CDMS} applies to this equation and yields that its unique formal solution $z(u,\eps)$ is $\eps^nu$-1-summable
in every direction not in $-\arg(P'(0,1))\mod2\pi\ZZ$.
It remains to multiply $\eps^nu$ by the unit $Q(u)$. Observing that $Q(0)=P'(0,1)$ we obtain the
$z(u,\eps)$ is $\eps^nP(u,1)$-1-summable for every direction not in $2\pi\ZZ$.
\end{proof}

\subsection{Second example}
\newcommand{\bu}{{\boldsymbol u}}
We consider the following singular partial differential equation
\begin{equation}\labl{pde1}
\gk{x_2\dif P{x_2} + \alpha P^{k+1}+P\,A}x_1\dif f{x_1} - \gk{x_1\dif P{x_1} + \beta P^{k+1}+P\,B}x_2\dif f{x_2}=h,
\end{equation}
where $P$ is a quasi-homogeneous polynomial for the valuation 
$w$ determined by $w(x_1)=a$, $w(x_2)=b$, where $k\in\NN^*$, $h,A,B$ are convergent power series and $w(A),w(B)>k\,g$, $g=w(P)$.
\begin{teorema} If (\reff{pde1}) has a formal solution $f(\bu)\in\CC[[\bu]]$ then it is $P$-$k$-summable provided $\alpha,\,\beta$
satisfy the following conditions.
\begin{enumerate}[itemsep=0.3em]
\item $\alpha+\beta\neq0$ if $P$ is not a monomial,
\item $a\mu_0\beta\neq(g-b\mu_0)\alpha$ if $x_2$ is a factor of $P$ of multiplicity 
     $\mu_0>0$,
\item $b\mu_\infty\alpha\neq(g-a\mu_\infty)\beta$ if $x_1$ is a factor of $P$ 
     of multiplicity $\mu_\infty>0$.

\end{enumerate}
\end{teorema}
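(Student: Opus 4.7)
The plan is to apply the blow-up and ramification reduction machinery developed in the paper (Theorem \reff{sumable-blu+} and Proposition \reff{sumable-ram}) to reduce $P$-$k$-summability of $\hat f$ to a finite collection of monomial-$k$-summability problems for doubly singular PDEs obtained after monomialising $P$ in each coordinate chart of a suitable resolution. Each of conditions (1)--(3) will emerge as the non-resonance condition corresponding to a distinct type of tangent component of $P$ at the origin.

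The structural observation is that the quasi-homogeneous $P$ of weights $(a,b)$ becomes, after the successive ramifications $r_a$ in $x_1$ and $r_b$ in $x_2$ (the latter via swapping $x_1\leftrightarrow x_2$; both applications preserve the form of \reff{pde1}), an ordinary homogeneous polynomial of degree $g$. A direct computation using $x_i\partial_{x_i}=(1/w_i)y_i\partial_{y_i}$ (with $w_1=a$, $w_2=b$) shows that \reff{pde1} is transformed into an equation of the same form with $(\alpha,\beta,A,B,h)$ replaced by $(b\alpha,a\beta,bA,aB,ab\,h)$ and multiplicities by $(a\mu_\infty,b\mu_0)$; conditions (1)--(3) transform correspondingly. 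By Proposition \reff{sumable-ram} it suffices to prove summability in this reduced setting, so we may henceforth assume $a=b=1$ and write $P=c\,x_1^{\mu_\infty}x_2^{\mu_0}\prod_\ell(x_2-\eta_\ell x_1)^{m_\ell}$ with $\eta_\ell\in\CC^*$. Theorem \reff{sumable-blu+} then reduces $P$-$k$-summability to $(P\circ b_\xi)$-$k$-summability at each $\xi$ in the finite set $\mcZ$ of tangent directions, which consists of the non-axis roots $\xi=\eta_\ell$ together with $\xi=0$ (if $\mu_0>0$) and $\xi=\infty$ (if $\mu_\infty>0$).

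In each chart, the blow-up monomialises $P$: using $x_1\partial_{x_1}=-(v_1+\xi)\partial_{v_1}+v_2\partial_{v_2}$, $x_2\partial_{x_2}=(v_1+\xi)\partial_{v_1}$ (and their analogues at $\infty$), together with the homogeneous Euler identity $x_1P_{x_1}+x_2P_{x_2}=gP$, one finds that $P\circ b_\xi=v_1^{p_\xi}v_2^g\cdot U_\xi$ with $U_\xi$ a unit and $p_\xi\in\{m_\ell,\mu_0,\mu_\infty\}$ according to the type of $\xi$, and that the transformed PDE becomes a doubly singular partial differential equation with monomial singularity $v_1^{p_\xi}v_2^g$. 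By Remark \reff{nota-desarrollo}(2), summability with respect to $P\circ b_\xi$ is equivalent to monomial-$k$-summability, which via the operator $T$ of Lemma \reff{T-sector} translates into a classical $k$-summability question in one variable with coefficients in a Banach space; the Borel-Laplace procedure (Proposition \reff{laplace}) then yields summability in every direction $d\not\equiv0$ provided the associated linear operator in the Borel plane is invertible. A direct computation identifies the corresponding non-resonance factor at each type of $\xi$ with the expression appearing in (1), (2) or (3) of the theorem (after the ramification step).

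The main obstacle is the monomial-summability step for the doubly singular PDE in each chart: one must verify that, after dividing by the appropriate power of $\tilde P=v_1^{p_\xi}v_2^g$, the equation has a principal linear operator whose invertibility on the relevant Banach space of transverse coefficients is guaranteed precisely by the non-resonance condition at $\xi$, and then carry out the Borel-Laplace construction to produce the sum in direction $d$. A secondary bookkeeping task is to track how conditions (1)--(3) behave under the initial ramification and to verify that the singular directions of each $\hat f\circ b_\xi$ lie in $\{0\}\bmod 2\pi\ZZ$, so that the union of these exceptional directions over the finite set $\mcZ$ remains inside $\{0\}\bmod 2\pi\ZZ$, yielding $P$-$k$-summability of $\hat f$ in every other direction.
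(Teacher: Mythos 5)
Your reduction skeleton is the same as the paper's: ramify to pass from the quasi-homogeneous to the homogeneous case (Proposition \reff{sumable-ram}, with $(\alpha,\beta)\mapsto(b\alpha,a\beta)$), then use Theorem \reff{sumable-blu+} to reduce to the charts over the finitely many tangent directions, where $P\circ b_\xi$ becomes a monomial times a unit, and read off conditions (1)--(3) chart by chart; the paper merely performs these two reductions in the opposite order, which changes nothing essential. However, what you label ``the main obstacle'' --- the monomial case --- is precisely the heart of the proof, and your sketch of it does not work as stated. Because $A$ and $B$ are general analytic perturbations (and, after blow-up, so are the coefficients $\bar A,\bar B$), the Borel transform of the equation is a \emph{convolution} equation, not a linear equation governed by an invertible ``principal linear operator'' on a Banach space of transverse coefficients; there is no single operator whose invertibility is equivalent to the non-resonance condition. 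The paper handles this by first invoking the Martinet--Ramis analytic classification \cite{MR2} to remove $A,B$ by a change of coordinates whose components are themselves $P$-$k$-summable --- and this is exactly where the condition $\alpha/d\neq\beta/c$ enters --- then splitting by roots of unity to reduce to the monomial $u_1u_2$ (equation (\reff{pde5})), and finally solving the Borel-plane equation (\reff{eqFH}) explicitly, Fourier mode by Fourier mode via the linear ODEs (\reff{eq-seq}), with exponential-growth bounds uniform in the mode $\ell$ and in $\gamma,\mu$. None of this analysis, nor any substitute for it, appears in your proposal, so the proof is incomplete at its central step.

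Two smaller points. First, your bookkeeping goal that the singular directions of each $\hat f\circ b_\xi$ lie in $\{0\}\bmod 2\pi\ZZ$ is both unnecessary and generally false here: the theorem only asserts $P$-$k$-summability, i.e.\ summability outside finitely many directions, and this follows from the Corollary to Theorem \reff{sumable-blu+} since the exceptional directions in each chart (which depend on $\arg(\alpha/d-\beta/c)$, not on $0$) form a finite set. Second, in each blow-up chart it is not enough to say that $P\circ b_\xi$ equals a monomial times a unit: to apply the monomial case one must also perform the substitution $v_1\mapsto\phi(v_1)$ with $Q(\phi(v_1))=v_1^c$ and choose new weights so that the transformed equation is literally of the form (\reff{pde1}) with a monomial and with $\bar A,\bar B$ of sufficiently high valuation, and only then do the effective parameters $(\alpha+\beta,0)$, $(\alpha+\beta,\alpha)$, $(\alpha+\beta,\beta)$ --- hence conditions (1), (2), (3) --- actually appear; this part is routine but should be carried out, not merely asserted.
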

\begin{nota}\benot\item The existence of a formal solution is a very strong hypothesis, comparable to the necessary condition
(see \cite{coe}, \cite{Lak})
for Ackerberg-O'Malley resonance \cite{AO}. The investigation of conditions for the existence of formal solutions, 
also for non-linear equations, will be done in a future article.
\item A formal solution of (\reff{pde1}) is not necessarily convergent, as the simple example
$f(\bx)=x_1\dis\sum_{n=0}^\infty n!\, P(\bx)^{n+1}$ with any quasi-homogeneous polynomial $P$ shows.
It is solution of (\reff{pde1}) with $k=1$, $\alpha=0$, $\beta=1,\ A=B=0$ and $h(\bx)=x_2\dif{P}{x_2}P.$
\item In the case of a monomial $P$, equations like (\reff{pde1}) have been studied by Pingli Li \cite{Li}
using a different method.
\ee\end{nota}

\begin{proof}In a first step, we prove the statement in the case of monomials $P$. Then we reduce the statement for
homogeneous and then quasi-homogeneous polynomials $P$ to the former one using 
the Theorems \reff{sumable-blu+} and \reff{sumable-ram}.

In the case of a monomial $P=x_1^cx_2^d$, $c,d$ positive integers,
the choice of $a$ and $b$ is arbitrary. The conditions (2) and (3) are both equivalent to
the condition that $\alpha/d\neq\beta/c$.
The right hand side of equation \reff{pde1}, moreover, is  divisible by $P$, because the left hand side is. We obtain the equation
\begin{equation}\labl{pde2}
\gk{d + \alpha P^{k}+A}x_1\dif f{x_1} - \gk{c + \beta P^{k}+B}x_2\dif f{x_2}= h/P.
\end{equation}
For later use, it is convenient to relax the conditions on $A,B$ slightly. We we also allow terms
in $A,B$ that are divisible by $x_1^{kc+1}$ or $x_2^{kd+1}$. Let $\mathcal I$ denote the ideal of all
convergent series satisfying this condition.

We can now apply a result of Martinet and Ramis \cite{MR2} and obtain
a formal change of coordinates $\bx=\tilde\bx+\psi(\tilde\bx)$ where both components of $\psi$
are in $\mathcal I$ such that the vector field
$$\gk{d + \alpha P^{k}+A}x_1\dif \,{x_1} - \gk{c + \beta P^{k}+B}x_2\dif \,{x_2}$$
in (\reff{pde2}) is reduced its formal normal form $\gk{d + \alpha P^{k}}\tilde x_1\dif \,{\tilde x_1} - 
\gk{c + \beta P^{k}}\tilde x_2\dif \,{\tilde x_2}$
except for a unit factor $1+\phi(\tilde\bx)\in1+{\mathcal I}$.
It is proved in \cite{MR2}, furthermore, that $\phi$ and the components of $\psi$ are $P$-$k$-summable under our
condition $\alpha/d\neq\beta/c$.

It remains to solve a partial differential equation
\begin{equation}\labl{pde3}
\gk{d + \alpha P^{k}}x_1\dif f{x_1} - \gk{c + \beta P^{k}}x_2\dif f{x_2}= h,
\end{equation}
where $P=x_1^cx_2^d$ and $h$ is a $P$-$k$-summable power series such that (\reff{pde3}) admits a formal solution $f$.

If $c>1$ then replacing $x_1$ by $x_1\,e^{2\pi i/c}$ we find that$\tilde f(x_1,x_2)=f(x_1\,e^{2\pi i/c},x_2)$ is a solution of
(\reff{pde3}) with right hand side $\tilde h(x_1,x_2)=h(x_1\,e^{2\pi i/c},x_2)$.
This leads us to split $f$ and $h$ into $c$ series: $f(x_1,x_2)=\sum_{j=0}^{c-1}x_1^jf_j(x_1^c,x_2)$ and a similar formula
for $h$ and $h_j$, $j=0,...,c-1$. We obtain $c$ equations for the $f_j=f_j(u_1,x_2)$.
\begin{equation}\labl{pde4}
\gk{d + \alpha \bar P^{k}}u_1\dif {f_j}{u_1} - \gk{1 + \frac\beta c \bar P^{k}}x_2\dif {f_j}{x_2}+\frac jc (d+\alpha \bar P^k)f_j= \frac1c h_j,\ j=0,...,c-1,
\end{equation}
where $\bar P(u_1,x_2)=u_1\,x_2^d$.
In a similar way, splitting $f_j(u_1,x_2)=\sum_{m=0}^{d-1}x_2^mf_{jm}(u_1,x_2^d)$ we obtain
equations for each of the $f_{jm}$. We simplify the notation by omitting the indices and the constant factor $\frac1{cd}$
and by introducing 
$\tilde\alpha=\alpha/d$, $\tilde\beta=\beta/c$, $\tilde P(u_1,u_2)=u_1\,u_2$. This yields the equation
\begin{equation}\labl{pde5}
\gk{1 + \tilde\alpha \tilde P^{k}}u_1\dif {f}{u_1} - \gk{1 + \tilde\beta \tilde P^{k}}u_2\dif {f}{u_2}
+\gk{\gamma(1+\tilde\alpha \tilde P^k)-\mu(1+\tilde\beta \tilde P^k)}f=  h,
\end{equation}
with $\gamma\in\MM=\{0,\frac1c,\ldots,1-\frac1c\}\subset[0,1[$,
$\mu\in\mcN=\{0,\frac1d,\ldots,1-\frac1d\}\subset[0,1[$, where now $h=h(u_1,u_2)$ is $\tilde P$-$k$-summable.


Now we use Proposition \reff{laplace-poly} to establish the $\tilde P$-$k$-summability of a formal solution $f$
of (\reff{pde5}) provided $\tilde\alpha\neq\tilde\beta$. 

Consider any direction $\theta$ for which $h$ is $\tilde P$-$k$-summable and which is not
congruent to $-\frac1k\arg(\tilde\alpha-\tilde\beta)\mod\frac\pi k\ZZ$.
Then there is a function $H(\bu,\tau)$ holomorphic 
for $\bu\in D(\bo;\rho)$ and $\tau\in D(0,\delta)$ or $\tau\in V(\theta-\delta,\theta+\delta;\infty)$ 
with the properties (1), (2), (3) stated in Proposition \reff{laplace-poly}.
In particular, there are positive constants $C,A$ such that 
\begin{equation}\labl{estH}\norm{H(\bu,\tau)}\leq C\,\exp\gk{A\norm\tau^k}\mbox{ if }\norm\bu<\rho,\ 
\tau\in (D(0,\delta)\cup V(\theta-\delta,\theta+\delta;\infty)),\end{equation}
for every $\tau$, we have $J_\bu (H(.,\tau)) \in \CC\{u_1\}+\CC\{u_2\}$, the Laplace Transforms
$$\LL(H)(\bu,t)=k\,t^{-k}\int_{\arg\tau=\tilde\theta} e^{-\tau^k/t^k} H(\bu,\tau)\tau^{k-1}\,d\tau$$
converge for $\tilde\theta$ near $\theta$
and we have $h(\bu)$ as $\tilde P$-$k$-Gevrey asymptotic expansion of $\LL(H)(\bu,\tilde P(\bu))$ in some $\tilde P$-sector
of angular opening larger that $\frac{2\pi}k$ bisected by $\arg(\tilde P(\bu))=\theta$.

\newcommand{\ta}{{\tilde\alpha}}\newcommand{\tb}{{\tilde\beta}}
Suppose for a moment that there is a function $F=F(\bu,\tau)$ with properties similar to $H$ such that
$\LL(F)(\bu,\tilde P(\bu))$ is a solution of (\reff{pde5}) having $f(\bu)$ as its $\tilde P(\bu)$-asymptotic expansion.
Using classical properties of the Laplace transform we obtain 
that $u_r\dif{f}{u_r}(\bu,\tilde P(\bu))=\LL(u_r\dif{F}{u_r}+\tau\dif{F}\tau)(\bu,\tilde P(\bu))$, $r=1,2$, and $t^k\LL(G)(\bu,t)=\LL(kI(G))(\bu,t)$
for $G$ with properties like $F$, where $I(G)(\bu,\tau)=\int_0^\tau\sigma^{k-1} G(\bu,\sigma)\,d\sigma$.
Inserting into (\reff{eqFH}) this yields that $F$ satisfies the following equation
\begin{equation}\labl{eqFH}\begin{array}{l}
\dis u_1\dif{F}{u_1}-u_2\dif{F}{u_2}+
{k(\ta-\tb)(\tau^k F-k\,I(F))+k\ta u_1\dif{I(F)}{u_1}-k\tb u_2\dif{I(F)}{u_2}}+\\
\hspace*{6cm}\dis\ \ \ +\,(\gamma-\mu)F+k(\gamma\ta-\mu\tb)I(F)=H.
\end{array}\end{equation}
Conversely, the existence of a solution $F$ of (\reff{eqFH}) with properties analogous to (1), (2), (3)
proves the  $\tilde P$-$k$-summability of a formal solution $f$ of (\reff{pde5}), because
(\reff{pde5} has a unique formal solution except for its constant term in the special case $\ell=\gamma=\mu=0$. 
This last assertion follows
by consideration of the terms of $f$ of lowest valuation. Details are left to the reader.

In order to solve (\reff{eqFH}), we expand $H$ and $F$ into series with respect to $u_1,u_2$. We have
$H(\bu,\tau)=\sum_{\ell=0}^\infty H_\ell(\tau)u_1^\ell+\sum_{\ell=1}^\infty H_{-\ell}(\tau)u_2^\ell$ and
want
$F(\bu,\tau)=\sum_{\ell=0}^\infty F_\ell(\tau)u_1^\ell+\sum_{\ell=1}^\infty F_{-\ell}(\tau)u_2^\ell$.
Equation (\reff{eqFH}) is equivalent to a sequence of equations for the functions $G_\ell=I(F_\ell)$
\begin{equation}\labl{eq-seq}
\gk{\ell+\gamma-\mu+k(\ta-\tb)\tau^k}G_\ell'+k\tau^{k-1}\gk{\chi\ell-k(\ta-\tb)+\gamma\ta-\mu\tb}G_\ell=
\tau^{k-1}H_\ell, \ G_\ell(0)=0,
\end{equation}
for $\ell\in\ZZ$, where $\chi=\ta$ if $\ell\geq0$, $\chi=\tb$ otherwise.

If $\ell+\gamma-\mu\neq0$, i.e.\ $\ell\neq0$ or $\gamma=\mu$, we can write the solutions of (\reff{eq-seq})
using the solutions of the corresponding homogeneous equations
$$U_\ell(\tau)=\gk{1+\tfrac{k(\ta-\tb)\tau^k}{\ell+\gamma-\mu}}^{-(\chi\ell-k(\ta-\tb)+\gamma\ta-\mu\tb)/(k(\ta-\tb))}.$$
We obtain
$$G_\ell(\tau)=U_\ell(\tau)\int_0^\tau \tfrac{ \tau^{k-1}H_\ell(s)}{\gk{\ell+\gamma-\mu+k(\ta-\tb)s^k}U_\ell(s)}\,ds,
\ F_\ell(\tau)=\tfrac{H_\ell(\tau)-k\gk{\chi\ell-k(\ta-\tb)+\gamma\ta-\mu\tb}G_\ell}
                  {{\ell+\gamma-\mu+k(\ta-\tb)\tau^k}}.  $$

By our hypothesis on $\theta$, we can assume that $q=\tfrac{k(\ta-\tb)\tau^k}{\ell+\gamma-\mu}$ is in the sector
$\norm{\arg q}<\pi-\delta/2$ and therefore $\frac1q\log(1+q)$ is bounded by some constant. This implies that there are constants
$K,M$ such that $\norm{U_\ell(\tau)}$ and $1/\norm{U_\ell(\tau)}$ are bounded by $M\,e^{K\norm\tau^k}$ for
$\tau\in D(0,\delta)\cup V(\theta-\delta/2,\theta+\delta/2;\infty)$ and all $\ell\in\ZZ,$ $\gamma\in\MM,\mu\in\mcN$,
$\ell\neq0,\,\gamma\neq\mu$.
By (\reff{estH}), we have $\norm{H_\ell(\tau)}\leq C\,\delta^{-\norm{\ell}}\,e^{A\norm{\tau}^k}$ for all $\ell,\gamma,\mu$
and all $\tau$. The above formula for $F_\ell$ implies that there is a constant $D$ such that
$\norm{F_\ell(\tau)}\leq D\,\delta^{-\norm{\ell}}\,e^{(A+2K+1)\norm{\tau}^k}$ for all $\ell,\gamma,\mu,\tau$
in the case $\ell\neq0,\,\gamma\neq\mu$. 

Observe that the formula for $F_\ell$ remains valid in the exceptional case $\ell=0$, $\gamma=\mu$, if we replace
$U_0$ by $\tilde U_0(\tau)=\tau^{k-\gamma}$.
It is straightforward to prove that $\norm{F_0(\tau)}\leq D\,e^{(A+2K+1)\norm{\tau}^k}$
for a certain $D$ in the finitely many cases $\gamma\in\MM,\,\mu\in\mcN$, $\gamma=\mu$.

This implies that 
$$\norm{F(\bu,\tau)}\leq \frac {4D}{2-\delta}\,e^{(A+2K+1)\norm{\tau}^k}$$
for $\bu\in D(\bo;\delta/2)$, $\tau\in  D(0,\delta)\cup V(\theta-\delta/2,\theta+\delta/2;\infty)$.
Thus the $\tilde P(\bu)$-$k$-summability of the formal solution $f$ of (\reff{pde5}) is proved.

Going back to the formal solution of equation (\reff{pde1}) in the case of the monomial $P=x_1^cx_2^d$, we
obtain its $P$-$k$-summability provided $\alpha/d\neq \beta/c$. In the terms of the Theorem, this
condition is equivalent to conditions (2) and (3) and the Theorem is proved for monomials $P$.

In a second step, we consider (\reff{pde1}) with a homogeneous polynomial $P$ that is not a monomial, i.e.\ the case $a=b$.
We can assume that $a=b=1$.
We apply Theorem \reff{sumable-blu+} to the formal solution $f$ of (\reff{pde1}) that exists according to the
assumption of the Theorem. Let $\MM$ denote the set of zeroes of $P$ in $\PC$. If we show 
that $f\circ b_\xi$ is $P\circ b_\xi$-$k$-summable for $\xi\in\MM$ then the statement follows in the present case.

It is sufficient to consider the  case where $\xi\in\CC$,
the case $\xi=\infty$ is reduced to the case $\xi=0$ by exchanging $v_1$ and $v_2$,
$\alpha$ and $\beta$, $a$ and $b$ respectively.

Now we consider $\tilde f=f\circ b_\xi$ and the analogously constructed
$\tilde h$, $\tilde P$, $\tilde A$, $\tilde B$. We calculate
\begin{equation}
\dif f {x_2}(v_2,(\xi+v_1)v_2)=\frac1{v_2}\dif {\tilde f}{v_1}(v_1,v_2),\ \ 
\dif f {x_1}(v_2,(\xi+v_1)v_2)=\gk{\dif {\tilde f}{v_2}-\frac{\xi+v_1}{v_2}\dif {\tilde f}{v_1}}
(v_1,v_2)
\end{equation}
and obtain analogous formulas for $\tilde P$. This leads to the following 
partial differential equation for $\tilde f$.
\begin{equation}\labl{pdexi}\begin{array}l
\dis\gk{(\xi+v_1)\dif {\tilde P}{v_1}+\alpha \tilde P^{k+1}+\tilde P\tilde A}v_2\dif {\tilde f}{v_2}-\\\dis
\hspace*{10em}-\gk{v_2\dif {\tilde P}{v_2}+
 (\alpha+\beta) \tilde P^{k+1}+\tilde P(\tilde A+\tilde B)}(\xi+v_1)
\dif {\tilde f}{v_1}=\tilde h
\end{array}
\end{equation}
where now the $v_2$-valuation of $\tilde A$ and $\tilde B$ is larger than $kg$, 
$g=w(P)$, and $\tilde P(v_1,v_2)=P(v_2,(\xi+v_1)v_2)=v_2^gQ(v_1)$ with some 
polynomial $Q$ vanishing at
the origin.
 
In order to reduce $\tilde P$ to a monomial, we consider a holomorphic function $\phi$
such that $Q(\phi(v_1))=v_1^c$, where $c$ is the multiplicity of $v_1=0$ as a zero of 
$Q$. Then replacing $v_1$ by $\phi(v_1)$, multiplying by $v_1\phi'(v_1)$ and dividing
by $\xi+\phi(v_1)$, we obtain a new equation for
$\bar f(v_1,v_2)=\tilde f(\phi(v_1),v_2)$ with 
$\bar P=v_1^cv_2^d$.
\begin{equation}\labl{pdexi2}\begin{array}l
\dis
\gk{v_1\dif {\bar P}{v_1}+\frac{v_1\phi'(v_1)}{\xi+\phi(v_1)}
  (\alpha\bar P^{k+1}+\bar P \bar A)}
  v_2\dif {\bar f}{v_2}-\\
\dis
\hspace*{10em}-\gk{v_2\dif {\bar P}{v_2}+(\alpha+\beta)\bar P^{k+1}+
    \bar P(\bar A+\bar B)}v_1\dif {\bar f}{v_1}=\bar h           
\end{array}
\end{equation}
where $\bar h(v_1,v_2)=\frac{v_1\phi'(v_1)}{\xi+\phi(v_1)}\tilde h(\phi(v_1),v_2)$ and
$\bar A$, $\bar B$ are certain convergent series containing only terms
of $v_2$-valuation larger than $kd$.
Choosing the quasi-homogeneous valuation appropriately, more precisely $a=1$ and
$b$ sufficiently large, we arrive at an equation of the form
(\reff{pde1}) with the monomial $P=x_1^cx_2^g$.
In the case $\xi\neq0$, the factors $\alpha,\beta$ replaced by $\alpha+\beta,0$
and $f\circ b_\xi$ is $P\circ b_\xi$-$k$-summable if $\alpha+\beta\neq0$;
this is satisfied by condition (1).
In the case $\xi=0$, these factors have to be replaced by 
$(\alpha+\beta,\alpha)$. As $c=\mu_0>0$ and $d=g$ here, we obtain
that $f\circ b_0$ is $P\circ b_0$-$k$-summable if $(\alpha+\beta)/g\neq \alpha/\mu_0$.
This condition is equivalent  to (2).
In the case $\xi=\infty$ the reduction to the case $\xi=0$, that is 
exchange $x_1$ and $x_2$ etc., proves that $f\circ b_\infty$ is $P\circ b_\infty$-$k$-summable if 
$(\alpha+\beta)/g\neq \beta/\mu_\infty$. This is condition (3).
Thus the Theorem is proved for homogeneous non-monomial $P$.

In a third and last step, we reduce a quasi-homogeneous non-monomial case 
to a homogeneous case by the ramifications $x_1=v_1^a$, $x_2=v_2^b$. 
It suffices to use  Theorem \reff{sumable-ram} twice.
The ramifications preserve the form (\reff{pde1}) of the equation,
only $\alpha,\beta$ are replaced by $b\alpha,a\beta$,
it is homogeneous for $w$ satisfying $w(v_1)=w(v_2)=1$
and $\mu_0,\mu_\infty$ are replaced by $b\mu_0,a\mu_\infty$, but
$g$ remains unchanged.
This yields the conditions of the Theorem for
$P$-$k$-summability in the present case.
\end{proof}

\end{document}